\theoremstyle{plain}
\newtheorem{thm}{Theorem}[section]
\newtheorem{lem}[thm]{Lemma}
\newtheorem{prp}[thm]{Propostion}
\newtheorem{cor}[thm]{Corollary}
\newtheorem{cla}[thm]{Claim}
\newtheorem{conj}[thm]{Conjecture}
\newtheorem*{Classical}{Proposition}
\theoremstyle{definition}
\newtheorem{dfn}[thm]{Definition}
\newtheorem{example}[thm]{Example}
\newtheorem{rem}[thm]{Remark}
\newtheorem*{ack}{Acknowledgements}
\newtheorem*{notation}{Notation}
\newtheorem{s}{Step}
\newcommand{\bigslant}[2]{{\raisebox{.2em}{$#1$}\left/\raisebox{-.2em}{$#2$}\right.}}
\newcommand{\rect}{\mathrm{Rect}_{\left(0, \cdots, 0\right)}^{\left(n_1, \cdots, n_r \right)}}
\newcommand{\Sha}{\rotatebox[origin=c]{180}{$\Pi\kern-0.361em\Pi$}}
\begin{document}
\title{Stable free lattices in residually reducible Galois deformations and control theorem of Selmer groups}
\author{Dong Yan}
\date{}
\maketitle
\begin{abstract}
In this paper, we study the graph of homothety classes of stable free lattices in a two-dimensional representation over a local UFD. This generalizes a classical result of Serre in the case where the base ring is a discrete valuation ring. As applications, we consider the case where the representation comes from a residually reducible Hida family and we study the control theorem of Selmer groups. These results enable us to know the precise statement of the main conjecture in the residually reducible case as we will remark in \S 4 and \S 5.. 
\end{abstract}
\tableofcontents
\section{Introduction}
Let $\left(A, \varpi \right)$ be a discrete valuation ring with $\varpi$ a fixed uniformizer. Let $K:=\mathrm{Frac}\left(A\right)$ be the field of fractions and $V$ a $K$-vector space of dimension two. Let $\rho: G \rightarrow \mathrm{Aut}_{K}\left(V \right)$ be a representation of a group $G$ which is irreducible and has a $G$-stable lattice $T$. We denote by $C\left(\rho \right)$ the set of homothety classes of stable lattices. Following Serre \cite{SeTe}, we consider the graph, whose vertices are the elements of $C\left(\rho \right)$ and for $x, x^{\prime} \in C\left(\rho \right)$, we draw an edge form $x$ to $x^{\prime}$ if there exist representatives $T$ and $T^{\prime}$ of $x$ and $x^{\prime}$ respectively such that $T \subset T^{\prime}$ and $\left.T^{\prime}\right/{T} \stackrel{\sim}{\rightarrow} \left.A\right/{\left(\varpi \right)}$ as an $A$-module. Then one can establish a dictionary between the algebraic properties of the representation $\rho$ and the geometric properties of the graph $C\left(\rho \right)$. For example, if $A$ is complete then $\rho$ is irreducible if and only if $C\left(\rho \right)$ is bounded. One of the applications of this dictionary is to give a conceptual and simple proof of Ribet's lemma \cite[Proposition 2.1]{Ri76} which is deduced by the following proposition.

\begin{Classical}[{\cite[Section 1.2]{Belllec}}, {\cite[Section 17, Proposition 2]{M11}}]
Let $T$ be a stable lattice of $V$. Suppose that $\rho$ is irreducible and the semi-simplification $\left(\left.T\right/{\varpi T} \right)^{\mathrm{ss}}$ of the residual representation is decomposed into two distinct characters, then the graph $C\left(\rho \right)$ is a segment. 
\end{Classical}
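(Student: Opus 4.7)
The plan is to reduce the claim to a local degree-bound, by showing that every vertex of $C(\rho)$ has at most two neighbors, and then invoking the tree structure of $C(\rho)$ together with the boundedness coming from irreducibility to conclude that $C(\rho)$ is a finite path, i.e., a segment. The starting point is a dictionary between edges of $C(\rho)$ issuing from a vertex $x = [T]$ and $G$-stable lines in the residual representation $\bar{T} := T/\varpi T$: any neighbor of $x$ admits a unique representative $T'$ with $\varpi T \subsetneq T' \subsetneq T$ and $T/T' \cong A/\varpi$, and the assignment $T' \mapsto T'/\varpi T$ identifies such $T'$ with lines (proper nonzero $A/\varpi$-subspaces) of $\bar{T}$. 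Moreover $T'$ is $G$-stable if and only if the corresponding line is $G$-stable, so the degree of $x$ in $C(\rho)$ equals the number of $G$-stable lines in $\bar{T}$.

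Next I would analyse $\bar{T}$ using the hypothesis $\bar{T}^{\mathrm{ss}} = \chi_1 \oplus \chi_2$ with $\chi_1 \ne \chi_2$. Two cases arise. If $\bar{T}$ is semisimple, then $\bar{T} \cong \chi_1 \oplus \chi_2$; since $\chi_1 \ne \chi_2$ forces $\mathrm{Hom}_G(\chi_i,\chi_j) = 0$ for $i \ne j$, any $G$-stable line must be one of the two character eigenlines, and there are exactly two. If $\bar{T}$ is not semisimple, it is a non-split extension of one character by the other, and its unique $G$-stable line is the socle. Either way $\bar{T}$ has at most two $G$-stable lines, and every vertex of $C(\rho)$ has degree at most $2$.

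To finish, $C(\rho)$ is a connected subtree of the Bruhat--Tits tree of $\mathrm{GL}_2(K)$ (stability of lattices is preserved under sums and intersections, hence under taking geodesics), and a connected tree all of whose vertices have degree at most $2$ is a (possibly infinite) path; the path is finite by the equivalence, recalled in the introduction, between irreducibility of $\rho$ and boundedness of $C(\rho)$, passing to the completion if necessary. The main subtlety I anticipate lies in the non-semisimple case: one must really use the distinctness $\chi_1 \ne \chi_2$ to exclude extra stable lines in a non-split extension, since if $\chi_1 = \chi_2$ then a semisimple $\bar{T}$ would contribute a whole $\mathbb{P}^1$ of stable lines and the segment structure would break down. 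As a by-product of Step 2 one also sees the expected geometric picture: the two endpoints of the segment correspond to the (at most two) vertices at which $\bar{T}$ is non-split, while all interior vertices satisfy $\bar{T} \cong \chi_1 \oplus \chi_2$.
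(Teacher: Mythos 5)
Your argument is correct, and it is a genuinely different route from the one this paper sets up. You prove the statement geometrically: identify the neighbours of a vertex $[T]$ in the Bruhat--Tits tree with the lines of $T/\varpi T$, observe that a two-dimensional residual representation whose semi-simplification is a sum of two \emph{distinct} characters has exactly two stable lines in the split case and exactly one (the socle) in the non-split case, conclude that every vertex of $C(\rho)$ has degree at most two, and finish with convexity of $C(\rho)$ in the tree plus the boundedness--irreducibility equivalence (correctly flagged as requiring completeness). All the steps are sound, and you correctly locate the only place where distinctness of the characters is indispensable, namely excluding the $\mathbb{P}^1$ of stable lines in a split self-extension. The paper, by contrast, obtains this statement as a consequence of the ideal of reducibility $J(\rho)$ of Bella\"iche--Chenevier (Lemma \ref{21} and Proposition \ref{22}): the stable classes form an explicit chain $T_n \supsetneq \cdots \supsetneq T_0$ of length $\mathrm{ord}_{\varpi}J(\rho)+1$ with $T_j/T_0 \cong A/(\varpi)^j(\vartheta)$. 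The trade-off is clear: your tree argument is more elementary and self-contained but only yields the qualitative shape of the graph, whereas the $J(\rho)$ approach quantifies the length of the segment and the $A[G]$-module structure of the successive quotients --- precisely the extra information the paper needs to localize at height-one primes and glue in the UFD setting of Theorem \ref{Gal}, where no ambient tree is available. Your closing observation that the endpoints are the vertices where $T/\varpi T$ is non-split is also correct and matches the extremal lattices $T_0$, $T_n$ of the paper's chain.
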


Nowadays the graph $C\left(\rho \right)$ has been studied more deeply. For the representations of higher dimension, the graph $C\left(\rho \right)$ has been studied by Bella\"iche-Graftieaux \cite{BG06} if the semi-simplification $\overline{\rho}^{\mathrm{ss}}$ of the residual representation is residually disjoint (i.e. the Jordan-H\"older components of $\overline{\rho}^{\mathrm{ss}}$ are non-isomorphic to each other). For the case where $\overline{\rho}^{\mathrm{ss}}$ is not residually disjoint and $\rho$ has dimension two, the graph $C\left(\rho \right)$ is studied by Bella\"iche-Chenevier \cite{BC14}. However, all of previous works focus on the case where $A$ is a discrete valuation ring i.e. a regular local ring of dimension one. 

In this paper, we study the case where $A$ is a local UFD and $\rho$ is a two-dimensional residually disjoint representation. Then a lattice may not be free over $A$. We study the graph of all stable free lattices. Rather than Ribet's lemma, we applicate this graph to deduce some results on the Iwasawa theory for residually reducible Hida deformations. In particular, we study the change of Selmer groups by isogeny, control theorem of Selmer groups and the main conjecture under specialization. Note that when $\rho$ comes from a Hida family, the deformation ring $\mathbb{I}$ has Krull dimension two. If we assume that $\mathbb{I}$ is a regular local ring, all reflexive lattices are free lattices. Then the change of Selmer group only depends on the change of stable free lattice (cf. \cite[Lemma 4.3]{Ochiai08}). This is why we focus on free lattices instead of all lattices. In general, a reflexive lattice may not be free if the Krull dimension of $A$ is greater than two, however we remark that by our Proposition \ref{reffr}, all stable reflexive lattices are always free.

We begin our set-up. Let $R$ be a local UFD with maximal ideal $\mathfrak{m}$ and field of fractions $K$. Assume that $R$ is complete with respect to the $\mathfrak{m}$-adic topology. Let $V$ be a $K$-vector space of dimension two and $\rho: G \rightarrow \mathrm{Aut}_{K}\left(V \right)$ a representation of a group $G$ which satisfies the following conditions
\begin{list}{}{}
\item[(Ir)]The representation $\rho$ is irreducible.
\item[(Lat-fr)]The vector space $V$ has a free lattice $T$ which is  stable under the action of $G$.
\item[(Red)]The trace $\mathrm{tr}\,\rho \,\mathrm{mod}\,\mathfrak{m}=\psi+\psi^{\prime}$ for characters $\psi, \psi^{\prime}: G \rightarrow \left(\left.R\right/{\mathfrak{m}}\right)^{\times}$.
\item[($G$-dist)]The characters $\psi$ and $\psi^{\prime}$ are distinct.
\end{list}

Let $T$ be a stable free lattice. We denote by $[T]:=\set{ x T | x \in K^{\times} }$ the homothety class of $T$ and by $C^{\mathrm{fr}}\left(\rho \right)$ the set of homothety classes of stable free lattices. We define a graph structure on $C^{\mathrm{fr}}\left(\rho \right)$ as follows:
\begin{list}{}{}
\item[($\mathrm{vert}\,C^{\mathrm{fr}}\left(\rho \right)$)]The vertex set is $C^{\mathrm{fr}}\left(\rho \right)$.
\item[($\mathrm{edge}\,C^{\mathrm{fr}}\left(\rho \right)$)]For two points $x, x^{\prime} \in C^{\mathrm{fr}}\left(\rho \right)$, we draw an edge from $x$ to $x^{\prime}$ if there exists representatives $T$ and $T^{\prime}$ of $x$ and $x^{\prime}$ respectively such that $T \subset T^{\prime}$ and $\mathrm{char}_{R}\left(\left.T^{\prime}\right/{T} \right)=\mathfrak{p}$ for some height-one prime ideal $\mathfrak{p}$ of $R$, where $\mathrm{char}_{R}$ denotes the characteristic ideal. 
\end{list}
By the structure theorem of finitely generated torsion $R$-modules, the graph $C^{\mathrm{fr}}\left(\rho \right)$ is undirected. 

Now let $n\in\mathbb{Z}_{\geq 0}$ and we define the graph of $n$-dimensional rectangle in $\mathbb{R}^n$. When $n=0$ we define the rectangle to be only one point. When $n\geq 1$ and for fixed integers $a_i \in \mathbb{Z}_{\geq 0}\ \left(1 \leq i \leq n \right)$, we define the graph of an $n$-dimensional rectangle $\mathrm{Rect}_{\left(0, \cdots, 0\right)}^{\left(a_1, \cdots, a_n \right)}$ as follows.
\begin{list}{}{}
\item[($\mathrm{vert}\,\mathrm{Rect}_{\left(0, \cdots, 0\right)}^{\left(a_1, \cdots, a_n \right)}$)]The vertex set is $\left\{\left(j_1, \cdots, j_n \right) \in \mathbb{Z}^{n} |\ 0 \leq j_1 \leq a_1, \cdots, 0 \leq j_n \leq a_n \right\}.$
\item[($\mathrm{edge}\,\mathrm{Rect}_{\left(0, \cdots, 0\right)}^{\left(a_1, \cdots, a_n \right)}$)]We draw an edge from $\left(j_1, \cdots, j_n \right)$ to $\left(j_1^{\prime}, \cdots, j_n^{\prime} \right)$ if there exists an integer $1 \leq s \leq n$ such that 
\begin{equation}\label{12231}
|j_i^{\prime}-j_i|=\begin{cases}
0 & \left(i \neq s \right) \\
1 & \left(i=s \right). \\
\end{cases}
\end{equation}
\end{list}
In the graph $\mathrm{Rect}_{\left(0, \cdots, 0\right)}^{\left(a_1, \cdots, a_n \right)}$, we define $\left(j_1^{\prime}, \cdots, j_n^{\prime} \right) \leq \left(j_1, \cdots, j_n \right)$ if $j_i^{\prime} \leq j_i$ for all $i=1, \cdots, n.$ Then under the relation $\leq$, $\mathrm{Rect}_{\left(0, \cdots, 0\right)}^{\left(a_1, \cdots, a_n \right)}$ is a partially ordered graph and has minimal and maximal elements $\left(0, \cdots, 0\right)$ and $\left(a_1, \cdots, a_n \right)$ respectively. We have the following theorem:

\begin{thm}\label{Gal}
Let $R$ be a local UFD with maximal ideal $\mathfrak{m}$ and field of fractions $K$. Assume that $R$ is complete with respect to the $\mathfrak{m}$-adic topology and the characteristic of the residue field $\left.R\right/\mathfrak{m}$ is not equal to two. Let $V$ be a $K$-vector space of dimension two. Let $\rho: G \rightarrow \mathrm{Aut}_{K}\left(V \right)$ a representation of a group $G$ satisfying the conditions (Ir), (Lat-fr), (Red) and ($G$-dist). Then we have the following statements
\begin{enumerate}
\item[(1)]There exists an integer $r \in \mathbb{Z}_{\geq 0}$ such that the graph $C^{\mathrm{fr}}\left(\rho \right)$ is isomorphic to an $r$-dimensional rectangle. 
\item[(2)]More precisely, let $J:=J\left(\rho \right)$ be the ideal of reducibility of $\rho$ (cf. \cite[\S 2.3]{BC05}) with characters $\vartheta$ and $\vartheta^{\prime}: G \rightarrow \left(\left.R\right/J \right)^{\times}$ such that $\mathrm{tr}\,\rho \,\mathrm{mod}\,J=\vartheta+\vartheta^{\prime}$. Let $\mathcal{N}$ be the set of height-one prime ideals $\mathfrak{p}$ of $R$ such that $J_{\mathfrak{p}} \subsetneq R_{\mathfrak{p}}$, then $C^{\mathrm{fr}}\left(\rho \right)$ consists of a unique element if $\mathcal{N}$ is empty. Suppose that $\mathcal{N}$ is non-empty, let $\mathcal{N}=\set{\mathfrak{p}_1, \cdots, \mathfrak{p}_r}$ and $n_i:=\mathrm{ord}_{\mathfrak{p}_i}J_{\mathfrak{p}_i}$. Then we have an isomorphism
\begin{equation*}
\Phi_{\vartheta}: C^{\mathrm{fr}}\left(\rho \right) \stackrel{\sim}{\rightarrow} \mathrm{Rect}_{\left(0, \cdots, 0\right)}^{\left(n_1, \cdots, n_r \right)}
\end{equation*}
such that for points $\left(j_1, \cdots, j_r \right)$ and $\left(j_1^{\prime}, \cdots, j_r^{\prime}\right)$ in $\mathrm{Rect}_{\left(0, \cdots, 0\right)}^{\left(n_1, \cdots, n_r \right)}$, the following conditions are equivalent:
\begin{enumerate}
\item[(i)]We have $\left(j_1, \cdots, j_r \right) \leq \left(j_1^{\prime}, \cdots, j_r^{\prime} \right)$.
\item[(ii)]There exist representatives $T$ and $T^{\prime}$ of $\Phi_{\vartheta}^{-1}\left( \left(j_1, \cdots, j_r \right)   \right)$ and $\Phi_{\vartheta}^{-1}\left( \left(j_1^{\prime}, \cdots, j_r^{\prime} \right)   \right)$ respectively such that $T \subset T^{\prime}$ and $\left.T^{\prime}\right/{T}$ is a quotient of $\left.R\right/J\,\left(\vartheta \right)$.
\end{enumerate}
Moreover if these conditions are satisfied, $\left.T^{\prime}\right/{T}$ is isomorphic to $\left.R\right/{\mathfrak{p}_1^{j_1^{\prime}-j_1}\cdots\mathfrak{p}_r^{j_r^{\prime}-j_r}}\,\left(\vartheta \right).$
\end{enumerate}

\end{thm}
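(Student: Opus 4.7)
The strategy I would follow is to localize at each height-one prime of $R$ to reduce to the classical DVR case handled by the Proposition recalled above, and then glue the local data together using that $R$ is a Krull domain.

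\textbf{Step 1 (localization at height-one primes).} Since $R$ is a UFD, every height-one prime $\mathfrak{p}$ is principal and $R_{\mathfrak{p}}$ is a DVR with a uniformizer given by a generator of $\mathfrak{p}$. The ideal of reducibility commutes with localization, so $J(\rho_{\mathfrak{p}}) = J(\rho)_{\mathfrak{p}}$. For $\mathfrak{p} \notin \mathcal{N}$ we have $J_{\mathfrak{p}} = R_{\mathfrak{p}}$, which means the local residual representation is already the honest direct sum of two characters and so the graph $C(\rho_{\mathfrak{p}})$ collapses to a single vertex. For $\mathfrak{p}_i \in \mathcal{N}$, applying the classical Proposition to $\rho$ viewed over the DVR $R_{\mathfrak{p}_i}$ yields that $C(\rho_{\mathfrak{p}_i})$ is a segment of length $n_i = \mathrm{ord}_{\mathfrak{p}_i} J_{\mathfrak{p}_i}$. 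Moreover $\vartheta$ singles out an orientation of each such segment: position $0$ is the class at which the residual quotient carries the action $\vartheta$, position $n_i$ the class at which it carries $\vartheta'$.

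\textbf{Step 2 (construction of $\Phi_{\vartheta}$).} Send a stable free lattice $T$ to the tuple $(j_1(T), \ldots, j_r(T))$, where $j_i(T)$ is the position of $[T_{\mathfrak{p}_i}]$ in the oriented local segment $C(\rho_{\mathfrak{p}_i})$; this is well defined on the homothety class of $T$ over $R$. Injectivity uses the Krull identity $T = \bigcap_{\mathfrak{p}} T_{\mathfrak{p}}$, intersected over all height-one primes inside $V$: two free lattices with the same local class at every $\mathfrak{p}$ coincide after a single scaling by an element of $K^{\times}$. For surjectivity I would, given a target tuple, build a candidate lattice $T$ by prescribing local data (taking the representative at position $j_i$ at each $\mathfrak{p}_i$ and leaving the other localizations equal to those of a reference lattice $T_0$) and setting $T = \bigcap_{\mathfrak{p}} T_{\mathfrak{p}}$. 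The result is reflexive, $G$-stability is a local condition, and freeness follows from Proposition~\ref{reffr}, the separately established result that stable reflexive lattices are free over $R$.

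\textbf{Step 3 (edges and refined module statement).} Every edge in $C^{\mathrm{fr}}(\rho)$ is witnessed by a single height-one prime $\mathfrak{p}$, since its characteristic ideal is a prime; at all other primes the two localized classes agree, so $\mathfrak{p} \in \mathcal{N}$. The local segment structure then matches the rectangle edge condition~(\ref{12231}) coordinate by coordinate. To obtain the precise module structure when $(j_1,\ldots,j_r)\leq (j_1',\ldots,j_r')$, I would iterate $|j_i'-j_i|$ edges in each coordinate direction, producing representatives with $T\subset T'$ such that $T'/T$ is finitely generated torsion with characteristic ideal $\mathfrak{p}_1^{j_1'-j_1}\cdots\mathfrak{p}_r^{j_r'-j_r}$; reflexivity of the quotient structure plus the local DVR computation identify $T'/T$ with $R/\mathfrak{p}_1^{j_1'-j_1}\cdots\mathfrak{p}_r^{j_r'-j_r}$ as an $R$-module. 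By the defining property of $J$, the $G$-action on this subquotient factors through $R/J$, and the orientation chosen in Step~1 forces the resulting action to be by $\vartheta$.

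The main obstacle I anticipate is twofold. First, turning local data into a genuinely free (not merely reflexive) global lattice requires Proposition~\ref{reffr}; without it surjectivity of $\Phi_{\vartheta}$ would only land in a graph of reflexive classes. Second, making the $\vartheta$-orientation globally coherent across all $\mathfrak{p}_i$—so that $\Phi_{\vartheta}$ identifies $C^{\mathrm{fr}}(\rho)$ with a single rectangle with the asserted $\vartheta$-twisted module structure on each subquotient, rather than merely with an unoriented product of segments—requires choosing bases in which $\rho$ reduces modulo $J$ to the form with $\vartheta,\vartheta'$ on the diagonal and controlling how the off-diagonal entries transform under edge moves. The hypothesis that the residue characteristic is not $2$ likely enters at this last step, to construct the idempotents separating $\vartheta$ from $\vartheta'$ via the ($G$-dist) assumption.
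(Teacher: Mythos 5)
Your proposal follows essentially the same route as the paper: localize at each height-one prime to invoke the classical DVR result, glue the resulting local chains by intersection to obtain reflexive lattices, use the UFD hypothesis (principality of height-one primes) to show every stable reflexive lattice is homothetic to one of these, and invoke Proposition \ref{reffr} to upgrade reflexive to free. The paper organizes this as Proposition \ref{Gal1} (the reflexive version of the theorem) followed by Proposition \ref{reffr}, and the two difficulties you anticipate — global freeness and the coherent $\vartheta$-orientation giving the cyclic module structure of $\left.T^{\prime}\right/T$ — are exactly the points the paper resolves via the idempotent decomposition $T=T\left(\psi \right)\oplus T\left(\psi^{\prime} \right)$ and Lemma \ref{frrkone}.
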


Theorem \ref{Gal} will be proved in \S 3. Now we study its application to number theory. First let us briefly recall some preliminaries on Hida family and its Galois representation. We fix an odd prime $p$ from now on to the end of the paper. We fix embeddings $\overline{\mathbb{Q}} \hookrightarrow \overline{\mathbb{Q}_p}$ and $\overline{\mathbb{Q}} \hookrightarrow \mathbb{C}$, where $\overline{\mathbb{Q}}$ and $\overline{\mathbb{Q}_p}$ are the algebraic closures of the rational number field $\mathbb{Q}$ and the $p$-adic number field $\mathbb{Q}_p$ respectively. Let $\Gamma$ be the Galois group $\mathrm{Gal}\left(\left.\mathbb{Q}_{\infty}\right/{\mathbb{Q}}\right)$ of the cyclotomic $\mathbb{Z}_p$-extension $\mathbb{Q}_{\infty}$ over $\mathbb{Q}$ and $\Gamma^{\prime}$ the $p$-Sylow subgroup of the group of diamond operators for the tower of modular curves $\left\{Y_1\left(p^t\right)\right\}_{t\geq 1}$. We have the following canonical isomorphisms
\begin{equation*}
\kappa_{\mathrm{cyc}}: \Gamma \stackrel{\sim}{\rightarrow} 1+p\mathbb{Z}_p,\ \ \kappa^{\prime}: \Gamma^{\prime} \stackrel{\sim}{\rightarrow} 1+p\mathbb{Z}_p.
\end{equation*}
We fix a topological generator $\gamma$ (resp. $\gamma^{\prime}$) of $\Gamma$ (resp. $\Gamma^{\prime}$) such that $\kappa_{\mathrm{cyc}}\left(\gamma \right)=\kappa^{\prime}\left(\gamma^{\prime} \right)$. In this section, let $\mathbb{I}$ be a quotient of Hida's ordinary cuspidal Hecke algebra $\mathbf{h}\left(1, \mathbb{Z}_p \right)^{\mathrm{ord}}$ of tame level one (cf. \cite[\S 1]{H1}) corresponding to a normalized $\mathbb{I}$-adic eigen cusp form $\mathcal{F}=\displaystyle\sum_{n=1}^{\infty}a\left(n, \mathcal{F} \right)q^n \in \mathbb{I}[[q]]$. Recall that $\mathbb{I}$ is a local domain of Krull dimension two which is finite flat over $\Lambda^{\prime}:=\mathbb{Z}_p[[\Gamma^{\prime}]]$. In \cite{H2}, Hida constructed a Galois representation $\mathbb{T}$ attached to $\mathcal{F}$ such that $\mathbb{T}$ is a finitely generated torsion-free $\mathbb{I}$-module and $\mathbb{V}_{\mathcal{F}}:=\mathbb{T}\otimes_{\mathbb{I}}\mathbb{K}$ has dimension two over the field of fractions $\mathbb{K}$ of $\mathbb{I}$. For any prime $l \neq p$, the trace of the action of the arithmetic Frobenius element $\mathrm{Frob}_l$ on $\mathbb{V}_{\mathcal{F}}$ is equal to the $l$-th Fourier coefficient $a\left(l, \mathcal{F} \right)$. 

We assume that $\mathbb{I}$ is a UFD and $\rho_{\mathcal{F}}$ satisfies the condition (Lat-fr) throughout the paper. For a stable free lattice $\mathbb{T}$ of $\mathbb{V}_{\mathcal{F}}$, let $\mathbb{A}:=\mathbb{T}\otimes_{\mathbb{I}}\mathbb{I}^{\lor}$ where $\mathbb{I}^{\lor}$ is the Pontryagin dual of $\mathbb{I}$. Due to Wiles \cite[Theorem 2.2.2]{Wi88}, there exists a $G_{\mathbb{Q}_p}$-submodule $F^{+}\mathbb{V}_{\mathcal{F}}$ of $\mathbb{V}_{\mathcal{F}}$ of dimension one such that $\left.\mathbb{V}_{\mathcal{F}}\right/{F^{+}\mathbb{V}_{\mathcal{F}}}$ is an unramified $G_{\mathbb{Q}_p}$-module. Let $F^{+}\mathbb{T}:=\mathbb{T}\cap F^{+}\mathbb{V}_{\mathcal{F}}$ and $F^{+}\mathbb{A}:=F^{+}\mathbb{T}\otimes_{\mathbb{I}}\mathbb{I}^{\lor}$ in $\mathbb{A}$. We define the Selmer group $\mathrm{Sel}_{\mathbb{A}}$ as Definition \ref{7131}. We now consider the two-variable deformation. Let $\mathcal{T}:=\mathbb{T}\hat{\otimes}_{\mathbb{Z}_p}\mathbb{Z}_p[[\Gamma]]\left(\tilde{\kappa}^{-1} \right)$, where $\tilde{\kappa}$ is the character $G_{\mathbb{Q}} \twoheadrightarrow \Gamma \hookrightarrow \mathbb{Z}_p[[\Gamma]]^{\times}$. Let $\mathcal{R}:=\mathbb{I}[[\Gamma]]$ and $\mathcal{A}:=\mathcal{T}\otimes_{\mathcal{R}}\mathcal{R}^{\lor}$. Let $F^{+}\mathcal{T}:=F^{+}\mathbb{T}\hat{\otimes}_{\mathbb{Z}_p}\mathbb{Z}_p[[\Gamma]]\left(\tilde{\kappa}^{-1} \right)$ and $F^{+}\mathcal{A}:=F^{+}\mathcal{T}\otimes_{\mathcal{R}}\mathcal{R}^{\lor}$. We define the Selmer group $\mathrm{Sel}_{\mathcal{A}}$ for two-variable Hida deformation in the same manner.

Nowadays it is well-known that an effective strategy for studying the Iwasawa theory of $\mathrm{Sel}_{\mathbb{A}}$ is working on the Selmer group $\mathrm{Sel}_{\mathcal{A}}$ and specializing its characteristic ideal. This strategy was first proposed by Ochiai \cite[\S 7]{Ochiai06}. Let $P$ be the height-one prime ideal of $\mathcal{R}$. The key tool in specializing the characteristic ideal is to study the following map
\begin{equation*}
\mathrm{res}_{P}: \mathrm{Sel}_{\mathcal{A}[P]} \rightarrow \mathrm{Sel}_{\mathcal{A}}[P]
\end{equation*}
induced by $h_P: H^1\left(\left.\mathbb{Q}_{\Sigma}\right/\mathbb{Q}, \mathcal{A}[P] \right) \rightarrow H^1\left(\left.\mathbb{Q}_{\Sigma}\right/\mathbb{Q}, \mathcal{A} \right)[P]$, where $\mathbb{Q}_{\Sigma}$ denotes the maximal extension of $\mathbb{Q}$ unramified outside the primes dividing $p$ and $\infty$.

Let $\overline{\rho}_{\mathcal{F}}$ be the residual representation of $\rho_{\mathcal{F}}$ modulo the maximal ideal $\mathfrak{m}_{\mathbb{I}}$ of $\mathbb{I}$. Then under the assumption (Lat-fr), $\overline{\rho}_{\mathcal{F}}$ is defined as the semi-simplification of $\left.\mathbb{T}\right/\mathfrak{m}_{\mathbb{I}}\mathbb{T}$ for any stable free lattice $\mathbb{T}$. In \cite[Proposition 5.1]{Ochiai06}, Ochiai studied the kernel of the map $l_{P}$ of local cohomology groups (see the diagram \eqref{190208} in \S 4.3 below). Then under the assumption that $\overline{\rho}_{\mathcal{F}}$ is irreducible, the map $h_P$ is injective and Ochiai determined the cokernel $\mathrm{Coker}\,\mathrm{res}_P$ by $\mathrm{Ker}\,l_P$. 

In this paper, we study the case where $\overline{\rho}_{\mathcal{F}}$ is reducible. We assume the following condition
\begin{list}{}{}
\item[(Eis)]The residual representation $\overline{\rho}_{\mathcal{F}}$ is isomorphic to $\mathbf{1}\oplus\overline{\chi}$, where $\chi$ is a primitive Dirichlet character modulo $p$ (which is odd) and $\overline{\chi}$ is the character $G_{\mathbb{Q}} \stackrel{\chi}{\rightarrow} \mathbb{I}^{\times} \twoheadrightarrow \left(\left.\mathbb{I}\right/\mathfrak{m}_{\mathbb{I}} \right)^{\times}.$
\end{list}
Then the map $h_P$ may not be injective and $\mathrm{Ker}\,h_P$ may depend on the choice of stable lattice. By the diagram \eqref{190208} in \S 4.3, we know that to study control theorem in the residually reducible case, it is important to study the group $H^0\left(\mathbb{Q}, \mathcal{A} \right)$ for each stable free lattice $\mathbb{T}$. The second motivation for studying $H^0\left(\mathbb{Q}, \mathcal{A} \right)$ in this paper is that it appears in the main conjecture in residually reducible case which we will discuss in \S 4.5.

Using Theorem \ref{Gal}, we study $H^0\left(\mathbb{Q}, \mathcal{A} \right)$ in Proposition \ref{H0}. Thus by combing Proposition \ref{H0} and Ochiai's result on $\mathrm{Ker}\,l_P$, we obtain a control theorem in the residually reducible case as follows. We remark that under the setting that $\mathcal{F}$ has tame level one and the assumption (Eis), one of the characters $\vartheta$ and $\vartheta^{\prime}$ in Theorem \ref{Gal} is trivial by class field theory. We denote by $\mathbb{J}$ the Eisenstein ideal of $\mathbb{I}$ which is generated by $a\left(l, \mathcal{F} \right)-1-\mathrm{det}\,\rho_{\mathcal{F}}\left(\mathrm{Frob}_l \right)$ for all primes $l \neq p$ and $a\left(p, \mathcal{F} \right)-1$. We assume that $\mathbb{J}$ is a principal ideal from now on to the end of this section. This follows if the Eisenstein components of Hida's Hecke algebra and cuspidal Hecke algebra (see \S 4.2 below) are Gorenstein by Ohta \cite[Theorem 2]{MO05}.

Let $P$ be a height-one prime ideal of $\mathcal{R}$. We remark that under the assumption that $\mathcal{R}$ is a UFD, the prime ideal $P$ is principal. In this section, we study the following two types of $P$:
\begin{enumerate}
\item[(I)]The ideal $P$ is generated by $\gamma-1$.
\item[(II)]The ideal $P$ is a factor of $\mathbb{J}$ in $\mathcal{R}$.
\end{enumerate}

\begin{thm}[Theorem \ref{T3}]\label{Con}
Suppose that $\mathbb{I}$ is a UFD and $\rho_{\mathcal{F}}$ satisfies the condition (Lat-fr). Assume $\chi \neq \omega$, (Eis) and that $\mathbb{J}$ is principal. We decompose $\mathbb{J}$ into product of height-one prime ideals as $\mathbb{J}=\mathfrak{p}_1^{n_1}\cdots\mathfrak{p}_r^{n_r}$, where $\mathfrak{p}_i$ and $\mathfrak{p}_j$ are distinct when $i\neq j$. Let us take a stable free lattice $\mathbb{T}$ and let $\left(j_1, \cdots, j_r \right)$ be the vertex of $\mathrm{Rect}_{\left(0, \cdots, 0\right)}^{\left(n_1, \cdots, n_r \right)}$ which is the image of $[\mathbb{T}]$ under $\Phi_{\mathbf{1}}$ in Theorem \ref{Gal}. Then we have the following statements. 
\begin{enumerate}
\item[(1)]When $P$ is in case (I), the map $\mathrm{res}_P$ is injective and the cokernel $\mathrm{Coker}\,\mathrm{res}_P$ is isomorphic to $\mathbb{I}^{\lor}\left[\dfrac{\left(a\left(p, \mathcal{F} \right)-1 \right)}{\mathfrak{p}_1^{n_1-j_1}\cdots\mathfrak{p}_r^{n_r-j_r}} \right]$.
\item[(2)]When $P$ is in case (II), the map $\mathrm{res}_{\mathfrak{p}_i}$ is surjective and we have the following statements for $\mathrm{Ker}\,\mathrm{res}_{\mathfrak{p}_i}$. 
\begin{enumerate}
\item[(2.1)]When $j_i=n_i$, the kernel $\mathrm{Ker}\,\mathrm{res}_{\mathfrak{p}_i}$ is trivial.
\item[(2.2)]When $j_i<n_i$, the kernel $\mathrm{Ker}\,\mathrm{res}_{\mathfrak{p}_i}$ is isomorphic to $\left(\left.\mathcal{R}\right/\mathfrak{p}_i\right)^{\lor}[\gamma-1]$. 
\end{enumerate}
\end{enumerate}
\end{thm}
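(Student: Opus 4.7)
My plan rests on three inputs: the commutative diagram \eqref{190208} linking $\mathrm{res}_P$ with the global-to-local map $h_P$ and Ochiai's local map $l_P$; the explicit formula for $H^0(\mathbb{Q},\mathcal{A})$ given by Proposition \ref{H0}, which via Theorem \ref{Gal} encodes the vertex $(j_1,\ldots,j_r)$ of $[\mathbb{T}]$; and Ochiai's description of $\mathrm{Ker}\,l_P$ in \cite[Proposition 5.1]{Ochiai06}. I would first extract from the four-term sequence $0 \to \mathcal{A}[P] \to \mathcal{A} \stackrel{f}{\to} \mathcal{A} \to \mathcal{A}/P\mathcal{A} \to 0$ (with $P=(f)$) the fundamental identification
$$\mathrm{Ker}\,h_P \;\cong\; H^0(\mathbb{Q},\mathcal{A})/P,$$
and then apply the snake lemma to \eqref{190208} to squeeze $\mathrm{Ker}\,\mathrm{res}_P$ and $\mathrm{Coker}\,\mathrm{res}_P$ between the global term $H^0(\mathbb{Q},\mathcal{A})/P$ and the local term $\mathrm{Ker}\,l_P$. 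Everything that follows is a case-by-case identification of these two pieces.

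For case (I) with $P=(\gamma-1)$, the twist by $\tilde\kappa^{-1}$ in the definition of $\mathcal{T}$ turns $\Gamma$-fixed points of $\mathcal{A}$ into a condition involving only the trivial character; combined with $\chi\neq\omega$ and Proposition \ref{H0}, this should force $H^0(\mathbb{Q},\mathcal{A})/(\gamma-1)=0$, yielding the injectivity of $\mathrm{res}_P$. For the cokernel, Ochiai's formula expresses $\mathrm{Ker}\,l_P$ as the annihilator-torsion in $\mathbb{I}^{\vee}$ coming from the unramified quotient $\mathbb{A}/F^{+}\mathbb{A}$, whose annihilator at the base vertex is generated by $a(p,\mathcal{F})-1$. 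The lattice $\mathbb{T}$ enters this picture through Theorem \ref{Gal}(2): moving from $(0,\ldots,0)$ to $(j_1,\ldots,j_r)$ shrinks the ordinary piece by $\mathfrak{p}_1^{n_1-j_1}\cdots\mathfrak{p}_r^{n_r-j_r}$, producing the claimed expression $\mathbb{I}^{\vee}[(a(p,\mathcal{F})-1)/\mathfrak{p}_1^{n_1-j_1}\cdots\mathfrak{p}_r^{n_r-j_r}]$.

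For case (II) with $P=\mathfrak{p}_i$, surjectivity of $\mathrm{res}_{\mathfrak{p}_i}$ should follow because $\mathfrak{p}_i \mid a(p,\mathcal{F})-1$ trivializes the unit-root action on $F^{+}\mathcal{A}/\mathfrak{p}_i$, whence Ochiai's formula forces the obstruction $\mathrm{Ker}\,l_{\mathfrak{p}_i}$ to vanish inside the Selmer image. The kernel $\mathrm{Ker}\,\mathrm{res}_{\mathfrak{p}_i}$ then reduces to the Selmer-compatible part of $H^0(\mathbb{Q},\mathcal{A})/\mathfrak{p}_i$, and Proposition \ref{H0} yields the advertised dichotomy: when $j_i=n_i$ the lattice is maximal in the $\mathfrak{p}_i$-direction and $H^0/\mathfrak{p}_i$ vanishes; when $j_i<n_i$ it contains a copy of $(\mathcal{R}/\mathfrak{p}_i)^{\vee}$ whose Selmer-compatible subgroup is precisely $(\mathcal{R}/\mathfrak{p}_i)^{\vee}[\gamma-1]$.

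The main obstacle I expect to face is reconciling the ring-theoretic decomposition $\mathbb{J}=\mathfrak{p}_1^{n_1}\cdots\mathfrak{p}_r^{n_r}$ with the graph-theoretic coordinates delivered by $\Phi_{\mathbf{1}}$: one must verify that the ordinate $j_i$ records exactly the $\mathfrak{p}_i$-adic valuation by which the annihilator of $\mathbb{A}/F^{+}\mathbb{A}$ varies as $[\mathbb{T}]$ moves across $C^{\mathrm{fr}}(\rho_{\mathcal{F}})$. Once this matching is established, both the cokernel formula in case (I) and the kernel dichotomy in case (II) fall out of Proposition \ref{H0} combined with Ochiai's local computation and the snake lemma argument above.
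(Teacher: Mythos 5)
Your overall architecture — the diagram \eqref{190208}, the identification $\mathrm{Ker}\,h_P \cong H^0(\mathbb{Q},\mathcal{A})/P H^0(\mathbb{Q},\mathcal{A})$, the snake-lemma four-term sequence squeezing $\mathrm{Ker}\,\mathrm{res}_P$ and $\mathrm{Coker}\,\mathrm{res}_P$ between $\mathrm{Ker}\,h_P$ and $\mathrm{Ker}\,l_P$, and the use of Proposition \ref{H0} plus Ochiai's computation of $\mathrm{Ker}\,l_P$ — is exactly the paper's strategy, and your treatment of case (II) is essentially correct (there $\mathrm{Ker}\,l_{\mathfrak{p}_i}=0$, so $\mathrm{res}_{\mathfrak{p}_i}$ is onto and $\mathrm{Ker}\,\mathrm{res}_{\mathfrak{p}_i}$ is all of $H^0(\mathbb{Q},\mathcal{A})/\mathfrak{p}_i H^0(\mathbb{Q},\mathcal{A})$, no further ``Selmer-compatible'' selection being needed). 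But case (I) contains a genuine gap. You assert that $H^0(\mathbb{Q},\mathcal{A})/(\gamma-1)=0$, which would make $\mathrm{Ker}\,h_P$ vanish and injectivity trivial. This contradicts Proposition \ref{H0}: $H^0(\mathbb{Q},\mathcal{A})\cong \mathcal{R}^{\lor}[\gamma-1,\ \mathfrak{p}_1^{n_1-j_1}\cdots\mathfrak{p}_r^{n_r-j_r}\mathcal{R}]$ is already annihilated by $\gamma-1$, so $\mathrm{Ker}\,h_P$ equals this whole module, which is nonzero for every lattice except the maximal one. Injectivity of $\mathrm{res}_P$ therefore hinges on showing that the connecting map $\mathrm{Ker}\,h_P \rightarrow \mathrm{Ker}\,l_P$ is injective, and this is the hard core of the paper's proof: one dualizes to the claim that $\left(F^{-}\mathcal{T}\right)^{*}_{G_{\mathbb{Q}_p}} \rightarrow \left(\mathcal{T}^{*}\right)_{G_{\mathbb{Q}}}$ is surjective, which is established by splitting $\left.\mathcal{T}^{*}\right/{\mathfrak{a}\mathcal{T}^{*}}$ as a $G_{\mathbb{Q}_p}$-module (using the freeness of $F^{\pm}\mathcal{T}$, the ramification of $F^{+}$, and the uniqueness of the characters in Lemma \ref{21} to identify the unramified constituent with the trivial one). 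Nothing in your proposal supplies this step.

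The same omission propagates to your cokernel formula. Since $F^{+}\mathbb{T}=F^{+}\mathbb{T}^{\mathrm{min}}$ for all stable free lattices and $\mathrm{Frob}_p$ acts on the rank-one module $F^{-}\mathbb{T}$ by the scalar $a\left(p,\mathcal{F}\right)$, the local term $\mathrm{Ker}\,l_P\cong\mathbb{I}^{\lor}[a\left(p,\mathcal{F}\right)-1]$ is the \emph{same} for every lattice; it is not ``shrunk'' by moving the vertex. The lattice-dependence of $\mathrm{Coker}\,\mathrm{res}_P$ enters entirely through the quotient $\mathrm{Ker}\,l_P/\mathrm{Im}\left(\mathrm{Ker}\,h_P\right)$, i.e. $\mathbb{I}^{\lor}[a\left(p,\mathcal{F}\right)-1]$ modulo the image of $\mathbb{I}^{\lor}[\mathfrak{p}_1^{n_1-j_1}\cdots\mathfrak{p}_r^{n_r-j_r}]$, which yields $\mathbb{I}^{\lor}\left[\left(a\left(p,\mathcal{F}\right)-1\right)/\mathfrak{p}_1^{n_1-j_1}\cdots\mathfrak{p}_r^{n_r-j_r}\right]$. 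So the mechanism you propose for producing the answer is the wrong one, even though the final expression matches. (You should also record, as the paper does, that the snake lemma applies because $H^1\left(\left.\mathbb{Q}_{\Sigma}\right/\mathbb{Q},\mathcal{A}[P]\right)\rightarrow Y_P$ is surjective under $\chi\neq\omega$.)
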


Theorem \ref{Con} leads us to study the Iwasawa main conjecture by specialization. Let $\mathbb{T}$ be a stable free lattice and $\mathcal{T}:=\mathbb{T}\hat{\otimes}\mathbb{Z}_p[[\Gamma]]\left(\tilde{\kappa}^{-1} \right)$. If we assume that $\mathbb{I}$ is Gorenstein and $\chi \neq \omega$, the module of $\mathbb{I}$-adic modular symbols (cf. \cite[\S 5.5]{Ki94}) attached to $\mathbb{T}$ is free of rank two over $\mathbb{I}$. Then the two-variable $p$-adic $L$-function $L_p^{\mathrm{Ki}}\left(\mathcal{T} \right)$ constructed by Mazur and Kitagawa \cite{Ki94} is an element of $\mathcal{R}\otimes_{\mathbb{Z}_p}\mathbb{Q}_p$. The Iwasawa main conjecture for Galois deformations (see Conjecture \ref{IMCGalois}) was proposed by Greenberg and modified by Ochiai. Firstly we consider the deformation $\mathcal{T}$ and by Proposition \ref{H0}, one could establish the main conjecture for $\mathcal{T}$ as follows.
\begin{conj}\label{IMCtwovarintro}
Suppose that $\mathbb{I}$ is a UFD and Gorenstein. Assume (Eis) and $\chi \neq \omega$. Let $\mathbb{T}$ be a stable free lattice. Then we have the following equality
\begin{equation*}
\mathrm{char}_{\mathcal{R}}\left(\mathrm{Sel}_{\mathcal{A}} \right)^{\lor}=\left(L_p^{\mathrm{Ki}}\left(\mathcal{T} \right) \right).
\end{equation*}
\end{conj}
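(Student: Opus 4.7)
The plan is to prove both divisibilities of the conjectured equality by specialization to height-one primes of $\mathcal{R}$ combined with known one-variable main conjecture results. Since $\mathcal{R}$ is a UFD under our hypotheses, the characteristic ideal of $\mathrm{Sel}_{\mathcal{A}}^{\lor}$ is principal and is determined up to a unit by its images at a Zariski-dense set of height-one primes, so it suffices to match the two sides after specializing at sufficiently many primes $P$.

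First I would exploit the control theorem (Theorem \ref{Con}) to specialize $\mathrm{Sel}_{\mathcal{A}}$ at $P=(\gamma-1)$ and identify $\mathrm{Coker}\,\mathrm{res}_P$ precisely, thereby relating $\mathrm{char}_{\mathcal{R}}(\mathrm{Sel}_{\mathcal{A}})^{\lor}$ modulo $P$ to the characteristic ideal of $\mathrm{Sel}_{\mathbb{A}}$ together with an explicit $H^0$-correction supplied by Proposition \ref{H0}. On the analytic side, the interpolation property of Kitagawa's $p$-adic $L$-function expresses $L_p^{\mathrm{Ki}}(\mathcal{T}) \bmod (\gamma-1)$ in terms of the one-variable $p$-adic $L$-function attached to $\mathbb{T}$. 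Matching these two specializations reduces the conjecture modulo $(\gamma-1)$ to the cyclotomic Iwasawa main conjecture for the Hida family, which is accessible via Kato's Euler system (one divisibility) and, in the residually reducible Eisenstein setting, via the work of Skinner--Urban (the other divisibility).

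To upgrade this cyclotomic specialization to the full two-variable statement, I would next specialize at the arithmetic height-one primes of $\mathcal{R}$, including those in case (II) of Theorem \ref{Con}, and follow Ochiai's strategy from \cite{Ochiai06} to descend divisibilities of characteristic ideals from $\mathcal{R}$ to individual classical specializations. Part (2) of Theorem \ref{Con} gives surjectivity of $\mathrm{res}_{\mathfrak{p}_i}$ and a controllable kernel in terms of the coordinates $(j_1,\dots,j_r)$ supplied by Theorem \ref{Gal}; combined with the corresponding specialization of $L_p^{\mathrm{Ki}}(\mathcal{T})$, this reduces the two-variable equality to a Zariski-dense family of one-variable equalities of Kato type.

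The main obstacle is the bookkeeping of the lattice dependence. In the residually reducible case, both $\mathrm{Sel}_{\mathcal{A}}$ and $L_p^{\mathrm{Ki}}(\mathcal{T})$ genuinely depend on the choice of $\mathbb{T}$, and to prove an equality one must check that the two sides transform in the same way as $[\mathbb{T}]$ moves across the graph $C^{\mathrm{fr}}(\rho_{\mathcal{F}})$. Theorem \ref{Gal} provides the combinatorial structure needed to track the Selmer side via the coordinates $(j_1,\dots,j_r)$ and the lengths $n_i=\mathrm{ord}_{\mathfrak{p}_i}J_{\mathfrak{p}_i}$, while the Eisenstein factor of $L_p^{\mathrm{Ki}}$ must be shown to vary compatibly along each edge of $\mathrm{Rect}_{(0,\dots,0)}^{(n_1,\dots,n_r)}$. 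I expect this combinatorial compatibility between the Eisenstein ideal $\mathbb{J}$, the ideal of reducibility $J(\rho_{\mathcal{F}})$, and the modular-symbol periods defining $L_p^{\mathrm{Ki}}$ to be the subtle core of the argument, and Theorem \ref{Con}(1) together with Proposition \ref{H0} should be the engine that makes this compatibility explicit.
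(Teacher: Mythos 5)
The statement you are trying to prove is presented in the paper as a \emph{conjecture} (it is the two-variable Iwasawa main conjecture for the residually reducible Hida deformation), and the paper gives no proof of it. What the paper does is derive the \emph{form} of the statement from the general Greenberg--Ochiai conjecture (Conjecture \ref{IMCGalois}) together with Proposition \ref{H0}, which shows that $H^{0}\left(\mathbb{Q}, \mathcal{A} \right)$ is pseudo-null over $\mathcal{R}$ so that the correction terms disappear; and then, in Corollary \ref{IMCs} and Corollary \ref{sp}, it proves implications in the \emph{opposite} direction to the one you need: the two-variable conjecture implies the one-variable conjectures for $\mathbb{T}$ and for $\left.\mathcal{T}\right/{\mathfrak{p}_i\mathcal{T}}$, via the control theorem (Theorem \ref{T3}) and the no-pseudo-null-submodule result (Proposition \ref{nontrivialpseudo-null}).

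Your proposal has two genuine gaps. First, a logical one: specializing at $P=\left(\gamma-1\right)$ and at the finitely many primes $\mathfrak{p}_i$ dividing $\mathbb{J}$ only determines $\mathrm{char}_{\mathcal{R}}\left(\mathrm{Sel}_{\mathcal{A}}\right)^{\lor}$ modulo those primes; to pin down a principal ideal of $\mathcal{R}$ up to units you would need the specialized main conjecture to be \emph{known} at a Zariski-dense set of height-one primes, and the control theorem by itself only transfers an equality from $\mathcal{R}$ down to $\left.\mathcal{R}\right/P$, not upward. Second, the external inputs you invoke to close the argument at those specializations --- Kato's Euler system divisibility and the Skinner--Urban converse --- are proved under a residual irreducibility (big image) hypothesis, which fails by the standing assumption (Eis). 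In the Eisenstein setting of this paper neither divisibility is available in the form you need, which is precisely why the author leaves the statement as a conjecture and only establishes the compatibilities under specialization. Your observation about tracking the lattice dependence of both sides via Theorem \ref{Gal} is the right instinct and matches what the paper does in Remark \ref{modifychange}, but it is a consistency check on the conjecture, not a proof of it.
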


Now we study the main conjecture for $\left.\mathcal{T}\right/P\mathcal{T}$ when $P$ is in cases (I) and (II). In case (I), we denote by $L_p^{\mathrm{Ki}}\left(\mathbb{T} \right) \in \mathbb{I}\otimes_{\mathbb{Z}_p}\mathbb{Q}_p$ the image of $L_p^{\mathrm{Ki}}\left(\mathbb{T} \right)$ under $\mathcal{R} \twoheadrightarrow \left.R\right/\left(\gamma-1 \right).$ By Proposition \ref{H0}, the main conjecture for $\left.\mathcal{T}\right/\left(\gamma-1 \right)\mathcal{T}\cong \mathbb{T}$ is as follows.

\begin{conj}\label{IMCHidaintro}
Let us keep the assumptions of Conjecture \ref{IMCtwovarintro}. Then we have the following equality
\begin{equation*}
\left(a\left(p, \mathcal{F} \right)-1 \right)\left(\mathrm{char}_{\mathbb{I}}\left(H^0\left(\mathbb{Q}, \mathbb{A} \right) \right)^{\lor} \right)^{-1}\mathrm{char}_{\mathbb{I}}\left(\mathrm{Sel}_{\mathbb{A}} \right)^{\lor}=\left(L_p^{\mathrm{Ki}}\left(\mathbb{T} \right) \right).
\end{equation*}
\end{conj}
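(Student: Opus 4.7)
The plan is to derive the precise formulation of Conjecture~\ref{IMCHidaintro} from the two-variable Conjecture~\ref{IMCtwovarintro} by specializing at the height-one prime $P=(\gamma-1)$ of $\mathcal{R}$, using Theorem~\ref{Con}(1) (the control theorem for $\mathrm{res}_P$) together with the computation of $H^0(\mathbb{Q},\mathbb{A})$ given by Proposition~\ref{H0}. Write $J_0:=\mathfrak{p}_1^{n_1-j_1}\cdots\mathfrak{p}_r^{n_r-j_r}\subset\mathbb{I}$; the derivation is essentially careful bookkeeping of how $J_0$ appears simultaneously as the annihilator of $\mathrm{Coker}\,\mathrm{res}_P$ and as the characteristic ideal attached to $H^0(\mathbb{Q},\mathbb{A})$.

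First I would handle the analytic side: by the very definition of $L_p^{\mathrm{Ki}}(\mathbb{T})$ as the image of $L_p^{\mathrm{Ki}}(\mathcal{T})$ under the projection $\mathcal{R}\twoheadrightarrow\mathcal{R}/(\gamma-1)\cong\mathbb{I}$, the right-hand side of Conjecture~\ref{IMCtwovarintro} specializes to $(L_p^{\mathrm{Ki}}(\mathbb{T}))$. On the algebraic side I would invoke the standard specialization formula for characteristic ideals: provided the $(\gamma-1)$-torsion of $\mathrm{Sel}_{\mathcal{A}}^{\lor}$ is pseudo-null as an $\mathbb{I}$-module (this is the principal technical point I expect to be the main obstacle, to be settled via a Greenberg/Ochiai-style vanishing argument), one has
$$\mathrm{char}_{\mathcal{R}}(\mathrm{Sel}_{\mathcal{A}}^{\lor})\bmod(\gamma-1)=\mathrm{char}_{\mathbb{I}}\bigl(\mathrm{Sel}_{\mathcal{A}}^{\lor}/(\gamma-1)\mathrm{Sel}_{\mathcal{A}}^{\lor}\bigr).$$
Under Conjecture~\ref{IMCtwovarintro} this gives $\mathrm{char}_{\mathbb{I}}(\mathrm{Sel}_{\mathcal{A}}^{\lor}/(\gamma-1))=(L_p^{\mathrm{Ki}}(\mathbb{T}))$.

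Next I would feed in the control theorem. By Theorem~\ref{Con}(1), $\mathrm{res}_P$ is injective with cokernel $\mathbb{I}^{\lor}[(a(p,\mathcal{F})-1)/J_0]$, yielding a short exact sequence
$$0\longrightarrow \mathrm{Sel}_{\mathbb{A}}\longrightarrow \mathrm{Sel}_{\mathcal{A}}[\gamma-1]\longrightarrow \mathbb{I}^{\lor}\left[\frac{a(p,\mathcal{F})-1}{J_0}\right]\longrightarrow 0.$$
Pontryagin dualizing (using $(\mathrm{Sel}_{\mathcal{A}}[\gamma-1])^{\lor}=\mathrm{Sel}_{\mathcal{A}}^{\lor}/(\gamma-1)\mathrm{Sel}_{\mathcal{A}}^{\lor}$ and that the dual of $\mathbb{I}^{\lor}[x]$ is $\mathbb{I}/(x)$) and taking characteristic ideals over $\mathbb{I}$ yields
$$\mathrm{char}_{\mathbb{I}}\bigl(\mathrm{Sel}_{\mathcal{A}}^{\lor}/(\gamma-1)\bigr)=\left(\frac{a(p,\mathcal{F})-1}{J_0}\right)\cdot\mathrm{char}_{\mathbb{I}}(\mathrm{Sel}_{\mathbb{A}}^{\lor}).$$

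Finally I would invoke Proposition~\ref{H0} to identify $\mathrm{char}_{\mathbb{I}}(H^0(\mathbb{Q},\mathbb{A}))^{\lor}=(J_0)$, i.e.\ that the Pontryagin dual of $H^0(\mathbb{Q},\mathbb{A})$ is a cyclic $\mathbb{I}$-module with characteristic ideal $J_0$. Substituting this into the previous display and equating with $(L_p^{\mathrm{Ki}}(\mathbb{T}))$ produces
$$(a(p,\mathcal{F})-1)\bigl(\mathrm{char}_{\mathbb{I}}(H^0(\mathbb{Q},\mathbb{A}))^{\lor}\bigr)^{-1}\mathrm{char}_{\mathbb{I}}(\mathrm{Sel}_{\mathbb{A}})^{\lor}=(L_p^{\mathrm{Ki}}(\mathbb{T})),$$
matching Conjecture~\ref{IMCHidaintro}. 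As indicated above, the main obstacle in a rigorous implementation is controlling the $(\gamma-1)$-torsion of $\mathrm{Sel}_{\mathcal{A}}^{\lor}$ so that the mod-$(\gamma-1)$ specialization of the two-variable characteristic ideal coincides exactly with $\mathrm{char}_{\mathbb{I}}(\mathrm{Sel}_{\mathcal{A}}^{\lor}/(\gamma-1))$ without an unwanted pseudo-null contribution.
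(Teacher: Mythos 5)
Your derivation is correct and follows essentially the same route as the paper's own proof (Corollary \ref{IMCs}): specialize Conjecture \ref{IMCtwovarintro} at $P=(\gamma-1)$, use Theorem \ref{T3}-(2) to relate $\mathrm{Sel}_{\mathbb{A}}$ to $\mathrm{Sel}_{\mathcal{A}}[\gamma-1]$, and identify $\mathrm{char}_{\mathbb{I}}\left(H^0\left(\mathbb{Q},\mathbb{A}\right)^{\lor}\right)$ with $\mathfrak{p}_1^{n_1-j_1}\cdots\mathfrak{p}_r^{n_r-j_r}$ via Proposition \ref{H0}. The technical point you flag about the $(\gamma-1)$-specialization of the characteristic ideal is exactly what the paper settles by verifying Greenberg's criterion in Proposition \ref{nontrivialpseudo-null}, so that $\left(\mathrm{Sel}_{\mathcal{A}}\right)^{\lor}$ has no nontrivial pseudo-null $\mathcal{R}$-submodule.
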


Now we consider the case when $P$ is a factor of $\mathbb{J}$. In this case the Selmer group $\left(\mathrm{Sel}_{\mathcal{A}[P]}\right)^{\lor}$ is not a torsion $\left.\mathcal{R}\right/P$-module for some $\mathbb{T}$. Hence first it is important to study that for a give $P\mid\mathbb{J}$, which free lattice makes the Selmer group torsion. We remark that we assume that $\mathbb{I}$ is a UFD and $\mathbb{J}$ is principal. Let us consider the following conditions
\begin{list}{}{}
\item[(Vand $\chi^{-1}\omega$)]The $\chi^{-1}\omega$-part of the ideal class group of $\mathbb{Q}\left(\mu_{p} \right)$ is trivial.
\item[($p$Four)]There exists a height-one prime ideal $\mathfrak{p}$ dividing $\mathbb{J}$ such that $\mathrm{ord}_{\mathfrak{p}}\mathbb{J}_{\mathfrak{p}}=\mathrm{ord}_{\mathfrak{p}}\left(a\left(p, \mathcal{F} \right)-1 \right)_{\mathfrak{p}}.$
\end{list}

\begin{thm}[Theorem \ref{toreis}]\label{23main}
Let us keep the assumptions and the notation of Theorem \ref{Con}. Assume further the conditions (Vand $\chi^{-1}\omega$) and ($p$Four). Let $\mathbb{T}$ be a stable free lattice and $\left(j_1, \cdots, j_r \right)$ the vertex of $\mathrm{Rect}_{\left(0, \cdots, 0\right)}^{\left(n_1, \cdots, n_r \right)}$ which is the image of $[\mathbb{T}]$ under $\Phi_{\mathbf{1}}$ in Theorem \ref{Gal}. Let $\mathfrak{p}_i$ be a height-one prime ideal which satisfies ($p$Four). Then we have the following statements. 
\begin{enumerate}
\item[(1)]When $j_i=0$, $\left(\mathrm{Sel}_{\mathcal{A}[\mathfrak{p}_i]}\right)^{\lor}$ is a finitely generated torsion $\left.\mathcal{R}\right/{\mathfrak{p}_i}$-module. Moreover, we have
\begin{equation*}
\mathrm{char}_{\left.\mathcal{R}\right/{\mathfrak{p}_i}}\left(\mathrm{Sel}_{\mathcal{A}[\mathfrak{p}_i]}\right)^{\lor} \subset \left(\gamma-1 \right).
\end{equation*} 
\item[(2)]When $j_i>0$, $\left(\mathrm{Sel}_{\mathcal{A}[\mathfrak{p}_i]}\right)^{\lor}$ has $\left.\mathcal{R}\right/{\mathfrak{p}_i}$-rank equal to one. 
\end{enumerate}
\end{thm}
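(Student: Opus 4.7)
My plan is to start from Theorem \ref{Con}(2), which for a prime $\mathfrak{p}_i$ dividing $\mathbb{J}$ provides the short exact sequence
\begin{equation*}
0 \rightarrow \mathrm{Ker}\,\mathrm{res}_{\mathfrak{p}_i} \rightarrow \mathrm{Sel}_{\mathcal{A}[\mathfrak{p}_i]} \rightarrow \mathrm{Sel}_{\mathcal{A}}[\mathfrak{p}_i] \rightarrow 0.
\end{equation*}
Since $\mathfrak{p}_i$ is principal in $\mathcal{R}$, Pontryagin dualizing gives
\begin{equation*}
0 \rightarrow \mathrm{Sel}_{\mathcal{A}}^{\lor}/\mathfrak{p}_i \mathrm{Sel}_{\mathcal{A}}^{\lor} \rightarrow \left(\mathrm{Sel}_{\mathcal{A}[\mathfrak{p}_i]}\right)^{\lor} \rightarrow \left(\mathrm{Ker}\,\mathrm{res}_{\mathfrak{p}_i}\right)^{\lor} \rightarrow 0.
\end{equation*}
By Theorem \ref{Con}(2.1)--(2.2) the rightmost term vanishes when $j_i = n_i$ and otherwise is isomorphic to $\mathbb{I}/\mathfrak{p}_i \cong \mathcal{R}/(\mathfrak{p}_i, \gamma-1)$, which is a cyclic, finitely generated torsion $\mathcal{R}/\mathfrak{p}_i$-module with characteristic ideal $(\gamma - 1)$. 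The theorem therefore reduces to computing the $\mathcal{R}/\mathfrak{p}_i$-rank, and the refined characteristic ideal, of $\mathrm{Sel}_{\mathcal{A}}^{\lor}/\mathfrak{p}_i\mathrm{Sel}_{\mathcal{A}}^{\lor}$, i.e.\ to determining $\mathrm{ord}_{\mathfrak{p}_i}\left(\mathrm{char}_{\mathcal{R}}\mathrm{Sel}_{\mathcal{A}}^{\lor}\right)$ as a function of $j_i$.

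To access this $\mathfrak{p}_i$-component, I would cross-specialize by applying Theorem \ref{Con}(1) to $P = (\gamma-1)$, which identifies $\mathrm{Sel}_{\mathcal{A}}^{\lor}/(\gamma-1)$ with $\mathrm{Sel}_{\mathbb{A}}^{\lor}$ up to a cokernel isomorphic to $\mathbb{I}^{\lor}\left[(a(p,\mathcal{F})-1)/\mathfrak{p}_1^{n_1-j_1}\cdots\mathfrak{p}_r^{n_r-j_r}\right]$. Combined with Proposition \ref{H0}, which already describes $H^0(\mathbb{Q},\mathcal{A})$ in terms of the lattice coordinates, this expresses the $\mathfrak{p}_i$-valuation of $\mathrm{char}_{\mathcal{R}}\mathrm{Sel}_{\mathcal{A}}^{\lor}$ in terms of the $\mathfrak{p}_i$-adic valuations of $a(p,\mathcal{F})-1$ and of the Eisenstein ideal $\mathbb{J}$, together with the coordinate $j_i$.

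The auxiliary hypotheses enter here. The Vandiver-type assumption (Vand $\chi^{-1}\omega$), via the Mazur--Wiles theorem for $\mathbb{Q}(\mu_p)$, controls the $\chi^{-1}\omega$-part of the residual class group and so rules out spurious non-Eisenstein contributions to $\mathrm{Sel}_{\mathbb{A}}$. The hypothesis ($p$Four) equates $\mathrm{ord}_{\mathfrak{p}_i}\mathbb{J}_{\mathfrak{p}_i}$ with $\mathrm{ord}_{\mathfrak{p}_i}(a(p,\mathcal{F})-1)_{\mathfrak{p}_i}$, so that the cokernel in Theorem \ref{Con}(1) exactly absorbs the $\mathfrak{p}_i$-Eisenstein contribution. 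From this the expected dichotomy should fall out: when $j_i = 0$ the $\mathfrak{p}_i$-component of $\mathrm{char}_{\mathcal{R}}\mathrm{Sel}_{\mathcal{A}}^{\lor}$ is trivial, so $\left(\mathrm{Sel}_{\mathcal{A}[\mathfrak{p}_i]}\right)^{\lor}$ is $\mathcal{R}/\mathfrak{p}_i$-torsion and its characteristic ideal is contained in the $(\gamma-1)$-factor produced by the kernel term; when $j_i > 0$, the $\mathcal{R}/\mathfrak{p}_i$-rank of $\mathrm{Sel}_{\mathcal{A}}^{\lor}/\mathfrak{p}_i$ is exactly one, and this rank survives into $\left(\mathrm{Sel}_{\mathcal{A}[\mathfrak{p}_i]}\right)^{\lor}$ since the kernel contribution is torsion.

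The hardest step will be the bookkeeping: combining the horizontal cyclotomic specialization with the vertical Eisenstein specialization in a single commutative diagram that simultaneously tracks how the coordinates $(j_1,\ldots,j_r)$ of $[\mathbb{T}]$ affect both. A secondary technical point is verifying at each stage that the relevant modules are genuinely $\mathcal{R}/\mathfrak{p}_i$-torsion and that pseudo-null ambiguities over the two-dimensional regular ring $\mathcal{R}/\mathfrak{p}_i$ do not corrupt the characteristic-ideal computation; this should follow from standard structure theorems for finitely generated modules over UFDs, together with careful use of the principality of $\mathfrak{p}_i$ and of $\mathbb{J}$.
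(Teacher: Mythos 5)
Your overall strategy --- playing the cyclotomic control theorem ($P=(\gamma-1)$) against the Eisenstein one ($P=\mathfrak{p}_i$) and letting (Vand $\chi^{-1}\omega$) and ($p$Four) pin down the $\mathfrak{p}_i$-contribution --- is exactly the paper's. But two concrete steps are missing. First, the cross-specialization at $\gamma-1$ only becomes usable once you know that $\mathrm{Sel}_{\mathcal{A}[\gamma-1]}=\mathrm{Sel}_{\mathbb{A}}$ is actually \emph{trivial}, not merely free of ``spurious contributions''; this is the paper's Lemma \ref{0}, proved by applying Bella\"iche--Pollack's vanishing result at arithmetic specializations of the maximal lattice $\mathbb{T}^{\mathrm{max}}$ (whose residual representation is a non-split extension of $\mathbf{1}$ by $\chi$), then Nakayama, then transferring to all lattices via Corollary \ref{alglonevar} and Proposition \ref{nontrivialpseudo-null}. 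Without this input the relation $\Psi\left(\mathrm{char}_{\mathcal{R}}\left(\mathrm{Sel}_{\mathcal{A}}\right)^{\lor}\right)=\mathrm{char}_{\mathbb{I}}\left(\mathrm{Sel}_{\mathbb{A}}\right)^{\lor}\cdot(\cdots)$ does not let you isolate the $\mathfrak{p}_i$-valuation, since $\mathrm{char}_{\mathbb{I}}\left(\mathrm{Sel}_{\mathbb{A}}\right)^{\lor}$ could itself contain powers of $\mathfrak{p}_i$. Once triviality is known, the paper in fact bypasses valuations of $\mathrm{char}_{\mathcal{R}}\left(\mathrm{Sel}_{\mathcal{A}}\right)^{\lor}$ altogether: one gets the explicit isomorphism $\mathrm{Sel}_{\mathcal{A}}[\gamma-1]\cong\mathcal{R}^{\lor}\left[\gamma-1, \left(a\left(p,\mathcal{F}\right)-1\right)/\mathfrak{p}_1^{n_1-j_1}\cdots\mathfrak{p}_r^{n_r-j_r}\right]$, and when $j_i=0$ the hypothesis ($p$Four) makes $\mathrm{Sel}_{\mathcal{A}}[\gamma-1,\mathfrak{p}_i]$ finite, which forces $\left(\mathrm{Sel}_{\mathcal{A}}[\mathfrak{p}_i]\right)^{\lor}$ to be $\left.\mathcal{R}\right/\mathfrak{p}_i$-torsion by the structure theory of modules over $\mathcal{O}^{\prime}[[\Gamma]]$.

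Second, in part (2) the conclusion ``rank exactly one'' does not follow from knowing $\mathrm{ord}_{\mathfrak{p}_i}$ of the characteristic ideal: the $\left.\mathcal{R}\right/\mathfrak{p}_i$-rank of $\left(\mathrm{Sel}_{\mathcal{A}}\right)^{\lor}/\mathfrak{p}_i\left(\mathrm{Sel}_{\mathcal{A}}\right)^{\lor}$ is the minimal number of generators of the localization at $\mathfrak{p}_i$, not its length, so a valuation $j_i\geq 2$ would a priori be compatible with rank two. You need the cyclicity of $\left(\mathrm{Sel}_{\mathcal{A}}\right)^{\lor}$ as an $\mathcal{R}$-module, which the paper extracts from the displayed isomorphism above (a cocyclic module) together with Nakayama's lemma; the non-vanishing of the rank for $j_i>0$ is then obtained from Corollary \ref{C2}. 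Both gaps are fillable within your framework, but they carry most of the actual content of the proof.
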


Now let us take a prime ideal $\mathfrak{p}_i$ dividing $\mathbb{J}$ and let us consider the main conjecture for $\left.\mathcal{T}\right/{\mathfrak{p}_i\mathcal{T}}$. We denote by $L_p\left(\left.\mathcal{T}\right/{\mathfrak{p}_i\mathcal{T}} \right)$ the image of $L_p^{\mathrm{Ki}}\left(\mathcal{T} \right)$ in $\left.\mathcal{R}\right/{\mathfrak{p}_i}\otimes_{\mathbb{Z}_p}\mathbb{Q}_p$. By Conjecture \ref{IMCGalois}, the main conjecture for $\left.\mathcal{T}\right/{\mathfrak{p}_i}\mathcal{T}$ is as follows.
\begin{conj}\label{IMCEis}
Let us keep the assumptions of Conjecture \ref{IMCtwovarintro}. Assume that $\left(\mathrm{Sel}_{\mathcal{A}[\mathfrak{p}_i]}\right)^{\lor}$ is a finitely generated torsion $\left.\mathcal{R}\right/{\mathfrak{p}_i}$-module. Then we have the following equality
\begin{equation*}
\mathrm{char}_{\left.\mathcal{R}\right/\mathfrak{p}_i}\left(H^0\left(\mathbb{Q}, \mathcal{A} [\mathfrak{p}_i]\right)^{\lor} \right)^{-1}\mathrm{char}_{\left.\mathcal{R}\right/\mathfrak{p}_i}\left(\mathrm{Sel}_{\mathcal{A} [\mathfrak{p}_i]} \right)^{\lor}=\left(L_p\left(\left.\mathcal{T}\right/{\mathfrak{p}_i\mathcal{T}} \right) \right).
\end{equation*}

\end{conj}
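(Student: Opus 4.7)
The plan is to derive Conjecture \ref{IMCEis} as the $\mathfrak{p}_i$-specialization of the two-variable main conjecture (Conjecture \ref{IMCtwovarintro}), using Theorem \ref{Con} (2) as the control theorem on the Selmer side and Proposition \ref{H0} to describe the Galois-invariant correction factor $H^0(\mathbb{Q},\mathcal{A}[\mathfrak{p}_i])$.

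First I would invoke Theorem \ref{Con} (2), which gives a surjection $\mathrm{res}_{\mathfrak{p}_i}\colon \mathrm{Sel}_{\mathcal{A}[\mathfrak{p}_i]} \twoheadrightarrow \mathrm{Sel}_{\mathcal{A}}[\mathfrak{p}_i]$ with explicit kernel, and dualise the resulting short exact sequence to obtain
\begin{equation*}
0 \longrightarrow (\mathrm{Sel}_{\mathcal{A}})^{\lor}/\mathfrak{p}_i(\mathrm{Sel}_{\mathcal{A}})^{\lor} \longrightarrow (\mathrm{Sel}_{\mathcal{A}[\mathfrak{p}_i]})^{\lor} \longrightarrow (\mathrm{Ker}\,\mathrm{res}_{\mathfrak{p}_i})^{\lor} \longrightarrow 0.
\end{equation*}
Under the torsion hypothesis of the conjecture (guaranteed by Theorem \ref{23main} (1) in the case $j_i = 0$), multiplicativity of characteristic ideals in this sequence gives
\begin{equation*}
\mathrm{char}_{\mathcal{R}/\mathfrak{p}_i}(\mathrm{Sel}_{\mathcal{A}[\mathfrak{p}_i]})^{\lor} = \mathrm{char}_{\mathcal{R}/\mathfrak{p}_i}\bigl((\mathrm{Sel}_{\mathcal{A}})^{\lor}/\mathfrak{p}_i\bigr)\cdot \mathrm{char}_{\mathcal{R}/\mathfrak{p}_i}(\mathrm{Ker}\,\mathrm{res}_{\mathfrak{p}_i})^{\lor}.
\end{equation*}
In parallel, a standard pseudo-isomorphism argument using the structure theorem for finitely generated torsion modules over the UFD $\mathcal{R}$, together with the very definition $L_p(\mathcal{T}/\mathfrak{p}_i\mathcal{T}) := L_p^{\mathrm{Ki}}(\mathcal{T})\bmod \mathfrak{p}_i$, allows Conjecture \ref{IMCtwovarintro} to be specialized modulo $\mathfrak{p}_i$ to the identity
\begin{equation*}
\mathrm{char}_{\mathcal{R}/\mathfrak{p}_i}\bigl((\mathrm{Sel}_{\mathcal{A}})^{\lor}/\mathfrak{p}_i\bigr) = (L_p(\mathcal{T}/\mathfrak{p}_i\mathcal{T})).
\end{equation*}

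Substituting and rearranging reduces Conjecture \ref{IMCEis} to the identification
\begin{equation*}
\mathrm{char}_{\mathcal{R}/\mathfrak{p}_i}(\mathrm{Ker}\,\mathrm{res}_{\mathfrak{p}_i})^{\lor} = \mathrm{char}_{\mathcal{R}/\mathfrak{p}_i}\bigl(H^0(\mathbb{Q},\mathcal{A}[\mathfrak{p}_i])^{\lor}\bigr).
\end{equation*}
When $j_i = n_i$ both sides are the unit ideal, by Theorem \ref{Con} (2.1) together with the parallel vanishing of $H^0(\mathbb{Q},\mathcal{A}[\mathfrak{p}_i])$ extracted from Proposition \ref{H0}. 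When $j_i < n_i$, Theorem \ref{Con} (2.2) identifies $\mathrm{Ker}\,\mathrm{res}_{\mathfrak{p}_i}$ with $(\mathcal{R}/\mathfrak{p}_i)^{\lor}[\gamma-1]$, and Proposition \ref{H0} applied to the lattice $\mathbb{T}$ at the vertex $(j_1,\dots,j_r)$ should give an isomorphic description of $H^0(\mathbb{Q},\mathcal{A}[\mathfrak{p}_i])$. The main obstacle will be this last identification: the Selmer-side kernel and the global Galois-invariants both arise from the same Eisenstein-type obstruction, but matching them precisely as $\mathcal{R}/\mathfrak{p}_i$-modules (not merely up to pseudo-isomorphism) requires a careful application of Proposition \ref{H0} which is compatible with the indexing of stable free lattices provided by $\Phi_{\mathbf{1}}$ in Theorem \ref{Gal}, together with a control of the $(\gamma-1)$-torsion that appears on both sides.
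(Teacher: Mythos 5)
There is a category error here: the statement you are asked about is a \emph{conjecture}, and the paper does not prove it (nor could it --- it is an instance of the Iwasawa main conjecture). What the paper actually does with Conjecture \ref{IMCEis} is \emph{formulate} it: it specializes the general Greenberg--Ochiai main conjecture (Conjecture \ref{IMCGalois}) to the deformation $\left.\mathcal{T}\right/{\mathfrak{p}_i\mathcal{T}}$, using Proposition \ref{H0} and the local analysis to see which of the correction terms $e_{\mathscr{A}}$, $\mathrm{char}\left(H^0\left(\mathbb{Q},\mathscr{A}\right)^{\lor}\right)$, $\mathrm{char}\left(H^0\left(\mathbb{Q},\mathscr{A}^{*}\right)^{\lor}\right)$ survive; only the $H^0\left(\mathbb{Q},\mathcal{A}[\mathfrak{p}_i]\right)$ term does, which is why the displayed equality has the shape it has. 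A ``proof'' of the equality itself is not on offer anywhere in the paper.

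What you have written is instead a sketch of the proof of Corollary \ref{sp} (equivalently Corollary \ref{IMCordeisintro}-(2)), namely the \emph{implication} ``Conjecture \ref{IMCtwovarintro} $\Rightarrow$ Conjecture \ref{IMCEis}''. As a proof of that implication your route coincides with the paper's: control theorem (Theorem \ref{T3}-(3)) to compare $\mathrm{Sel}_{\mathcal{A}[\mathfrak{p}_i]}$ with $\mathrm{Sel}_{\mathcal{A}}[\mathfrak{p}_i]$, specialization of $\mathrm{char}_{\mathcal{R}}\left(\mathrm{Sel}_{\mathcal{A}}\right)^{\lor}$ modulo $\mathfrak{p}_i$, and matching of $\mathrm{Ker}\,\mathrm{res}_{\mathfrak{p}_i}$ against $H^0\left(\mathbb{Q},\mathcal{A}[\mathfrak{p}_i]\right)$, both of which contribute $\left(\gamma-1\right)$. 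But even read this way two things are missing. First, the specialization step is not a consequence of the structure theorem alone: it requires that $\left(\mathrm{Sel}_{\mathcal{A}}\right)^{\lor}$ have no nontrivial pseudo-null $\mathcal{R}$-submodule, which is exactly Proposition \ref{nontrivialpseudo-null} and is a substantial input (Greenberg's criterion), not a routine remark. Second, the paper only establishes the implication under the additional hypotheses (Vand $\chi^{-1}\omega$), ($p$Four), $N$ square-free, $\mathbb{I}\cong\mathcal{O}[[X]]$ and $j_i=0$; these are needed both to guarantee the torsionness hypothesis (Theorem \ref{toreis}) and to run Lemma \ref{0} and Corollary \ref{C2} in the background. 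Your proposal treats the torsion hypothesis as if it alone sufficed for all vertices $\left(j_1,\cdots,j_r\right)$, whereas for $j_i>0$ the dual Selmer group has positive rank and the statement is vacuous or out of reach. So: correct skeleton for the conditional corollary, but it neither proves the stated conjecture nor records the hypotheses and the pseudo-nullity input that the conditional argument genuinely needs.
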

By combining our control theorem (Theorem \ref{Con}) and the torsionness of $\mathrm{Sel}_{\mathcal{A} [\mathfrak{p}_i]}$ (Theorem \ref{23main}), we have the following corollary. 
\begin{cor}[Corollary \ref{IMCs}, Corollary \ref{sp}]\label{IMCordeisintro}
Let us keep the assumptions and the notation of Theorem \ref{Con}. Let $\mathbb{T}$ be a stable free lattice and $\left(j_1, \cdots, j_r \right)$ the vertex of $\mathrm{Rect}_{\left(0, \cdots, 0\right)}^{\left(n_1, \cdots, n_r \right)}$ which is the image of $[\mathbb{T}]$ under $\Phi_{\mathbf{1}}$ in Theorem \ref{Gal}. Then we have the following statements.
\begin{enumerate}
\item[(1)]Conjecture \ref{IMCtwovarintro} implies Conjecture \ref{IMCHidaintro}.
\item[(2)]Assume further the conditions (Vand $\chi^{-1}\omega$) and ($p$Four). Let $\mathfrak{p}_i$ be a height-one prime ideal which satisfies ($p$Four). Suppose $j_i=0$, then $\left(\mathrm{Sel}_{\mathcal{A}[\mathfrak{p}_i]}\right)^{\lor}$ is $\left.\mathcal{R}\right/{\mathfrak{p}_i}$-torsion and Conjecture \ref{IMCtwovarintro} implies Conjecture \ref{IMCordeisintro}.

\end{enumerate}
\end{cor}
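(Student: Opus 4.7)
\emph{Overview of the strategy.} The plan is to derive each one-variable main conjecture by specialising the two-variable equality of Conjecture~\ref{IMCtwovarintro} at the height-one prime in question. The link between the two-variable and one-variable Selmer groups is exactly the control map $\mathrm{res}_P$, whose kernel/cokernel is already computed in Theorem~\ref{Con}; this converts the discrepancy between $\mathrm{Sel}_{\mathcal{A}}[P]$ and $\mathrm{Sel}_{\mathcal{A}[P]}$ into a concrete characteristic ideal, while Proposition~\ref{H0} identifies the $H^0$-contribution attached to the chosen stable free lattice.

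\emph{Part (1).} Set $P=(\gamma-1)$, so that $\mathcal{A}[P]\cong\mathbb{A}$ and $\mathrm{Sel}_{\mathcal{A}[P]}=\mathrm{Sel}_{\mathbb{A}}$. Theorem~\ref{Con}(1) provides an exact sequence
\begin{equation*}
0\to \mathrm{Sel}_{\mathbb{A}}\xrightarrow{\mathrm{res}_P} \mathrm{Sel}_{\mathcal{A}}[P]\to \mathbb{I}^{\lor}\left[\dfrac{a(p,\mathcal{F})-1}{\mathfrak{p}_1^{n_1-j_1}\cdots\mathfrak{p}_r^{n_r-j_r}}\right]\to 0.
\end{equation*}
Dualising, using the standard compatibility between $\mathrm{char}_{\mathbb{I}}(\mathrm{Sel}_{\mathcal{A}}^{\lor}/P)$ and $\mathrm{char}_{\mathcal{R}}(\mathrm{Sel}_{\mathcal{A}})^{\lor}\bmod P$, and plugging in Conjecture~\ref{IMCtwovarintro} gives
\begin{equation*}
\mathrm{char}_{\mathbb{I}}\left(\mathrm{Sel}_{\mathbb{A}}\right)^{\lor}\cdot \left(\dfrac{a(p,\mathcal{F})-1}{\mathfrak{p}_1^{n_1-j_1}\cdots\mathfrak{p}_r^{n_r-j_r}}\right)=\left(L_p^{\mathrm{Ki}}(\mathbb{T})\right).
\end{equation*}
Proposition~\ref{H0} asserts $\mathrm{char}_{\mathbb{I}}\left(H^0(\mathbb{Q},\mathbb{A})^{\lor}\right)=(\mathfrak{p}_1^{n_1-j_1}\cdots\mathfrak{p}_r^{n_r-j_r})$, so multiplying the preceding equality by this ideal rearranges it into the form demanded by Conjecture~\ref{IMCHidaintro}.

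\emph{Part (2).} Take $P=\mathfrak{p}_i$ satisfying ($p$Four) and assume $j_i=0$. The torsionness of $(\mathrm{Sel}_{\mathcal{A}[\mathfrak{p}_i]})^{\lor}$ over $\mathcal{R}/\mathfrak{p}_i$ is precisely Theorem~\ref{23main}(1). Theorem~\ref{Con}(2) shows that $\mathrm{res}_{\mathfrak{p}_i}$ is surjective with kernel $(\mathcal{R}/\mathfrak{p}_i)^{\lor}[\gamma-1]$, which is finite. Dualising and taking characteristic ideals (the finite kernel contributes trivially to $\mathrm{char}_{\mathcal{R}/\mathfrak{p}_i}$), together with the compatibility of $\mathrm{char}_{\mathcal{R}}(\mathrm{Sel}_{\mathcal{A}})^{\lor}\bmod \mathfrak{p}_i$ with $\mathrm{char}_{\mathcal{R}/\mathfrak{p}_i}(\mathrm{Sel}_{\mathcal{A}}[\mathfrak{p}_i])^{\lor}$, Conjecture~\ref{IMCtwovarintro} yields
\begin{equation*}
\mathrm{char}_{\mathcal{R}/\mathfrak{p}_i}\left(\mathrm{Sel}_{\mathcal{A}[\mathfrak{p}_i]}\right)^{\lor}=\left(L_p(\mathcal{T}/\mathfrak{p}_i\mathcal{T})\right).
\end{equation*}
Combining this with Proposition~\ref{H0} applied to $\mathcal{A}[\mathfrak{p}_i]$, which identifies $\mathrm{char}_{\mathcal{R}/\mathfrak{p}_i}(H^0(\mathbb{Q},\mathcal{A}[\mathfrak{p}_i])^{\lor})$, rearranges into exactly Conjecture~\ref{IMCEis}.

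\emph{Main obstacle.} The delicate point throughout is the base-change of characteristic ideals: specialising $\mathrm{char}_{\mathcal{R}}$ modulo $P$ agrees with $\mathrm{char}_{\mathcal{R}/P}$ of the $P$-specialisation only up to pseudo-null discrepancy, so I will need to verify either that the modules involved have no pseudo-null submodule (typically a consequence of Greenberg's Selmer group being a direct limit of cofinitely generated pieces, once the control sequence from Theorem~\ref{Con} is in hand) or that any pseudo-null contribution cancels between the kernel and cokernel sides. The second, more bookkeeping-level, task is to match the exponents $n_i-j_i$ produced by the cokernel/kernel description in Theorem~\ref{Con} with the exponents of $\mathfrak{p}_i$ appearing in Proposition~\ref{H0}; this should be automatic because both are computed from the same vertex $(j_1,\dots,j_r)=\Phi_{\mathbf{1}}([\mathbb{T}])$ furnished by Theorem~\ref{Gal}.
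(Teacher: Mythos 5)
Your Part (1) follows the paper's route: specialize $\mathrm{char}_{\mathcal{R}}\left(\mathrm{Sel}_{\mathcal{A}} \right)^{\lor}$ along $P=\left(\gamma-1\right)$, feed the cokernel of $\mathrm{res}_P$ from Theorem \ref{Con}-(1) into the characteristic-ideal bookkeeping, and identify $\mathrm{char}_{\mathbb{I}}\left(H^0\left(\mathbb{Q},\mathbb{A}\right)^{\lor}\right)$ with $\mathfrak{p}_1^{n_1-j_1}\cdots\mathfrak{p}_r^{n_r-j_r}=\mathbb{J}/\mathfrak{p}_1^{j_1}\cdots\mathfrak{p}_r^{j_r}$ via Proposition \ref{H0}. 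The ``compatibility of specialization with characteristic ideals'' that you flag as the main obstacle is exactly what the paper supplies in Proposition \ref{nontrivialpseudo-null} (Greenberg's criterion showing $\left(\mathrm{Sel}_{\mathcal{A}}\right)^{\lor}$ has no nontrivial pseudo-null submodule); citing that proposition closes the gap, so Part (1) is fine.

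Part (2) contains a genuine error. The kernel of $\mathrm{res}_{\mathfrak{p}_i}$ from Theorem \ref{Con}-(2.2) is $\left(\left.\mathcal{R}\right/\mathfrak{p}_i\right)^{\lor}[\gamma-1]$, and this module is \emph{not} finite: since $\mathfrak{p}_i$ does not lie above $p$ (Ferrero--Washington), $\left.\mathcal{R}\right/\mathfrak{p}_i\cong\mathcal{O}^{\prime}[[\Gamma]]$, so the Pontryagin dual of this kernel is $\left.\mathcal{R}\right/{\left(\mathfrak{p}_i,\gamma-1\right)}\cong\mathcal{O}^{\prime}$, an infinite torsion $\left.\mathcal{R}\right/\mathfrak{p}_i$-module with characteristic ideal $\left(\gamma-1\right)$. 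Consequently your intermediate equality $\mathrm{char}_{\left.\mathcal{R}\right/\mathfrak{p}_i}\left(\mathrm{Sel}_{\mathcal{A}[\mathfrak{p}_i]}\right)^{\lor}=\left(L_p\left(\left.\mathcal{T}\right/{\mathfrak{p}_i\mathcal{T}}\right)\right)$ is off by a factor of $\left(\gamma-1\right)$; the correct statement (cf.\ the proof of Theorem \ref{toreis}) is
\begin{equation*}
\mathrm{char}_{\left.\mathcal{R}\right/\mathfrak{p}_i}\left(\mathrm{Sel}_{\mathcal{A}[\mathfrak{p}_i]}\right)^{\lor}=\left(\gamma-1\right)\,\mathrm{char}_{\left.\mathcal{R}\right/\mathfrak{p}_i}\left(\mathrm{Sel}_{\mathcal{A}}[\mathfrak{p}_i]\right)^{\lor}=\left(\gamma-1\right)\left(L_p\left(\left.\mathcal{T}\right/{\mathfrak{p}_i\mathcal{T}}\right)\right).
\end{equation*}
Had the kernel really contributed trivially, your equality together with Conjecture \ref{IMCEis} would force $\mathrm{char}_{\left.\mathcal{R}\right/\mathfrak{p}_i}\left(H^0\left(\mathbb{Q},\mathcal{A}[\mathfrak{p}_i]\right)^{\lor}\right)$ to be the unit ideal, whereas Proposition \ref{H0} gives it as $\left(\gamma-1\right)$; so your two final steps are mutually inconsistent as written. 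The point of this case --- emphasized in Remark \ref{remeis} --- is precisely that the kernel of the control map produces the trivial-zero factor $\left(\gamma-1\right)$ on the algebraic side, and the term $\mathrm{char}_{\left.\mathcal{R}\right/\mathfrak{p}_i}\left(H^0\left(\mathbb{Q},\mathcal{A}[\mathfrak{p}_i]\right)^{\lor}\right)^{-1}$ in Conjecture \ref{IMCEis} is exactly the modification that cancels it. Once you replace ``the finite kernel contributes trivially'' by ``the kernel contributes $\left(\gamma-1\right)$,'' the two factors of $\left(\gamma-1\right)$ cancel and the argument goes through as in Corollary \ref{sp}.
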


\begin{rem}[Remark \ref{modifychange}, Remark \ref{remeis}]\label{IMCrem}
Let us explain the role of the term $H^{0}\left(\mathbb{Q}, \mathcal{A}[P] \right)$ in the main conjecture when $P$ is in cases (I) and (II). When $P=\left(\gamma-1\right)$. We remark that in this case, the term $a\left(p, \mathcal{F} \right)-1$ also appears when $\overline{\rho}_{\mathcal{F}}$ is irreducible by \cite[\S 7-(c)]{Ochiai06}. We will show in Corollary \ref{alglonevar} that the term $\mathrm{char}_{\mathbb{I}}\left(\mathrm{Sel}_{\mathbb{A}} \right)^{\lor}$ is independent on $\mathbb{T}$. However the analytic side $L_p^{\mathrm{Ki}}\left(\mathbb{T}\right)$ depends on $\mathbb{T}$ by the definition of $\mathbb{I}$-adic modular symbols associated to stable free lattices. Thus one may view $\mathrm{char}_{\mathbb{I}}\left(H^0\left(\mathbb{Q}, \mathbb{A} \right)\right)^{\lor}$ as making the algebraic side depends on the choice of stable free lattice. 

When $P=\mathfrak{p}_i$ divides $\mathbb{J}$. By Theorem \ref{23main} we know that the characteristic ideal $\mathrm{char}_{\left.\mathcal{R}\right/{\mathfrak{p}_i}}\left(\mathrm{Sel}_{\mathcal{A}[\mathfrak{p}_i]}\right)^{\lor}$ has a trivial zero. Thus by combining the proof of Theorem \ref{toreis} and Corollary \ref{sp} we know that the term $\mathrm{char}_{\left.\mathcal{R}\right/\mathfrak{p}_i}\left(H^0\left(\mathbb{Q}, \mathcal{A} [\mathfrak{p}_i]\right)\right)^{\lor}$ is a modification related to the trivial zero.

\end{rem}

\begin{notation}
For a fixed Noetherian integrally closed domain $R$, we denote by $\mathrm{Spec}_{\mathrm{ht}=1}\left(R \right)$ the set of height-one prime ideals of $R$ and by $\mathrm{Frac}\left(\mathcal{R} \right)$ the field of fractions of $\mathcal{R}$. For an ideal $I$ of $R$, we denote by $V\left(I \right)$ the set of prime ideals of $R$ containing $I$. For a finitely generated $R$-module $M$, we denote by $M^{*}$ (resp. $M^{**}$) the $R$-linear dual (resp. double $R$-linear dual) of $M$. We denote by $\mathbf{1}$ the trivial character. We denote by $\mu_r$ the group of $r$-th roots of unity for an integer $r$ and by $\mathbb{Q}\left(\mu_r \right)$ the field obtained by adjoining $\mu_r$ to $\mathbb{Q}$.
\end{notation}

\begin{ack}
The author expresses his sincere gratitude to Professor Tadashi Ochiai for stimulating conversation on this subject and encouragement. He thanks Kenji Sakugawa and Somnath Jha for useful discussion. He also thanks Professor Hansheng Diao and Emmanuel Lecouturier for reading this manuscript carefully and pointing out mistakes. This paper has been written during his stay at Yanqi Lake Beijing Institute of Mathematical Sciences and Applications (BIMSA). The author is also thankful to BIMSA for their hospitality.
\end{ack}

\section{Preliminaries on the ideal of reducibility}
In this section, we recall the definition of ideal of reducibility and its application to counting the number of homothety classes of stable lattices of a representation over a discrete valuation ring. First we recall the following lemma due to Bella\"iche-Chenevier \cite{BC05}.
\begin{lem}[{\cite[Lemme 1]{BC05}}]\label{21}
Let $R$ be a local domain with maximal ideal $\mathfrak{m}$ and field of fractions $K$. We assume that the characteristic of the residue field $R/\mathfrak{m}$ is not two. Let $V$ be a vector space of dimension two over $K$ and $$\rho : G \rightarrow \mathrm{Aut}_K\left(V \right)$$ a linear representation of a group $G$ such that $\mathrm{tr}\rho\left(G \right) \subset R$. Assume that there exists an element $g_0 \in G$ such that the characteristic polynomial of $\rho\left(g_0\right)$ has roots in $R$ which are distinct modulo $\mathfrak{m}$. Then there exists a unique ideal $J\left(\rho \right)$ of $R$ such that for any ideal $I$, $J\left(\rho \right) \subset I$ if and only if there exist characters $\vartheta, \vartheta^{\prime} : G \rightarrow \left(R/I \right)^{\times}$ such that $\mathrm{tr}\rho\ \mathrm{mod}\ I=\vartheta+\vartheta^{\prime}$. Furthermore, if $I \subset \mathfrak{m}$, the set of such characters $\set{\vartheta, \vartheta^{\prime}}$ is unique. 
\end{lem}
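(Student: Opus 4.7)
The plan is to construct the candidate characters $\vartheta, \vartheta'\colon G \to R$ explicitly from $\mathrm{tr}\,\rho$ and the element $g_0$, and then to define $J(\rho)$ as the ideal generated by the obstructions to their multiplicativity. Let $\alpha, \beta \in R$ denote the roots of the characteristic polynomial of $\rho(g_0)$. The hypothesis $\alpha \not\equiv \beta \pmod{\mathfrak{m}}$ together with the locality of $R$ makes $\beta - \alpha$ a unit of $R$, so one may set
\[
\vartheta(g) := \frac{\beta\,\mathrm{tr}\,\rho(g) - \mathrm{tr}\,\rho(gg_0)}{\beta - \alpha}, \qquad \vartheta'(g) := \mathrm{tr}\,\rho(g) - \vartheta(g).
\]
A routine calculation will yield $\vartheta+\vartheta'=\mathrm{tr}\,\rho$, $\vartheta(e) = \vartheta'(e) = 1$, $\vartheta(g_0) = \alpha$, and $\vartheta'(g_0) = \beta$.

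Next I would define $J(\rho) \subset R$ to be the ideal generated by all elements $\vartheta(gh) - \vartheta(g)\vartheta(h)$ and $\vartheta'(gh) - \vartheta'(g)\vartheta'(h)$ as $g, h$ range over $G$. The implication that $J(\rho) \subset I$ forces a trace decomposition is then immediate: reducing modulo $I$ makes both $\vartheta$ and $\vartheta'$ multiplicative; the identity $\vartheta(g)\vartheta(g^{-1}) \equiv \vartheta(e) = 1 \pmod I$ forces the values to lie in $(R/I)^{\times}$; and $\vartheta + \vartheta' \equiv \mathrm{tr}\,\rho \pmod{I}$ holds by construction.

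The harder direction, which I expect to be the main obstacle, is to show that whenever characters $\widetilde\vartheta, \widetilde\vartheta'\colon G \to (R/I)^\times$ exist with $\widetilde\vartheta + \widetilde\vartheta' \equiv \mathrm{tr}\,\rho \pmod I$, one must have $J(\rho) \subset I$. Since $R$ is local, any proper ideal $I$ is contained in $\mathfrak{m}$, so $R/I$ itself is local. Now $\widetilde\vartheta(g_0)$ is a root of $(T - \alpha)(T - \beta)$ in $R/I$, since its sum with $\widetilde\vartheta'(g_0)$ is $\alpha+\beta$ and the product is $\det\rho(g_0) = \alpha\beta$. Because $\beta - \alpha$ remains a unit in $R/I$, the element $e := (\beta - \alpha)^{-1}(\widetilde\vartheta(g_0) - \alpha)$ is idempotent, and locality of $R/I$ forces $e \in \{0,1\}$, so $\widetilde\vartheta(g_0) \equiv \alpha$ or $\beta \pmod I$. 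After possibly swapping the pair, we may assume $\widetilde\vartheta(g_0) \equiv \alpha$.

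From the pair of congruences $\widetilde\vartheta(g) + \widetilde\vartheta'(g) \equiv \mathrm{tr}\,\rho(g)$ and $\alpha\,\widetilde\vartheta(g) + \beta\,\widetilde\vartheta'(g) \equiv \mathrm{tr}\,\rho(gg_0)$ modulo $I$, inverting $\beta - \alpha$ via Cramer's rule will yield $\widetilde\vartheta(g) \equiv \vartheta(g)$ and $\widetilde\vartheta'(g) \equiv \vartheta'(g) \pmod{I}$ for every $g \in G$. Transferring the multiplicativity of $\widetilde\vartheta, \widetilde\vartheta'$ through these congruences gives $J(\rho) \subset I$. This last step simultaneously proves uniqueness of the unordered pair $\{\vartheta, \vartheta'\}$ modulo $I$ when $I \subset \mathfrak{m}$; the uniqueness of $J(\rho)$ itself is then formal, since any ideal characterized by the stated equivalence must equal the set-theoretic intersection of all $I$ admitting such a decomposition.
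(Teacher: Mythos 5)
Your construction is the standard one. The paper itself does not prove this lemma (it is quoted from Bella\"iche--Chenevier), but Remark \ref{constrofidealofredu} recalls that $J\left(\rho\right)$ is generated by the products $b\left(g\right)c\left(g^{\prime}\right)$ of off-diagonal matrix entries in a basis diagonalizing $\rho\left(g_0\right)$. Your $\vartheta, \vartheta^{\prime}$ are exactly the diagonal entries $a\left(g\right), d\left(g\right)$ in that basis, and $\vartheta\left(gh\right)-\vartheta\left(g\right)\vartheta\left(h\right)=b\left(g\right)c\left(h\right)$, so your ideal coincides with the one in the paper; your formula moreover makes the containment $J\left(\rho\right)\subset R$ automatic. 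The easy direction, the idempotent trick, and the Cramer's-rule identification $\widetilde\vartheta\equiv\vartheta \pmod I$ are all correct.

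There is one step you assert without justification, and it is precisely where the hypothesis $\mathrm{char}\left(R/\mathfrak{m}\right)\neq 2$ --- which you never invoke --- must enter: the claim that $\widetilde\vartheta\left(g_0\right)\widetilde\vartheta^{\prime}\left(g_0\right)\equiv\det\rho\left(g_0\right)=\alpha\beta\pmod I$. Nothing in the hypotheses says that the product of the two character values is the determinant; a priori one could have $\widetilde\vartheta\left(g_0\right)=\alpha+t$ and $\widetilde\vartheta^{\prime}\left(g_0\right)=\beta-t$ for some $t$, and then the quadratic $(T-\alpha)(T-\beta)$ need not vanish at $\widetilde\vartheta\left(g_0\right)$. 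To obtain the product relation, use multiplicativity at $g_0^2$: one has $\widetilde\vartheta\left(g_0\right)^2+\widetilde\vartheta^{\prime}\left(g_0\right)^2\equiv\mathrm{tr}\,\rho\left(g_0^2\right)=\alpha^2+\beta^2 \pmod I$, and subtracting this from the square of the sum relation gives $2\,\widetilde\vartheta\left(g_0\right)\widetilde\vartheta^{\prime}\left(g_0\right)\equiv 2\alpha\beta\pmod I$; since $I\subset\mathfrak{m}$ and the residue characteristic is not two, $2$ is a unit in $R/I$ and the product relation follows. With that inserted, the rest of your argument goes through.
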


\begin{rem}\label{constrofidealofredu}
Let us keep the assumptions and the notation of Lemma \ref{21}. We recall the construction of $J\left(\rho \right)$. Let $\lambda_1$ and $\lambda_2$ be the roots of the characteristic polynomial of $\rho\left(g_0\right)$. Let us choose a $K$-basis of $V$ such that $\rho\left(g_0\right)=\begin{pmatrix} \lambda_1 & 0 \\ 0 & \lambda_2 \end{pmatrix}$. For any $g \in G$, write $\rho\left(g \right)=\begin{pmatrix} a\left(g \right) & b\left(g \right) \\ c\left(g \right) & d\left(g \right) \end{pmatrix}$ with respect to the basis. We define $J\left(\rho \right)$ $R$-submodule of $K$ which is generated by $b(g)c(g^{\prime})$ for all $g, g^{\prime} \in G$. By calculation $\mathrm{tr}\,\rho\left(g \right)$ and  $\mathrm{tr}\,\rho\left(g_0g \right)$, one can show that $J\left(\rho \right)$ is contained in $R$. We call $J\left(\rho \right)$ the \textit{ideal of reducibility of $R$ corresponding to $\rho$}. 

\end{rem}

Now we restrict to the case where $R$ is a discrete valuation ring. Then we have the following proposition on the relation between $J\left(\rho \right)$ and $\sharp C\left(\rho \right)$.

\begin{prp}\label{22}
Let $A$ be a discrete valuation ring with $\varpi$ a fixed uniformizer and filed of fractions  $K$. We assume the characteristic of the residue field $A/\left(\varpi \right)$ is not two. Let $V$ be a two-dimensional $K$-vector space and $$\rho : G \rightarrow \mathrm{Aut}_K\left(V \right)$$
a linear representation of a group $G$ such that $\rho$ has a $G$-stable lattice $T$. Assume that there exists an element $g_0 \in G$ such that the characteristic polynomial of $\rho\left(g_0\right)$ has roots in $A$ which are distinct modulo $\left(\varpi\right)$.  We denote by $J\left(\rho \right)$ the ideal of reducibility of $A$. Then we have the following statements:
\begin{enumerate}
\renewcommand{\labelenumi}{(\arabic{enumi})}
\item We have $J\left(\rho \right) \subset \left(\varpi \right)$ if and only if the semi-simplification $\left(T/\varpi T \right)^{\mathrm{ss}}$ is decomposed into two characters i.e. $\left(T/\varpi T \right)^{\mathrm{ss}} \cong A/\left(\varpi \right)(\psi) \oplus A/\left(\varpi \right)(\psi^{\prime})$, where $\psi$ and $\psi^{\prime} : G \rightarrow \left(A/\left(\varpi \right) \right)^{\times}$ are characters. Moreover if they satisfied, we have $\psi\neq\psi^{\prime}$. 
\item We have $\sharp C\left(\rho \right)=\mathrm{ord}_{\varpi}J\left(\rho \right)+1$. 
\item More precisely, let $\mathrm{ord}_{\varpi}J\left(\rho \right)=n>0$. Then there exists a chain of $G$-stable lattices $T_n\supsetneq\cdots\supsetneq T_0$ which is a system of representatives of $C\left(\rho \right)$ such that for any $1 \leq j \leq n$, $T_j/T_0$ is isomorphic to $A/\left(\varpi \right)^j\,\left(\vartheta \right)$ as an $A[G]$-module, where $\set{\vartheta, \vartheta^{\prime}}$ is the set of characters with values in $\left(\left.A\right/\left(\varpi \right)^n \right)^{\times}$ such that $\mathrm{tr}\,\rho\,\mathrm{mod}\,\left(\varpi \right)^n=\vartheta+\vartheta^{\prime}$.

\end{enumerate}
\end{prp}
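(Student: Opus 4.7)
For (1), I apply Lemma \ref{21}: the inclusion $J(\rho) \subset (\varpi)$ is equivalent to $\mathrm{tr}\,\rho \bmod (\varpi) = \psi + \psi'$ for some characters $\psi, \psi': G \to (A/(\varpi))^\times$. Under the hypothesis $\mathrm{char}(A/\varpi) \neq 2$, Brauer--Nesbitt then identifies this trace condition with $(T/\varpi T)^{\mathrm{ss}} \cong A/(\varpi)(\psi) \oplus A/(\varpi)(\psi')$. Distinctness of $\psi, \psi'$ follows from the hypothesis on $g_0$, since $\{\psi(g_0), \psi'(g_0)\} = \{\lambda_1, \lambda_2\} \bmod \varpi$ and these two residues differ.

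For (2) and (3) I begin by diagonalizing $\rho(g_0)$ over $T$: because $\lambda_1 \not\equiv \lambda_2 \pmod{\varpi}$, an elementary Chinese Remainder (or Hensel) argument splits any $G$-stable lattice as the direct sum of its rank-one $\lambda_i$-eigenspaces under $\rho(g_0)$. Choose a basis $e_1, e_2$ of $T$ adapted to this decomposition and write $\rho(g) = \left(\begin{smallmatrix} a(g) & b(g) \\ c(g) & d(g)\end{smallmatrix}\right)$. Set $m := \min_g \mathrm{ord}_\varpi b(g)$ and $\ell := \min_g \mathrm{ord}_\varpi c(g)$; irreducibility of $\rho$ prevents either from being infinite, and the description of $J(\rho)$ in Remark \ref{constrofidealofredu} gives $n = m + \ell$. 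I then consider the family $T_j := A e_1 + A \varpi^{\ell - j} e_2$ for $j \in \mathbb{Z}$. A short matrix calculation shows that $T_j$ is $G$-stable exactly when $0 \leq j \leq n$, and that the corresponding homothety classes are pairwise distinct, producing a chain $T_0 \subsetneq T_1 \subsetneq \cdots \subsetneq T_n$ of $n+1$ stable lattices. Conversely, any stable lattice $T'$ admits the same $\rho(g_0)$-eigenspace decomposition, forcing $T' = A u_1 e_1 \oplus A u_2 e_2$ for some $u_1, u_2 \in K^\times$; up to homothety $T'$ depends only on $\mathrm{ord}_\varpi(u_2/u_1)$ and hence lies in the above list. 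This proves (2) and the chain claim in (3). For the quotient structure, the relation $\rho(gh) = \rho(g)\rho(h)$ together with $b(g)c(h) \in J(\rho) = (\varpi^n)$ shows that $a \bmod \varpi^n$ and $d \bmod \varpi^n$ are group homomorphisms $\vartheta', \vartheta: G \to (A/(\varpi)^n)^\times$ with $\mathrm{tr}\,\rho \equiv \vartheta + \vartheta' \pmod{\varpi^n}$. Computing $\rho(g) \cdot \varpi^{\ell - j} e_2$ modulo $T_0$, the off-diagonal contribution $\varpi^{\ell - j} b(g) e_1$ lies in $A e_1 \subset T_0$ because $\mathrm{ord}_\varpi b(g) \geq m = n - \ell \geq j - \ell$, so $G$ acts on the cyclic quotient $T_j/T_0$ through $d \bmod \varpi^j$, yielding $T_j/T_0 \cong A/(\varpi)^j\,(\vartheta)$.

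The only delicate point is the completeness assertion in (2), namely that every stable lattice is homothetic to some $T_j$. This relies on the $g_0$-eigenspace splitting of an arbitrary stable lattice, which is available here precisely because $\lambda_1 \not\equiv \lambda_2 \pmod{\varpi}$; without this hypothesis $\rho(g_0)$ need not act semisimply on $T'$ and the classification could fail. The remaining verifications reduce to direct $2 \times 2$ matrix computations and tracking of $\varpi$-orders.
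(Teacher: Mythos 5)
Your proof is correct. Note that the paper does not actually prove this proposition: it cites Bella\"iche--Graftieaux for assertion (2) and says the rest ``follows by Lemma \ref{21}'', so you have supplied an argument where the paper defers to references. What you do is the standard route, and it is in fact the same technique the paper itself deploys later in \S 3: split any stable lattice into rank-one $\rho(g_0)$-eigenmodules via the idempotents $\frac{\rho(g_0)-\lambda_2}{\lambda_1-\lambda_2}$ (possible because $\lambda_1-\lambda_2\in A^\times$ -- compare the elements $g_\psi, g_{\psi'}$ before Lemma \ref{basic}), read off $J(\rho)=(\varpi^{m+\ell})$ from the generators $b(g)c(g')$ of Remark \ref{constrofidealofredu}, and parametrize stable lattices by the valuation of the ratio of the two eigencomponents. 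All the key verifications are present and correct: stability of $T_j$ exactly for $0\le j\le n$, exhaustiveness via the eigenspace splitting of an arbitrary stable lattice, the congruences $a(gh)\equiv a(g)a(h)$ and $d(gh)\equiv d(g)d(h)$ modulo $\varpi^n$, and the computation of the $G$-action on $T_j/T_0$. Two cosmetic remarks: no Hensel or CRT argument is needed for the splitting, since the hypothesis already places the roots $\lambda_1,\lambda_2$ in $A$ (this is also why, as the paper observes, completeness of $A$ is not assumed); and for assertion (1) you should say explicitly that Brauer--Nesbitt applies because in dimension two with residue characteristic $\neq 2$ the trace function determines the characteristic polynomials (via $\det = \frac{1}{2}((\mathrm{tr}\,g)^2-\mathrm{tr}(g^2))$), which is exactly where the hypothesis on the residue characteristic enters.
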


Note that the second assertion is mentioned in \cite[Remarques of Th\'eor\`eme 4.1.3]{BG06} and others follows by Lemma \ref{21}. We do not assume that $A$ is complete since there is a bijection of the set of $V$ onto the set of lattices of $\hat{V}:=V\otimes_{K}\hat{K}$ for the completion $\hat{K}$ of $K$ (cf. \cite[Chap. II, \S 1.1, \textit{completion}.]{SeTe}).

\section{Proof of Theorem \ref{Gal}}
We prove Theorem \ref{Gal} in this section. We divide our proof into two parts. In the first half of this section, we prove a weaken result of Theorem \ref{Gal} on the graph structure of stable reflexive lattices rather than free lattices. Then in the later half of this section, we will show that all stable reflexive lattices are free over $R$. 

Let $R$ be a local UFD with maximal ideal $\mathfrak{m}$ and field of fractions $K$. Assume that $R$ is complete with respect to the $\mathfrak{m}$-adic topology and the characteristic of the residue field $\left.R\right/\mathfrak{m}$ is not equal to two. Let $V$ be a $K$-vector space of dimension two and $\rho: G \rightarrow \mathrm{Aut}_{K}\left(V \right)$ a representation of a group $G$. We consider a weak condition (Lat) of (Lat-fr) as follows:
\begin{list}{}{}
\item[(Lat)]The vector space $V$ has a lattice $T$ which is  stable under the action of $G$.
\end{list}
Let $T$ be a stable lattice, then $T^{**}$ is reflexive and hence we may assume that $T$ is reflexive. We denote by $[T]:=\set{ x T | x \in K^{\times} }$ the homothety class of $T$ and by $C^{\mathrm{ref}}\left(\rho \right)$ the set of homothety classes of stable reflexive lattices. We define a graph on $C^{\mathrm{ref}}\left(\rho \right)$ in the same way as $C^{\mathrm{fr}}\left(\rho \right)$ in \S 1. The goal of the first half of the section is to prove the following proposition.

\begin{prp}\label{Gal1}
Let $\rho: G \rightarrow \mathrm{Aut}_{K}\left(V \right)$ be a representation of a group $G$ such that $\mathrm{tr}\,\rho \subset R$. Suppose the conditions (Ir), (Lat) and (Red). Let $J:=J\left(\rho \right)$ be the ideal of reducibility of $\rho$ and let $\vartheta, \vartheta^{\prime}: G \rightarrow \left(\left.R\right/J \right)^{\times}$ be characters such that $\mathrm{tr}\,\rho \,\mathrm{mod}\,J=\vartheta+\vartheta^{\prime}$. Let $\mathcal{N}:=\set{\mathfrak{p}\in \mathrm{Spec}_{\mathrm{ht}=1}\left(R \right) | J_{\mathfrak{p}} \subsetneq R_{\mathfrak{p}}}$. Then $C^{\mathrm{ref}}\left(\rho \right)$ consists of a unique element if $\mathcal{N}$ is empty. Suppose that $\mathcal{N}$ is non-empty. Let $\mathcal{N}=\set{\mathfrak{p}_1, \cdots, \mathfrak{p}_r}$ and let $\mathrm{ord}_{\mathfrak{p}_i}J_{\mathfrak{p}_i}=n_i$. Then the graph $C^{\mathrm{ref}}\left(\rho \right)$ is isomorphic to the rectangle $\mathrm{Rect}_{\left(0, \cdots, 0\right)}^{\left(n_1, \cdots, n_r \right)}$. More precisely, we have an isomorphism
\begin{equation*}
\Phi_{\vartheta}: C^{\mathrm{ref}}\left(\rho \right) \stackrel{\sim}{\rightarrow} \mathrm{Rect}_{\left(0, \cdots, 0\right)}^{\left(n_1, \cdots, n_r \right)}
\end{equation*}
such that for points $\left(j_1, \cdots, j_r \right)$ and $\left(j_1^{\prime}, \cdots, j_r^{\prime}\right)$ in $\mathrm{Rect}_{\left(0, \cdots, 0\right)}^{\left(n_1, \cdots, n_r \right)}$, the following conditions are equivalent:
\begin{enumerate}
\item[(1)]We have $\left(j_1, \cdots, j_r \right) \leq \left(j_1^{\prime}, \cdots, j_r^{\prime} \right)$.
\item[(2)]There exist representatives $T$ and $T^{\prime}$ of $\Phi_{\vartheta}^{-1}\left( \left(j_1, \cdots, j_r \right)   \right)$ and $\Phi_{\vartheta}^{-1}\left( \left(j_1^{\prime}, \cdots, j_r^{\prime} \right)   \right)$ respectively such that $T \subset T^{\prime}$ and $\left(\left.T^{\prime}\right/{T} \right)_{\mathfrak{p}}$ is a quotient of $\left.R_{\mathfrak{p}}\right/J_{\mathfrak{p}}\,\left(\vartheta_1 \right)$ for any $\mathfrak{p}\in \mathrm{Spec}_{\mathrm{ht}=1}\left(R \right)$.
\end{enumerate}
Moreover, if these conditions are satisfied, $\left(\left.T^{\prime}\right/{T} \right)_{\mathfrak{p}_i}$ is isomorphic to $\left.R_{\mathfrak{p}_i}\right/{\mathfrak{p}_i^{j_i^{\prime}-j_i}}\left(\vartheta \right)$ for any $\mathfrak{p}_i \in \mathcal{N}$.

\end{prp}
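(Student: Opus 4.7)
The plan is to reduce to the discrete valuation ring case (Proposition \ref{22}) by localising at the height-one primes of $R$. Since $R$ is a local UFD, hence a Noetherian normal domain, every reflexive lattice $T \subset V$ is recovered as $T = \bigcap_{\mathfrak{p} \in \mathrm{Spec}_{\mathrm{ht}=1}(R)} T_\mathfrak{p}$, and two such lattices are equal (respectively homothetic) if and only if their localisations coincide (respectively are homothetic) at every height-one prime. At each $\mathfrak{p}$, $R_\mathfrak{p}$ is a DVR and $T_\mathfrak{p}$ is a $G$-stable $R_\mathfrak{p}$-lattice for $V$, so the local graph $C(\rho_\mathfrak{p})$ is completely described by Proposition \ref{22}.

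\textbf{Construction of $\Phi_\vartheta$.} First I would verify that the formation of the reducibility ideal commutes with localisation, $J(\rho)_\mathfrak{p} = J(\rho_\mathfrak{p})$, directly from the generator description in Remark \ref{constrofidealofredu}. For $\mathfrak{p} \notin \mathcal{N}$ this gives $J_\mathfrak{p} = R_\mathfrak{p}$, whence Proposition \ref{22}(2) forces $C(\rho_\mathfrak{p})$ to be a single point and $T_\mathfrak{p}$ is determined up to homothety. For $\mathfrak{p}_i \in \mathcal{N}$, Proposition \ref{22}(3) yields a chain $T_{\mathfrak{p}_i, 0} \subsetneq T_{\mathfrak{p}_i, 1} \subsetneq \cdots \subsetneq T_{\mathfrak{p}_i, n_i}$ of representatives of $C(\rho_{\mathfrak{p}_i})$ with $T_{\mathfrak{p}_i, j}/T_{\mathfrak{p}_i, 0} \cong R_{\mathfrak{p}_i}/\mathfrak{p}_i^{j} R_{\mathfrak{p}_i}\,(\vartheta)$. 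After fixing a base stable reflexive lattice $T_0$ (obtained as $T^{\ast\ast}$ from a stable lattice $T$ furnished by (Lat)) and rescaling so that $T_{0, \mathfrak{p}_i}$ sits at position $0$ for every $i$, I define $\Phi_\vartheta([T]) := (j_1, \dots, j_r)$, where $j_i$ is the position of $T_{\mathfrak{p}_i}$ for the unique homothetic representative of $[T]$ agreeing with $T_0$ at all $\mathfrak{p} \notin \mathcal{N}$.

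\textbf{Bijectivity, edges, and the main obstacle.} Injectivity is immediate from the intersection formula. For surjectivity, given $(j_1, \dots, j_r)$, one defines
\begin{equation*}
T \; := \; T_0 \,\cap\, \bigcap_{i=1}^{r} T_{\mathfrak{p}_i, j_i}
\end{equation*}
in $V$; only finitely many localisations differ from those of $T_0$, so a standard Krull-domain argument shows that this intersection is a finitely generated reflexive $R$-module, and $G$-stability follows since $T_0$ and each $T_{\mathfrak{p}_i, j_i}$ is $G$-stable. The graph structure matches because $\mathrm{char}_R(T'/T) = \prod_i \mathfrak{p}_i^{\mathrm{length}_{R_{\mathfrak{p}_i}}(T'_{\mathfrak{p}_i}/T_{\mathfrak{p}_i})}$ equals a single height-one prime exactly when the coordinates differ by one step in exactly one position; the final identification $(T'/T)_{\mathfrak{p}_i} \cong R_{\mathfrak{p}_i}/\mathfrak{p}_i^{j_i'-j_i}(\vartheta)$ then comes straight from Proposition \ref{22}(3) combined with reflexivity.

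The main obstacle I anticipate is tracking the character $\vartheta$ globally. Proposition \ref{22}(3) produces, at each $\mathfrak{p}_i$ independently, a character with values in $R_{\mathfrak{p}_i}/\mathfrak{p}_i^{n_i}R_{\mathfrak{p}_i}$, and one must show that these local characters are the images of the single global $\vartheta : G \to (R/J)^\times$ furnished by Lemma \ref{21}, so that the isomorphism above is coherent with the global $G$-module structure and condition (2) of the proposition really does hold at \emph{every} height-one prime simultaneously. The uniqueness clause in Lemma \ref{21} should close this gap after checking that $R/J$ injects into $\prod_i R_{\mathfrak{p}_i}/\mathfrak{p}_i^{n_i}R_{\mathfrak{p}_i}$ via the primary decomposition of $J$ coming from its factorisation into the $\mathfrak{p}_i^{n_i}$ up to a unit supported outside $\mathcal{N}$.
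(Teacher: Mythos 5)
Your proposal is correct and follows essentially the same route as the paper: localize the reducibility ideal at height-one primes, apply Proposition \ref{22} over each DVR $R_{\mathfrak{p}}$, glue the local chains by intersection to build the representatives $T(j_1,\dots,j_r)$, and use reflexivity plus the principality of height-one primes in the UFD $R$ to pass between local and global homothety classes. The character-coherence issue you flag is resolved exactly as you suggest, via the uniqueness clause of Lemma \ref{21} applied after reducing the global $\vartheta,\vartheta'$ modulo $\mathfrak{p}_i^{n_i}R_{\mathfrak{p}_i}$.
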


\begin{lem}\label{redu}
Let us keep the assumptions and the notation of Theorem \ref{Gal1}. Then under the assumption (Red), $J_{\mathfrak{p}}$ coincide with the ideal of reducibility of $\rho\otimes_R R_{\mathfrak{p}}$.
\end{lem}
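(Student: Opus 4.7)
The plan is to show that the same element $g_0 \in G$ can be used to build the ideal of reducibility both for $\rho$ over $R$ and for $\rho \otimes_R R_{\mathfrak{p}}$ over $R_{\mathfrak{p}}$, and that in a common basis adapted to this $g_0$ the defining generators literally coincide.

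First I would use the assumption (Red) together with the $G$-distinction condition (implicit once one knows the mod-$\mathfrak{m}$ trace splits as $\psi+\psi'$ with $\psi\ne\psi'$) to produce $g_0\in G$ with $\psi(g_0)\ne\psi'(g_0)\bmod\mathfrak{m}$. The characteristic polynomial $X^2-\operatorname{tr}\rho(g_0)X+\det\rho(g_0)\in R[X]$ has discriminant which is a unit modulo $\mathfrak{m}$, so because $R$ is $\mathfrak{m}$-adically complete and its residue characteristic is not two, Hensel's lemma provides a factorisation over $R$ into distinct linear factors with roots $\lambda_1,\lambda_2\in R$ satisfying $\lambda_1-\lambda_2\in R^{\times}$.

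Next I would observe that since $\lambda_1-\lambda_2$ is already a unit in $R$, it is a unit in $R_{\mathfrak{p}}$ as well, for any $\mathfrak{p}\in\mathrm{Spec}_{\mathrm{ht}=1}(R)$. Hence $\lambda_1,\lambda_2\in R\subset R_{\mathfrak{p}}$ remain distinct modulo $\mathfrak{p}R_{\mathfrak{p}}$, so the same $g_0$ fulfils the hypothesis of Lemma \ref{21} for $\rho\otimes_R R_{\mathfrak{p}}$. Now fix the $K$-basis of $V$ diagonalising $\rho(g_0)$ as in Remark \ref{constrofidealofredu} and write $\rho(g)=\left(\begin{smallmatrix}a(g)&b(g)\\ c(g)&d(g)\end{smallmatrix}\right)$. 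By construction $J(\rho)$ is the $R$-submodule of $K$ generated by $\{b(g)c(g')\mid g,g'\in G\}$, while the ideal of reducibility of $\rho\otimes_R R_{\mathfrak{p}}$ is the $R_{\mathfrak{p}}$-submodule of $K$ generated by the very same elements.

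Finally, since extension of scalars from $R$ to $R_{\mathfrak{p}}$ commutes with taking submodules generated by a fixed set, the ideal of reducibility of $\rho\otimes_R R_{\mathfrak{p}}$ equals $J(\rho)\cdot R_{\mathfrak{p}}=J_{\mathfrak{p}}$. The only potentially delicate step is ensuring that $g_0$ can be chosen to work simultaneously over $R$ and over every $R_{\mathfrak{p}}$; this is where the unit-in-$R$ property of $\lambda_1-\lambda_2$, itself a consequence of Hensel's lemma plus $R$ being local, plays the crucial role, and once that is established the rest is essentially a tautology from the explicit construction.
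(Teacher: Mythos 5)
Your proof is correct and follows essentially the same route as the paper: pick $g_0$ with $\psi(g_0)\neq\psi^{\prime}(g_0)$, lift the eigenvalues by Hensel's lemma using completeness so that $\lambda_1-\lambda_2\in R^{\times}\subset R_{\mathfrak{p}}^{\times}$, and then observe that in the diagonalising basis the generators $b(g)c(g^{\prime})$ of the reducibility ideal are the same over $R$ and $R_{\mathfrak{p}}$, so localisation gives $J_{\mathfrak{p}}$. Your explicit remark that $\lambda_1-\lambda_2$ being a unit in $R$ guarantees the hypothesis of Lemma \ref{21} over every $R_{\mathfrak{p}}$ is a point the paper leaves implicit, but the argument is the same.
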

\begin{proof}
The proof is by definition. Clearly we have $\mathrm{tr}\,\rho \subset R \subset R_{\mathfrak{p}}$. Let us take an element $g_0 \in G$ such that $\psi\left(g_0 \right)\neq\psi^{\prime}\left(g_0 \right)$. Since $R$ is complete, the characteristic polynomial of $\rho\left(g_0 \right)$ has roots $\lambda_{\psi} \neq \lambda_{\psi^{\prime}}$ in $R$ such that $\overline{\lambda}_{\xi}=\xi\left(g_0 \right)$ for $\xi \in \set{\psi, \psi^{\prime}}$ by Hensel's lemma. Choose a basis $\set{v_{\psi}, v_{\psi^{\prime}}}$ of $V$ such that $\rho\left(g_0 \right)v_{\xi}=\lambda_{\xi} v_{\xi}$. Then the lemma follows immediately by the definition of ideal of reducibility (cf. Remark \ref{constrofidealofredu}).

\end{proof}

Let us take a stable reflexive lattice $T$ which is fixed to the end of the proof of Lemma \ref{1108}. For any $\mathfrak{p}\in P^{1}\left(R \right)$, we denote by $C\left(\rho\otimes R_{\mathfrak{p}}\right)$ the set of homothety classes of stable $R_{\mathfrak{p}}$-lattices of $V$. Since $R_{\mathfrak{p}}$ is a discrete valuation ring, we have
\begin{equation}\label{190228}
\sharp C\left(\rho\otimes R_{\mathfrak{p}} \right)=\mathrm{ord}_{\mathfrak{p}}J_{\mathfrak{p}}+1
\end{equation}
by Lemma \ref{redu} and Proposition \ref{22}. First we assume that $\mathcal{N}=\set{\mathfrak{p}_1, \cdots, \mathfrak{p}_r}$ which is non-empty and let us take an element $\mathfrak{p}_i \in \mathcal{N}$. Let $n_i:=\mathrm{ord}_{\mathfrak{p}_i}\mathbb{J}_{\mathfrak{p}_i}$, we have $$\mathrm{tr}\,\rho \equiv \vartheta+\vartheta^{\prime} \pmod{\mathfrak{p}_i^{n_i}R_{\mathfrak{p}_i}}.$$ Then by Proposition \ref{22}-(3), there exists a chain of stable $R_{\mathfrak{p}_i}$-lattices 
\begin{equation}\label{20200516}
T_i^{(n_i)} \supsetneq \cdots \supsetneq T_i^{(0)}
\end{equation}
of $V$ such that $T_i^{(n_i)} \supsetneq \cdots \supsetneq T_i^{(0)}$ is a system of  representatives of $C\left(\rho\otimes R_{\mathfrak{p}_i} \right)$ and $T_i^{(j_i^{\prime})}/T_i^{(j_i)}$ is isomorphic to $R_{\mathfrak{p}_i}/\mathfrak{p}_i^{j_i^{\prime}- j_i}\,\left(\vartheta_1 \right)$ for every $j_i \leq j_i^{\prime}$.

Now we construct a system of representatives of $C^{\mathrm{ref}}\left(\rho \right)$ by gluing all local lattices as follows.  For each $\mathfrak{p}_i \in \mathcal{N}$ and $0 \leq j_i \leq n_i$, we define the module $T\left(j_1, \cdots, j_r \right)$ as follows:
\begin{equation}\label{04281}
T\left(j_1, \cdots, j_r \right)=\left(\bigcap_{\mathfrak{p} \not\in \mathcal{N}}T_{\mathfrak{p}}\right)\bigcap\left(\bigcap_{\mathfrak{p}_i \in \mathcal{N}}T_i^{(j_i)}\right).
\end{equation}
Then $T\left(j_1, \cdots, j_r \right)$ is a reflexive lattice, more precisely we have
\begin{equation}\label{1901128}
T\left(j_1, \cdots, j_r \right)_{\mathfrak{p}}=\begin{cases}
T_{\mathfrak{p}} & \left(\mathfrak{p} \not\in \mathcal{N} \right) \\
T_i^{\left(j_i \right)} & \left(\mathfrak{p}=\mathfrak{p}_i\in \mathcal{N} \right). \\
\end{cases}
\end{equation}
Since every $T_{\mathfrak{p}}$ and $T_i^{\left(j_i \right)}$ are stable under the action of $G$, $T\left(j_1, \cdots, j_r \right)$ is a stable lattice. Thus $T\left(j_1, \cdots, j_r \right)$ is a stable reflexive lattice of $V$. 
\begin{lem}\label{1108}
When $\mathcal{N}$ is non-empty, $\set{T\left(j_1, \cdots, j_r \right) | i=1, \cdots, r, j_i=0, \cdots, n_i}$ is a set of representatives of $C^{\mathrm{ref}}\left(\rho \right)$. When $\mathcal{N}$ is empty, $C^{\mathrm{ref}}\left(\rho \right)$ consists only of the homothety class of $T$.
\end{lem}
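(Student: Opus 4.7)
The plan is to reduce the claim to local information at each height-one prime and then glue back using the UFD hypothesis and reflexivity. Given a stable reflexive lattice $T'$, its localization $T'_{\mathfrak{p}}$ at a height-one prime $\mathfrak{p}$ is a stable free $R_{\mathfrak{p}}$-lattice, and by Lemma \ref{redu} combined with Proposition \ref{22} and \eqref{190228}, its homothety class lies in a set of cardinality $\mathrm{ord}_{\mathfrak{p}} J_{\mathfrak{p}} + 1$. For $\mathfrak{p} \notin \mathcal{N}$ this set is a singleton, so $T'_{\mathfrak{p}}$ is homothetic to $T_{\mathfrak{p}}$; for $\mathfrak{p}=\mathfrak{p}_i \in \mathcal{N}$ there is a unique index $0 \leq j_i \leq n_i$ with $T'_{\mathfrak{p}_i}$ homothetic to $T_i^{(j_i)}$. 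The tuple $(j_1,\ldots,j_r)$ is the candidate index for the class $[T']$.

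Injectivity of the assignment $(j_1,\ldots,j_r) \mapsto [T(j_1,\ldots,j_r)]$ is the easier half. If $c\, T(j_1,\ldots,j_r) = T(j_1',\ldots,j_r')$ for some $c \in K^{\times}$, then localizing at $\mathfrak{p}_i$ gives $c\, T_i^{(j_i)} = T_i^{(j_i')}$, so $T_i^{(j_i)}$ and $T_i^{(j_i')}$ are homothetic as $R_{\mathfrak{p}_i}$-lattices; since the chain \eqref{20200516} was chosen as a system of representatives of $C(\rho \otimes R_{\mathfrak{p}_i})$, this forces $j_i = j_i'$ for every $i$.

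The main obstacle is surjectivity: promoting the purely local homothety data $T'_{\mathfrak{p}} = c_{\mathfrak{p}} T(j_1,\ldots,j_r)_{\mathfrak{p}}$, with each $c_{\mathfrak{p}} \in K^{\times}$ well-defined only modulo $R_{\mathfrak{p}}^{\times}$, to a global scalar $c \in K^{\times}$ with $c\, T' = T(j_1,\ldots,j_r)$. The crux is to verify that $v_{\mathfrak{p}}(c_{\mathfrak{p}}) = 0$ for all but finitely many $\mathfrak{p}$. I would argue this by considering $M := T' \cap T(j_1,\ldots,j_r)$ inside $V$ and observing that the two quotients $T'/M$ and $T(j_1,\ldots,j_r)/M$ are finitely generated torsion $R$-modules whose supports in codimension one consist of the minimal primes over their nonzero annihilator ideals, hence are finite by Krull's principal ideal theorem. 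Outside this finite exceptional set we have $T'_{\mathfrak{p}} = M_{\mathfrak{p}} = T(j_1,\ldots,j_r)_{\mathfrak{p}}$, forcing $v_{\mathfrak{p}}(c_{\mathfrak{p}}) = 0$. The divisor $\sum_{\mathfrak{p}} v_{\mathfrak{p}}(c_{\mathfrak{p}})[\mathfrak{p}]$ therefore has finite support and is principal because $R$ is a UFD; any generator $c$ satisfies $c\, T(j_1,\ldots,j_r)_{\mathfrak{p}} = T'_{\mathfrak{p}}$ at every height-one $\mathfrak{p}$, and the formula $N = \bigcap_{\mathfrak{p} \in \mathrm{Spec}_{\mathrm{ht}=1}(R)} N_{\mathfrak{p}}$ for reflexive $N$ upgrades these local equalities to the global equality $c\, T(j_1,\ldots,j_r) = T'$.

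The empty case $\mathcal{N} = \emptyset$ is a degenerate instance of the same argument: every $T'_{\mathfrak{p}}$ is already homothetic to $T_{\mathfrak{p}}$, and the gluing step applied with $T(j_1,\ldots,j_r)$ replaced by $T$ itself directly yields a global scalar $c$ with $c\, T' = T$, so $C^{\mathrm{ref}}(\rho)$ reduces to the single class $[T]$.
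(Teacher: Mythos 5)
Your proposal is correct and follows essentially the same route as the paper: classify the localizations at each height-one prime via Lemma \ref{redu} and Proposition \ref{22}, observe that the local homothety factors are trivial outside a finite set of primes, use the UFD hypothesis to assemble a global scalar from the resulting divisor, and invoke reflexivity ($N=\bigcap_{\mathfrak{p}}N_{\mathfrak{p}}$) to upgrade the local equalities to a global one. The only cosmetic differences are that the paper first normalizes $T'\subset T(n_1,\dots,n_r)$ so all exponents are nonnegative, whereas you allow a divisor with negative coefficients, and you spell out the injectivity half explicitly, which the paper leaves implicit.
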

\begin{proof}
First we assume that $\mathcal{N}$ is non-empty. Let us take a stable reflexive lattice $T^{\prime}$. By multiplying an element of $R$ if necessary, we may assume $T^{\prime} \subset T\left(n_1, \cdots, n_r \right)$. Let us take an element $\mathfrak{p} \in P^1\left(R \right)$ and let us consider the following cases:
\begin{enumerate}
\item[(a)]When $\mathfrak{p} \not\in \mathcal{N}$, $C\left(\rho\otimes_R R_{\mathfrak{p}} \right)$ consists only of the homothety class of $T_{\mathfrak{p}}$ by Lemma \ref{redu}. Since the localization $T\left(n_1, \cdots, n_r \right)_{\mathfrak{p}}=T_{\mathfrak{p}}$ by \eqref{1901128}, under the assumption $T^{\prime} \subset T\left(n_1, \cdots, n_r \right)$, there exists an integer $e_{\mathfrak{p}} \in \mathbb{Z}_{\geq 0}$ such that $T_{\mathfrak{p}}^{\prime}=\mathfrak{p}^{e_{\mathfrak{p}}}T_{\mathfrak{p}}.$
\item[(b)]When $\mathfrak{p}=\mathfrak{p}_i \in \mathcal{N}$, $T_i^{(n_i)} \supsetneq \cdots \supsetneq T_i^{(0)}$ is a system of  representatives of $C\left(\rho\otimes_R R_{\mathfrak{p}_i} \right)$. Then under the assumption $T^{\prime} \subset T\left(n_1, \cdots, n_r \right)$, there exist an integer $e_{\mathfrak{p}_i} \in \mathbb{Z}_{\geq 0}$ and an integer $0 \leq j_i \leq n_i$ such that $T_{\mathfrak{p}_i}=\mathfrak{p}_i^{e_{\mathfrak{p}_i}}T_i^{\left(j_i \right)}$. 
\end{enumerate}
Since $T$ and $T^{\prime}$ are lattices, we have $T^{\prime}_{\mathfrak{p}}=T_{\mathfrak{p}}$ for all but finitely many $\mathfrak{p} \in P^1\left(R \right)$. Thus the integers $e_{\mathfrak{p}}$ in cases (a) and (b) are $0$ for all but finitely many $\mathfrak{p}\in P^1\left(R \right)$. Furthermore, since $R$ is a UFD, every height-one prime ideal is principal. Then $\displaystyle\prod_{\mathfrak{p} \in P^{1}\left(R \right)}\mathfrak{p}^{e_{\mathfrak{p}}}$ is generated by an element $x \in R$. Let $T^{\prime\prime}=xT\left(j_1, \cdots, j_r \right).$ We have $T^{\prime}_{\mathfrak{p}}=T^{\prime\prime}_{\mathfrak{p}}$ for any $\mathfrak{p} \in P^{1}\left(R \right)$. Since $T^{\prime}$ and $T^{\prime\prime}$ are reflexive lattices, we have $$T^{\prime}=\bigcap_{\mathfrak{p} \in P^{1}\left(R \right)}T^{\prime}_{\mathfrak{p}}=\bigcap_{\mathfrak{p} \in P^{1}\left(R \right)}T^{\prime\prime}_{\mathfrak{p}}=T^{\prime\prime}.$$ This implies that $\set{T\left(j_1, \cdots, j_r \right) | i=1, \cdots, r, j_i=0, \cdots, n_i}$ is a system of  representatives of $C^{\mathrm{ref}}\left(\rho \right)$.

Now we assume that $\mathcal{N}$ is empty. Let us take a stable reflexive lattice $T^{\prime}$. By multiplying an element of $R$ if necessary, we may assume $T^{\prime} \subset T$. Under the assumption that $\mathcal{N}$ is empty, any prime ideal $\mathfrak{p} \in P^1\left(R \right)$ belongs to case (a) above. Then by the same argument, we have that there exists an element $x^{\prime}\in R$ such that $T^{\prime}=x^{\prime}T$. This completes the proof of Lemma \ref{1108}.

\end{proof}

Let us return to the proof of Proposition \ref{Gal1}

\begin{proof}[Proof of Proposition \ref{Gal1}]
We define $\Phi_{\vartheta_1}: C^{\mathrm{ref}}\left(\rho \right) \rightarrow \mathrm{Rect}_{\left(0, \cdots, 0\right)}^{\left(n_1, \cdots, n_r \right)}$ by sending the homothety class $[T\left(j_1, \cdots, j_r \right)]$ to $\left(j_1, \cdots, j_r \right)$. Let us take an edge of $\mathrm{Rect}_{\left(0, \cdots, 0\right)}^{\left(n_1, \cdots, n_r \right)}$ with origin and terminus $\left(j_1, \cdots, j_r \right)$ and $\left(j_1^{\prime}, \cdots, j_r^{\prime} \right)$ respectively. We may assume $j_i^{\prime}-j_i=1$ when $i=s$ and $j_i^{\prime}=j_i$ when $i\neq s$ for some $1 \leq s \leq n$. Then by our construction, we have $T\left(j_1, \cdots, j_r \right)_{\mathfrak{p}}=T\left(j_1^{\prime}, \cdots, j_r^{\prime} \right)_{\mathfrak{p}}$ when $\mathfrak{p}\neq\mathfrak{p}_s$ and $\left.T\left(j_1^{\prime}, \cdots, j_r^{\prime} \right)_{\mathfrak{p}_s}\right/{T\left(j_1, \cdots, j_r \right)_{\mathfrak{p}_s}}$ is isomorphic to $\left.R_{\mathfrak{p}_s}\right/{\mathfrak{p}_s}$. This implies 
\begin{equation*}
\mathrm{char}_{R}\left(\left.T\left(j_1^{\prime}, \cdots, j_r^{\prime} \right) \right/T\left(j_1, \cdots, j_r \right)   \right)=\mathfrak{p}_s.
\end{equation*}
Thus $\Phi_{\vartheta_1}$ is an isomorphism of graphs. 

Now we prove the equivalence between (1) and (2). Note that $(1) \Rightarrow (2)$ follows by our construction of $T\left(j_1, \cdots, j_r \right)$. Now we assume (2). We may assume $T=T\left(j_1, \cdots, j_r \right)$ and $T^{\prime}=y T\left(j_1^{\prime}, \cdots, j_r^{\prime} \right)$ for some $y \in K^{\times}$. By the assumption of Proposition \ref{Gal1}-(2) we have $\mathrm{ord}_{\mathfrak{p}}\left(y \right)=0$ for all $\mathfrak{p} \not\in \mathcal{N}$. Since $\left.T^{\prime}_{\mathfrak{p}}\right/{T_{\mathfrak{p}}}$ is cyclic and $G$ acts on $\left.T^{\prime}_{\mathfrak{p}}\right/{T_{\mathfrak{p}}}$ by $\vartheta_1$, we must have that $T^{\prime}_{\mathfrak{p}}$ lies in the chain \eqref{20200516}. By our construction of $T\left(j_1^{\prime}, \cdots, j_r^{\prime} \right)$, this implies that $y$ is a unit in $R$. Thus by localizing $T$ and $T^{\prime}$ at every $\mathfrak{p}_i$, we have $j_i \leq j_i^{\prime}$ and $\left(\left.T\left(j_1^{\prime}, \cdots, j_r^{\prime} \right)\right/{T\left(j_1, \cdots, j_r \right)}\right)_{\mathfrak{p}_i}$ is isomorphic to $\left.R_{\mathfrak{p}_i}\right/{\mathfrak{p}_i^{j_i^{\prime}-j_i}}\,\left(\vartheta_1 \right)$ by the property of the chain \eqref{20200516}. This completes the proof of Proposition \ref{Gal1}.
\end{proof}
Let us keep the notation which is used in the proof of Lemma \ref{redu}. Let $V\left(\xi \right)$ be the $K$-subspace of $V$ of dimension one which is generated by $v_{\xi}$ for $\xi \in \set{\psi, \psi^{\prime}}$. Then $V=V\left(\psi \right)\oplus V\left(\psi^{\prime} \right)$ as an $R[g_0]$-module. Let $g_{\psi}$ and $g_{\psi^{\prime}}$ be elements of $R[g_0]$ defined as follows
\begin{equation*}
g_{\psi}:=\dfrac{g_0-\lambda_{\psi^{\prime}}}{\lambda_{\psi}-\lambda_{\psi^{\prime}}},  g_{\psi^{\prime}}:=\dfrac{g_0-\lambda_{\psi}}{\lambda_{{\psi}^{\prime}}-\lambda_{\psi}}.
\end{equation*}
Then we have $g_{\psi}+g_{\psi^{\prime}}=\mathbf{1}_G$ and 
\begin{equation}
g_{\nu}v_{\xi}=\begin{cases}
0 & \left(\nu \neq \xi \right) \\
v_{\xi} & \left(\nu=\xi \right), \\
\end{cases}
\end{equation}
which implies that $g_{\xi}$ projects $V$ onto $V\left(\xi \right)$. For a stable lattice $T$, let $T\left(\xi \right):=g_{\xi}T$, we have $T=T\left(\psi \right)\oplus T\left(\psi^{\prime} \right)$ as an $R[g_0]$-module.

\begin{lem}\label{basic}
Let us keep the assumptions of Theorem \ref{Gal} and let $T$ be a stable lattice. Then we have $T\left(\xi \right)=T\cap V\left(\xi \right)$ for $\xi \in \set{\psi, \psi^{\prime}}$.
\end{lem}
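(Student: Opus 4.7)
The plan is to exploit the idempotents $g_\psi,g_{\psi'}\in R[g_0]$ that were introduced in the paragraph just above the lemma. The crucial observation is that $\lambda_\psi-\lambda_{\psi'}$ is a unit of $R$: the roots $\lambda_\psi,\lambda_{\psi'}$ are distinct modulo the maximal ideal $\mathfrak{m}$ (this is how they were produced via Hensel's lemma in the proof of Lemma \ref{redu}), so their difference lies outside $\mathfrak{m}$ and is therefore invertible in $R$. Consequently the formulas defining $g_\psi$ and $g_{\psi'}$ really do produce elements of the integral subring $R[g_0]$, not merely of $K[g_0]$. Since $T$ is $G$-stable and is an $R$-module, it is stable under the full subring $R[G]\subset\mathrm{End}_K(V)$; in particular $g_\xi T\subset T$ for each $\xi\in\{\psi,\psi'\}$.

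Once this is set up, both inclusions are immediate. For $T(\xi)\subset T\cap V(\xi)$: by definition $T(\xi)=g_\xi T$, which lies in $T$ by the preceding observation and lies in $V(\xi)$ because, as recorded in the displayed identity preceding the lemma, $g_\xi$ projects $V$ onto $V(\xi)$. For the reverse inclusion, take $v\in T\cap V(\xi)$; since $g_\xi$ acts as the identity on $V(\xi)$, we have $v=g_\xi v\in g_\xi T=T(\xi)$.

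There is essentially no obstacle here: the lemma is a bookkeeping consequence of having explicit spectral projectors $g_\xi$ defined over $R$, which is exactly what the residual distinctness of $\psi$ and $\psi'$ buys. The only point that needs to be verified with any care is that $g_\xi\in R[g_0]$ (as opposed to $K[g_0]$), and this is where the hypothesis on the residue characteristic being different from two is implicitly used through ($G$-dist) to force $\lambda_\psi-\lambda_{\psi'}\in R^\times$.
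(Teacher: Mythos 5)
Your proof is correct and follows essentially the same route as the paper: the containment $T(\xi)\subset T\cap V(\xi)$ from $g_\xi\in R[g_0]$, and the reverse containment by writing an element of $T\cap V(\xi)$ as a vector fixed by the projector $g_\xi$. (Only a small aside: the invertibility of $\lambda_\psi-\lambda_{\psi^{\prime}}$ comes from ($G$-dist) alone; the residue characteristic being different from two is used elsewhere, in Lemma \ref{21}, not here.)
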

\begin{proof}
The inclusion $T\left(\xi \right) \subset T\cap V\left(\xi \right)$ is obvious. Any element of $T\cap V\left(\xi \right)$ can be written can be written as $a v_{\xi}$, for some $a \in K$. Hence $av_{\xi}=\rho\left(g_{\xi} \right)av_{\xi} \in T\left(\xi \right)$. 
\end{proof}

\begin{lem}\label{frrkone}
Let us keep the assumption of Theorem \ref{Gal}. Let $T$ be a stable lattice which satisfies the condition (Lat-fr). Then the $R[g_0]$-modules $T\left(\psi \right)$ and $T\left(\psi^{\prime} \right)$ are free of rank one over $R$.

\end{lem}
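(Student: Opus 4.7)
My plan is to exploit the orthogonal idempotents $g_{\psi}, g_{\psi'}$ coming from the semisimple action of $g_0$ to realize $T(\psi)$ and $T(\psi')$ as direct summands of the free $R$-module $T$, and then to invoke the fact that finitely generated projective modules over a local ring are free.

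First I would verify that $g_{\psi}$ and $g_{\psi'}$ truly act on $T$, i.e.\ lie in $R[g_0]$ rather than merely $K[g_0]$. By ($G$-dist), the reductions $\psi(g_0)$ and $\psi'(g_0)$ are distinct in $R/\mathfrak{m}$, so the Hensel lifts $\lambda_{\psi}, \lambda_{\psi'}\in R$ used in the proof of Lemma \ref{redu} satisfy that $\lambda_{\psi}-\lambda_{\psi'}$ is a unit in $R$. Hence the Lagrange-interpolation formulas $g_{\xi}=(g_0-\lambda_{\xi'})/(\lambda_{\xi}-\lambda_{\xi'})$ express $g_{\psi}$ and $g_{\psi'}$ as polynomials in $g_0$ with coefficients in $R$. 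Since $T$ is $G$-stable, $g_0$ acts $R$-linearly on $T$, and so do $g_{\psi}$ and $g_{\psi'}$.

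Second, the identities $g_{\psi}+g_{\psi'}=1$ and $g_{\psi}g_{\psi'}=0$ (the latter from Cayley--Hamilton applied to $g_0$ on $V$, since $(g_0-\lambda_{\psi})(g_0-\lambda_{\psi'})$ is its characteristic polynomial) show that $g_{\psi}, g_{\psi'}$ are orthogonal idempotents in $\mathrm{End}_R(T)$. This produces the internal $R$-module direct sum decomposition $T=T(\psi)\oplus T(\psi')$ already recorded above Lemma \ref{basic}. In particular each $T(\xi)$ is a direct summand of the free (hence finitely generated) $R$-module $T$, so $T(\xi)$ is finitely generated and projective over $R$.

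Third, because $R$ is local, every finitely generated projective $R$-module is free (Kaplansky, or a direct application of Nakayama's lemma to lift a basis modulo $\mathfrak{m}$). Hence $T(\psi)$ and $T(\psi')$ are free $R$-modules. The rank is pinned down by tensoring with $K$: $T(\xi)\otimes_R K = g_{\xi} V = V(\xi)$ is one-dimensional over $K$, so each $T(\xi)$ has $R$-rank one. There is no serious obstacle here; the statement is essentially a routine consequence of the idempotent splitting combined with the local-ring freeness principle, and the only point that genuinely requires the hypotheses is the unit property of $\lambda_{\psi}-\lambda_{\psi'}$ which makes the decomposition integral rather than merely rational.
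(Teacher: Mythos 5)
Your proof is correct, and it reaches the conclusion by a slightly different route from the paper's. Both arguments rest on the same decomposition $T=T\left(\psi\right)\oplus T\left(\psi^{\prime}\right)$ induced by the idempotents $g_{\psi}, g_{\psi^{\prime}}$ (the paper records this just before the lemma), and your verification that $\lambda_{\psi}-\lambda_{\psi^{\prime}}$ is a unit of the local ring $R$ -- so that the idempotents lie in $R[g_0]$ and hence preserve the $G$-stable module $T$ -- is exactly the computation underlying the proof of Lemma \ref{redu}. Where you diverge is in the final step. The paper tensors the split sequence $0\to T\left(\psi^{\prime}\right)\to T\to T\left(\psi\right)\to 0$ with $\left.R\right/{\mathfrak{m}}$, uses the distinctness of the eigenvalues of $g_0$ on $T\otimes_{R}\left.R\right/{\mathfrak{m}}$ to rule out the case $\dim_{\left.R\right/{\mathfrak{m}}}\bigl(T\left(\psi\right)\otimes_{R}\left.R\right/{\mathfrak{m}}\bigr)=2$, concludes by Nakayama's lemma that $T\left(\psi\right)$ is cyclic, and then invokes torsion-freeness to see that a cyclic torsion-free module is free of rank one. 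You instead note that each $T\left(\xi\right)$ is a direct summand of the free module $T$, hence finitely generated projective, hence free because $R$ is local, with the rank read off from $T\left(\xi\right)\otimes_{R}K=V\left(\xi\right)$. Your route is marginally shorter and does not use the eigenvalue distinctness at the last step (only to obtain the integrality of the idempotents), while the paper's route directly yields the additional fact that $T\left(\xi\right)\otimes_{R}\left.R\right/{\mathfrak{m}}$ is one-dimensional. Both arguments are complete.
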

\begin{proof}
We have the following split exact sequence 
\begin{equation}\label{20200914}
0 \rightarrow T\left(\psi^{\prime} \right) \rightarrow T \rightarrow T\left(\psi \right) \rightarrow 0.
\end{equation}
Hence the assertion for $T\left(\psi \right)$ and $T\left(\psi^{\prime} \right)$ are done exactly by the same argument. By taking the base extension $\otimes_{R}\left.R\right/{\mathfrak{m}}$, we have the following exact sequence
\begin{equation*}
T\left(\psi^{\prime} \right)\otimes_{R}\left.R\right/{\mathfrak{m}} \rightarrow T\otimes_{R}\left.R\right/{\mathfrak{m}} \rightarrow T\left(\psi \right)\otimes_{R}\left.R\right/{\mathfrak{m}}\rightarrow 0.
\end{equation*}
Since $T$ is free of rank two over $R$, the $\left.R\right/{\mathfrak{m}}$-vector space $T\otimes_{R}\left.R\right/{\mathfrak{m}}$ has dimension two. This implies that the dimension of $T\left(\psi \right)\otimes_{R}\left.R\right/{\mathfrak{m}}$ is less than or equal to two. On the other hand, $T\left(\psi \right)\otimes_{R}\left.R\right/{\mathfrak{m}}$ is nontrivial by Nakayama's lemma. Thus the dimension of $T\left(\psi \right)\otimes_{R}\left.R\right/{\mathfrak{m}}$ is one or two.

Assume that $T\left(\psi \right)\otimes_{R}\left.R\right/{\mathfrak{m}}$ has $\left.R\right/{\mathfrak{m}}$-dimension two. Then  $T\left(\psi \right)\otimes_{R}\left.R\right/{\mathfrak{m}}$ is isomorphic to $T\otimes_{R}\left.R\right/{\mathfrak{m}}$. This contradicts to that the eigenvalues of the action of $g_0$ on $T\otimes_{R}\left.R\right/{\mathfrak{m}}$ are distinct. Thus $T\left(\psi \right)\otimes_{R}\left.R\right/{\mathfrak{m}}$ has $\left.R\right/{\mathfrak{m}}$-dimension one, which implies that $T\left(\psi \right)$ is a cyclic $R$-module by Nakayama's lemma. Furthermore, since $T\left(\psi \right)$ is $R$-torsion free, we have that $T\left(\psi \right)$ is a free $R$-module of rank one. 
\end{proof}

In general, a reflexive lattice may not be a free lattice if the Krull dimension of $R$ is greater than two. However, we have the following result for stable lattices. 

\begin{prp}\label{reffr}
Let us keep the assumptions and the notation of Theorem \ref{Gal}. Then under the assumptions (Lat-fr) and ($G$-dist), every stable reflexive lattice is free over $R$. 
\end{prp}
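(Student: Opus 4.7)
The plan is to exploit the idempotent decomposition set up just before Lemma \ref{basic}. Fix the element $g_0 \in G$ coming from the proof of Lemma \ref{redu} (its existence uses $(G\text{-dist})$ together with Hensel's lemma, the latter available because $R$ is complete), together with the projectors $g_\psi, g_{\psi'} \in R[g_0]$. For an arbitrary stable reflexive lattice $T'$, Lemma \ref{basic} gives the $R[g_0]$-direct sum decomposition
\begin{equation*}
T' \;=\; T'(\psi) \oplus T'(\psi'), \qquad T'(\xi) = T' \cap V(\xi).
\end{equation*}
So the proposition reduces immediately to showing that each rank-one summand $T'(\xi)$ is free of rank one over $R$.

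The first step is to observe that $T'(\xi)$ is itself reflexive: it is a direct $R$-module summand of the reflexive module $T'$ (being cut out by the idempotent $g_\xi$), and a direct summand of a reflexive module is reflexive. Moreover $T'(\xi)$ has generic rank one, since it spans the one-dimensional $K$-vector space $V(\xi)$.

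The second step is the key algebraic input: a rank-one reflexive module over a local UFD is free. Indeed, any such module $M$ embeds, after tensoring with $K$, into the one-dimensional $K$-vector space $M \otimes_R K$; fixing an isomorphism identifies $M$ with a fractional ideal of $R$, and its reflexive hull $M^{**}$ is a divisorial ideal. Since $R$ is a UFD, its divisor class group vanishes, so every divisorial ideal is principal; hence $M \cong M^{**} \cong R$. Applying this to $T'(\psi)$ and $T'(\psi')$ yields two free rank-one summands, and therefore $T'$ is free of rank two.

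There is no real obstacle here beyond recalling the correct form of the divisor-class-group triviality for UFDs; the decomposition machinery from the beginning of \S 3 does all the heavy lifting by reducing the reflexivity-implies-freeness question for the rank-two module $T'$ to the classical rank-one UFD statement for each isotypic piece.
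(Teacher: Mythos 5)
Your argument is correct, but it is genuinely different from the one in the paper. Both proofs start from the same idempotent decomposition $T'=T'(\psi)\oplus T'(\psi')$ with $T'(\xi)=g_{\xi}T'=T'\cap V(\xi)$ (Lemma \ref{basic}), but they diverge at the key step. The paper fixes a stable \emph{free} lattice $T$ (this is where (Lat-fr) enters), proves via Nakayama that $T(\psi)$ and $T(\psi')$ are free of rank one (Lemma \ref{frrkone}), and then uses the rectangle structure of Proposition \ref{Gal1} to compare an arbitrary reflexive lattice $T'$ with $T$: one shows $T'(\psi')=T(\psi')$ and $\prod_i\mathfrak{p}_i^{j_i'-j_i}T'(\psi)=T(\psi)$, so freeness is transported from $T$ to $T'$ along the partial order of the rectangle. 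You instead observe that each summand $T'(\xi)$ is a finitely generated reflexive $R$-module of generic rank one (a direct summand of a reflexive module is reflexive), identify it with a divisorial fractional ideal, and invoke the vanishing of the divisor class group of a UFD to conclude it is principal, hence free. This is shorter, bypasses Proposition \ref{Gal1} entirely, and in fact shows that (Lat-fr) is not needed for this particular conclusion: every stable reflexive lattice is free already under (Ir), (Lat), (Red) and ($G$-dist). What the paper's longer route buys is the explicit identities \eqref{202009171} and \eqref{202009172} comparing $T(\psi)$, $T(\psi')$ with $T'(\psi)$, $T'(\psi')$, which are reused immediately afterwards in the proof of Theorem \ref{Gal} to compute $T'/T$ as a cyclic $R[G]$-module; your argument would have to be supplemented by that comparison to recover part (2) of the theorem.
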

\begin{proof}
Let $T$ be a stable lattice which satisfies (Lat-fr). By Proposition \ref{Gal1}, the homothety classes of stable reflexive lattices is isomorphic to the rectangle $\mathrm{Rect}_{\left(0, \cdots, 0 \right)}^{\left(n_1, \cdots, n_r \right)}$. Let $\left(j_1, \cdots, j_r \right)$ be the vertex corresponding to $[T]$ under the map $\Phi_{\vartheta}$ in \ref{Gal1}. Let us admit the following claim for a while.
\begin{cla}\label{c1}
For the vertex $\left(j_1^{\prime}, \cdots, j_r^{\prime} \right) \geq \left(j_1, \cdots, j_r \right)$ or $\left(j_1^{\prime}, \cdots, j_r^{\prime} \right) \leq \left(j_1, \cdots, j_r \right)$, the corresponding reflexive lattice is free. 
\end{cla}
Then since the graph $\mathrm{Rect}_{\left(0, \cdots, 0 \right)}^{\left(n_1, \cdots, n_r \right)}$ is partially ordered and has maximal (and minimal) vertex, Proposition \ref{reffr} follows immediately by Claim \ref{c1}.

We will show Claim \ref{c1} in the rest of the proof. Since the proof for the vertexes $\left(j_1^{\prime}, \cdots, j_r^{\prime} \right) \geq \left(j_1, \cdots, j_r \right)$ and $\left(j_1^{\prime}, \cdots, j_r^{\prime} \right) \leq \left(j_1, \cdots, j_r \right)$ are done by the same argument, we only consider the case when $\left(j_1^{\prime}, \cdots, j_r^{\prime} \right) \geq \left(j_1, \cdots, j_r \right)$. By Proposition \ref{Gal1}, there exists a representative $T^{\prime}$ of $\Phi_{\vartheta}^{-1}\left( \left(j_1^{\prime}, \cdots, j_r^{\prime} \right)   \right)$ such that $T^{\prime} \supset T$, that $T^{\prime}_{\mathfrak{p}}=T_{\mathfrak{p}}$ for all $\mathfrak{p} \in \mathrm{Spec}_{\mathrm{ht}=1}\left(R \right)\setminus \mathcal{N}$ and that $\left(\left.T^{\prime}\right/{T} \right)_{\mathfrak{p}_i}$ is isomorphic to $\left.R_{\mathfrak{p}_i}\right/{\mathfrak{p}_i^{j_i^{\prime}-j_i}}\left(\vartheta \right)$ for any $\mathfrak{p}_i \in \mathcal{N}$. Since $T^{\prime}=T^{\prime}\left(\psi \right)\oplus T^{\prime}\left(\psi^{\prime} \right)$, it is sufficient to prove that $T^{\prime}\left(\psi \right)$ and $T^{\prime}\left(\psi^{\prime} \right)$ are $R$-free. By the definition of $T^{\prime}\left(\psi \right)$ and $T^{\prime}\left(\psi^{\prime} \right)$, we have $T\left(\psi \right) \subset T^{\prime}\left(\psi \right)$ and $T\left(\psi^{\prime} \right) \subset T^{\prime}\left(\psi^{\prime} \right)$. Then we have the following commutative diagram of $R[g_0]$-modules
\begin{equation}\label{2020660616}
 \xymatrix{
   0 \ar[r]   & T\left(\psi \right) \ar[r] \ar@{^{(}-_>}[d] & T \ar[r] \ar@{^{(}-_>}[d] & T\left(\psi^{\prime} \right) \ar[r] \ar@{^{(}-_>}[d] & 0 \\
    0 \ar[r] & T^{\prime}\left(\psi \right) \ar[r] & T^{\prime} \ar[r] & T^{\prime}\left(\psi^{\prime} \right) \ar[r] & 0.
  }
\end{equation}
Thus we have the following exact sequence
\begin{equation}\label{202009170}
0 \rightarrow \left.T^{\prime}\left(\psi \right)\right/{T\left(\psi \right)}\rightarrow \left.T^{\prime}\right/{T} \rightarrow \left.T^{\prime}\left(\psi^{\prime} \right)\right/{T\left(\psi^{\prime} \right)} \rightarrow 0
\end{equation}
by the snake lemma. Since the horizontal exact sequences in \eqref{2020660616} split, we may assume $\vartheta\,\mod\,\mathfrak{m}=\psi$ without loss of generality. Then since $\psi\left(g_0 \right)\neq\psi^{\prime}\left(g_0 \right)$, by taking base extension $\otimes_{R}R_{\mathfrak{p}}$ for any $\mathfrak{p} \in \mathrm{Spec}_{\mathrm{ht}=1}\left(R \right)$ we have $T\left(\psi^{\prime} \right)_{\mathfrak{p}}=T^{\prime}\left(\psi^{\prime} \right)_{\mathfrak{p}}$ for any $\mathfrak{p} \in \mathrm{Spec}_{\mathrm{ht}=1}\left(R \right)$. Since $T^{\prime}$ (resp. $T$) is reflexive, so is the $R$-module $T^{\prime}\left(\psi^{\prime} \right)$ (resp. $T\left(\psi^{\prime} \right)$) by Lemma \ref{basic}, hence we have
\begin{equation}\label{202009171}
T\left(\psi^{\prime} \right)=T^{\prime}\left(\psi^{\prime} \right). 
\end{equation}
Then since $T\left(\psi^{\prime} \right)$ is a free $R$-module of rank one by Lemma \ref{frrkone}, so is the $R$-module $T^{\prime}\left(\psi^{\prime} \right)$. Now we prove the freeness of $T^{\prime}\left(\psi \right)$. Again by taking base extension $\otimes_{R}R_{\mathfrak{p}}$, we have $T^{\prime}\left(\psi \right)_{\mathfrak{p}}=T\left(\psi \right)_{\mathfrak{p}}$ for all $\mathfrak{p} \in \mathrm{Spec}_{\mathrm{ht}=1}\left(R \right)$ and $\left(\left.T^{\prime}\left(\psi \right)\right/T\left(\psi \right) \right)_{\mathfrak{p}_i} \stackrel{\sim}{\rightarrow} \left(\left.T^{\prime}\right/{T} \right)_{\mathfrak{p}_i} \stackrel{\sim}{\rightarrow} \left.R_{\mathfrak{p}_i}\right/{\mathfrak{p}_i^{j_i^{\prime}-j_i}}$ as an $R_{\mathfrak{p}_i}$-module. Since $R_{\mathfrak{p}_i}$ is a discrete valuation ring and $T^{\prime}\left(\psi \right)_{\mathfrak{p}_i}$ is an $R_{\mathfrak{p}_i}$-lattice of $V\left(\psi \right)$, we have that $T^{\prime}\left(\psi \right)_{\mathfrak{p}_i}$ is $R_{\mathfrak{p}_i}$-free of rank one. Thus $\mathfrak{p}_i^{j_i^{\prime}-j_i}T^{\prime}\left(\psi \right)_{\mathfrak{p}_i}=T\left(\psi \right)_{\mathfrak{p}_i}$. On the other hand, since $T^{\prime}\left(\psi \right)$ and $T\left(\psi \right)$ are reflexive by Lemma \ref{basic}, we have the equality
\begin{equation}\label{202009172}
\displaystyle\prod_{i=1}^{r}\mathfrak{p}_i^{j_i^{\prime}-j_i}T^{\prime}\left(\psi \right)=T\left(\psi \right).
\end{equation}
Thus $T^{\prime}\left(\psi \right)$ is also $R$-free of rank one by Lemma \ref{frrkone}. This completes the proof of Claim \ref{c1}.

\end{proof}
We complete the proof of Theorem \ref{Gal}
\begin{proof}[Proof of Theorem \ref{Gal}]
By Proposition \ref{reffr}, the graphs $C^{\mathrm{fr}}\left(\rho \right)$ and $C^{\mathrm{ref}}\left(\rho \right)$ coincide. The assertion (2) $\Rightarrow$ (1) follows by Proposition \ref{Gal1}. Thus it remains to prove the assertion (1) $\Rightarrow$ (2). By Proposition \ref{Gal1}, there exist reflexive lattices $T$ and $T^{\prime}$ of $\Phi_{\vartheta_1}^{-1}\left( \left(j_1, \cdots, j_r \right)   \right)$ and $\Phi_{\vartheta_1}^{-1}\left( \left(j_1^{\prime}, \cdots, j_r^{\prime} \right)   \right)$ respectively such that $T \subset T^{\prime}$, that $T^{\prime}_{\mathfrak{p}}=T_{\mathfrak{p}}$ for all $\mathfrak{p} \in \mathrm{Spec}_{\mathrm{ht}=1}\left(R \right)\setminus \mathcal{N}$ and that $\left(\left.T^{\prime}\right/{T^{\prime}} \right)_{\mathfrak{p}_i}$ is isomorphic to $\left.R_{\mathfrak{p}_i}\right/{\mathfrak{p}_i^{j_i^{\prime}-j_i}}\left(\vartheta_1 \right)$ for any $\mathfrak{p}_i \in \mathcal{N}$. Then we have $\left.T^{\prime}\right/T \stackrel{\sim}{\rightarrow} \left.T^{\prime}\left(\psi \right)\right/{\displaystyle\prod_{i=1}^r \mathfrak{p}_i^{j_i^{\prime}-j_i} T^{\prime}\left(\psi \right)}$ by combining the equality \eqref{202009170}, \eqref{202009171} and \eqref{202009172}. By Proposition \ref{reffr}, $T$ and $T^{\prime}$ are free over $R$. Then by Lemma \ref{frrkone}, $T^{\prime}\left(\psi \right)$ is $R$-free of rank one. Thus we have 
\begin{equation*}
\left.T^{\prime}\right/T \stackrel{\sim}{\rightarrow} \left.R\right/{\displaystyle\prod_{i=1}^r \mathfrak{p}_i^{j_i^{\prime}-j_i}}\,\left(\vartheta \right)
\end{equation*}
as $R[g_0]$-modules. Moreover, $G$ acts on $\left.T^{\prime}\right/T$ by one of the characters $\vartheta$ and $\vartheta^{\prime}$ by Lemma \ref{21}. Thus we completes the proof since $\vartheta\left(g_0 \right)\neq\vartheta^{\prime}\left(g_0 \right)$ .
\end{proof}

\begin{example}\label{Til}
In this example, we consider the case when $\rho$ comes from a Hida family. Let $\mathcal{F}$ be a normalized $\mathbb{I}$-adic eigen cusp form and $\mathbb{T}$ the Galois representation attached to $\mathcal{F}$. Note that $\mathbb{T}$ is a finitely generated torsion-free $\mathbb{I}$-module and $\mathbb{V}_{\mathcal{F}}:=\mathbb{T}\otimes_{\mathbb{I}}\mathbb{K}$ has $\mathbb{K}$-dimension equal to two. Since $\mathbb{I}$ has Krull dimension two, we have that when $\mathbb{I}$ is regular, every reflexive lattice is free over $\mathbb{I}$. Now we consider a weaker condition when $\mathbb{I}$ is UFD and Gorenstein. Then the Gorenstein's assumption (with some extra assumptions on the Neben character, see \S 4.2 below) implies the existence of a stable free lattice by a result of Tilouine \cite[Theorem 4.4]{T}. Thus Proposition \ref{reffr} tells us that every stable reflexive lattice is free even $\mathbb{I}$ is not regular and Theorem \ref{Gal} tells us that the graph $C^{\mathrm{fr}}\left(\rho_{\mathcal{F}} \right)$ is connected and rectangle. 

Now let us consider the two-variable case. Let $\mathcal{V}=\mathbb{T}\hat{\otimes}_{\mathbb{Z}_p}\mathbb{Z}_p[[\Gamma]]\left(\tilde{\kappa}^{-1} \right)\otimes_{\mathcal{R}}\mathcal{K}$, where $\tilde{\kappa}$ denotes the character $G_{\mathbb{Q}} \twoheadrightarrow \Gamma \hookrightarrow \mathbb{Z}_p[[\Gamma]]^{\times}$, $\mathcal{R}:=\mathbb{I}[[\Gamma]]$ and $\mathcal{K}$ is the field of fractions of $\mathcal{R}$. Since $\mathcal{R}$ has Krull dimension three, a reflexive lattice may not be free over $\mathcal{R}$ even $\mathcal{R}$ is regular. However under the above assumptions we also have that any stable reflexive lattice is free.  On the other hand, by definition we have that the ideal of reducibility of $\rho_{\mathcal{F}}^{\mathrm{n. ord}}$ is generated by the the ideal of reducibility of $\rho_{\mathcal{F}}$. Thus by applying Theorem \ref{Gal} to $\rho_{\mathcal{F}}$ and $\rho_{\mathcal{F}}^{\mathrm{n. ord}}$, we know that every stable free lattice of $\mathcal{V}$ comes from stable free lattice of $\mathbb{V}_{\mathcal{F}}$. 

\end{example}

\section{Application to Selmer groups for residually reducible Hida deformation}
\subsection{Perrin-Riou and Ochiai's formula}
In this section, we recall a formula of  Perrin-Riou \cite{PR} Ochiai \cite{Ochiai08} on calculating the difference of Selmer groups for different choices of stable lattices in Galois deformation. Then by applying Theorem \ref{Gal}, we study how dose Selmer group change concretely when a stable lattice varies in two-variable Hida deformation.

Let $\mathscr{R}$ be an integrally closed local domain which is finite flat over $\mathbb{Z}_p[[X_1, \cdots, X_n]]$ with $\mathscr{M}$ the maximal ideal of $\mathscr{R}$ and $\mathscr{K}$ the field of fractions. Let $\mathscr{V}$ be a finite-dimensional $\mathscr{K}$-vector space and $$\rho: G_{\mathbb{Q}} \rightarrow \mathrm{Aut}_{\mathscr{K}}\left(\mathscr{V} \right)$$ a linear representation such that 
\begin{itemize}
\item The representation $\rho$ has a stable lattice $\mathscr{T}$.
\item The action of $G_{\mathbb{Q}}$ on $\mathscr{T}$ is continuous with respect to the $\mathscr{M}$-adic topology on $\mathscr{T}$.
\item The the action of $G_{\mathbb{Q}}$ on $\mathscr{T}$ is unramified outside a finite set of primes $\Sigma \supset \set{p, \infty}$. 
\end{itemize}
We fix the above $\left(\mathscr{R}, \mathscr{V}, \rho \right)$ to the end of this subsection. 
\begin{dfn}\label{7131}
Suppose that we have a $G_{\mathbb{Q}_p}$-stable subspace $F^{+}\mathscr{V}$ of $\mathscr{V}$. For a stable lattice $\mathscr{T}$, let $F^{+}\mathscr{T}:=\mathscr{T}\cap F^{+}\mathscr{V}$ and $F^{+}\mathscr{A}:=F^{+}\mathscr{T}\otimes_{\mathscr{R}}\mathscr{R}^{\lor}$ in $\mathscr{A}:=\mathscr{T}\otimes_{\mathscr{R}}\mathscr{R}^{\lor}$. We define the Selmer group $\mathrm{Sel}_{\mathscr{A}}$ for $\mathscr{T}$ as follows
\begin{equation*}
\mathrm{Sel}_{\mathscr{A}}:=\mathrm{Ker}\left[H^{1}\left(\mathbb{Q}_{\Sigma}/\mathbb{Q}, \mathscr{A} \right) \rightarrow \displaystyle\prod_{v \in \Sigma\setminus\{p, \infty\}}H^{1}\left(I_v, \mathscr{A} \right)\times H^{1}\left(I_p, \left.\mathscr{A}\right/F^{+}\mathscr{A} \right) \right].
\end{equation*}

\end{dfn}

\begin{dfn}[{\cite[Definition 1.5]{Ochiai08}}]\label{O1}
Let us introduce the following conditions on a stable lattice $\mathscr{T}$:
\begin{enumerate}
\item[$(\mathbf{F}_{\mathbb{Q}})$]The coinvariant quotient $\left(\mathscr{T}^{*} \right)_{G_{\mathbb{Q}}}$ is a pseudo-null $\mathscr{R}$-module.
\item[$(\mathbf{F_p})$]$I_p$ acts non-trivially on every element of $\mathscr{T}/F^{+}\mathscr{T}$. $G_{\mathbb{Q}_p}$ acts non-trivially on every elements of $F^{+}\mathscr{T}\left(-1 \right)$ and $\mathscr{T}\left(-1 \right)/F^{+}\mathscr{T}\left(-1 \right)$, where $\mathscr{T}\left(-1 \right):=\mathscr{T}\otimes_{\mathbb{Z}_p}\mathbb{Z}_p\left(-1 \right)$. $\left(\left(\mathscr{A}/F^{+}\mathscr{A} \right)^{G_{\mathbb{Q}_p}} \right)^{\lor}$ is a pseudo-null $\mathscr{R}$-module.
\item[$(\mathbf{T})$]$\left(\mathrm{Sel}_{\mathscr{A}} \right)^{\lor}$ is a torsion $\mathscr{R}$-module. 
\item[$(\mathbf{T}^{*})$]$\left(\mathrm{Sel}_{\mathscr{T}^{*}\left(1 \right)\otimes_{\mathscr{R}}\mathscr{R}^{\lor}} \right)^{\lor}$ is a torsion $\mathscr{R}$-module, where $\mathscr{T}^{*}\left(1 \right)=\mathscr{T}^{*}\otimes_{\mathbb{Z}_p} \mathbb{Z}_p\left(1 \right)$.
\end{enumerate}
Furthermore, for each prime $l \in \Sigma\setminus\set{p, \infty}$, we introduce the following condition:
\begin{enumerate}
\item[$(\mathbf{F_v})$]$G_{\mathbb{Q}_v}$ acts non-trivially on every element of $\mathscr{T}\left(-1 \right)$. The modules $\mathscr{A}^{G_{\mathbb{Q}_v}}$ and $\left(\mathscr{A}^{I_v} \right)_{G_{\mathbb{Q}_v}}$ are pseudo-null $\mathscr{R}$-modules. 
\end{enumerate}
\end{dfn}

\begin{rem}[{\cite[Remark 1.7]{Ochiai08}}]\label{O2}
The conditions $(\mathbf{F}_{\mathbb{Q}}), (\mathbf{F_p}), (\mathbf{T}), (\mathbf{T}^{*})$ and $(\mathbf{F_v})$ for every $v \in \Sigma\setminus\set{p, \infty}$ are satisfied for the following deformation $\mathscr{T}$ and its all sub-lattices:
\begin{enumerate}
\renewcommand{\labelenumi}{(\arabic{enumi})}
\item $\mathscr{T}=T\otimes_{\mathbb{Z}_p}\mathbb{Z}_p[[\Gamma]]\left(\tilde{\kappa}^{-1} \right)\otimes_{\mathbb{Z}_p}\omega^i$, where $T$ is a stable lattice of the Galois representation attached to a $p$-ordinary normalized eigen cusp form. Recall that $\tilde{\kappa}$ is the character $\tilde{\kappa}: G_{\mathbb{Q}} \twoheadrightarrow \Gamma \hookrightarrow \mathbb{Z}_p[[\Gamma]]^{\times}$.
\item $\mathscr{T}=\mathbb{T}\hat{\otimes}_{\mathbb{Z}_p}\mathbb{Z}_p[[\Gamma]]\left(\tilde{\kappa}^{-1} \right)\otimes_{\mathbb{Z}_p}\omega^i$, where $\mathbb{T}$ is the Galois representation attached to an $\mathbb{I}$-adic normalized eigen cusp form and $\mathbb{I}$ is isomorphic to $\mathcal{O}[[X]]$ for the ring of integers of a finite extension of $\mathbb{Q}_p$.
\end{enumerate}
\end{rem}

\begin{rem}\label{hold not Hida}
We remark that the condition $(\mathbf{F_p})$ does not hold for the one-variable Hida deformation $\mathbb{T}$ since under the action of $G_{\mathbb{Q}_p}$ on $\left.\mathbb{T}\right/{F^{+}\mathbb{T}}$ is unramified due to Wiles \cite[Theorem 2.2.2]{Wi88}. Also by Proposition \ref{H0} below, the condition $(\mathbf{F}_{\mathbb{Q}})$ dose not hold for some stable free lattices (in fact it holds for only one free lattice by Proposition \ref{H0}). 
\end{rem}

Let $\mathscr{T}$ and $\mathscr{T}^{\prime}$ be $G_{\mathbb{Q}}$-stable $\mathscr{R}$-free lattices of $\mathscr{V}$. The following comparison formula enables us to calculate the difference between $\mathrm{char}_{\mathscr{R}}\left(\mathrm{Sel}_{\mathscr{A}}\right)^{\lor}$ and $\mathrm{char}_{\mathscr{R}}\left(\mathrm{Sel}_{\mathscr{A}^{\prime}}\right)^{\lor}$. Note that the formula is first proved by Schneider \cite{PS} for the case where $\mathscr{T}$ is the cyclotomic deformation of a $p$-adic representation attached to an abelian variety and by Perrin-Riou \cite{PR} for the case where $\mathscr{T}$ is the cyclotomic deformation of an ordinary $p$-adic representation. Then Ochiai \cite{Ochiai08} generalized the formula to more general Galois deformation. 
\begin{thm}[Ochiai {\cite[Theorem 1.6]{Ochiai08}}]\label{O}
Suppose that $\mathscr{R}$ is isomorphic to $\mathcal{O}[[X_1, \cdots, X_n]]$, where $\mathcal{O}$ is the ring of integers of a finite extension of $\mathbb{Q}_p$. Let us take $G_{\mathbb{Q}}$-stable $\mathscr{R}$-free lattices $\mathscr{T}$ and $\mathscr{T}^{\prime}$ with $\mathscr{T}^{\prime} \subset \mathscr{T}$. Assume the conditions $(\mathbf{F}_{\mathbb{Q}}), (\mathbf{F_p}), (\mathbf{T}), (\mathbf{T}^{*})$ and $(\mathbf{F_v})$ for every $v \in \Sigma\setminus\set{p, \infty}$ hold for $\mathscr{T}$ and $\mathscr{T}^{\prime}$. Then we have the following equality:
\begin{equation*}
\dfrac{\mathrm{char}_{\mathscr{R}}\left(\mathrm{Sel}_{\mathscr{A}}\right)^{\lor}}{\mathrm{char}_{\mathscr{R}}\left(\mathrm{Sel}_{\mathscr{A}^{\prime}}\right)^{\lor}}=\displaystyle\prod_{\mathscr{P}\in \mathrm{Spec}_{\mathrm{ht}=1}\left(\mathscr{R} \right)}\mathscr{P}^{\mathrm{length}_{\mathscr{R}_{\mathscr{P}}}\left(\left(\mathscr{T}/\mathscr{T}^{\prime} \right)_{G_{\mathbb{R}}} \right)_{\mathscr{P}}-\mathrm{length}_{\mathscr{R}_{\mathscr{P}}}\left(\left(F^{+}\mathscr{T}/F^{+}\mathscr{T}^{\prime} \right) \right)_{\mathscr{P}}} .
\end{equation*}

\end{thm}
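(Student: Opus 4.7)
The plan is to compare the two Selmer groups by exploiting the short exact sequence
\begin{equation*}
0 \to \mathscr{A}' \to \mathscr{A} \to C \to 0,
\end{equation*}
where $C := (\mathscr{T}/\mathscr{T}') \otimes_{\mathscr{R}} \mathscr{R}^{\vee}$ is a discrete $G_{\mathbb{Q}}$-module, together with its analogue for the $F^{+}$-parts with quotient $C^{+} := (F^{+}\mathscr{T}/F^{+}\mathscr{T}') \otimes_{\mathscr{R}} \mathscr{R}^{\vee}$. Since $\mathscr{T}$ and $\mathscr{T}'$ are $\mathscr{R}$-free, tensoring with $\mathscr{R}^{\vee}$ preserves exactness; both $C$ and $C^{+}$ are cofinitely generated torsion $\mathscr{R}$-modules whose Pontryagin duals, localized at each height-one prime $\mathscr{P}$, are finite-length $\mathscr{R}_{\mathscr{P}}$-modules of computable length.

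Taking global and local Galois cohomology, I would assemble the defining diagrams of $\mathrm{Sel}_{\mathscr{A}'}$ and $\mathrm{Sel}_{\mathscr{A}}$ into one large commutative diagram with vertical maps induced by $\mathscr{A}' \hookrightarrow \mathscr{A}$, and run a snake lemma argument. The resulting exact sequence expresses the kernel and cokernel of $\mathrm{Sel}_{\mathscr{A}'} \to \mathrm{Sel}_{\mathscr{A}}$ in terms of the $H^0$ and $H^1$ groups of $C$ and $C^{+}$, both globally over $\mathrm{Gal}(\mathbb{Q}_{\Sigma}/\mathbb{Q})$ and locally over $G_{\mathbb{Q}_v}$ and $I_v$ for $v \in \Sigma$. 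The hypotheses $(\mathbf{F}_{\mathbb{Q}})$ together with its Tate-twisted counterpart encoded in $(\mathbf{T}^{*})$ would force every global contribution outside $H^0(\mathbb{Q}, C)$ to be pseudo-null; Pontryagin duality then identifies $H^0(\mathbb{Q}, C)^{\vee}$ with the coinvariants $(\mathscr{T}/\mathscr{T}')_{G_{\mathbb{Q}}}$, giving the first factor in the product. Analogously, $(\mathbf{F_p})$ and the conditions $(\mathbf{F_v})$ for $v \neq p$ would eliminate every local contribution except the $F^{+}$-part at $p$, which enters with the opposite sign because $F^{+}\mathscr{T}/F^{+}\mathscr{T}'$ contributes to the local condition being removed rather than to the global group itself. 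The torsionness assumptions $(\mathbf{T})$ and $(\mathbf{T}^{*})$ are used to guarantee that both characteristic ideals are well defined and to run a Greenberg-style Euler characteristic argument that kills the global $H^2$-terms.

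Passing to each localization $\mathscr{R}_{\mathscr{P}}$ reduces the identity to a length computation over the discrete valuation ring $\mathscr{R}_{\mathscr{P}}$, and the regularity of $\mathscr{R} \cong \mathcal{O}[[X_1, \ldots, X_n]]$ guarantees that characteristic ideals behave well under short exact sequences and that pseudo-null modules can be safely discarded. The hard step is the precise analysis at $p$: one must verify that the induced map $H^1(I_p, \mathscr{A}'/F^{+}\mathscr{A}') \to H^1(I_p, \mathscr{A}/F^{+}\mathscr{A})$ has kernel and cokernel differing from the explicit $F^{+}\mathscr{T}/F^{+}\mathscr{T}'$ contribution only by pseudo-null pieces, and this is exactly what the three separate clauses of $(\mathbf{F_p})$ concerning $F^{+}\mathscr{T}(-1)$, $\mathscr{T}(-1)/F^{+}\mathscr{T}(-1)$, and the pseudo-nullness of $((\mathscr{A}/F^{+}\mathscr{A})^{G_{\mathbb{Q}_p}})^{\vee}$ are designed to provide. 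Once this local input is in hand, matching the global-minus-local exponents at each height-one prime yields the stated formula.
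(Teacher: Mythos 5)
First, note that the paper does not prove this statement at all: Theorem \ref{O} is quoted verbatim from Ochiai \cite[Theorem 1.6]{Ochiai08} (building on Perrin-Riou and Schneider), so there is no internal proof to compare against. Judged on its own terms, your proposal has the right global shape (compare the two Selmer groups through the long exact sequences attached to the isogeny, use the hypotheses to discard pseudo-null contributions, and localize at height-one primes), but it contains two genuine errors.

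The first is in the very first line: the sequence $0 \to \mathscr{A}' \to \mathscr{A} \to C \to 0$ with $C = (\mathscr{T}/\mathscr{T}')\otimes_{\mathscr{R}}\mathscr{R}^{\lor}$ is not correct. Since multiplication by any nonzero $f \in \mathscr{R}$ is injective on $\mathscr{R}$, it is surjective on $\mathscr{R}^{\lor}$, so $\mathscr{R}^{\lor}$ is a divisible $\mathscr{R}$-module and $M \otimes_{\mathscr{R}} \mathscr{R}^{\lor} = 0$ for every finitely generated torsion $M$; in particular your $C$ and $C^{+}$ vanish. The inclusion $\mathscr{T}' \subset \mathscr{T}$ induces a \emph{surjection} $\mathscr{A}' \twoheadrightarrow \mathscr{A}$ whose kernel is $\mathrm{Tor}_1^{\mathscr{R}}(\mathscr{T}/\mathscr{T}', \mathscr{R}^{\lor})$ (the analogue of $T/T' = \ker(V/T' \to V/T)$ in the classical picture). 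The difference module therefore sits in degree-shifted position relative to your diagram, the comparison map on Selmer groups goes the other way, and the roles of $H^0$ and $H^1$ in your snake-lemma bookkeeping are interchanged. The second error is the identification of the global factor: you claim the exponent $\mathrm{length}\left(\left(\mathscr{T}/\mathscr{T}'\right)_{G_{\mathbb{R}}}\right)_{\mathscr{P}}$ arises as $H^0(\mathbb{Q}, C)^{\lor} \cong (\mathscr{T}/\mathscr{T}')_{G_{\mathbb{Q}}}$. The formula involves coinvariants under $G_{\mathbb{R}}$, i.e.\ under complex conjugation, not under $G_{\mathbb{Q}}$, and this term does not come from a global $H^0$ (which the hypothesis $(\mathbf{F}_{\mathbb{Q}})$ is precisely designed to render pseudo-null). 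It comes from the archimedean local term in the global Euler--Poincar\'e characteristic formula, applied over $\mathscr{R}_{\mathscr{P}}$ to the kernel of the isogeny: the balance of global $H^1$ against the local conditions is computed by Poitou--Tate, and the only term that survives besides the $F^{+}$-contribution at $p$ is the contribution of the real place. Your sketch never invokes the Euler characteristic formula, and without it there is no mechanism producing the $G_{\mathbb{R}}$-coinvariants, so the "matching of exponents" in your last step cannot be carried out.
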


\subsection{Application to two-variable Hida deformations}

In this subsection, by applying Theorem \ref{Gal}, we give an example of Theorem \ref{O} to study how does the Selmer group change in a two-variable Hida deformation with reducible residual representation for our later use. First let us prepare some notation on Hida deformation. We fix an integer $N \in \mathbb{Z}_{\geq 1}$ which is prime to $p$ and $\chi$ a Dirichlet character of conductor $N$. We denote by $\mathbf{H}\left(N, \chi, \mathbb{Z}_p[\chi] \right)^{\mathrm{ord}}$ and $\mathbf{h}\left(N, \chi, \mathbb{Z}_p[\chi] \right)^{\mathrm{ord}}$ be the universal Hida's Hecke algebra and cuspidal Hecke algebra respectively on which $\left(\left.\mathbb{Z}\right/{Np}\mathbb{Z} \right)^{\times}$ acts by $\chi$. Let $\Lambda_{\chi}:=\mathbb{Z}_p[\chi][[\Gamma^{\prime}]]$ where $\Gamma^{\prime}$ is the $p$-Sylow subgroup of the group of diamond operators for the tower of modular curves $\left\{Y_1\left(p^t\right)\right\}_{t\geq 1}$ as in \S 1. We omit the subscript $\chi$ of $\Lambda_{\chi}$ when the image of $\chi$ is contained in $\mathbb{Z}_p$.

We denote by $\mathcal{E}_{\chi}=\displaystyle\sum_{n=0}^{\infty}a\left(n, \mathcal{E}_{\chi} \right)q^n \in \Lambda_{\chi}[[q]]$ the $\Lambda_{\chi}$-adic Eisenstein series $\mathcal{E}\left(\chi\omega^{-1}, \mathbf{1} \right)$ defined in \cite[\S 1.2]{MO05}. Let $I$ be the ideal of $\mathbf{H}\left(N, \chi, \mathbb{Z}_p[\chi] \right)^{\mathrm{ord}}$ which is generated by $T_l-a\left(l, \mathcal{E}_{\chi} \right)$ for all primes $l \nmid Np$ and by $U_p-1$. Let $\mathfrak{M}$ be a maximal ideal of $\mathbf{H}\left(N, \chi, \mathbb{Z}_p[\chi] \right)^{\mathrm{ord}}$ generated by $I, \gamma^{\prime}-1, \varpi$ for a fixed uniformizer of $\mathbb{Z}_p[\chi]$. Let $\mathbb{I}$ be a quotient of $\mathbf{h}\left(N, \chi, \mathbb{Z}_p[\chi] \right)^{\mathrm{ord}}_{\mathfrak{M}}$ which corresponds to a certain normalized $\mathbb{I}$-adic eigen cusp form $\mathcal{F}$. The algebra $\mathbb{I}$ is a local domain which is finite flat over $\Lambda_{\chi}$. Let $\mathbb{K}$ be the field of fractions of $\mathbb{I}$ and we replace $\mathbb{I}$ with its integral closure in $\mathbb{K}$ from now on to the end of this paper.

Let $\rho_{\mathcal{F}}$ be the Galois representation attached to $\mathcal{F}$. We assume that the representation $\rho_{\mathcal{F}}$ satisfies the condition (Lat-fr) in \S 1 throughout the paper. The condition (Lat-fr) is satisfied in the following cases
\begin{enumerate}
\item[(1)]The algebra $\mathbb{I}$ is a regular local ring.
\item[(2)]The algebra $\mathbb{I}$ is Gorenstein and $\left.\chi\right|_{\left(\left.\mathbb{Z}\right/p\mathbb{Z} \right)^{\times}} \neq \mathbf{1}, \omega$ (Tilouine \cite[Theorem 4.4]{T}). 
\end{enumerate}
Then the residual representation $\overline{\rho}_{\mathcal{F}}$ modulo the maximal ideal $\mathfrak{m}_{\mathbb{I}}$ of $\mathbb{I}$ is defined by the semi-simplification of $\left.\mathbb{T}\right/\mathfrak{m}_{\mathbb{I}}\mathbb{T}$ for a stable free lattice $\mathbb{T}$ of $\rho_{\mathcal{F}}$. We assume $\left.\chi\right|_{\left(\left.\mathbb{Z}\right/p\mathbb{Z} \right)^{\times}} \neq \omega^{-1}$ from now on to the end of this paper. Then since $\mathcal{F}$ corresponds to the algebra homomorphism $\mathbf{h}\left(N, \chi, \mathbb{Z}_p[\chi] \right)^{\mathrm{ord}}_{\mathfrak{M}} \rightarrow \mathbb{I}$, by the duality theorem \cite[Proposition 1.5.3]{MO05} we have that $\overline{\rho}_{\mathcal{F}}$ is isomorphic to $\mathbf{1}\oplus\chi$.

Recall that for a stable lattice $\mathbb{T}$, we denote by $\mathcal{T}$ and $\mathcal{A}$ the $\mathcal{R}$-modules $\mathbb{T}\hat{\otimes}_{\mathbb{Z}_p}\mathbb{Z}_p[[\Gamma]]\left(\tilde{\kappa}^{-1} \right)$ and $\mathcal{T}\otimes_{\mathcal{R}}\mathcal{R}^{\lor}$ respectively, where $\mathcal{R}:=\mathbb{I}[[\Gamma]]$. Suppose $p \nmid \varphi\left(N \right)$, then by class field theory, the ideal of reducibility $J\left(\rho_{\mathcal{F}} \right)$ coincides with the ideal $\mathbb{J}$ which is generated by $a\left(l, \mathcal{F} \right)-1-a\left(l, \mathcal{E}_{\chi}\right)$ for all primes $l \nmid Np$ and $a\left(p, \mathcal{F} \right)-1$. Let $\mathscr{L}_p^{\mathrm{alg}}\left(\rho_{\mathcal{F}}^{\mathrm{n.ord}, \left(i \right)} \right)$ be the set of all $\mathrm{char}_{\mathcal{R}}\left(\mathrm{Sel}_{\mathcal{A}\otimes\omega^i} \right)^{\lor}$ when $\mathbb{T}$ varies in the set of all stable lattices (free and non-free) of $\mathbb{V}_{\mathcal{F}}$. We obtain the following result. 

\begin{cor}\label{C2}
Suppose that $\mathbb{I}$ is isomorphic to $\mathcal{O}[[X]]$ for the ring of integers of a finite extension of $\mathbb{Q}_p$. Assume $p \nmid \varphi\left(N \right)$ and that $\left.\chi\right|_{\left(\left.\mathbb{Z}\right/{p\mathbb{Z}} \right)^{\times}}$ is nontrivial. We decompose $\mathbb{J}^{**}$ into the product of height-one prime ideals as $\mathbb{J}^{**}=\mathfrak{p}_1^{n_1}\cdots\mathfrak{p}_r^{n_r}$, where $\mathfrak{p}_i$ and $\mathfrak{p}_j$ are distinct when $i\neq j$. Let $\Phi_{\mathbf{1}}$ be the isomorphism in Theorem \ref{Gal} and $\mathbb{T}^{\mathrm{min}}$ a representative of $\Phi_{\mathbf{1}}^{-1}\left(\left(0, \cdots, 0 \right) \right)$. Let $i$ be an even integer, we have the following equality:
\begin{equation*}
\mathscr{L}_p^{\mathrm{alg}}\left(\rho_{\mathcal{F}}^{\mathrm{n.ord}, \left(i \right)} \right)=
\left\{ \left.\mathrm{char}_{\mathcal{R}}\left(\mathrm{Sel}_{\mathcal{A}^{\mathrm{min}}\otimes\omega^i} \right)^{\lor}\cdot\mathfrak{p}_1^{j_1}\cdots\mathfrak{p}_r^{j_r}\mathcal{R} \right| j_s=0, \cdots, n_i, s=1, \cdots r \right\}.
\end{equation*}
\end{cor}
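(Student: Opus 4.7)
The plan is to combine the rectangular description of $C^{\mathrm{fr}}(\rho_{\mathcal{F}})$ coming from Theorem \ref{Gal} with Ochiai's comparison formula (Theorem \ref{O}). First I would reduce to stable \emph{free} lattices: for any stable lattice $\mathbb{T}$ the double dual $\mathbb{T}^{**}$ is a stable reflexive lattice, hence free over $\mathbb{I}$ by Proposition \ref{reffr}; the cokernel of $\mathbb{T} \hookrightarrow \mathbb{T}^{**}$ is pseudo-null over $\mathbb{I}$, so the induced map on Selmer groups has Pontryagin dual with pseudo-null kernel and cokernel, and the characteristic ideal of $(\mathrm{Sel}_{\mathcal{A}\otimes\omega^i})^{\lor}$ is unaffected when we replace $\mathbb{T}$ by $\mathbb{T}^{**}$.

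Next I would set up the application of Theorem \ref{Gal}. Under the standing assumptions $p \nmid \varphi(N)$ and $\chi|_{(\mathbb{Z}/p\mathbb{Z})^{\times}}$ nontrivial, the conditions (Ir), (Lat-fr), (Red) and ($G$-dist) are all in force, and by class field theory the ideal of reducibility is $\mathbb{J}$; its reflexive hull factors as $\mathbb{J}^{**}=\mathfrak{p}_1^{n_1}\cdots\mathfrak{p}_r^{n_r}$, which matches the data $n_i=\mathrm{ord}_{\mathfrak{p}_i}J_{\mathfrak{p}_i}$ appearing in Theorem \ref{Gal}. Fix $\mathbb{T}^{\mathrm{min}}$ representing $\Phi_{\mathbf{1}}^{-1}(0,\ldots,0)$. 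For an arbitrary free lattice $\mathbb{T}$ at vertex $(j_1,\ldots,j_r)$, Theorem \ref{Gal}(2) gives a representative satisfying $\mathbb{T}^{\mathrm{min}}\subset\mathbb{T}$ with $\mathbb{T}/\mathbb{T}^{\mathrm{min}}\cong \mathbb{I}/(\mathfrak{p}_1^{j_1}\cdots\mathfrak{p}_r^{j_r})(\mathbf{1})$, on which $G_{\mathbb{Q}}$ acts trivially.

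Tensoring with $\mathbb{Z}_p[[\Gamma]](\tilde{\kappa}^{-1})\otimes\omega^i$ yields
\begin{equation*}
(\mathcal{T}\otimes\omega^i)/(\mathcal{T}^{\mathrm{min}}\otimes\omega^i)\;\cong\;\mathcal{R}/\bigl(\mathfrak{p}_1^{j_1}\cdots\mathfrak{p}_r^{j_r}\bigr)\,(\tilde{\kappa}^{-1}\omega^i).
\end{equation*}
Since $i$ is even and $\tilde{\kappa}$ factors through the pro-$p$ group $\Gamma$, complex conjugation acts trivially on this quotient, so $G_{\mathbb{R}}$-coinvariants equal the whole quotient. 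Localizing at a height-one prime $\mathscr{P}$ of $\mathcal{R}$, the length is $j_i$ when $\mathscr{P}=\mathfrak{p}_i\mathcal{R}$ and $0$ otherwise. For the $F^{+}$ term in Ochiai's formula I would argue that by Hida--Wiles $F^{+}\mathbb{V}_{\mathcal{F}}$ carries the ramified residual $G_{\mathbb{Q}_p}$-character equal to $\chi$ (not $\mathbf{1}$), whereas $\mathbb{T}/\mathbb{T}^{\mathrm{min}}$ supports only the trivial character; the natural injection $F^{+}\mathbb{T}/F^{+}\mathbb{T}^{\mathrm{min}}\hookrightarrow \mathbb{T}/\mathbb{T}^{\mathrm{min}}$ obtained from the snake lemma is therefore forced to be zero, so $F^{+}\mathcal{T}=F^{+}\mathcal{T}^{\mathrm{min}}$ and the $F^{+}$-contribution in Theorem \ref{O} vanishes.

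Verifying the hypotheses $(\mathbf{F}_{\mathbb{Q}}),(\mathbf{F}_{p}),(\mathbf{T}),(\mathbf{T}^{*}),(\mathbf{F}_{v})$ for these two-variable lattices is given by Remark \ref{O2}, so Theorem \ref{O} applies and gives
\begin{equation*}
\frac{\mathrm{char}_{\mathcal{R}}(\mathrm{Sel}_{\mathcal{A}\otimes\omega^i})^{\lor}}{\mathrm{char}_{\mathcal{R}}(\mathrm{Sel}_{\mathcal{A}^{\mathrm{min}}\otimes\omega^i})^{\lor}}\;=\;\mathfrak{p}_1^{j_1}\cdots\mathfrak{p}_r^{j_r}\,\mathcal{R},
\end{equation*}
which, letting $(j_1,\ldots,j_r)$ run through the rectangle, is exactly the asserted description of $\mathscr{L}_p^{\mathrm{alg}}(\rho_{\mathcal{F}}^{\mathrm{n.ord},(i)})$. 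The principal obstacle I anticipate is the $F^{+}$-step: one has to verify carefully that the $G_{\mathbb{Q}_p}$-filtration on $\mathbb{V}_{\mathcal{F}}$ restricts nicely to the pair $\mathbb{T}^{\mathrm{min}}\subset\mathbb{T}$ produced by Theorem \ref{Gal}, so that the residually-ramified/unramified dichotomy genuinely forces $F^{+}\mathbb{T}/F^{+}\mathbb{T}^{\mathrm{min}}=0$; the reduction from arbitrary stable lattices to free ones via pseudo-nullity of $\mathbb{T}^{**}/\mathbb{T}$ is the secondary point that must be spelled out, but it is standard once Proposition \ref{reffr} is in hand.
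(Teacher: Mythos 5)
Your proposal is correct and follows essentially the same route as the paper: reduce to free lattices (the paper cites \cite[Lemma 4.3]{Ochiai08} for the double-dual step you sketch), use Theorem \ref{Gal}(2) to identify $\mathbb{T}/\mathbb{T}^{\mathrm{min}}$ as a cyclic module with trivial $G_{\mathbb{Q}}$-action, kill the $F^{+}$-contribution because the residual character on $F^{+}$ is $\overline{\chi}\neq\mathbf{1}$, and then apply Theorem \ref{O} with the $G_{\mathbb{R}}$-coinvariants computed via the evenness of $\tilde{\kappa}^{-1}\omega^i$. The only cosmetic difference is that the paper phrases the $F^{+}$-step through the oddness of the character $\alpha$ on $F^{+}\mathbb{V}_{\mathcal{F}}$ rather than directly through the ramified-versus-trivial residual dichotomy you use, but the underlying computation $\overline{\alpha}=\overline{\chi}$, $\overline{\beta}=\mathbf{1}$ is identical.
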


\begin{proof}
Let $\alpha$ (resp. $\beta$) be the character of $G_{\mathbb{Q}_p}$ such that $G_{\mathbb{Q}_p}$ acts on $F^{+}\mathbb{V}_{\mathcal{F}}$ (resp. $F^{-}\mathbb{V}_{\mathcal{F}}:=\left.\mathbb{V}_{\mathcal{F}}\right/{F^{+}\mathbb{V}_{\mathcal{F}}}$) by $\alpha$ (resp. $\beta$). We have $\set{\overline{\alpha}, \overline{\beta} }=\set{\mathbf{1}, \overline{\chi} }$. Under the assumption that $\left.\chi\right|_{\left(\left.\mathbb{Z}\right/{p\mathbb{Z}} \right)^{\times}}$ is nontrivial, we have that $\overline{\alpha}\overline{\beta}$ is ramified at $p$. Since $\overline{\beta}$ is unramified, we have $\overline{\alpha}=\overline{\chi}$ and $\overline{\beta}=\mathbf{1}$. This implies that the character $\alpha$ is odd and $\beta$ is even.

Let $\mathbb{T}$ be a stable lattice. By \cite[Lemma 4.3]{Ochiai08} it is sufficient to assume that $\mathbb{T}$ is a free lattice. Let $\left(j_1, \cdots, j_r \right)$ be a vertex of $\rect$ corresponding to $[\mathbb{T}]$ under $\Phi_{\mathbf{1}}$. Then by Theorem \ref{Gal}-(2), we have the isomorphism $\left.\mathbb{T}\right/{\mathbb{T}^{\mathrm{min}}}\stackrel{\sim}{\rightarrow} \left.\mathbb{I}\right/{\mathfrak{p}_1^{j_1}\cdots\mathfrak{p}_r^{j_r}}\left(\mathbf{1} \right)$ by multiplying $\mathbb{T}$ by elements of $\mathbb{K}^{\times}$. Let us consider the following commutative diagram of $G_{\mathbb{Q}_p}$-modules:
\begin{equation*}
 \xymatrix{
   0 \ar[r]   & F^{+}\mathbb{T}^{\mathrm{min}} \ar[r] \ar@{^{(}-_>}[d] & \mathbb{T}^{\mathrm{min}} \ar[r] \ar@{^{(}-_>}[d] & F^{-}\mathbb{T}^{\mathrm{min}} \ar[r] \ar[d] & 0 \\
    0 \ar[r] & F^{+}\mathbb{T} \ar[r] & \mathbb{T} \ar[r] & F^{-}\mathbb{T} \ar[r] & 0,
  }
\end{equation*}
where $F^{-}\mathbb{T}:=\left.\mathbb{T}\right/{F^{+}\mathbb{T}}$. Since $F^{-}\mathbb{T}^{\mathrm{min}}$ is isomorphic to the image of the map $\mathbb{T}^{\mathrm{min}} \hookrightarrow \mathbb{V}_{\mathcal{F}} \twoheadrightarrow F^{-}\mathbb{V}_{\mathcal{F}}$ which implies that $F^{-}\mathbb{T}^{\mathrm{min}}$ is $\mathbb{I}$-torsion free. Thus by the snake lemma, the map $F^{-}\mathbb{T}^{\mathrm{min}} \rightarrow F^{-}\mathbb{T}$ is injective and we have
\begin{equation*}
0 \rightarrow \left.F^{+}\mathbb{T}\right/{F^{+}\mathbb{T}^{\mathrm{min}}}\,\left(\alpha \right) \rightarrow \left.\mathbb{T}\right/{\mathbb{T}^{\mathrm{min}}}\,\left(\mathbf{1} \right) \rightarrow \left.F^{-}\mathbb{T}\right/{F^{-}\mathbb{T}^{\mathrm{min}}}\,\left(\beta \right) \rightarrow 0.
\end{equation*}
Since $\alpha$ is odd, we have $F^{+}\mathbb{T}=F^{+}\mathbb{T}^{\mathrm{min}}$. Then by applying Theorem \ref{O} to $\mathcal{T}\otimes\omega^i:=\mathbb{T}\hat{\otimes}_{\mathbb{Z}_p}\mathbb{Z}_p[[\Gamma]]\left(\tilde{\kappa}^{-1} \right)\otimes\omega^i$ and $\mathcal{T}^{\mathrm{min}}\otimes\omega^i:=\mathbb{T}^{\mathrm{min}}\hat{\otimes}_{\mathbb{Z}_p}\mathbb{Z}_p[[\Gamma]]\left(\tilde{\kappa}^{-1} \right)\otimes\omega^i$, we have 
\begin{equation}\label{20200609}
\dfrac{\mathrm{char}_{\mathcal{R}}\left(\mathrm{Sel}_{\mathcal{A}\otimes\omega^i}\right)^{\lor}}{\mathrm{char}_{\mathcal{R}}\left(\mathrm{Sel}_{\mathcal{A}^{\mathrm{min}}\otimes\omega^i}\right)^{\lor}}=\displaystyle\prod_{s=1}^r\mathfrak{P}_s^{\mathrm{length}_{\mathcal{R}_{\mathfrak{P}_s}}\left(\left(\left.\mathcal{R}_{\mathfrak{P}_s}\right/{\mathfrak{P}_s^{j_s}}\left(\tilde{\kappa}^{-1}\omega^i \right)\right)_{G_{\mathbb{R}}} \right)_{\mathfrak{P}_s}},
\end{equation}
where $\mathfrak{P}_s=\mathfrak{p}_s\mathcal{R}$. Since $\mathbb{Q}_{\infty}$ is totally real and $i$ is even, $\left.\tilde{\kappa}^{-1}\omega^i\right|_{G_{\mathbb{R}}}$ is a trivial character. Thus the equality \eqref{20200609} becomes to 
\begin{equation*}
\dfrac{\mathrm{char}_{\mathcal{R}}\left(\mathrm{Sel}_{\mathcal{A}\otimes\omega^i}\right)^{\lor}}{\mathrm{char}_{\mathcal{R}}\left(\mathrm{Sel}_{\mathcal{A}^{\mathrm{min}}\otimes\omega^i}\right)^{\lor}}=\mathfrak{P}_1^{j_1}\cdots\mathfrak{P}_r^{j_r}
\end{equation*}
and the proof is complete.

\end{proof}

\begin{rem}\label{irregular pair}
\begin{enumerate}
\item[(1)]Let $\left(p, k \right)$ be an irregular pair i.e. $p$ is an irregular prime which divides the numerator of the $k$-th Bernoulli number $B_k$. By an idea of Ribet \cite{Ri76}, there is an eigen cusp form $f_k \in S_k\left(\mathrm{SL}_2\left(\mathbb{Z} \right) \right)$ such that the Galois representation $\rho_{f_k}$ attached to $f_k$ is residually reducible. This Galois representation $\rho_{f_k}$ and its Hida deformation play an important role in the Iwasawa theory for ideal class groups which is studied by Ribet \cite{Ri76}, Mazur-Wiles \cite{MW84} and Wiles \cite{Wi90}. 

We recall that the following condition holds for all irregular pairs $\left(p, k \right)$ with $p<10^7$ and $k<8000$ except for the pair $\left(p, k \right)=\left(547, 486 \right)$:
\begin{enumerate}
\item[($\Lambda$)]The Eisenstein component $\mathbf{h}\left(1, \omega^{k-1}, \mathbb{Z}_p \right)^{\mathrm{ord}}_{\mathfrak{M}}$ of Hida's cuspidal Hecke algebra  is free of rank one over $\Lambda$.
\end{enumerate}
This implies that there exists a unique $\Lambda$-adic eigen cusp form $\mathcal{F}_{p, k}$ with character $\omega^{k-1}$ which can specialize to $f_k$ and the Eisenstein ideal $J\left(\mathcal{F}_{p, k} \right)$ is generated by the Kubota-Leopoldt $p$-adic $L$-function $L_p\left(\omega^{k}; \gamma^{\prime} \right)$ for those $\left(p, k \right)$. For all examples we known, $L_p\left(\omega^{k}; \gamma^{\prime} \right)$ is always a prime ideal of $\Lambda$. Thus Theorem \ref{Gal} and Corollary \ref{C2} tell us that for all examples which we know, there are exactly two stable $\Lambda$-free lattices up to homothety and $\mathscr{L}_p^{\mathrm{alg}}\left(\rho_{\mathcal{F}}^{\mathrm{n.ord}, \left(i \right)} \right)$ contains exactly two elements for even $i$.
\item[(2)]As is remarked in Remark \ref{hold not Hida},  one could not apply Theorem \ref{O} directly to study the change of $\mathrm{Sel}_{\mathbb{A}}$ for one-variable Hida deformation $\mathbb{T}$. However, we will use the control theorem which is studied in the next subsection to specialize $\mathrm{char}_{\mathcal{R}}\left(\mathrm{Sel}_{\mathcal{A}} \right)^{\lor}$. We prove that $\mathrm{Sel}_{\mathbb{A}}$ does not change in Corollary \ref{alglonevar}.
\end{enumerate}
\end{rem}

\subsection{Control theorem of Selmer groups}
Let us keep the notation of the previous subsection. We assume that $\rho_{\mathcal{F}}$ satisfies the condition (Lat-fr) in \S 1. As is recalled in \S 4.2, a sufficient condition of (Lat-fr) is that the ring $\mathbb{I}$ is Gorenstein and $\left.\chi\right|_{\left(\left.\mathbb{Z}\right/p\mathbb{Z} \right)^{\times}} \neq \mathbf{1}, \omega$ due to Tilouine. From now on to the end of this paper, we will also assume that the Eisenstein ideal $\mathbb{J}$ is principle. This principality of $\mathbb{J}$ holds if $\mathbf{H}\left(N, \chi, \mathbb{Z}_p[\chi] \right)^{\mathrm{ord}}_{\mathfrak{M}}$ and $\mathbf{h}\left(N, \chi, \mathbb{Z}_p[\chi] \right)^{\mathrm{ord}}_{\mathfrak{M}}$ are Gorenstein due to Ohta \cite[Theorem 2]{MO05}. 

Let $\mathbb{T}$ be a stable free lattice and $\mathcal{T}:=\mathbb{T}\hat{\otimes}_{\mathbb{Z}_p}\mathbb{Z}_p[[\Gamma]]\left(\tilde{\kappa}^{-1} \right)$. Let $\mathcal{R}:=\mathbb{I}[[\Gamma]]$ and $\mathcal{A}=\mathcal{T}\otimes_{\mathcal{R}}\mathcal{R}^{\lor}$. Let $\Sigma$ be the union of the set of primes dividing $Np$ and $\infty$. Recall that $\mathbb{Q}_{\Sigma}$ is the maximal extension of $\mathbb{Q}$ unramified outside $\Sigma$. Let $P$ be a height-one prime ideal of $\mathcal{R}$ and we consider the following map 
\begin{equation*}
h_P: H^1\left(\left.\mathbb{Q}_{\Sigma}\right/{\mathbb{Q}}, \mathcal{A}\otimes\omega^i[P] \right) \rightarrow H^1\left(\left.\mathbb{Q}_{\Sigma}\right/{\mathbb{Q}}, \mathcal{A}\otimes\omega^i\right)[P].
\end{equation*}

We call a homomorphism $\phi \in \mathrm{Hom}_{\mathbb{Z}_p}\left(\mathbb{I}, \overline{\mathbb{Q}_p} \right)$ an \textit{arithmetic specialization of weight $k_{\phi}$} if there exists an open subgroup $U$ of $\Gamma^{\prime}$ such that $\left.\phi\right|_U$ coincide with the character ${\kappa^{\prime}}^{k_{\phi}-2}$ for some $k_{\phi} \in \mathbb{Z}_{\geq 2}$. We obtain the following result.

\begin{thm}\label{T3}
Suppose that $\mathbb{I}$ is a UFD and Gorenstein. Assume the conditions $p \nmid \varphi\left(N \right)$, and that the Eisenstein ideal $\mathbb{J}$ is principal. We decompose $\mathbb{J}$ into product of height-one prime ideals as $\mathbb{J}=\mathfrak{p}_1^{n_1}\cdots\mathfrak{p}_r^{n_r}$, where $\mathfrak{p}_i$ and $\mathfrak{p}_j$ are distinct when $i\neq j$. Assume further that $\left.\chi\right|_{\left(\left.\mathbb{Z}\right/p\mathbb{Z} \right)^{\times}} \neq \mathbf{1}, \omega$ and that the tame conductor $N$ is square free. Let us take a stable free lattice $\mathbb{T}$ and let $\left(j_1, \cdots, j_r \right)$ be the vertex of $\mathrm{Rect}_{\left(0, \cdots, 0\right)}^{\left(n_1, \cdots, n_r \right)}$ which is the image of $[\mathbb{T}]$ under $\Phi_{\mathbf{1}}$ in Theorem \ref{Gal}. Let $P$ be a height-one prime ideal of $\mathcal{R}$ in the following cases
\begin{enumerate}
\item[(a)]$P$ is $P=\mathrm{Ker}\left(\phi \right)\mathcal{R}$ for some arithmetic specialization $\phi$.
\item[(b)]$P$ is $P=\left(\gamma-1 \right) \subset \mathcal{R}$.
\item[(c)]$P$ is $P=\mathfrak{p}_t$ which is a factor of $\mathbb{J}$ in $\mathcal{R}$ for some $1\leq t \leq r$.
\end{enumerate}
Let $i$ be an even integer, then we have the following statements
\begin{enumerate}
\item[(1)]In case (a), the map $\mathrm{res}_P$ is an isomorphism.
\item[(2)]In case (b), the map $\mathrm{res}_P$ is an isomorphism except when $\omega^i=\mathbf{1}$ for which we have the following statements:

When $\omega^i=\mathbf{1}$, the map $\mathrm{res}_P$ is injective and $\mathrm{Coker}\,\mathrm{res}_P$ is isomorphic to $\mathbb{I}^{\lor}\left[ \dfrac{\left(a\left(p, \mathcal{F} \right)-1 \right)}{\mathfrak{p}_1^{n_1-j_1}\cdots\mathfrak{p}_r^{n_r-j_r}} \right]$.
\item[(3)]In case (c), the map $\mathrm{res}_P$ is an isomorphism except when $\omega^i=\mathbf{1}$ and $j_t<n_t$ for which we have the following statements:

When $\omega^i=\mathbf{1}$ and $j_t<n_t$, the map $\mathrm{res}_P$ is surjective and $\mathrm{Ker}\,\mathrm{res}_P$ is isomorphic to $\left(\left.\mathcal{R}\right/\mathfrak{p}_t\right)^{\lor}\left[\gamma-1\right]$.
\end{enumerate}
\end{thm}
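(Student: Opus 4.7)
The plan is to set up the standard control--theorem diagram \eqref{190208}, whose middle and right vertical arrows are
\begin{equation*}
h_P : H^1(\mathbb{Q}_\Sigma/\mathbb{Q}, \mathcal{A}\otimes\omega^i[P])\to H^1(\mathbb{Q}_\Sigma/\mathbb{Q}, \mathcal{A}\otimes\omega^i)[P], \qquad l_P=\prod_{v\in\Sigma}l_{P,v},
\end{equation*}
while $\mathrm{res}_P$ appears as the left vertical arrow on Selmer groups. Since $\mathcal{A}\otimes\omega^i$ is a $P$-divisible discrete $\mathcal{R}$-module, the long exact sequence attached to $0\to(\mathcal{A}\otimes\omega^i)[P]\to\mathcal{A}\otimes\omega^i\to\mathcal{A}\otimes\omega^i\to 0$ identifies $\mathrm{Ker}\,h_P$ with $H^0(\mathbb{Q},\mathcal{A}\otimes\omega^i)/P$ and shows $h_P$ to be surjective. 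The snake lemma therefore yields
\begin{equation*}
0\to\mathrm{Ker}\,\mathrm{res}_P\to H^0(\mathbb{Q},\mathcal{A}\otimes\omega^i)/P\xrightarrow{\;\delta\;}\mathrm{Ker}\,l_P\to\mathrm{Coker}\,\mathrm{res}_P\to 0,
\end{equation*}
reducing everything to (i) the computation of $H^0(\mathbb{Q},\mathcal{A}\otimes\omega^i)$ provided by Proposition~\ref{H0}, (ii) the local computation of $\mathrm{Ker}\,l_P$ from \cite[Proposition~5.1]{Ochiai06}, and (iii) the behaviour of $\delta$.

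Next I would dispose of the cases in which $\mathrm{res}_P$ is forced to be an isomorphism. In case (a), specialisation at an arithmetic $\phi$ of weight $k_\phi\ge 2$ produces the Galois representation attached to a classical $p$-ordinary cusp form, for which both $H^0(\mathbb{Q},\cdot)$ and Ochiai's local kernel vanish, so $\mathrm{res}_P$ is an isomorphism. For $P$ in cases (b) or (c) with $\omega^i\ne\mathbf{1}$, the residual representation of $\mathcal{A}\otimes\omega^i$ is $\omega^i\oplus\overline{\chi}\omega^i$; the hypothesis $\left.\chi\right|_{(\mathbb{Z}/p\mathbb{Z})^{\times}}\ne\mathbf{1},\omega$ together with $i$ even (which makes $\omega^i$ even and $\overline{\chi}\omega^i$ odd) prevents the trivial character from being a Jordan--H\"older constituent, whence $H^0(\mathbb{Q},\mathcal{A}\otimes\omega^i)=0$; the same parity argument makes $\mathrm{Ker}\,l_P=0$ at the prime~$p$, and the remaining local terms are dealt with by the tame-level hypothesis $N$ square-free together with $p\nmid\varphi(N)$. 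The isomorphism statements in~(2) and~(3) follow.

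The delicate content lies at $\omega^i=\mathbf{1}$, and it is precisely here that Theorem~\ref{Gal} enters. Proposition~\ref{H0} describes $H^0(\mathbb{Q},\mathcal{A})$ as an explicit $\mathcal{R}$-submodule of $\mathcal{R}^\vee$ indexed by the rectangle coordinate $(j_1,\dots,j_r)$ of $[\mathbb{T}]$ under $\Phi_{\mathbf{1}}$. For case~(b), $P=(\gamma-1)$, Ochiai's analysis of $l_P$ identifies $\mathrm{Ker}\,l_P$ with the $U_p$-trivial-zero term $\mathbb{I}^\vee[a(p,\mathcal{F})-1]$ supplied by the unramified quotient $F^-\mathcal{A}$, and a diagram chase shows that $\delta$ is injective with image equal to $\mathbb{I}^{\vee}[\mathfrak{p}_1^{n_1-j_1}\cdots\mathfrak{p}_r^{n_r-j_r}]$; dividing one by the other produces the cokernel asserted in~(2). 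For case~(c), $P=\mathfrak{p}_t$, the same local computation shows $\mathrm{Ker}\,l_P=0$, so $\mathrm{res}_P$ is surjective, and Proposition~\ref{H0} combined with the rectangle description of Theorem~\ref{Gal} identifies $H^0(\mathbb{Q},\mathcal{A})/\mathfrak{p}_t$ with $(\mathcal{R}/\mathfrak{p}_t)^\vee[\gamma-1]$ precisely when $j_t<n_t$, and with $0$ when $j_t=n_t$, yielding~(3). The main obstacle in executing this scheme is the explicit determination of the image of $\delta$ in case~(b): one must show that moving along the graph $C^{\mathrm{fr}}(\rho_{\mathcal{F}})$ from the minimal vertex to $(j_1,\dots,j_r)$ modifies the annihilator by exactly $\mathfrak{p}_1^{n_1-j_1}\cdots\mathfrak{p}_r^{n_r-j_r}$, and this is where the rectangle isomorphism $\Phi_{\mathbf{1}}$ together with the explicit quotients $T'/T\cong R/\mathfrak{p}_1^{j_1'-j_1}\cdots\mathfrak{p}_r^{j_r'-j_r}(\vartheta)$ from Theorem~\ref{Gal} is indispensable.
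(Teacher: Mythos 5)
Your overall scheme coincides with the paper's: the diagram \eqref{190208}, the identification $\mathrm{Ker}\,h_P\cong H^0\left(\mathbb{Q},\mathcal{A}\otimes\omega^i\right)/P$, the four-term snake sequence, Proposition \ref{H0} for the global $H^0$, and Ochiai's computation of $\mathrm{Ker}\,l_P$; your treatment of case (a), of cases (b)--(c) with $\omega^i\neq\mathbf{1}$, and of case (c) is essentially the paper's argument. One ingredient you omit: to obtain the four-term sequence terminating in $\mathrm{Coker}\,\mathrm{res}_P\to 0$ you also need the surjectivity of the global-to-local map $H^1\left(\mathbb{Q}_\Sigma/\mathbb{Q},\mathcal{A}\otimes\omega^i[P]\right)\to Y_P$; the paper gets this from the proof of \cite[Theorem 4.10 and Corollary 4.12]{Ochiai06}, using that $\mathcal{R}$ is Gorenstein and $\left.\chi\right|_{\left(\mathbb{Z}/p\mathbb{Z}\right)^{\times}}\neq\omega$.

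The genuine gap is in case (b) with $\omega^i=\mathbf{1}$, where you dispose of the injectivity of $\delta$ as ``a diagram chase.'' It is not: a priori $H^0\left(\mathbb{Q},\mathcal{A}\right)/P$ could split between $\mathrm{Ker}\,\mathrm{res}_P$ and its image in $\mathrm{Ker}\,l_P$, and nothing in the rectangle combinatorics decides how. What is needed is a local--global compatibility at $p$: dually, one must show that the composite $\left(F^{-}\mathcal{T}\right)^{*}\hookrightarrow\mathcal{T}^{*}\twoheadrightarrow\left(\mathcal{T}^{*}\right)_{G_{\mathbb{Q}}}$ remains surjective, i.e.\ that the trivial-character quotient of $\mathcal{T}^{*}$ is already realized as a quotient of the rank-one unramified piece $\left(F^{-}\mathcal{T}\right)^{*}$. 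The paper proves this by comparing the two rank-one filtrations of $\left.\mathcal{T}^{*}\right/\mathfrak{a}\mathcal{T}^{*}$ for $\mathfrak{a}=\left(\gamma-1,\mathfrak{p}_1^{n_1-j_1}\cdots\mathfrak{p}_r^{n_r-j_r}\right)$: the two characters occurring are $\left\{\mathbf{1},\eta^{-1}\right\}$ by the uniqueness statement of Lemma \ref{21}, and since $\gamma-1\in\mathfrak{a}$ the unramified character on $\left.\left(F^{-}\mathcal{T}\right)^{*}\right/\mathfrak{a}\left(F^{-}\mathcal{T}\right)^{*}$ must be the trivial one, whence the filtration splits as $G_{\mathbb{Q}_p}$-modules and the desired surjection follows. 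Your closing paragraph locates the difficulty instead in the description of $\left.T^{\prime}\right/T$ along the rectangle, but that is precisely the content of Proposition \ref{H0}, which you already take as given; the missing step is the one above.
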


We prove Theorem \ref{T3} in this section. For a prime $v \mid N$, we define $H_{\mathrm{ur}}^1\left(\mathbb{Q}_v, \mathcal{A}\otimes\omega^i \right)$ as follows:
\begin{equation*}
H_{\mathrm{ur}}^1\left(\mathbb{Q}_v, \mathcal{A}\otimes\omega^i \right):=\mathrm{Ker}\left[H^1\left(\mathbb{Q}_v, \mathcal{A}\otimes\omega^i \right) \rightarrow H^1\left(I_v, \mathcal{A}\otimes\omega^i \right) \right].
\end{equation*}
We define $H_{\mathrm{Gr}}^1\left(\mathbb{Q}_p, \mathcal{A}\otimes\omega^i \right)$ as follows:
\begin{equation*}
H_{\mathrm{Gr}}^1\left(\mathbb{Q}_p, \mathcal{A}\otimes\omega^i \right):=\mathrm{Ker}\left[H^1\left(\mathbb{Q}_p, \mathcal{A}\otimes\omega^i \right) \rightarrow H^{1}\left(I_p, \left.\mathcal{A}\otimes\omega^i\right/F^{+}\mathcal{A}\otimes\omega^i \right) \right].
\end{equation*}
Let us consider the following commutative diagram:

\begin{equation}\label{190208}
\xymatrix{
    0 \ar[r]   & \mathrm{Sel}_{\mathcal{A}\otimes\omega^i[P]} \ar[r] \ar[d]^{\mathrm{res}_P} & H^1\left(\mathbb{Q}_{\Sigma}/\mathbb{Q}, \mathcal{A}\otimes\omega^i[P] \right) \ar[r] \ar[d]^{h_P} & Y_P \ar[d]^{l_P} \\
    0 \ar[r] & \mathrm{Sel}_{\mathcal{A}\otimes\omega^i}[P] \ar[r] & H^1\left(\mathbb{Q}_{\Sigma}/\mathbb{Q}, \mathcal{A}\otimes\omega^i \right)[P] \ar[r] & Y[P], \\
   }
\end{equation}
where $$Y_P:=\dfrac{H^1\left(\mathbb{Q}_p, \mathcal{A}\otimes\omega^i[P] \right)}{H_{\mathrm{Gr}}^1\left(\mathbb{Q}_p, \mathcal{A}\otimes\omega^i[P] \right)} \oplus \displaystyle\bigoplus_{v\mid N}\dfrac{H^1\left(\mathbb{Q}_v, \mathcal{A}\otimes\omega^i[P] \right)}{H_{\mathrm{ur}}^1\left(\mathbb{Q}_v, \mathcal{A}\otimes\omega^i[P] \right)},$$ $$Y:=\dfrac{H^1\left(\mathbb{Q}_p, \mathcal{A}\otimes\omega^i \right)}{H_{\mathrm{Gr}}^1\left(\mathbb{Q}_p, \mathcal{A}\otimes\omega^i \right)} \oplus \displaystyle\bigoplus_{v\mid N}\dfrac{H^1\left(\mathbb{Q}_v, \mathcal{A}\otimes\omega^i \right)}{H_{\mathrm{ur}}^1\left(\mathbb{Q}_v, \mathcal{A}\otimes\omega^i \right)}.$$ Note that the map $h_P$ is surjective. Thus to prove Theorem \ref{T3}, it is important to study the groups $\mathrm{Ker}\,l_P$ and $\mathrm{Ker}\,h_P$. Following an argument of Ochiai \cite[Proposition 5.1]{Ochiai06}, we understand the group $\mathrm{Ker}\,l_P$ as follows.

\begin{prp}[Ochiai {\cite[Proposition 5.1, Proposition 5.2]{Ochiai06}}]\label{Ochiai06s5}
Let us keep the assumptions of Theorem \ref{T3}. Then we have
\begin{equation*}
\mathrm{Ker}\,l_P \cong
\begin{cases}
0 & \text{in case (a)}\\
0 & \text{in case (b) when}\ \omega^i \neq \mathbf{1}\\
\mathbb{I}^{\lor}[a\left(p, \mathcal{F} \right)-1] & \text{in case (b) when}\ \omega^i=\mathbf{1}\\
0 & \text{in case (c)}.
\end{cases}
\end{equation*}

\end{prp}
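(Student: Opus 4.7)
The plan is to follow the standard Greenberg--Ochiai strategy for analyzing the kernel of the local comparison map $l_P$ by breaking it into its local components. Since $l_P = l_P^{(p)} \oplus \bigoplus_{v\mid N}l_P^{(v)}$ is a direct sum over the primes in $\Sigma\setminus\{\infty\}$, I would first show that each $l_P^{(v)}$ for $v\mid N$ has trivial kernel in all three cases (a), (b), (c), and then concentrate on the subtle computation at $p$.

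For the tame primes $v\mid N$, my approach is to use the inflation--restriction sequence for $I_v \subset G_{\mathbb{Q}_v}$ together with the $[P]$-torsion sequence $0 \to \mathcal{A}\otimes\omega^i[P] \to \mathcal{A}\otimes\omega^i \xrightarrow{P} P\cdot\mathcal{A}\otimes\omega^i \to 0$. A short diagram chase then identifies $\mathrm{Ker}\,l_P^{(v)}$ with a subquotient of $H^0(I_v,\mathcal{A}\otimes\omega^i)/P$ measured against the action of $\mathrm{Frob}_v$. The hypothesis that $N$ is square-free forces the action of $I_v$ on $\mathcal{T}$ to be tame of the expected shape, and the assumption $\left.\chi\right|_{(\mathbb{Z}/p\mathbb{Z})^{\times}}\neq\mathbf{1},\omega$ together with $p\nmid\varphi(N)$ rules out unexpected coincidences of Frobenius eigenvalues. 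One can then verify that $(\mathrm{Frob}_v - 1)$ acts bijectively on the relevant module, which makes the kernel vanish in every one of cases (a), (b), (c).

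The main work occurs at $p$, and this is the step I expect to be the main obstacle because of the interaction between the Greenberg filtration and the specialization at $P$. Let $\mathrm{Gr}^-\mathcal{A} := \mathcal{A}/F^+\mathcal{A}$, on which $I_p$ acts trivially and $\mathrm{Frob}_p$ acts by the character $\beta\cdot\tilde{\kappa}^{-1}$, where $\beta$ is the unramified character from Wiles with $\beta(\mathrm{Frob}_p) = a(p,\mathcal{F})$. Inflation--restriction for $I_p$ reduces the analysis of $l_P^{(p)}$ to the comparison between $H^1(I_p,\mathrm{Gr}^-\mathcal{A}\otimes\omega^i[P])^{G_{\mathbb{Q}_p}/I_p}$ and $H^1(I_p,\mathrm{Gr}^-\mathcal{A}\otimes\omega^i)^{G_{\mathbb{Q}_p}/I_p}[P]$. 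Applying the $[P]$-torsion long exact sequence and the snake lemma, $\mathrm{Ker}\,l_P^{(p)}$ is pinned down by $H^0(G_{\mathbb{Q}_p}/I_p, \mathrm{Gr}^-\mathcal{A}\otimes\omega^i)[P]$ modulo the image of the corresponding $H^0$ for $\mathcal{A}\otimes\omega^i[P]$, which amounts to computing the $(\mathrm{Frob}_p-1)$-invariants of $\mathrm{Gr}^-\mathcal{A}\otimes\omega^i[P]$.

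It then remains to case-analyze when the character $\beta\cdot\tilde{\kappa}^{-1}\cdot\omega^i$ becomes trivial modulo $P$. In case (a) for an arithmetic specialization $\phi$ of weight $k_\phi \geq 2$, the value $\phi(a(p,\mathcal{F}))$ is a Hecke eigenvalue of the specialized ordinary form, which is never equal to $\phi(\tilde\kappa^{-1}\omega^i)(\mathrm{Frob}_p)$ for weight reasons, so the invariants are trivial. In case (b), $P = (\gamma-1)$ kills $\tilde{\kappa}^{-1}$, so the character collapses to $\beta\cdot\omega^i$, which is trivial modulo $P$ exactly when $\omega^i=\mathbf{1}$ and $a(p,\mathcal{F})\equiv 1$; tracking the module then produces the term $\mathbb{I}^{\vee}[a(p,\mathcal{F})-1]$. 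In case (c), $P = \mathfrak{p}_t\mathcal{R}$ is a height-one prime of $\mathbb{I}$ extended to $\mathcal{R}$, so $\tilde\kappa$ survives as a non-trivial character mod $P$; even though $a(p,\mathcal{F})\equiv 1 \pmod{\mathfrak{p}_t}$ (as $\mathfrak{p}_t\mid \mathbb{J}$), the factor $\tilde\kappa^{-1}(\mathrm{Frob}_p)$ keeps the character non-trivial and the invariants vanish, giving $\mathrm{Ker}\,l_P = 0$. Combining the tame and wild local calculations yields the proposition.
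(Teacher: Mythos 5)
There is a genuine gap at the prime $p$ in case (c), and it comes from computing the wrong module. The kernel of $H^1\left(I_p, F^{-}\mathcal{A}\otimes\omega^i[P]\right) \rightarrow H^1\left(I_p, F^{-}\mathcal{A}\otimes\omega^i\right)[P]$ is the \emph{quotient} $\left.H^0\left(I_p, F^{-}\mathcal{A}\otimes\omega^i\right)\right/{P\,H^0\left(I_p, F^{-}\mathcal{A}\otimes\omega^i\right)}$: one takes inertia invariants of the divisible module first and only then reduces modulo $P$. You instead propose to compute the $(\mathrm{Frob}_p-1)$-invariants of $\left(F^{-}\mathcal{A}\otimes\omega^i\right)[P]$, i.e.\ you take $P$-torsion first. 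The two agree in case (b), which is why your answer there is correct, but they differ in case (c): for $\omega^i=\mathbf{1}$ one has $H^0\left(I_p, F^{-}\mathcal{A}\right)\cong F^{-}\mathcal{A}[\gamma-1]\cong\mathbb{I}^{\lor}$, which is $\mathfrak{p}_t$-divisible, so the correct quotient vanishes; whereas $H^0\left(I_p, F^{-}\mathcal{A}[\mathfrak{p}_t]\right)\cong\left(\left.\mathbb{I}\right/{\mathfrak{p}_t}\right)^{\lor}$ with $\mathrm{Frob}_p$ acting by $a\left(p,\mathcal{F}\right)\equiv 1\pmod{\mathfrak{p}_t}$, so its Frobenius invariants are all of $\left(\left.\mathbb{I}\right/{\mathfrak{p}_t}\right)^{\lor}\neq 0$. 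Your attempted rescue --- that the factor $\tilde{\kappa}^{-1}(\mathrm{Frob}_p)$ keeps the character non-trivial --- fails because $\mathbb{Q}_{\infty}/\mathbb{Q}$ is totally ramified at $p$, so $\tilde{\kappa}$ is already killed upon passing to $I_p$-invariants and contributes nothing to the residual Frobenius action. This non-vanishing $H^0$ is real, but it lives in $\mathrm{Ker}\,h_P$ (it is exactly the source of the kernel $\left(\left.\mathcal{R}\right/\mathfrak{p}_t\right)^{\lor}[\gamma-1]$ of $\mathrm{res}_{\mathfrak{p}_t}$ in Theorem \ref{T3}-(3)), not in $\mathrm{Ker}\,l_P$.

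At the primes $v\mid N$ your argument is also under-specified in case (c). The paper identifies $\mathrm{Ker}\,l_{P,v}$ with the Pontryagin dual of $\left(\left(\left(\mathbb{T}^{*}\right)_{I_v}\hat{\otimes}\mathbb{Z}_p[[\Gamma]]\left(\tilde{\kappa}\right)\otimes\omega^{-i}\right)[P]\right)_{G_{\mathbb{Q}_v}}$ and shows the module inside the coinvariants is already zero, using two inputs you do not invoke: the structure result $\left(\mathbb{T}^{*}\right)_{I_v}\cong\left.\mathbb{I}\right/\left(\varpi\right)^{r}\oplus\left.\mathbb{I}\right/\left(\varpi\right)^{r^{\prime}}$ (coming from $N$ square-free and $\chi$ primitive, which force principal series and hence finite inertia image), and, crucially for case (c), Ferrero--Washington's theorem: since $\mathbb{J}$ divides the Kubota--Leopoldt $p$-adic $L$-function, $\mathfrak{p}_t$ cannot equal $\left(\varpi\right)$, so the $[\mathfrak{p}_t]$-torsion of this module vanishes. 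Your appeal to non-coincidence of Frobenius eigenvalues does not rule out the scenario $\mathfrak{p}_t=\left(\varpi\right)$, so this input is needed.
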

Since our normalization is different from Ochiai's paper which leads the statement slightly different and case (c) does not exist in residually irreducible case, we recall the proof briefly.
\begin{proof}
First we study $\mathrm{Ker}\,l_{P, v}$ for the restriction $l_{P, v}$ of $l_P$ to the $v$-part for the prime $v \mid N$. We have that $\mathrm{Ker}\,l_{P, v}$ is the Pontryagin dual of the following module
\begin{equation*}
\left(\left(\left(\mathbb{T}^{*} \right)_{I_v}\hat{\otimes}\mathbb{Z}_p[[\Gamma]]\left(\tilde{\kappa} \right)\otimes\omega^{-i} \right)[P] \right)_{G_{\mathbb{Q}_v}}
\end{equation*}
by the proof of \cite[Proposition 5.2]{Ochiai06}. Thus the module $\left(\left(\mathbb{T}^{*} \right)_{I_v}\hat{\otimes}\mathbb{Z}_p[[\Gamma]]\left(\tilde{\kappa} \right)\otimes\omega^{-i} \right)[P]$ vanishes when $P$ is in case (b). Under the assumption that $\chi$ is primitive and $N$ is square-free, we have that the local automorphic representation $\pi_{f, v}$ at $v$ is principal series for every cusp form $f$ which is the image of $\mathcal{F}$ under an arithmetic specialization. Thus the image of $I_v$ under $G_{\mathbb{Q}} \rightarrow \mathrm{Aut}_{\mathbb{I}}\left(\mathbb{T} \right)$ is a finite group by the proof of \cite[Lemma 2.14]{FO12}. Then there exist a ring of integers $\mathcal{O} \subset \mathbb{I}$ of a finite extension of $\mathbb{Q}_p$ and integers $0 \leq r, r^{\prime} \leq \infty$ such that $\left(\mathbb{T}^{*}  \right)_{I_v}\cong \left.\mathbb{I}\right/\left(\varpi\right)^{r} \oplus \left.\mathbb{I}\right/\left(\varpi\right)^{r^{\prime}}$ by \cite[Theorem 3.3-(1)]{Ochiai06}, where $\varpi$ is a fixed uniformizer of $\mathcal{O}$. Thus the module $\left(\left(\mathbb{T}^{*} \right)_{I_v}\hat{\otimes}\mathbb{Z}_p[[\Gamma]]\left(\tilde{\kappa} \right)\otimes\omega^{-i} \right)[P]$ vanishes when $P$ is in case (a). Now we consider case (c). We have that $\mathbb{J}$ is a factor of Kubota-Leopoldt $p$-adic $L$-function $L_p\left(\chi\omega; \gamma^{\prime} \right)\in\Lambda_{\chi}$ (see \cite[Proposition 3.8]{Y} for example). Thus $P$ is not a uniformizer of any finite extension $\mathcal{O}$ of $\mathbb{Z}_p$ by Ferrero-Washington's theorem \cite{FW77}, hence $\left(\left(\mathbb{T}^{*} \right)_{I_v}\hat{\otimes}\mathbb{Z}_p[[\Gamma]]\left(\tilde{\kappa} \right)\otimes\omega^{-i} \right)[P]$ vanishes.

Now we study $\mathrm{Ker}\,l_{P, p}$ for the restriction $l_{P, p}$ of $l_P$ to the $p$-part. We have $\mathrm{Ker}\,l_{P, p}$ is isomorphic to $\left(\left.H^0\left(I_p, F^{-}\mathcal{A}\otimes\omega^i \right)\right/{P H^0\left(I_p, F^{-}\mathcal{A}\otimes\omega^i \right)} \right)^{G_{\mathbb{Q}_p}}$ and 
\begin{equation}\label{2008121}
H^0\left(I_p, F^{-}\mathcal{A}\otimes\omega^i \right)\cong
\begin{cases}
F^{-}\mathcal{A}[\gamma-1] & \text{if}\ \omega^i=\mathbf{1} \\
0& \text{otherwise}.
\end{cases}
\end{equation}
This is different from \cite[page 1178]{Ochiai06} since our normalization is the dual of Ochiai's paper. Thus Proposition \ref{Ochiai06s5} follows. 
\end{proof}

Now we study the group $\mathrm{Ker}\,h_P$ in the following proposition. 

\begin{prp}\label{H0}
Let us keep the  assumptions and the notation of Theorem \ref{T3}. Let us take a stable free lattice $\mathbb{T}$ and let $\left(j_1, \cdots, j_r \right)$ be the vertex of $\mathrm{Rect}_{\left(0, \cdots, 0\right)}^{\left(n_1, \cdots, n_r \right)}$ which is the image of $[\mathbb{T}]$ under $\Phi_{\mathbf{1}}$ in Theorem \ref{Gal}. Let $i$ be an even integer, then we have 
\begin{equation*}
H^{0}\left(\mathbb{Q}, \mathcal{A}\otimes\omega^i \right) \cong 
\begin{cases}
\mathcal{R}^{\lor}[\mathfrak{p}^{n_1-j_1}\cdots\mathfrak{p}_r^{n_r-j_r}\mathcal{R}, \gamma-1] & \text{when}\ \omega^i=\mathbf{1}\\
0 & \text{otherwise}.
\end{cases}
\end{equation*}
\end{prp}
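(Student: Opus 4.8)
The plan is to compute $H^0(\mathbb{Q},\mathcal{A}\otimes\omega^i)$ by descending through the layers of the construction $\mathcal{A} = \mathcal{T}\otimes_{\mathcal{R}}\mathcal{R}^\lor$, where $\mathcal{T} = \mathbb{T}\hat\otimes_{\mathbb{Z}_p}\mathbb{Z}_p[[\Gamma]](\tilde\kappa^{-1})$. Pontryagin duality turns $H^0(\mathbb{Q},\mathcal{A}\otimes\omega^i) = (\mathcal{A}\otimes\omega^i)^{G_{\mathbb{Q}}}$ into the dual of the $G_{\mathbb{Q}}$-coinvariants $\bigl(\mathcal{T}^*\otimes\omega^{-i}\bigr)_{G_{\mathbb{Q}}}$, or more directly I would work with $\bigl((\mathcal{T}\otimes\omega^i)\otimes_{\mathcal{R}}\mathcal{R}^\lor\bigr)^{G_\mathbb{Q}} \cong \mathrm{Hom}_{\mathcal{R}}\bigl((\mathcal{T}\otimes\omega^i)_{G_\mathbb{Q}},\mathcal{R}^\lor\bigr) = \bigl((\mathcal{T}\otimes\omega^i)_{G_\mathbb{Q}}\bigr)^\lor$. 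So the real task is to identify the $G_\mathbb{Q}$-coinvariant quotient of $\mathcal{T}\otimes\omega^i = \mathbb{T}\hat\otimes\mathbb{Z}_p[[\Gamma]](\tilde\kappa^{-1})\otimes\omega^i$ as an $\mathcal{R}$-module.

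First I would reduce to the situation modulo the Eisenstein ideal. Since $\mathcal F$ has tame level $N$ with $\chi$ primitive and $p\nmid\varphi(N)$, by class field theory the ideal of reducibility $J(\rho_{\mathcal F})$ equals the Eisenstein ideal $\mathbb{J}$, and one of the two reducibility characters is trivial — this is exactly the setup of Theorem \ref{Gal}, so I may normalize $\vartheta = \mathbf{1}$, $\vartheta' = \det\rho_{\mathcal F}$. Applying Theorem \ref{Gal}(2) to the chosen lattice $\mathbb{T}$, whose class $\Phi_{\mathbf 1}([\mathbb T]) = (j_1,\dots,j_r)$, and to a representative $\mathbb{T}^{\mathrm{min}}$ of $\Phi_{\mathbf 1}^{-1}(0,\dots,0)$, I get (after scaling by $\mathbb{K}^\times$) an exact sequence of $\mathbb{I}[G_\mathbb{Q}]$-modules
\begin{equation*}
0 \to \mathbb{T}^{\mathrm{min}} \to \mathbb{T} \to \mathbb{I}/\mathfrak{p}_1^{j_1}\cdots\mathfrak{p}_r^{j_r}(\mathbf 1) \to 0,
\end{equation*}
and likewise $\mathbb{T}/\mathfrak{m}_{\mathbb I}\mathbb{T}$ sits in an extension whose subobject is $\mathbf{1}$ and whose quotient is $\overline\chi$ (the reducible-but-indecomposable residual picture, consistent with (Eis)). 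The point is that, at the level of coinvariants, the "interesting" contribution comes from the trivial sub/quotient. I would make the maximal submodule on which $G_\mathbb{Q}$ acts trivially explicit: the largest quotient of $\mathbb T$ on which $G_\mathbb{Q}$ acts through $\mathbf 1$ is, by the structure provided by Theorem \ref{Gal} and the fact that $J(\rho_{\mathcal F}) = \mathbb{J} = \mathfrak p_1^{n_1}\cdots\mathfrak p_r^{n_r}$ cuts out precisely where the representation becomes reducible, the quotient $\mathbb{I}/\mathfrak p_1^{n_1-j_1}\cdots\mathfrak p_r^{n_r-j_r}(\mathbf 1)$ — the "complementary" part of the rectangle. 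This is the step where Theorem \ref{Gal} does the essential work: it converts the lattice-theoretic data $(j_1,\dots,j_r)$ into the exact cyclic module $\mathbb I/\mathfrak p_1^{n_1-j_1}\cdots\mathfrak p_r^{n_r-j_r}$.

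Having $(\mathbb T\otimes\omega^i)_{G_\mathbb{Q}}$ in hand as $\mathbb{I}/\mathfrak p_1^{n_1-j_1}\cdots\mathfrak p_r^{n_r-j_r}$ when $\omega^i = \mathbf 1$ (and $0$ otherwise, since then the twist of the trivial quotient is a nontrivial character and the residual representation has no other trivial Jordan–Hölder constituent up to the $\chi\ne\omega$ hypothesis), I would next propagate through the cyclotomic variable. Forming the coinvariants of $\mathcal{T}\otimes\omega^i = \mathbb{T}\hat\otimes\mathbb{Z}_p[[\Gamma]](\tilde\kappa^{-1})\otimes\omega^i$: since $\tilde\kappa$ is the tautological character $G_\mathbb{Q}\twoheadrightarrow\Gamma\hookrightarrow\mathbb{Z}_p[[\Gamma]]^\times$, taking $G_\mathbb{Q}$-coinvariants of a module with a trivial $G_\mathbb{Q}$-quotient tensored with $\mathbb{Z}_p[[\Gamma]](\tilde\kappa^{-1})$ forces the augmentation, contributing a factor $\mathbb{Z}_p[[\Gamma]]/(\gamma-1)$. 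Concretely, $\bigl(\mathbb I/\mathfrak p_1^{n_1-j_1}\cdots\mathfrak p_r^{n_r-j_r}(\mathbf 1)\hat\otimes\mathbb{Z}_p[[\Gamma]](\tilde\kappa^{-1})\bigr)_{G_\mathbb{Q}} \cong \mathcal R/(\mathfrak p_1^{n_1-j_1}\cdots\mathfrak p_r^{n_r-j_r}\mathcal R,\ \gamma-1)$. Dualizing gives $H^0(\mathbb{Q},\mathcal A\otimes\omega^i) \cong \mathcal R^\lor[\mathfrak p_1^{n_1-j_1}\cdots\mathfrak p_r^{n_r-j_r}\mathcal R,\ \gamma-1]$, exactly the claimed formula; and when $\omega^i\ne\mathbf 1$ the coinvariants vanish already at the $\mathbb{I}$-level, giving $0$.

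The main obstacle I expect is the identification of the trivial-quotient part of $\mathbb T$ precisely, i.e. controlling the $G_\mathbb{Q}$-coinvariants $(\mathbb T\otimes\omega^i)_{G_\mathbb{Q}}$ rather than just knowing $\mathbb{T}/\mathbb{T}^{\mathrm{min}}$. One has to rule out "extra" trivial coinvariants coming from the $\mathbb{T}^{\mathrm{min}}$ part — this is where I would invoke that $\mathbb T^{\mathrm{min}}$ is the lattice on which the reducibility structure is "as reducible as possible" on the $\vartheta'$-side, together with $\chi\ne\omega$ and the oddness of $\chi$ (so that the residual $G_\mathbb{Q}$-action genuinely involves the nontrivial character $\overline\chi$ and no further trivial constituent appears), forcing $(\mathbb T^{\mathrm{min}}\otimes\omega^i)_{G_\mathbb{Q}} = 0$. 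Equivalently, in the language of the ideal of reducibility: $J(\rho_{\mathcal F}\otimes R_{\mathfrak p}) = \mathfrak p^{n_i}R_{\mathfrak p}$ controls exactly the obstruction to the trace decomposing, so the "trivial-isotypic" quotient localized at $\mathfrak p_i$ has length $n_i - j_i$, by Proposition \ref{22}(3) applied locally and the gluing construction \eqref{04281}–\eqref{1901128} from the proof of Proposition \ref{Gal1}. Assembling these local lengths via the UFD hypothesis (every height-one prime principal) yields the global cyclic module $\mathbb I/\mathfrak p_1^{n_1-j_1}\cdots\mathfrak p_r^{n_r-j_r}$, and the rest is the formal duality and $\Gamma$-coinvariant bookkeeping described above.
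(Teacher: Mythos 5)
Your overall skeleton is the right one (pass to $G_{\mathbb{Q}}$-coinvariants by Pontryagin duality, feed in the lattice comparison from Theorem \ref{Gal}, and pick up $\gamma-1$ from the $\tilde{\kappa}$-twist), but there are two problems. The first is a duality slip that is not harmless. Since $\mathcal{T}$ is free, $\mathcal{A}=\mathcal{T}\otimes_{\mathcal{R}}\mathcal{R}^{\lor}\cong(\mathcal{T}^{*})^{\lor}$, so $H^{0}(\mathbb{Q},\mathcal{A})\cong\bigl((\mathcal{T}^{*})_{G_{\mathbb{Q}}}\bigr)^{\lor}$ — the coinvariants of the \emph{linear dual}, not of $\mathcal{T}$ itself; this is how the paper sets it up. The complementary exponents $n_{i}-j_{i}$ arise precisely because linear duality reverses the inclusions of lattices: the quotient $\mathbb{T}^{\mathrm{max}}/\mathbb{T}\cong\mathbb{I}/\mathfrak{p}_{1}^{n_{1}-j_{1}}\cdots\mathfrak{p}_{r}^{n_{r}-j_{r}}(\mathbf{1})$ becomes a trivial-action quotient $\mathbb{T}^{*}/\mathbb{T}^{\mathrm{max},*}$ of the dual lattice. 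For $\mathbb{T}$ itself, Theorem \ref{Gal} gives $\mathbb{T}/\mathbb{T}^{\mathrm{min}}\cong\mathbb{I}/\mathfrak{p}_{1}^{j_{1}}\cdots\mathfrak{p}_{r}^{j_{r}}(\mathbf{1})$, i.e.\ the \emph{non}-complementary exponents; so if you combine your identification $\mathcal{A}^{G_{\mathbb{Q}}}\cong(\mathcal{T}_{G_{\mathbb{Q}}})^{\lor}$ with a correct computation of $\mathcal{T}_{G_{\mathbb{Q}}}$ you land on $\mathcal{R}^{\lor}[\mathfrak{p}_{1}^{j_{1}}\cdots\mathfrak{p}_{r}^{j_{r}},\gamma-1]$, which is the wrong answer. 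You reach the stated formula only because a second error (asserting that the largest trivial quotient of $\mathbb{T}$ is the complementary one) compensates the first.

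The second problem is the genuine gap you yourself flag: bounding the coinvariants from above. Your appeal to Proposition \ref{22}-(3) ``applied locally'' does not do this — that proposition describes the quotients $T_{j'}/T_{j}$ \emph{between} lattices in the chain, not the coinvariant quotient of a single lattice, and ``no extra trivial coinvariants from the $\mathbb{T}^{\mathrm{min}}$ part'' is exactly what must be proved. In the paper this occupies Steps 2–3 and 5: (i) cyclicity of $(\mathcal{T}^{*})_{G_{\mathbb{Q}}}$ via Nakayama and the oddness of $\det\rho_{\mathcal{F}}$; (ii) the kernel lattice $\mathcal{M}=\mathrm{Ker}(\mathcal{T}^{*}\twoheadrightarrow(\mathcal{T}^{*})_{G_{\mathbb{Q}}})$ and its decomposition under complex conjugation, yielding $\mathcal{M}^{+}=\mathrm{Ann}_{\mathcal{R}}((\mathcal{T}^{*})_{G_{\mathbb{Q}}})\,\mathcal{T}^{*,+}$ and hence, via the ideal of reducibility, $\mathbb{J}\mathcal{R}\subset\mathfrak{p}_{1}^{j_{1}}\cdots\mathfrak{p}_{r}^{j_{r}}\cdot\mathrm{Ann}_{\mathcal{R}}((\mathcal{T}^{*})_{G_{\mathbb{Q}}})$; (iii) the uniqueness of the reducibility characters (Lemma \ref{21}) to force $\tilde{\kappa}\equiv\mathbf{1}$ modulo the annihilator, i.e.\ $\gamma-1\in\mathrm{Ann}$; and (iv) a separate argument for the extremal vertex $(n_{1},\dots,n_{r})$, where one must show the coinvariants vanish by specializing at an arithmetic point and using that $[\mathbb{T}^{\mathrm{max}}\otimes_{\mathbb{I}}\phi(\mathbb{I})]$ is an extremity of the segment $C(\rho_{\phi})$ (so the residual extension is non-split in the right direction). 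None of these steps is supplied by your sketch, so as it stands the proposal establishes only the surjection $(\mathcal{T}^{*})_{G_{\mathbb{Q}}}\twoheadrightarrow\mathcal{R}/(\gamma-1,\mathfrak{p}_{1}^{n_{1}-j_{1}}\cdots\mathfrak{p}_{r}^{n_{r}-j_{r}})$, not the isomorphism.
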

\begin{proof}
When $\omega^i \neq \mathbf{1}$, the assertion follows immediately since $\mathcal{A}\otimes\omega^i$ has no composition factor on which $G_{\mathbb{Q}}$ acts trivially. Hence it remains to study the case when $\omega^i=\mathbf{1}$. We see that $H^0\left(\mathbb{Q}, \mathcal{A} \right)$ is the Pontryagin dual of $\left(\mathcal{T}^{*} \right)_{G_{\mathbb{Q}}}$. We divide our argument on studying the $\mathcal{R}$-module $\left(\mathcal{T}^{*} \right)_{G_{\mathbb{Q}}}$ into the following steps. 
\begin{s}\label{s1}
Let us consider the vertex $\left(n_1, \cdots, n_r \right)$ in  $\mathrm{Rect}_{\left(0, \cdots, 0\right)}^{\left(n_1, \cdots, n_r \right)}$. Since $\left(j_1, \cdots, j_r \right) \leq \left(n_1, \cdots, n_r \right)$, by Theorem \ref{Gal}, one could take a free lattice $\mathbb{T}^{\mathrm{max}}$ which is a representative of $\Phi_1^{-1}\left(  \left(n_1, \cdots, n_r \right)        \right)$ such that $\mathcal{T} \subset \mathcal{T}^{\mathrm{max}}$ and $\left.\mathcal{T}^{\mathrm{max}}\right/\mathcal{T} \stackrel{\sim}{\rightarrow} \bigslant{\mathcal{R}}{\mathfrak{p}_1^{n_1-j_1}\cdots\mathfrak{p}_r^{n_r-j_r}\mathcal{R}}\,\left(\tilde{\kappa}^{-1} \right)$, where $\mathcal{T}:=\mathbb{T}^{\mathrm{max}}\hat{\otimes}_{\mathbb{Z}_p}\mathbb{Z}_p[[\Gamma]]\left(\tilde{\kappa}^{-1} \right)$. Let $\mathcal{K}:=\mathrm{Frac}\left(\mathcal{R} \right)$ and $\mathcal{V}:=\mathcal{T}\otimes_{\mathcal{R}}\mathcal{K}$. Recall that $\mathcal{T}^{*}$ consists of the linear map $f : \mathcal{V} \rightarrow \mathcal{K}$ such that $f\left(\mathcal{T} \right) \subset \mathcal{R}$. Thus $\left.\mathcal{T}^{*}\right/{\mathcal{T}^{\mathrm{max}}}^{*} \stackrel{\sim}{\rightarrow} \bigslant{\mathcal{R}}{\mathfrak{p}_1^{n_1-j_1}\cdots\mathfrak{p}_r^{n_r-j_r}\mathcal{R}}\,\left(\tilde{\kappa}\right)$ and we have the following exact sequence 
\small
\begin{equation}\label{20200521}
0 \rightarrow \bigslant{\mathcal{R}}{\mathfrak{p}_1^{n_1-j_1}\cdots\mathfrak{p}_r^{n_r-j_r}\mathcal{R}}\,\left(\tilde{\kappa}\eta^{-1} \right) \rightarrow \bigslant{\mathcal{T}^*}{\mathfrak{p}_1^{n_1-j_1}\cdots\mathfrak{p}_r^{n_r-j_r}\mathcal{T}^*} \rightarrow \bigslant{\mathcal{R}}{\mathfrak{p}_1^{n_1-j_1}\cdots\mathfrak{p}_r^{n_r-j_r}\mathcal{R}}\,\left(\tilde{\kappa}\right) \rightarrow 0,
\end{equation}
\normalsize
where we denote by $\eta$ the determinant $\mathrm{det}\,\rho_{\mathcal{F}}$ to the end of the proof. This implies the surjection $$\mathcal{T}^{*} \twoheadrightarrow \left(\bigslant{\mathcal{R}}{\mathfrak{p}_1^{n_1-j_1}\cdots\mathfrak{p}_r^{n_r-j_r}\mathcal{R}}\,\left(\tilde{\kappa}\right) \right)_{G_{\mathbb{Q}}} \stackrel{\sim}{\rightarrow} \bigslant{\mathcal{R}}{\left(\gamma-1, \mathfrak{p}_1^{n_1-j_1}\cdots\mathfrak{p}_r^{n_r-j_r}\mathcal{R}\right)},$$ which factors through $\left(\mathcal{T}^{*} \right)_{G_{\mathbb{Q}}}$. Hence
\begin{equation}\label{202005254}
\left(\mathcal{T}^{*} \right)_{G_{\mathbb{Q}}} \twoheadrightarrow \bigslant{\mathcal{R}}{\left(\gamma-1, \mathfrak{p}_1^{n_1-j_1}\cdots\mathfrak{p}_r^{n_r-j_r}\mathcal{R}\right)}.
\end{equation}
Thus $\left(\mathcal{T}^{*} \right)_{G_{\mathbb{Q}}}$ is nontrivial if $\left(j_1, \cdots, j_r \right) < \left(n_1, \cdots, n_r \right)$.
\end{s}

\begin{s}\label{s2}
Now we assume $\left(j_1, \cdots, j_r \right) < \left(n_1, \cdots, n_r \right)$ from now on to the end of Step \ref{s3}. Let $\mathfrak{m}_{\mathcal{R}}$ be the maximal ideal of $\mathcal{R}$. We have the surjection 
\begin{equation}\label{2001271}
\left.\mathcal{T}^{*}\right/{\mathfrak{m}_{\mathcal{R}}\mathcal{T}^{*}} \twoheadrightarrow \left.\left(\mathcal{T}^{*} \right)_{G_{\mathbb{Q}}}\right/{\mathfrak{m}_{\mathcal{R}}\left(\mathcal{T}^{*} \right)_{G_{\mathbb{Q}}}}.
\end{equation}
If the $\left.\mathcal{R}\right/{\mathfrak{m}_{\mathcal{R}}}$-vector space $\left.\left(\mathcal{T}^{*} \right)_{G_{\mathbb{Q}}}\right/{\mathfrak{m}_{\mathcal{R}}\left(\mathcal{T}^{*} \right)_{G_{\mathbb{Q}}}}$ has dimension two, the surjection \eqref{2001271} becomes an isomorphism, which contradicts to that the determinant $\eta$ is odd. On the other hand, the module $\left(\mathcal{T}^{*} \right)_{G_{\mathbb{Q}}}$ is nontrivial which is proved in step \ref{s1}. Thus $\left.\left(\mathcal{T}^{*} \right)_{G_{\mathbb{Q}}}\right/{\mathfrak{m}_{\mathcal{R}}\left(\mathcal{T}^{*} \right)_{G_{\mathbb{Q}}}}$ must have dimension one by Nakayama's lemma. Hence $\left(\mathcal{T}^{*} \right)_{G_{\mathbb{Q}}}$ is a cyclic $\mathcal{R}$-module and we have 
\begin{equation}\label{2008251}
\left(\mathcal{T}^{*} \right)_{G_{\mathbb{Q}}} \stackrel{\sim}{\rightarrow} \left.\mathcal{R}\right/{\mathrm{Ann}_{\mathcal{R}}(  \left(\mathcal{T}^{*} \right)_{G_{\mathbb{Q}}} )}
\end{equation}
for the annihilator ideal $\mathrm{Ann}_{\mathcal{R}}(  \left(\mathcal{T}^{*} \right)_{G_{\mathbb{Q}}} )$. Thus by \eqref{202005254}, we have
\begin{equation}\label{2008252}
\mathrm{Ann}_{\mathcal{R}}(  \left(\mathcal{T}^{*} \right)_{G_{\mathbb{Q}}} ) \subset \left(\gamma-1, \mathfrak{p}_1^{n_1-j_1}\cdots\mathfrak{p}_r^{n_r-j_r}\mathcal{R}\right).
\end{equation}
We will prove the converse of the inclusion in the following step. 
\end{s}

\begin{s}\label{s3} 
Let us keep the assumption that $\left(j_1, \cdots, j_r \right) < \left(n_1, \cdots, n_r \right)$ and let $\mathcal{M}:=\mathrm{Ker}\left(\mathcal{T}^{*} \twoheadrightarrow \left(\mathcal{T}^{*} \right)_{G_{\mathbb{Q}}} \right).$ Since $\left(\mathcal{T}^{*} \right)_{G_{\mathbb{Q}}}$ is a torsion $\mathcal{R}$-module (this is because that $\rho_{\mathcal{F}}^{\mathrm{n. ord}}$ is irreducible), $\mathcal{M}$ is a stable lattice of $\mathcal{V}$. We define $\mathcal{M}^{+}:=\dfrac{1+c}{2}\mathcal{M}$ and $\mathcal{M}^{-}:=\dfrac{1-c}{2}\mathcal{M}$ for the complex conjugation $c \in G_{\mathbb{R}}$, then one can decompose $\mathcal{M}=\mathcal{M}^+\oplus\mathcal{M}^-$ as an $\mathcal{R}[G_{\mathbb{R}}]$-module. We define the $G_{\mathbb{R}}$-modules $\mathcal{T}^{*, +}$ and $\mathcal{T}^{*, -}$ in the same manner. Then we have the following exact sequence of $G_{\mathbb{R}}$-modules
\begin{equation*}
0 \rightarrow \left.\mathcal{T}^{*, +}\right/{\mathcal{M}^{+}}\,\left(\mathbf{1} \right) \rightarrow \left.\mathcal{T}^{*}\right/{\mathcal{M}}\,\left(\mathbf{1} \right) \rightarrow \left.{\mathcal{T}^{*}}^{-}\right/{\mathcal{M}^{-}}\,\left(-\mathbf{1} \right) \rightarrow 0
\end{equation*}
by the same arguments as the proof of Proposition \ref{reffr}. Then 
\begin{equation}\label{20201208}
\mathcal{M}^{-}={\mathcal{T}^{*}}^{-}
\end{equation}
and $\left.\mathcal{T}^{*, +}\right/{\mathcal{M}^{+}}$ is isomorphic to $\left.\mathcal{T}^{*}\right/{\mathcal{M}}$ as $G_{\mathbb{R}}$-modules. Then by \eqref{2008251}, the quotient $\left.\mathcal{T}^{*, +}\right/{\mathcal{M}^{+}}$ is isomorphic to $\left.\mathcal{R}\right/{\mathrm{Ann}_{\mathcal{R}}(  \left(\mathcal{T}^{*} \right)_{G_{\mathbb{Q}}} )}$. Moreover, since the $\mathcal{R}$-module $\mathcal{T}^{*, +}$ is free of rank one by Lemma \ref{frrkone}, we have the following equality
\begin{equation}\label{200921}
\mathcal{M}^{+}=\mathrm{Ann}_{\mathcal{R}}(  \left(\mathcal{T}^{*} \right)_{G_{\mathbb{Q}}} )\mathcal{T}^{* +}.
\end{equation}

Now let us consider the vertex $\left(0, \cdots, 0 \right)$ in  $\mathrm{Rect}_{\left(0, \cdots, 0\right)}^{\left(n_1, \cdots, n_r \right)}$. By Theorem \ref{Gal}, one can take a free lattice $\mathbb{T}^{\mathrm{min}}$ which is a representative of $\Phi_1^{-1}\left(  \left(0, \cdots, 0 \right)        \right)$ such that $\mathcal{T}^{\mathrm{min}} \subset \mathcal{T}$ and $\left.\mathcal{T}\right/\mathcal{T}^{\mathrm{min}} \stackrel{\sim}{\rightarrow} \bigslant{\mathcal{R}}{\mathfrak{p}_1^{j_1}\cdots\mathfrak{p}_r^{j_r}\mathcal{R}}\,\left(\tilde{\kappa}^{-1} \right)$, where $\mathcal{T}^{\mathrm{min}}=\mathbb{T}^{\mathrm{min}}\hat{\otimes}_{\mathbb{Z}_p}\mathbb{Z}_p[[\Gamma]]\left(\tilde{\kappa}^{-1} \right)$. Then the $G_{\mathbb{Q}}$-module $\left.{\mathcal{T}^{\mathrm{min}}}^{*}\right/{\mathcal{T}^{*}}$ is isomorphic to $\bigslant{\mathcal{R}}{\mathfrak{p}_1^{j_1}\cdots\mathfrak{p}_r^{j_r}\mathcal{R}}\,\left(\tilde{\kappa} \right)$. Since $\tilde{\kappa}$ is even, we have
\begin{equation}\label{202009211}
\begin{cases}
{\mathcal{T}^{*}}^{-}={{\mathcal{T}^{\mathrm{min}}}^{*}}^{-}\\
\mathcal{T}^{* +}=\mathfrak{p}_1^{j_1}\cdots\mathfrak{p}_r^{j_r}\mathcal{R}{\mathcal{T}^{\mathrm{min}}}^{* +}.
\end{cases}
\end{equation}
Thus we have the following equality by combining \eqref{20201208}, \eqref{200921} and \eqref{202009211}:
\begin{equation}\label{2020092112}
\begin{cases}
\mathcal{M}^{-}={{\mathcal{T}^{\mathrm{min}}}^{*}}^{-}\\
\mathcal{M}^{+}=\mathfrak{p}_1^{j_1}\cdots\mathfrak{p}_r^{j_r}\mathcal{R}\cdot\mathrm{Ann}_{\mathcal{R}}(  \left(\mathcal{T}^{*} \right)_{G_{\mathbb{Q}}} ){\mathcal{T}^{\mathrm{min}}}^{* +}.
\end{cases}
\end{equation}
This implies that $\mathrm{tr}\,{\mathcal{\rho}_{\mathcal{F}}^{\mathrm{n. ord}}}^{*}$ modulo $\mathfrak{p}_1^{j_1}\cdots\mathfrak{p}_r^{j_r}\mathcal{R}\cdot\mathrm{Ann}_{\mathcal{R}}(  \left(\mathcal{T}^{*} \right)_{G_{\mathbb{Q}}} )$ is sum of two characters. Note that the ideal of reducibility $J\left({\mathcal{\rho}_{\mathcal{F}}^{\mathrm{n. ord}}}^{*}\right)$ is generated by $\mathbb{J}$, then we have $\mathbb{J}\mathcal{R} \subset \mathfrak{p}_1^{j_1}\cdots\mathfrak{p}_r^{j_r}\mathcal{R}\cdot\mathrm{Ann}_{\mathcal{R}}(  \left(\mathcal{T}^{*} \right)_{G_{\mathbb{Q}}} )$. Thus under the assumption that $\mathbb{J}$ is principal, we have 
\begin{equation}\label{202005211}
\mathfrak{p}_1^{n_1-j_1}\cdots\mathfrak{p}_r^{n_r-j_r}\mathcal{R} \subset \mathrm{Ann}_{\mathcal{R}}(  \left(\mathcal{T}^{*} \right)_{G_{\mathbb{Q}}} ). 
\end{equation}

By \eqref{202005211} we have the following commutative diagram
\small
\begin{equation}\label{01250125}
 \xymatrix{
   0 \ar[r]   & \bigslant{\mathcal{R}}{\mathfrak{p}_1^{n_1-j_1}\cdots\mathfrak{p}_r^{n_r-j_r}\mathcal{R}}\,\left(\tilde{\kappa}\eta^{-1} \right) \ar[r] \ar[d] & \bigslant{\mathcal{T}^*}{\mathfrak{p}_1^{n_1-j_1}\cdots\mathfrak{p}_r^{n_r-j_r}\mathcal{T}^*} \ar[r] \ar[d] & \bigslant{\mathcal{R}}{\mathfrak{p}_1^{n_1-j_1}\cdots\mathfrak{p}_r^{n_r-j_r}\mathcal{R}}\,\left(\tilde{\kappa} \right) \ar[r] \ar[d] & 0 \\
    0 \ar[r] & \bigslant{\mathcal{R}}{\mathrm{Ann}_{\mathcal{R}}(  \left(\mathcal{T}^{*} \right)_{G_{\mathbb{Q}}} )}\,\left(\tilde{\kappa}\eta^{-1} \right) \ar[r] & \bigslant{\mathcal{T}^*}{\mathrm{Ann}_{\mathcal{R}}(  \left(\mathcal{T}^{*} \right)_{G_{\mathbb{Q}}} )\mathcal{T}^*} \ar[r] & \bigslant{\mathcal{R}}{\mathrm{Ann}_{\mathcal{R}}(  \left(\mathcal{T}^{*} \right)_{G_{\mathbb{Q}}} )}\,\left(\tilde{\kappa} \right) \ar[r] & 0,
  }
\end{equation}
\normalsize
in which all vertical maps are surjective. By the equality \eqref{200921} we have that $\mathrm{tr}\,{\mathcal{\rho}_{\mathcal{F}}^{\mathrm{n. ord}}}^{*}$ modulo the ideal $\mathrm{Ann}_{\mathcal{R}}(  \left(\mathcal{T}^{*} \right)_{G_{\mathbb{Q}}} )$ decomposes into two characters one of which is trivial. Thus,  since $\tilde{\kappa}$ is even and $\tilde{\kappa}\eta$ is odd, we must have that $\tilde{\kappa}$ modulo $\mathrm{Ann}_{\mathcal{R}}(  \left(\mathcal{T}^{*} \right)_{G_{\mathbb{Q}}} )$ is trivial by the uniqueness statement of the characters in Lemma \ref{21}. Hence we have the surjection $\bigslant{\mathcal{R}}{\mathfrak{p}_1^{n_1-j_1}\cdots\mathfrak{p}_r^{n_r-j_r}\mathcal{R}}\,\left(\tilde{\kappa}\right) \twoheadrightarrow \left.\mathcal{R}\right/{\mathrm{Ann}_{\mathcal{R}}(  \left(\mathcal{T}^{*} \right)_{G_{\mathbb{Q}}} )}\,\left(\mathbf{1} \right)$ which factors through $\left(\bigslant{\mathcal{R}}{\mathfrak{p}_1^{n_1-j_1}\cdots\mathfrak{p}_r^{n_r-j_r}\mathcal{R}}\,\left(\tilde{\kappa}\right) \right)_{G_{\mathbb{Q}}}$. Thus we have 
\begin{equation}\label{2008253}
\left(\gamma-1, \mathfrak{p}_1^{n_1-j_1}\cdots\mathfrak{p}_r^{n_r-j_r}\mathcal{R}\right) \subset \mathrm{Ann}_{\mathcal{R}}(  \left(\mathcal{T}^{*} \right)_{G_{\mathbb{Q}}} ).
\end{equation}
\end{s}

\begin{s}\label{s5}
We complete the proof in this step. For the lattice $\mathbb{T}$ corresponding to the vertex $\left(j_1, \cdots, j_r \right)<\left(n_1, \cdots, n_r \right)$, we have
\begin{equation}\label{20200524}
\left(\mathcal{T}^{*}\right)_{G_{\mathbb{Q}}} \stackrel{\sim}{\rightarrow} \left.\mathcal{R}\right/{\mathrm{Ann}_{\mathcal{R}}(  \left(\mathcal{T}^{*} \right)_{G_{\mathbb{Q}}} )} \stackrel{\sim}{\rightarrow} \bigslant{\mathcal{R}}{\left(\gamma-1, \mathfrak{p}_1^{n_1-j_1}\cdots\mathfrak{p}_r^{n_r-j_r}\mathcal{R}\right)}
\end{equation}
by combining \eqref{2008251}, \eqref{2008252} and \eqref{2008253}. Now we consider the lattice $\mathbb{T}^{\mathrm{max}}$ which is a representative of $\Phi_{\mathbf{1}}^{-1}\left(\left(n_1, \cdots, n_r \right)\right)$. Under the assumption that $\mathbb{J}$ is principal, there exists a stable free lattice $\mathbb{T}^{\mathrm{min}}$ such that $\left.\mathbb{T}^{\mathrm{max}}\right/{\mathbb{T}^{\mathrm{min}}} \stackrel{\sim}{\rightarrow} \left.\mathbb{I}\right/{\mathbb{J}}\left(\mathbf{1} \right)$. Let us take an arithmetic specialization $\phi$, then we have that $\left.\mathbb{T}^{\mathrm{max}}\otimes_{\mathbb{I}}\phi\left(\mathbb{I} \right)\right/\mathbb{T}^{\mathrm{min}}\otimes_{\mathbb{I}}\phi\left(\mathbb{I} \right) \stackrel{\sim}{\rightarrow} \left.\phi\left(\mathbb{I} \right)\right/{\phi\left(\mathbb{J} \right)}\left(\mathbf{1} \right)$. Since $\mathbb{J}$ coincide with the ideal of reducibility of $\rho_{\mathcal{F}}$, we have that $\mathrm{ord}_{\phi\left(\mathfrak{m}_{\mathbb{I}} \right)}\phi\left(\mathbb{J} \right)+1$ is the length of the segment $C\left(\rho_{\phi} \right)$ for the Galois representation $\rho_{\phi}$ attached to $f_{\phi}$ by Proposition \ref{22}. Thus the homothety class $[\mathbb{T}^{\mathrm{max}}\otimes_{\mathbb{I}}\phi\left(\mathbb{I} \right)]$ is the extremity of $C\left(\rho_{\phi} \right)$. This implies that $\left.\mathcal{T}^{\mathrm{max}}\right/{\mathfrak{m}_{\mathcal{R}}\mathcal{T}^{\mathrm{max}}}$, which is isomorphic to $\left.\mathbb{T}^{\mathrm{max}}\right/{\mathfrak{m}_{\mathbb{I}}\mathbb{T}^{\mathrm{max}}}$, has no sub-module on which $G_{\mathbb{Q}}$ acts trivially. Hence $\left.{\mathcal{T}^{\mathrm{max}}}^{*}\right/{\mathfrak{m}_{\mathcal{R}}{\mathcal{T}^{\mathrm{max}}}^{*}}$ has no quotient on which $G_{\mathbb{Q}}$ acts trivially. Using the argument on the dimension of $\left.\left({\mathcal{T}^{\mathrm{max}}}^{*} \right)_{G_{\mathbb{Q}}}\right/{\mathfrak{m}_{\mathcal{R}}\left({\mathcal{T}^{\mathrm{max}}}^{*} \right)_{G_{\mathbb{Q}}}}$ in Step \ref{s2}, we have that $\left({\mathcal{T}^{\mathrm{max}}}^{*} \right)_{G_{\mathbb{Q}}}$ must be trivial. Thus \eqref{20200524} also holds for $\left(j_1, \cdots, j_r \right)=\left(n_1, \cdots, n_r \right)$ and the proof is complete. 

\end{s}

\end{proof}

Now we return to the proof of Theorem \ref{T3}
\begin{proof}[Proof of Theorem \ref{T3}]
Since the proof are done in the same way, we only prove cases (b) and (c) when $\omega^i=\mathbf{1}$. First let us consider case (b) which needs more arguments. Since $\mathcal{R}$ is Gorenstein, under the assumption $\left.\chi\right|_{\left(\left.\mathbb{Z}\right/p\mathbb{Z} \right)^{\times}} \neq \omega$, the map $H^1\left(\mathbb{Q}_{\Sigma}/\mathbb{Q}, \mathcal{A}[P] \right) \rightarrow Y_P$ is surjective by the same proof of \cite[Theorem 4.10 and Corollary 4.12]{Ochiai06}. Then by the commutative diagram \eqref{190208} and the snake lemma, we have the following exact sequence
\begin{equation}\label{2008151}
0 \rightarrow \mathrm{Ker}\,\mathrm{res}_P \rightarrow \mathrm{Ker}\,h_P \rightarrow \mathrm{Ker}\,l_P \rightarrow \mathrm{Coker}\,\mathrm{res}_P \rightarrow 0.
\end{equation}

We prove that the map $\mathrm{Ker}\,h_P \rightarrow \mathrm{Ker}\,l_P$ is injective, which is equivalent to that $\left(\mathrm{Ker}\,l_P \right)^{\lor} \rightarrow \left(\mathrm{Ker}\,h_P \right)^{\lor}$ is surjective. We have that $\left(\mathrm{Ker}\,l_P \right)^{\lor}$ is isomorphic to $\left(\left(F^{-}\mathcal{T} \right)^{*}_{I_p}[P] \right)_{G_{\mathbb{Q}_p}}$ by the proof of Proposition \ref{Ochiai06s5}. The proof of Proposition \ref{Ochiai06s5} also tells us that $\left(F^{-}\mathcal{T} \right)^{*}_{I_p}$ is $\left.\left(F^{-}\mathcal{T} \right)^{*}\right/{\left(\gamma-1 \right)\left(F^{-}\mathcal{T} \right)^{*}}$. Thus we have the following isomorphism
\begin{equation}\label{miss1}
\left(\mathrm{Ker}\,l_P \right)^{\lor}\stackrel{\sim}{\rightarrow}\left(F^{-}\mathcal{T} \right)^{*}_{G_{\mathbb{Q}_p}}. 
\end{equation}
Let $\mathfrak{a}:=\left(\gamma-1, \mathfrak{p}_1^{n_1-j_1}\cdots\mathfrak{p}_r^{n_r-j_r}\right)$ and $\eta:=\mathrm{det}\,\rho_{\mathcal{F}}$. We have the following exact sequence of $G_{\mathbb{Q}}$-modules
\begin{equation}\label{012523}
0 \rightarrow \left.\mathcal{R}\right/{\mathfrak{a}}\,\left(\eta^{-1} \right) \rightarrow \left.\mathcal{T}^{*}\right/{\mathfrak{a}\mathcal{T}^{*}} \rightarrow \left(\mathcal{T}^{*}\right)_{G_{\mathbb{Q}}} \rightarrow 0
\end{equation}
by combining \eqref{01250125} and \eqref{20200524}. On the other hand, since $\overline{\rho}_{\mathcal{F}}\cong\mathbf{1}\oplus\chi$ as $G_{\mathbb{Q}}$-module and $\chi$ is primitive modulo $Np$, the action of $G_{\mathbb{Q}_p}$ on $F^{+}\mathcal{T}\otimes_{\mathcal{R}}\left.\mathcal{R}\right/{\mathfrak{m}_{\mathcal{R}}}$ is ramified. This implies that as $G_{\mathbb{Q}_p}$-modules, the semi-simplification of $\left.\mathcal{T}\right/\mathfrak{m}_{\mathcal{R}}\mathcal{T}$ is decomposed into two distinct characters. Thus we have that $F^{+}\mathcal{T}$ and $F^{-}\mathcal{T}$ as free $\mathcal{R}$-modules of rank one by \cite[Remark 2.13-(3)]{FO12}. Hence we have the following exact sequence of $G_{\mathbb{Q}_p}$-modules
\begin{equation}\label{0125233}
0 \rightarrow \left.\left(F^{-}\mathcal{T} \right)^{*}\right/{\mathfrak{a}\left(F^{-}\mathcal{T} \right)^{*}} \rightarrow \left.\mathcal{T}^{*}\right/{\mathfrak{a}\mathcal{T}^{*}} \rightarrow \left.\left(F^{+}\mathcal{T} \right)^{*}\right/{\mathfrak{a}\left(F^{+}\mathcal{T} \right)^{*}} \rightarrow 0.
\end{equation}
Since $F^{+}\mathcal{T}$ and $F^{-}\mathcal{T}$ are free $\mathcal{R}$-modules of rank one, we denote by $\varepsilon$ (resp. $\delta$) the character such that $G_{\mathbb{Q}_p}$ acts on $\left(F^{+}\mathcal{T} \right)^{*}$ (resp. $\left(F^{-}\mathcal{T} \right)^{*}$) by $\varepsilon$ (resp. $\delta$). We have that $\left(\mathcal{T}^{*}\right)_{G_{\mathbb{Q}}}$ is isomorphic to $\left.\mathcal{R}\right/\mathfrak{a}\,\left(\mathbf{1} \right)$ by \eqref{20200524}. Then combining with \eqref{012523} we have $\set{\varepsilon,  \delta}=\set{\mathbf{1}, \eta^{-1}}$ as characters of $G_{\mathbb{Q}_p}$ with values in $\left.\mathcal{R}\right/\mathfrak{a}$ by Lemma \ref{21}. Since $\gamma-1 \in \mathfrak{a}$, the action of $G_{\mathbb{Q}_p}$ on $\left.\left(F^{-}\mathcal{T} \right)^{*}\right/{\mathfrak{a}\left(F^{-}\mathcal{T} \right)^{*}}$ is unramified and hence $\delta=\mathbf{1}$ in $\left.\mathcal{R}\right/\mathfrak{a}$. Thus the exact sequence \eqref{012523} splits as $G_{\mathbb{Q}_p}$-modules and $\left.\left(F^{-}\mathcal{T} \right)^{*}\right/{\mathfrak{a}\left(F^{-}\mathcal{T} \right)^{*}}$ is isomorphic to $\left(\mathcal{T}^{*}\right)_{G_{\mathbb{Q}}}$. This implies that the following map 
\begin{equation}\label{012533}
\left(F^{-}\mathcal{T} \right)^{*} \hookrightarrow \mathcal{T}^{*} \twoheadrightarrow \left(\mathcal{T}^{*}\right)_{G_{\mathbb{Q}}}
\end{equation}
is surjective (we remark that we identify $\left(F^{-}\mathcal{T} \right)^{*}$ with the $\mathcal{R}$-lattice of $\mathrm{Hom}_{\mathcal{K}}\left(F^{-}\mathcal{T}\otimes_{\mathcal{R}}\mathcal{K}, \mathcal{K} \right)$, where $\mathcal{K}=\mathrm{Frac}\left(\mathcal{R} \right)$). Then \eqref{012533} induces the following surjection of $G_{\mathbb{Q}_p}$-modules
\begin{equation}\label{miss}
\left(F^{-}\mathcal{T} \right)^{*}_{G_{\mathbb{Q}_p}} \twoheadrightarrow \left(\mathcal{T}^{*}\right)_{G_{\mathbb{Q}}}. 
\end{equation}
Hence the injectivity of $\mathrm{Ker}\,h_P \rightarrow \mathrm{Ker}\,l_P$ follows by combining \eqref{miss1}, \eqref{012533} and \eqref{miss}.

Combining \eqref{2008151} and the injectivity of $\mathrm{Ker}\,h_P \rightarrow \mathrm{Ker}\,l_P$. We have that $\mathrm{Ker}\,\mathrm{res}_P$ is trivial and $\mathrm{Coker}\,\mathrm{res}_P$ is isomorphic to $\mathbb{I}^{\lor}\left[\dfrac{\left(a\left(p, \mathcal{F} \right)-1 \right)}{\mathfrak{p_1}^{n_1-j_1}\cdots\mathfrak{p}_r^{n_r-j_r}} \right]$. This completes the proof of case (b).

Now let us consider case (c). By Proposition \ref{Ochiai06s5}, the group $\mathrm{Ker}\,l_P$ is trivial in this case. Then $\mathrm{Ker}\,\mathrm{res}_P$ is isomorphic to $\mathrm{Ker}\,h_P$, which is isomorphic to $\left.H^0\left(\mathbb{Q}, \mathcal{A} \right)\right/\mathfrak{p}_t H^0\left(\mathbb{Q}, \mathcal{A} \right)$. When $j_t<n_t$, we have the following isomorphism by Proposition \ref{H0}:
\begin{equation*}
\left.H^0\left(\mathbb{Q}, \mathcal{A} \right)\right/\mathfrak{p}_t H^0\left(\mathbb{Q}, \mathcal{A} \right) \stackrel{\sim}{\rightarrow} \mathcal{R}^{\lor}[\gamma-1, \mathfrak{p}_t].
\end{equation*}
When $j_t=n_t$, Proposition \ref{H0} again implies that $H^0\left(\mathbb{Q}, \mathcal{A} \right)$ is $\mathfrak{p}_t$-divisible, hence the module $\left.H^0\left(\mathbb{Q}, \mathcal{A} \right)\right/\mathfrak{p}_t H^0\left(\mathbb{Q}, \mathcal{A} \right)$ is trivial. This completes the proof of Theorem \ref{T3}.

\end{proof}

\begin{rem}
When $i$ is odd, one could prove that $H^0\left(\mathbb{Q}, \mathcal{A}\otimes\omega^i \right)$ is non-trivial when $\omega^i=\chi^{-1}$. In that case, the map $\mathrm{res}_P$ is surjective and has non-trivial kernel for some $P$. We will study the control theorem for odd $i$ and its relation to the Iwasawa $\lambda$-invariants of cyclotomic fields in a forthcoming paper.
\end{rem}

\subsection{Greenberg's criterion for pseudo-null submodule}
Let us keep the notation of the previous section. To specialize the characteristic ideal $\mathrm{char}_{\mathcal{R}}\left(\mathrm{Sel}_{\mathcal{A}} \right)^{\lor}$ and study its relation to $\mathrm{char}_{\mathbb{I}}\left(\mathrm{Sel}_{\mathbb{A}} \right)^{\lor}$, the first important work is to study the control theorem of Selmer groups which is studied in the previous subsection. The second is to prove that $\left(\mathrm{Sel}_{\mathcal{A}} \right)^{\lor}$ has no nontrivial pseudo-null $\mathcal{R}$-submodule (see the proof of \cite[Lemma 3.4]{Ochiai05} for the importance of this result in specialization). In \cite[Proposition 4.1.1]{RG16}, Greenberg gives a criterion for Selmer groups of more general Galois deformation to be almost divisible, which is equivalent to that its Pontryagin dual has no nontrivial pseudo-null submodule (cf. \cite[Proposition 2.4]{RG06}). We verify Greenberg's criterion in the case of Hida deformation in the following proposition. We denote by $\mathfrak{m}_{\mathcal{R}}$ the maximal ideal of $\mathcal{R}$.

\begin{prp}\label{nontrivialpseudo-null}
Suppose that $\mathbb{I}$ is a UFD and Gorenstein. Assume $\left.\chi\right|_{\left(\left.\mathbb{Z}\right/p\mathbb{Z} \right)^{\times}} \neq \mathbf{1}, \omega$ and that $N$ is square-free. Let us take a stable free lattice $\mathbb{T}$. Let $i$ be an integer such that $\omega^i\neq\omega$ and $\left.\chi^{-1}\omega\right|_{\left(\mathbb{Z}/p\mathbb{Z} \right)^{\times}}$. Then $\left(\mathrm{Sel}_{\mathcal{A}\otimes\omega^i} \right)^{\lor}$ has no nontrivial pseudo-null $\mathcal{R}$-submodule.
\end{prp}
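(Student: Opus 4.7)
The plan is to invoke Greenberg's criterion for Selmer groups to be almost divisible (equivalently, for their Pontryagin dual to have no nontrivial pseudo-null submodule), as formulated in \cite[Proposition 4.1.1]{RG16}. In our two-variable ordinary setting, that criterion reduces to checking: (a) that the local conditions defining $\mathrm{Sel}_{\mathcal{A}\otimes\omega^i}$ are coreflexive $\mathcal{R}$-quotients of $\mathcal{A}\otimes\omega^i$; (b) a local ``non-degeneracy'' condition at each prime in $\Sigma$ ensuring that the corresponding local cohomology contributes no pseudo-null piece; (c) a global condition, typically phrased as the nonvanishing (or cofinite generation) of certain $H^0$'s attached to the residual representation and its dual twist. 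I will verify these three points in turn, reducing as much as possible to results already in the paper.

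First I would handle the local condition at $p$. By the argument given in the middle of the proof of Theorem \ref{T3} (right after equation \eqref{012523}), the assumption that $\overline{\rho}_{\mathcal{F}}\cong \mathbf{1}\oplus\chi$ with $\chi$ primitive modulo $Np$ forces the semisimplification of $\mathcal{T}/\mathfrak{m}_{\mathcal{R}}\mathcal{T}$ as a $G_{\mathbb{Q}_p}$-module to split into two distinct characters; hence $F^{+}\mathcal{T}$ and $F^{-}\mathcal{T}$ are free $\mathcal{R}$-modules of rank one. Therefore $F^{+}\mathcal{A}\otimes\omega^i$ and $(\mathcal{A}/F^{+}\mathcal{A})\otimes\omega^i$ are cofree of corank one over $\mathcal{R}$, so the local condition at $p$ is coreflexive as required by (a). The hypothesis $\omega^i\neq\omega$ is then what guarantees the residual $G_{\mathbb{Q}_p}$-invariants of $(\mathcal{A}/F^{+}\mathcal{A})\otimes\omega^i$ (which would be responsible for a bad pseudo-null piece via the unramified quotient) are controlled, while $\omega^i\neq \chi^{-1}\omega|_{(\mathbb{Z}/p\mathbb{Z})^{\times}}$ controls the Tate-dual contribution coming from $F^{+}\mathcal{A}\otimes\omega^i$.

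Next I would deal with the tame primes $v\mid N$. Since $\chi$ is primitive modulo $Np$ and $N$ is square-free, the local automorphic representation at $v$ attached to any classical specialization is principal series, so the image of $I_v$ under $G_{\mathbb{Q}}\to\mathrm{Aut}_{\mathbb{I}}(\mathbb{T})$ is finite (cf.\ the proof of Proposition \ref{Ochiai06s5}, via \cite[Lemma 2.14]{FO12}). From this one deduces, exactly as in \cite[Theorem 3.3-(1)]{Ochiai06}, a clean description of $\mathcal{T}^{I_v}$ and $(\mathcal{A}\otimes\omega^i)^{G_{\mathbb{Q}_v}}$, and in particular that the latter is a cofinitely generated $\mathbb{Z}_p$-module. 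This is what Greenberg's criterion requires at $v$.

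The main obstacle will be condition (c), the global nontriviality. One must rule out a residual constituent of $\mathcal{A}\otimes\omega^i$ whose Tate twist admits global invariants, since such a constituent could force a pseudo-null submodule of $(\mathrm{Sel}_{\mathcal{A}\otimes\omega^i})^{\lor}$. The Jordan--Hölder constituents of the residual representation are precisely $\omega^i$ and $\chi\omega^i$. The hypotheses $\omega^i\neq \omega$ and $\chi\omega^i\neq \omega$ on $(\mathbb{Z}/p\mathbb{Z})^{\times}$ are exactly the conditions that neither of these twists equals the cyclotomic character after passing to the Tate dual, so neither can contribute a globally fixed line. Verifying carefully that this residual statement implies the integral nontriviality required by \cite[Proposition 4.1.1]{RG16}---taking into account that we work with $\mathcal{A}\otimes\omega^i$ rather than a single Galois representation---is the technical heart of the argument and will occupy most of the proof.
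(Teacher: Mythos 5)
Your overall strategy---checking the hypotheses of Greenberg's criterion \cite[Proposition 4.1.1]{RG16}---is exactly the route the paper takes, and several of your intended verifications match the paper's: the freeness of $F^{+}\mathcal{T}$ and $F^{-}\mathcal{T}$ as rank-one $\mathcal{R}$-modules at $p$, the finiteness of the image of $I_v$ for $v\mid N$ (via \cite[Lemma 2.14]{FO12} and the structure of $\left(\mathbb{T}^{*}\right)_{I_v}$), and the fact that the hypotheses $\omega^i\neq\omega$ and $\omega^i\neq\left.\chi^{-1}\omega\right|_{\left(\mathbb{Z}/p\mathbb{Z}\right)^{\times}}$ rule out a composition factor isomorphic to $\omega$ in $\left.\mathcal{T}\right/{\mathfrak{m}_{\mathcal{R}}\mathcal{T}}$.

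The gap is that your three-point summary (a)--(c) does not cover all the hypotheses that \cite[Proposition 4.1.1]{RG16} actually demands, and the ones you omit are where essentially all the work lies. Concretely, the paper must also verify: the vanishing of $\Sha^2\left(\Sigma, \mathcal{A}\otimes\omega^i \right)$; the surjectivity of the global-to-local map defining the Selmer group (the paper's condition (v), which needs the argument of \cite[Theorem 4.10 and Corollary 4.12]{Ochiai06} and again uses $\omega^i\neq\omega,\ \chi^{-1}\omega$); the reflexivity of $\left.\mathcal{T}^{*}(1)\right/{H^0\left(\mathbb{Q}_v, \mathcal{T}^{*}(1)\right)}$ for every $v\mid Np$, which in the paper is a genuine case analysis on the $\mathcal{K}$-rank of $H^0\left(\mathbb{Q}_v, \mathcal{T}^{*}(1)\right)$ combined with a Nakayama argument; and the statement that $H_{\mathrm{Gr}}^{1}\left(\mathbb{Q}_p, \mathcal{A}\otimes\omega^i \right)^{\lor}$ itself has no nontrivial pseudo-null submodule. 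For this last point, coreflexivity of $F^{+}\mathcal{A}\otimes\omega^i$ is not enough: the paper proves $H^1\left(\mathbb{Q}_p, \mathcal{A}\otimes\omega^i \right)^{\lor}$ is $\mathcal{R}$-torsion-free via local Tate duality and \cite[Lemma 2.2.6]{RG10}, and then establishes the exact sequence \eqref{200810} using the vanishing of $H^2\left(\mathbb{Q}_p, F^{+}\mathcal{A}\otimes\omega^i\right)$ and of $\left(\left(F^{-}\mathcal{A}\otimes\omega^i\right)^{I_p}\right)_{\left.G_{\mathbb{Q}_p}\right/{I_p}}$. Finally, you locate the ``technical heart'' in the global nontriviality condition, but in the paper that is the one hypothesis that follows immediately from $\omega^i\neq\omega,\ \chi^{-1}\omega$; the real content is in the local reflexivity and divisibility statements just listed. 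As it stands, the proposal is a correct plan with the right main tool, but it is not yet a proof.
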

\begin{proof}
Let us fix an integer $i$ satisfying the assumption of this lemma. By abuse of notation, we write a new $\mathcal{T}$ (resp. $\mathcal{A}$) for old $\mathcal{T}\otimes\omega^i$ (resp. $\mathcal{T}\otimes\omega^i\otimes_{\mathcal{R}}\mathcal{R}^{\lor}$) from now on to the end of the proof. Due to Greenberg \cite[Proposition 4.1.1]{RG16}, it is sufficient to verify the following conditions.
\begin{enumerate}
\item[(i)]The $\mathcal{R}$-module $\mathcal{T}$ is reflexive.
\item[(ii)]The $\mathcal{R}$-module $\Sha^2\left(\Sigma, \mathcal{A} \right)$ is trivial, where $\Sha^i\left(\Sigma, \mathcal{A} \right):=\mathrm{Ker}\left[H^i\left(\left.\mathbb{Q}_{\Sigma}\right/{\mathbb{Q}}, \mathcal{A} \right) \rightarrow \displaystyle\bigoplus_{v\mid Np}H^1\left(\mathbb{Q}_v, \mathcal{A} \right) \right]$ is the Tate Shafarevich group. 
\item[(iii)]The $\mathcal{R}$-module $\left.\mathcal{T}^{*}(1)\right/{H^0\left(\mathbb{Q}_v, \mathcal{T}^{*}(1) \right)}$ is reflexive for any prime $v \mid Np$.
\item[(iv)]There exists a prime $v\mid Np$ such that $H^0\left(\mathbb{Q}_v, \mathcal{T}^{*}(1) \right)=0$.
\item[(v)]The global-to-local map $H^1\left(\left.\mathbb{Q}_{\Sigma}\right/{\mathbb{Q}}, \mathcal{A} \right) \rightarrow \dfrac{H^1\left(\mathbb{Q}_p, \mathcal{A} \right)}{H_{\mathrm{Gr}}^1\left(\mathbb{Q}_p, \mathcal{A} \right)} \oplus \displaystyle\bigoplus_{v\mid N}\dfrac{H^1\left(\mathbb{Q}_v, \mathcal{A} \right)}{H_{\mathrm{ur}}^1\left(\mathbb{Q}_v, \mathcal{A} \right)}$ is surjective. 
\item[(vi)]The local condition $H_{\mathrm{ur}}^{1}\left(\mathbb{Q}_v, \mathcal{A} \right)^{\lor}$ has no nontrivial pseudo-null $\mathcal{R}$-submodule for any $v \mid N$.
\item[(vii)]The local condition $H_{\mathrm{Gr}}^{1}\left(\mathbb{Q}_p, \mathcal{A} \right)^{\lor}$ has no nontrivial pseudo-null $\mathcal{R}$-submodule.
\item[(viii)]The $G_{\mathbb{Q}}$-module $\left.\mathcal{T}\right/{\mathfrak{m}_{\mathcal{R}}\mathcal{T}}$ has no composition factor which is isomorphic to $\omega$. 
\end{enumerate}

Since $\mathcal{T}$ is a free $\mathcal{R}$-module, the assertion (i) is obvious. The assertion (ii) follows since we have $H^2\left(\left.\mathbb{Q}_{\Sigma}\right/{\mathbb{Q}}, \mathcal{A} \right)$ is trivial by \cite[Lemma 8.3]{Ochiai06}. 

Let us verify the condition (iii) by discussion on the rank of ${\mathcal{T}^{*}\left(1\right)}^{G_{\mathbb{Q}_v}}$ i.e. the dimension of $\mathcal{K}$-vector space ${\mathcal{T}^{*}\left(1\right)}^{G_{\mathbb{Q}_v}}\otimes_{\mathcal{R}}\mathcal{K}$ for the field of fractions $\mathcal{K}$ of $\mathcal{R}$. If the ${\mathcal{T}^{*}\left(1\right)}^{G_{\mathbb{Q}_v}}$ has rank zero, we have ${\mathcal{T}^{*}\left(1\right)}^{G_{\mathbb{Q}_v}}$ is trivial since the $\mathcal{R}$-module ${\mathcal{T}^{*}\left(1\right)}^{G_{\mathbb{Q}_v}}$ is torsion-free,  hence (iii) follows. If ${\mathcal{T}^{*}\left(1\right)}^{G_{\mathbb{Q}_v}}$ has rank two, ${\mathcal{T}^{*}\left(1\right)}^{G_{\mathbb{Q}_v}}$ and $\mathcal{T}^{*}\left(1 \right)$ are $G_{\mathbb{Q}_v}$-isogeny. Since $\chi$ is primitive, this contradicts to that the action of $G_{\mathbb{Q}_v}$ on $\mathcal{T}^{*}\left(1 \right)$ is ramified. Now we assume that ${\mathcal{T}^{*}\left(1\right)}^{G_{\mathbb{Q}_v}}$ has rank one. Let us consider the following exact sequence
\begin{equation*}
{\mathcal{T}^{*}\left(1\right)}^{G_{\mathbb{Q}_v}}\otimes\left.\mathcal{R}\right/{\mathfrak{m}_{\mathcal{R}}}\rightarrow\mathcal{T}^{*}\left(1\right)\otimes\left.\mathcal{R}\right/{\mathfrak{m}_{\mathcal{R}}}\rightarrow \left.\mathcal{T}^{*}\left(1\right)\right/{{\mathcal{T}^{*}\left(1\right)}^{G_{\mathbb{Q}_v}}}\otimes\left.\mathcal{R}\right/{\mathfrak{m}_{\mathcal{R}}}\rightarrow 0.
\end{equation*}
We discuss the dimension of the $\left.\mathcal{R}\right/{\mathfrak{m}_{\mathcal{R}}}$-vector space $\left.\mathcal{T}^{*}\left(1\right)\right/{{\mathcal{T}^{*}\left(1\right)}^{G_{\mathbb{Q}_v}}}\otimes\left.\mathcal{R}\right/{\mathfrak{m}_{\mathcal{R}}}$. If the dimension is zero, $\mathcal{T}^{*}\left(1\right)\otimes\left.\mathcal{R}\right/{\mathfrak{m}_{\mathcal{R}}}$ is a quotient of ${\mathcal{T}^{*}\left(1\right)}^{G_{\mathbb{Q}_v}}\otimes\left.\mathcal{R}\right/{\mathfrak{m}_{\mathcal{R}}}$, hence $G_{\mathbb{Q}_v}$ acts trivially. Under the assumption that $\chi$ is primitive, this contradicts to that the action of $G_{\mathbb{Q}_v}$ on $\mathcal{T}^{*}\left(1\right)\otimes\left.\mathcal{R}\right/{\mathfrak{m}_{\mathcal{R}}}$ is ramified. If $\left.\mathcal{T}^{*}\left(1\right)\right/{{\mathcal{T}^{*}\left(1\right)}^{G_{\mathbb{Q}_v}}}\otimes\left.\mathcal{R}\right/{\mathfrak{m}_{\mathcal{R}}}$ has dimension two, the map $\mathcal{T}^{*}\left(1\right)\otimes\left.\mathcal{R}\right/{\mathfrak{m}_{\mathcal{R}}}\rightarrow \left.\mathcal{T}^{*}\left(1\right)\right/{{\mathcal{T}^{*}\left(1\right)}^{G_{\mathbb{Q}_v}}}\otimes\left.\mathcal{R}\right/{\mathfrak{m}_{\mathcal{R}}}$ is an isomorphism. Hence the image of the map ${\mathcal{T}^{*}\left(1\right)}^{G_{\mathbb{Q}_v}}\otimes\left.\mathcal{R}\right/{\mathfrak{m}_{\mathcal{R}}}\rightarrow\mathcal{T}^{*}\left(1\right)\otimes\left.\mathcal{R}\right/{\mathfrak{m}_{\mathcal{R}}}$ is trivial.  This implies ${\mathcal{T}^{*}\left(1\right)}^{G_{\mathbb{Q}_v}} \subset \mathcal{\mathfrak{m}_{\mathcal{R}}}\mathcal{T}^{*}(1)$. Hence  ${\mathcal{T}^{*}\left(1\right)}^{G_{\mathbb{Q}_v}} \subset H^0\left(\mathbb{Q}_v, \mathcal{\mathfrak{m}_{\mathcal{R}}}\mathcal{T}^{*}(1) \right)=\mathfrak{m}_{\mathcal{R}}\cdot{\mathcal{T}^{*}\left(1\right)}^{G_{\mathbb{Q}_v}}$. Thus ${\mathcal{T}^{*}\left(1\right)}^{G_{\mathbb{Q}_v}}$ is trivial by Nakayama's lemma which contradicts to our assumption (that ${\mathcal{T}^{*}\left(1\right)}^{G_{\mathbb{Q}_v}}$ has rank one). Thus $\left.\mathcal{T}^{*}\left(1\right)\right/{{\mathcal{T}^{*}\left(1\right)}^{G_{\mathbb{Q}_v}}}\otimes\left.\mathcal{R}\right/{\mathfrak{m}_{\mathcal{R}}}$ must have dimension one, whence $\left.\mathcal{T}^{*}\left(1\right)\right/{{\mathcal{T}^{*}\left(1\right)}^{G_{\mathbb{Q}_v}}}$ is a cyclic $\mathcal{R}$-module. This implies that $\left.\mathcal{T}^{*}\left(1\right)\right/{{\mathcal{T}^{*}\left(1\right)}^{G_{\mathbb{Q}_v}}}$ is either a torsion $\mathcal{R}$-module or $\mathcal{R}$-torsion free. Moreover, since we have assumed that ${\mathcal{T}^{*}\left(1\right)}^{G_{\mathbb{Q}_v}}$ has rank one, by taking the base extension $\otimes_{\mathcal{R}}\mathcal{K}$ to the following exact sequence
\begin{equation*}
0 \rightarrow {\mathcal{T}^{*}\left(1\right)}^{G_{\mathbb{Q}_v}} \rightarrow \mathcal{T}^{*}\left(1\right) \rightarrow \left.\mathcal{T}^{*}\left(1\right)\right/{{\mathcal{T}^{*}\left(1\right)}^{G_{\mathbb{Q}_v}}} \rightarrow 0
\end{equation*}
we have that $\left.\mathcal{T}^{*}\left(1 \right)\right/{{\mathcal{T}^{*}\left(1\right)}^{G_{\mathbb{Q}_v}}}$ is not $\mathcal{R}$-torsion. Thus $\left.\mathcal{T}^{*}\left(1\right)\right/{{\mathcal{T}^{*}\left(1\right)}^{G_{\mathbb{Q}_v}}}$ is a free $\mathcal{R}$-module of rank one and the assertion (iii) follows.

Let us verify the condition (iv). Let us take $v=p$ and recall that we have the exact sequence of $G_{\mathbb{Q}_p}$-modules
\begin{equation*}
0 \rightarrow F^{+}\mathcal{T}^{*}(1) \rightarrow \mathcal{T}^{*}(1) \rightarrow F^{-}\mathcal{T}^{*}(1) \rightarrow 0. 
\end{equation*}
Since $\overline{\rho}_{\mathcal{F}}\cong\mathbf{1}\oplus\chi$ and $\chi$ is primitive modulo $Np$, we have that the character by which $G_{\mathbb{Q}_p}$ acts on $F^{+}\mathbb{T}$ and $F^{-}\mathbb{T}$ are distinct modulo $\mathfrak{m}_{\mathcal{R}}$. Thus $F^{+}\mathbb{T}$ and $F^{-}\mathbb{T}$ are free $\mathbb{I}$-modules of rank one by \cite[Remark 2.13]{FO12}, so are the $\mathcal{R}$-modules $F^{+}\mathcal{T}^{*}(1)$ and $F^{-}\mathcal{T}^{*}(1)$. Since the semi-simplification of the $I_p$-module $\left.{\mathcal{T}^{*}(1)}\right/{\mathfrak{m}_{\mathcal{R}}\mathcal{T}^{*}(1)}$ is isomorphic to $\omega^{1-i}\oplus\left.\chi^{-1}\right|_{\left(\mathbb{Z}/p\mathbb{Z} \right)^{\times}}\omega^{1-i}$, we have that $F^{+}\mathcal{T}^{*}(1)$ and $F^{-}\mathcal{T}^{*}(1)$ are ramified under the assumptions $\omega^i\neq\omega$ and $\omega^i\neq\left.\chi^{-1}\omega\right|_{\left(\mathbb{Z}/p\mathbb{Z} \right)^{\times}}$. Hence the groups $H^0\left(\mathbb{Q}_p, F^{+}\mathcal{T}^{*}(1) \right)$ and $H^0\left(\mathbb{Q}_p, F^{-}\mathcal{T}^{*}(1) \right)$ vanish. This implies that $H^0\left(\mathbb{Q}_p, \mathcal{T}^{*}(1) \right)$ vanishes and the assertion (iv) follows. 

Under the assumption $\omega^i \neq \omega$ and $\chi^{-1}\omega$, the assertion (v) follows by the same proof of \cite[Theorem 4.10 and Corollary 4.12]{Ochiai06}. 

Let us verify the assertion (vi). Since $H_{\mathrm{ur}}^1\left(\mathbb{Q}_{v}, \mathcal{A} \right)$ is isomorphic to $\left(\mathcal{A}^{I_v} \right)_{G_{\mathbb{Q}_{v}}}$, whose Pontryagin dual is $\left(\left(\mathcal{T}^{*}\right)_{I_v} \right)^{G_{\mathbb{Q}_v}}$. Thus it is sufficient to prove that $\left(\mathcal{T}^{*}\right)_{I_v}$ has no nontrivial pseudo-null $\mathcal{R}$-submodule. Since $v\neq p$, $\left(\mathcal{T}^{*}\right)_{I_v}$ is isomorphic to $\left(\mathbb{T}^{*}\right)_{I_v}\hat{\otimes}_{\mathbb{Z}_p}\mathbb{Z}_p[[\Gamma]]\left(\tilde{\kappa} \right)\otimes\omega^{-i}$. Under the assumption that $\chi$ is primitive and $N$ is square-free, there exist a ring of integers $\mathcal{O} \subset \mathbb{I}$ of a finite extension of $\mathbb{Q}_p$ and integers $0 \leq r, r^{\prime} \leq \infty$ such that $\left(\mathbb{T}^{*}  \right)_{I_v}\cong \left.\mathbb{I}\right/\left(\varpi\right)^{r} \oplus \left.\mathbb{I}\right/\left(\varpi\right)^{r^{\prime}}$ (cf. the proof of Proposition \ref{Ochiai06s5}), where $\varpi$ is a fixed uniformizer of $\mathcal{O}$. Thus the assertion (vi) follows. 

Let us verify the assertion (vii). First we prove that $H^1\left(\mathbb{Q}_p, \mathcal{A} \right)^{\lor}$ is a torsion-free $\mathcal{R}$-module which is equivalent to that $H^1\left(\mathbb{Q}_p, \mathcal{A} \right)$ is a divisible $\mathcal{R}$-module. Let $a$ be an irreducible element of $\mathcal{R}$. We have that $\left.H^1\left(\mathbb{Q}_p, \mathcal{A} \right)\right/{a H^1\left(\mathbb{Q}_p, \mathcal{A} \right)}$ is isomorphic to an $\mathcal{R}$-submodule of $H^2\left(\mathbb{Q}_p, \mathcal{A}[a] \right)$. Thus it is enough to prove that $H^2\left(\mathbb{Q}_p, \mathcal{A}[a] \right)$ is trivial. We have that $H^2\left(\mathbb{Q}_p, \mathcal{A}[a] \right)$ is the Pontrygin dual of $H^{0}\left(\mathbb{Q}_p, \left.\mathcal{T}^{*}(1)\right/{a \mathcal{T}^{*}(1)} \right)$ by local Tate duality. By the argument in the verification of the assertion (iv), we have that $H^{0}\left(I_p, \left.\mathcal{T}^{*}(1)\right/{\mathfrak{m}_{\mathcal{R}} \mathcal{T}^{*}(1)} \right)$ is trivial under the assumptions $\omega^i\neq\omega$ and $\omega^i\neq\left.\chi^{-1}\omega\right|_{\left(\mathbb{Z}/p\mathbb{Z} \right)^{\times}}$. This implies that $H^{0}\left(\mathbb{Q}_p, \left.\mathcal{T}^{*}(1)\right/{\mathfrak{m}_{\mathcal{R}} \mathcal{T}^{*}(1)} \right)$ is trivial. Thus we have $H^{0}\left(\mathbb{Q}_p, \left.\mathcal{T}^{*}(1)\right/{a \mathcal{T}^{*}(1)} \right)$ is trivial by \cite[Lemma 2.2.6]{RG10} (We remark \cite[Lemma 2.2.6]{RG10} is still valid for $\mathbb{Q}_p$ since $G_{\mathbb{Q}_p}$ is topologically finitely generated). This proves that $H^1\left(\mathbb{Q}_p, \mathcal{A} \right)^{\lor}$ is a torsion-free $\mathcal{R}$-module. 

Next we claim that $H_{\mathrm{Gr}}^1\left(\mathbb{Q}_p, \mathcal{A} \right)$ fits into the following exact sequence
\begin{equation}\label{200810}
0 \rightarrow H_{\mathrm{Gr}}^1\left(\mathbb{Q}_p, \mathcal{A} \right) \rightarrow H^1\left(\mathbb{Q}_p, \mathcal{A} \right) \rightarrow H^1\left(\mathbb{Q}_p, F^{-}\mathcal{A} \right) \rightarrow 0.
\end{equation}
Indeed, the group $H^2\left(\mathbb{Q}_p, F^{+}\mathcal{A} \right)$ is the Pontryagin dual of $H^0\left(\mathbb{Q}_p, \left(F^{+}\mathcal{T}\right)^{*}(1) \right)$ by local Tate duality. Hence the group $H^2\left(\mathbb{Q}_p, F^{+}\mathcal{A} \right)$ vanishes by the argument in the assertion (iv). This implies that the map $H^1\left(\mathbb{Q}_p, \mathcal{A} \right) \rightarrow H^1\left(\mathbb{Q}_p, F^{-}\mathcal{A} \right)$ is surjective. Thus it is sufficient to prove that the map $H^1\left(\mathbb{Q}_p, F^{-}\mathcal{A} \right) \rightarrow H^1\left(I_p, F^{-}\mathcal{A} \right)^{G_{\mathbb{Q}_p}}$ is an isomorphism. The cokernel of the map is isomorphic to a subgroup of $H^2\left(\left.G_{\mathbb{Q}_p}\right/{I_p}, \left(F^{-}\mathcal{A}\right)^{I_p} \right)$, which vanishes since $\left.G_{\mathbb{Q}_p}\right/{I_p}$ has cohomological dimension one. The kernel of the map is isomorphic to $\left(\left(F^{-}\mathcal{A}\right)^{I_p} \right)_{\left.G_{\mathbb{Q}_p}\right/{I_p}}$. Since the group $\left.G_{\mathbb{Q}_p}\right/{I_p}$ is generated by $\mathrm{Frob}_p$ which acts on $F^{-}\mathbb{T}$ by $a\left(p, \mathcal{F} \right)$, the group $\left(\left(F^{-}\mathcal{A}\right)^{I_p} \right)_{\left.G_{\mathbb{Q}_p}\right/{I_p}}$ vanishes by \eqref{2008121}. Thus the exactness of \eqref{200810} follows. 

Let us take the Pontryagin dual of \eqref{200810}. Since we have proved that $H^1\left(\mathbb{Q}_p, \mathcal{A} \right)^{\lor}$ is a torsion-free $\mathcal{R}$-module, which implies that has no non-trivial pseudo-null $\mathcal{R}$-submodule. On the other hand, the $\mathcal{R}$-module $H^1\left(\mathbb{Q}_p, F^{-}\mathcal{A} \right)^{\lor}$ is reflexive by \cite[Lemma 8.5]{Ochiai06}. This verifies the assertion (vii) by \cite[Lemma 8.7]{Ochiai06}. 

Under the assumption $\omega^i \neq \omega$ and $\chi^{-1}\omega$, the assertion (viii) follows immediately. Thus the proof is complete.

\end{proof}

\subsection{Discussion of the main conjecture under specialization}

In this section, we discuss the Iwasawa main conjecture for one- and two-variable Hida deformation by specialization. 

Firstly we recall the main conjecture for general Galois deformation. Let $\mathscr{R}$ be a complete Noetherian local domain such that the residue field is a finite extension of $\mathbb{F}_p$. Let $\mathscr{T}$ be a free $\mathscr{R}$-module of finite rank on which $G_{\mathbb{Q}}$ acts continuously and unramified outside a finite set of primes $\Sigma \supset \set{p, \infty}$. Let $\mathscr{A}:=\mathscr{T}\otimes_{\mathscr{R}}\mathscr{R}^{\lor}$. We refer the reader to \cite[\S 1]{Ochiai10} for the conditions ($\mathbf{Geom}$), ($\mathbf{Pan}$), ($\mathbf{Crit}$) and ($\mathbf{NV}$). In particular, the condition ($\mathbf{Pan}$) enables us to define a $G_{\mathbb{Q}_p}$-submodule $F^{+}\mathscr{A}$ of $\mathscr{A}$. The Iwasawa main conjecture for $\mathscr{T}$ is the following conjecture which is proposed by Greenberg \cite[Conjecture 4.1]{RG} and modified by Ochiai \cite[Conjecture C]{Ochiai10}.

\begin{conj}\label{IMCGalois}
Let $S \subset \mathrm{Hom}_{\mathrm{cont}}\left(\mathscr{R}, \overline{\mathbb{Q}_p} \right)$ be a Zariski dense subset. Assume the conditions ($\mathbf{Geom}$), ($\mathbf{Pan}$), ($\mathbf{Crit}$) and ($\mathbf{NV}$) for the pair $\left(\mathscr{R}, \mathscr{T}, S \right)$. Assume further the existence of the analytic $p$-adic $L$-function $L_p\left(\mathscr{T} \right)\in \mathrm{Frac}\left(\mathscr{R}\right)$ (cf. \cite[page 218]{RG}). Then we have the following equality
\begin{equation}\label{20200623}
e_{\mathscr{A}}\cdot\mathrm{char}_{\mathscr{R}}\left(\mathrm{Sel}_{\mathscr{A}} \right)^{\lor}\mathrm{char}_{\mathscr{R}}\left(H^0\left(\mathbb{Q}, \mathscr{A} \right)^{\lor} \right)^{-1}\mathrm{char}_{\mathscr{R}}\left(H^0\left(\mathbb{Q}, \mathscr{A}^{*} \right)^{\lor} \right)^{-1}=\left(L_p\left(\mathscr{T} \right) \right),
\end{equation}
where $\mathscr{A}:=\mathscr{T}\otimes_{\mathscr{R}}\mathscr{R}^{\lor}, \mathscr{A}^{*}:=\mathrm{Hom}\left(\mathscr{T}, \mu_{p^{\infty}} \right)$ and 
\begin{equation*}
e_{\mathscr{A}}=\begin{cases}
\mathrm{char}_{\mathscr{R}}\left(H^0\left(\mathbb{Q}_p, \left.\mathscr{A}\right/{F^{+}\mathscr{A}} \right)^{\lor} \right)& \text{if}\ \left.\mathscr{A}\right/{F^{+}\mathscr{A}}\ \text{is unramified} \\
0& \text{otherwise}.
\end{cases}
\end{equation*}
\end{conj}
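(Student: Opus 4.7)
The plan is to establish the equality by proving two opposing divisibilities separately, following the Mazur--Wiles paradigm that has guided essentially every proof of a main conjecture to date. First I would attempt to show the divisibility
\begin{equation*}
e_{\mathscr{A}}\cdot\mathrm{char}_{\mathscr{R}}\left(\mathrm{Sel}_{\mathscr{A}} \right)^{\lor}\mathrm{char}_{\mathscr{R}}\left(H^0\left(\mathbb{Q}, \mathscr{A} \right)^{\lor} \right)^{-1}\mathrm{char}_{\mathscr{R}}\left(H^0\left(\mathbb{Q}, \mathscr{A}^{*} \right)^{\lor} \right)^{-1}\subset\left(L_p\left(\mathscr{T} \right) \right)
\end{equation*}
by constructing a suitable Euler system (or Kolyvagin system) for the deformation $\mathscr{T}$. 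In the setting where Conjecture \ref{IMCGalois} specializes to the Hida-deformation conjectures of \S 4 one would begin with Kato's Euler system attached to $\mathcal{F}$ and deform it through the family, ultimately producing an $\mathscr{R}$-adic cohomology class whose image under the dual exponential map is (a unit times) $L_p\left(\mathscr{T} \right)$. The Kolyvagin/Rubin machinery, adapted to coefficient rings of Krull dimension $>1$ via the pseudo-null vanishing established in Proposition \ref{nontrivialpseudo-null}, then bounds the Selmer group; the local correction factor $e_{\mathscr{A}}$ enters through the computation of the derived image in $H^1\left(\mathbb{Q}_p, \left.\mathscr{A}\right/F^{+}\mathscr{A} \right)$.

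For the reverse divisibility, I would follow the Eisenstein-ideal approach of Mazur--Wiles, Wiles, and Skinner--Urban, generalized to the residually reducible framework developed earlier in the paper. The strategy is to produce, for each height-one prime $\mathscr{P}$ dividing $L_p\left(\mathscr{T} \right)$, non-trivial elements of $\mathrm{Sel}_{\mathscr{A}}\left[\mathscr{P} \right]$ arising as Galois-theoretic extensions inside representations attached to congruent Eisenstein-like automorphic forms. The ideal of reducibility machinery of \S 2, combined with the graph of stable free lattices provided by Theorem \ref{Gal}, is precisely what converts congruence data into a lower bound on $\mathrm{char}_{\mathscr{R}}\left(\mathrm{Sel}_{\mathscr{A}} \right)^{\lor}$. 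The correction terms $\mathrm{char}_{\mathscr{R}}\left(H^0\left(\mathbb{Q}, \mathscr{A} \right)^{\lor} \right)^{-1}$ and $\mathrm{char}_{\mathscr{R}}\left(H^0\left(\mathbb{Q}, \mathscr{A}^{*} \right)^{\lor} \right)^{-1}$ enter because, in the residually reducible case, the choice of stable free lattice affects the trivial-invariants, a phenomenon made explicit by Proposition \ref{H0}.

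With both divisibilities in hand, the equality follows provided one verifies that $\left(\mathrm{Sel}_{\mathscr{A}} \right)^{\lor}$ is a torsion $\mathscr{R}$-module of the expected rank (consequence of ($\mathbf{NV}$) and the Euler characteristic computation) and that it has no non-trivial pseudo-null submodule (Proposition \ref{nontrivialpseudo-null} for the Hida case, Greenberg's criterion \cite[Prop.~4.1.1]{RG16} in general). A secondary, purely combinatorial step is a Zariski-dense descent: if the equality \eqref{20200623} holds up to a unit after reduction modulo $\mathrm{Ker}\left(\phi \right)$ for every $\phi \in S$, then the Weierstrass preparation theorem applied to each height-one prime of $\mathscr{R}$ propagates the identity to all of $\mathscr{R}$. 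This is how one would import classical cases (the cyclotomic main conjecture of Mazur--Wiles, the Skinner--Urban three-variable result) as the base of an induction.

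The hard part, by a wide margin, is the Euler-system construction in the residually reducible setting and the handling of the local factor $e_{\mathscr{A}}$ when $\left.\mathscr{A}\right/F^{+}\mathscr{A}$ is ramified, in which case $e_{\mathscr{A}}=0$ and the conjecture predicts exceptional zeros of the algebraic side that must be matched against trivial zeros of $L_p\left(\mathscr{T} \right)$. A closely related obstacle is confirming that the characteristic ideals of $H^0\left(\mathbb{Q}, \mathscr{A} \right)$ and $H^0\left(\mathbb{Q}, \mathscr{A}^{*} \right)$ are genuine contributions rather than mere bookkeeping: in the concrete Hida situation these have been computed via Theorem \ref{Gal} and class field theory, but in the generality of Conjecture \ref{IMCGalois} this requires a fine analysis of the reducibility locus of $\rho$ which is not yet available. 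For this reason I would regard a full proof of Conjecture \ref{IMCGalois} as presently out of reach; the realistic target is to verify it in specific residually reducible Hida cases, which is precisely the direction the author pursues in \S 4.5.
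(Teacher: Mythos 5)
The statement you were asked to prove is a conjecture, not a theorem of the paper: it is Greenberg's main conjecture for Galois deformations as modified by Ochiai (\cite[Conjecture 4.1]{RG}, \cite[Conjecture C]{Ochiai10}), and the paper states it without proof, precisely because no proof is known. So there is no proof in the paper against which your attempt can be matched, and your own text concedes at the end that a full proof is ``presently out of reach'' --- which means what you have written is a strategy survey (Euler/Kolyvagin systems for one divisibility, Eisenstein-congruence lattice constructions for the other, plus a Zariski-dense descent), not a proof. As a survey it is reasonable and correctly identifies the genuinely hard points (the residually reducible Euler system, the exceptional-zero case $e_{\mathscr{A}}=0$), but it should not be presented as a proof proposal for the displayed equality.

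What the paper actually does with Conjecture \ref{IMCGalois} is different in kind from anything in your sketch: it uses the conjecture only as a template to write down its specializations to the Hida setting (Conjectures \ref{IMCtwovarintro}, \ref{IMCHidaintro}, \ref{IMCEis}), and then \emph{proves implications between these specializations}. Those implications rest not on Euler systems or Eisenstein congruences but on the control theorem (Theorem \ref{T3}), the computation of $H^{0}\left(\mathbb{Q}, \mathcal{A} \right)$ via the lattice graph (Proposition \ref{H0} together with Theorem \ref{Gal}), and the absence of pseudo-null submodules (Proposition \ref{nontrivialpseudo-null}), as carried out in Corollary \ref{IMCs} and Corollary \ref{sp}. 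If you want your write-up to align with the paper, the honest statement is: the conjecture itself is left open; the correction terms $\mathrm{char}_{\mathscr{R}}\left(H^0\left(\mathbb{Q}, \mathscr{A} \right)^{\lor} \right)$ and $e_{\mathscr{A}}$ are justified in the paper not by a proof of \eqref{20200623} but by showing that with these terms the specialized conjectures are compatible under the maps $\mathrm{res}_P$ for $P=\left(\gamma-1\right)$ and $P=\mathfrak{p}_i \mid \mathbb{J}$.
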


When $\mathscr{T}$ is the cyclotomic deformation of a Dirichlet character, the term $\mathrm{char}_{\mathcal{R}}\left(H^0\left(\mathbb{Q}, \mathscr{A} \right)^{\lor} \right)$ and $\mathrm{char}_{\mathcal{R}}\left(H^0\left(\mathbb{Q}, \mathscr{A}^{*} \right)^{\lor} \right)$ corresponds to the denominator of Kubota-Leopoldt $p$-adic $L$-function (cf. \cite[\S 1]{RG}). The first example where the main conjecture dose not hold without $e_{\mathscr{A}}$ is established in \cite[\S 7-(c)]{Ochiai06}. 

Let us keep the notation of Hida deformation as \S 4.2. First we formulate Conjecture \ref{IMCGalois} when $\mathscr{T}:=\mathbb{T}\hat{\otimes}_{\mathbb{Z}_p}\mathbb{Z}_p[[\Gamma]]\left(\tilde{\kappa}^{-1} \right)\otimes_{\mathbb{Z}_p}\omega^i$ comes from a residually reducible Hida family $\mathcal{F}$. We assume that $\mathbb{I}$ is Gorenstein and $\left.\chi\right|_{\left(\left.\mathbb{Z}\right/p\mathbb{Z} \right)^{\times}} \neq \mathbf{1}, \omega$. Then $\rho_{\mathcal{F}}$ satisfies the condition (Lat-fr). Let $\mathbb{T}$ be a stable free lattice of $\mathbb{V}_{\mathcal{F}}$. Let $\mathbb{A}=\mathbb{T}\otimes_{\mathbb{I}}\mathbb{I}^{\lor}, \mathcal{T}=\mathbb{T}\hat{\otimes}_{\mathbb{Z}_p}\mathbb{Z}_p[[\Gamma]]\left(\tilde{\kappa}^{-1} \right)$ and $\mathcal{A}=\mathcal{T}\otimes_{\mathcal{R}}\mathcal{R}^{\lor}$. We recall that under the assumption that $\mathbb{I}$ is Gorenstein, $\chi$ is primitive modulo $Np$ and $\left.\chi\right|_{\left(\left.\mathbb{Z}\right/p\mathbb{Z} \right)^{\times}} \neq \omega$, the two-variable $p$-adic $L$-function $L_p^{\mathrm{Ki}}\left(\mathcal{T}\otimes\omega^i \right)$ is constructed to be an element in $\mathcal{R}\otimes_{\mathbb{Z}_p}\mathbb{Q}_p$ by Mazur and Kitagawa \cite{Ki94}. The one-variable $p$-adic $L$-function $L_p^{\mathrm{Ki}}\left(\mathbb{T}\otimes\omega^i \right) \in \mathbb{I}\otimes_{\mathbb{Z}_p}\mathbb{Q}_p$ is defined by the image of $L_p^{\mathrm{Ki}}\left(\mathcal{T}\otimes\omega^i \right)$ under $\mathcal{R} \twoheadrightarrow \left.\mathcal{R}\right/{\left(\gamma-1 \right)}$. Since we are interested in the case where the terms $e_{\mathcal{A}\otimes\omega^i}$ and $\mathrm{char}_{\mathcal{R}}\left(H^0\left(\mathbb{Q}, \mathcal{A}\otimes\omega^i \right)^{\lor} \right)$ do not vanish, we consider the case when $\omega^i=\mathbf{1}$ (i.e $i \equiv 0 \pmod{p-1}$). Then by Proposition \ref{H0}, $H^{0}\left(\mathbb{Q}, \mathcal{A} \right)$ is a pseudo-null $\mathcal{R}$-module. Thus Conjecture \ref{IMCGalois} for $\mathcal{T}$ is the following conjecture. 

\begin{conj}\label{IMCtwovar}
Suppose that $\mathbb{I}$ is a UFD and Gorenstein. Suppose $\left.\chi\right|_{\left(\left.\mathbb{Z}\right/p\mathbb{Z} \right)^{\times}} \neq \mathbf{1}, \omega$. Assume $p \nmid \varphi\left(N \right)$ and that the Eisenstein ideal $\mathbb{J}$ is principal. Let $\mathbb{T}$ be a stable free lattice. Then we have the following equality
\begin{equation}\label{20200625}
\mathrm{char}_{\mathcal{R}}\left(\mathrm{Sel}_{\mathcal{A}} \right)^{\lor}=\left(L_p^{\mathrm{Ki}}\left(\mathcal{T} \right) \right).
\end{equation}
\end{conj}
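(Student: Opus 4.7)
The plan is to establish \eqref{20200625} by reducing to specializations at a Zariski-dense family of height-one primes of $\mathcal{R}$, then exploiting that Proposition \ref{nontrivialpseudo-null} ensures $(\mathrm{Sel}_{\mathcal{A}})^{\vee}$ has no nontrivial pseudo-null $\mathcal{R}$-submodule; consequently an equality of characteristic ideals can be read off from matched equalities in codimension one. Since $\mathbb{I}$ is a UFD, so is $\mathcal{R}=\mathbb{I}[[\Gamma]]$, hence both sides of \eqref{20200625} are principal ideals and it suffices to match them at every height-one prime of $\mathcal{R}$.

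First I would handle the Zariski-dense set of arithmetic specializations $P=\ker(\phi)\mathcal{R}$ of weight $k_\phi \ge 2$: by Theorem \ref{T3}(1) the control map $\mathrm{res}_P$ is an isomorphism, so the algebraic side specializes to the characteristic ideal of the cyclotomic Selmer group of the classical eigenform $f_\phi=\phi_*\mathcal{F}$, while the Mazur--Kitagawa interpolation sends $L_p^{\mathrm{Ki}}(\mathcal{T}) \bmod P$ to the cyclotomic $p$-adic $L$-function of $f_\phi$. For almost every such $\phi$ the residual representation $\overline{\rho}_{f_\phi}$ is irreducible, and the equality of the two specialized quantities then follows from the cyclotomic main conjecture for $f_\phi$, established by combining Kato's Beilinson--Kato Euler system (one divisibility) with the Skinner--Urban Eisenstein-congruence argument on $\mathrm{GU}(2,2)$ (the opposite divisibility). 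Running this at a dense set of $\phi$ pins down the two sides of \eqref{20200625} on a dense collection of codimension-one fibres.

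The hard step, and the real obstacle, is to control the exceptional primes that lie outside the arithmetic locus, in particular $P=(\gamma-1)$ and each $P=\mathfrak{p}_t \mid \mathbb{J}$. Here Theorem \ref{T3}(2)--(3) introduces kernel and cokernel corrections governed by $H^0(\mathbb{Q},\mathcal{A})$, which Proposition \ref{H0} identifies as $\mathcal{R}^{\vee}[\mathfrak{p}_1^{n_1-j_1}\cdots \mathfrak{p}_r^{n_r-j_r}\mathcal{R},\,\gamma-1]$. I would therefore need to verify that $L_p^{\mathrm{Ki}}(\mathcal{T})$ carries exactly the same lattice-dependence as the algebraic side: because Kitagawa's construction depends on the $\mathbb{I}$-adic modular symbols attached to $\mathbb{T}$, the ratio $L_p^{\mathrm{Ki}}(\mathcal{T})/L_p^{\mathrm{Ki}}(\mathcal{T}^{\mathrm{min}})$ should cut out the divisor $\mathfrak{p}_1^{j_1}\cdots \mathfrak{p}_r^{j_r}\mathcal{R}$, matching the algebraic computation of Corollary \ref{C2}. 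The missing technical input, and the genuine bottleneck for turning this plan into a theorem, is an explicit reciprocity law connecting the two-variable Kato classes to $L_p^{\mathrm{Ki}}(\mathcal{T})$ modulo the Eisenstein ideal $\mathbb{J}$; with such a reciprocity in hand, the density argument from the arithmetic fibres combined with the matched lattice-dependence on both sides would close the proof.
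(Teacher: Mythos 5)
This statement is a \emph{conjecture} in the paper: the author does not prove it, and offers no proof. The paper's contribution here is only to \emph{formulate} the two-variable main conjecture correctly in the residually reducible setting --- namely, to observe via Proposition \ref{H0} that $H^{0}\left(\mathbb{Q}, \mathcal{A} \right)$ is pseudo-null over $\mathcal{R}$ (so the correction terms $\mathrm{char}_{\mathscr{R}}\left(H^0\left(\mathbb{Q}, \mathscr{A} \right)^{\lor} \right)^{-1}$ and $e_{\mathscr{A}}$ in Conjecture \ref{IMCGalois} disappear) --- and then to deduce the one-variable and Eisenstein-specialized conjectures from it (Corollaries \ref{IMCs} and \ref{sp}). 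A proof of the displayed equality is not something you can be expected to reconstruct from this paper, and your own write-up concedes as much when it names a ``missing technical input'' and a ``genuine bottleneck.''

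Beyond the fact that the statement is open, your strategy has a concrete flaw. You write that for almost every arithmetic specialization $\phi$ the residual representation $\overline{\rho}_{f_\phi}$ is irreducible, and you then invoke Kato's Euler system and the Skinner--Urban argument to get the cyclotomic main conjecture for $f_\phi$. But the residual representation modulo $\mathfrak{m}_{\mathbb{I}}$ is constant along the Hida family, and the standing hypothesis (Eis) of this paper is precisely that it equals $\mathbf{1}\oplus\overline{\chi}$; hence \emph{every} classical specialization $f_\phi$ is residually reducible (Eisenstein). The irreducibility hypotheses underlying both the integral Kato divisibility and the Skinner--Urban converse therefore fail at every arithmetic point, so the ``dense set of codimension-one fibres'' you propose to pin the two sides down on is empty of usable inputs. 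The remaining structural ingredients you cite (Proposition \ref{nontrivialpseudo-null}, Theorem \ref{T3}, Corollary \ref{C2}, the lattice-dependence of $L_p^{\mathrm{Ki}}$) are indeed the ones the paper uses --- but only to pass \emph{from} the two-variable conjecture to its specializations, not to prove the two-variable conjecture itself.
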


Let us keep the assumptions of Conjecture \ref{IMCtwovar}. We consider the main conjecture for $\mathbb{T}$. Under the assumption $\chi \neq \omega$, the $\mathbb{I}$-module $H^0\left(\mathbb{Q}, \mathbb{A}^* \right)$ is trivial. However by Proposition \ref{H0} we know that the $\mathbb{I}$-module $H^0\left(\mathbb{Q}, \mathbb{A} \right)$ is not pseudo-null. Thus Conjecture \ref{IMCGalois} for $\mathbb{T}$ is the following conjecture.

\begin{conj}\label{IMCHida}
Let us keep the assumptions of Conjecture \ref{IMCtwovar}. Then we have the following equality
\begin{equation}\label{20200626}
\left(a\left(p, \mathcal{F} \right)-1 \right)\mathrm{char}_{\mathbb{I}}\left(\mathrm{Sel}_{\mathbb{A}} \right)^{\lor}{\mathrm{char}_{\mathbb{I}}\left({H^0\left(\mathbb{Q}, \mathbb{A} \right)}^{\lor} \right)}^{-1}=\left(L_p^{\mathrm{Ki}}\left(\mathbb{T} \right) \right).
\end{equation}

\end{conj}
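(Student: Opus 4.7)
The plan is to deduce Conjecture~\ref{IMCHida} from the two-variable main conjecture (Conjecture~\ref{IMCtwovar}) by specialization at the principal prime $(\gamma-1) \subset \mathcal{R}$. On the analytic side, this is immediate from the definition: $L_p^{\mathrm{Ki}}(\mathbb{T})$ is the image of $L_p^{\mathrm{Ki}}(\mathcal{T})$ under $\mathcal{R} \twoheadrightarrow \mathcal{R}/(\gamma-1) \simeq \mathbb{I}$, so assuming \eqref{20200625}, the task reduces to computing $\mathrm{char}_{\mathcal{R}}(\mathrm{Sel}_{\mathcal{A}})^{\lor} \bmod (\gamma-1)$ and matching it with the prescribed product on the algebraic side.

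To perform the reduction I would first invoke Proposition~\ref{nontrivialpseudo-null}, applicable at $i=0$ because $\omega \neq \mathbf{1}$ and, under $\chi \neq \omega$, also $\chi^{-1}\omega|_{(\mathbb{Z}/p\mathbb{Z})^\times} \neq \mathbf{1}$. This guarantees that $(\mathrm{Sel}_{\mathcal{A}})^{\lor}$ carries no nontrivial pseudo-null $\mathcal{R}$-submodule, so that by the standard specialization lemma (cf.\ the proof of \cite[Lemma 3.4]{Ochiai05}) one has
\begin{equation*}
\mathrm{char}_{\mathcal{R}}(\mathrm{Sel}_{\mathcal{A}})^{\lor} \bmod (\gamma-1) \;=\; \mathrm{char}_{\mathbb{I}}\bigl( \mathrm{Sel}_{\mathcal{A}}[\gamma-1]\bigr)^{\lor}.
\end{equation*}
Next I would apply Theorem~\ref{T3}(2), case (b) with $\omega^i=\mathbf{1}$, which (after identifying $\mathcal{A}[\gamma-1] \simeq \mathbb{A}$ via $\mathcal{R}^{\lor}[\gamma-1] \simeq \mathbb{I}^{\lor}$ and $\mathcal{T}/(\gamma-1)\mathcal{T} \simeq \mathbb{T}$) produces the short exact sequence
\begin{equation*}
0 \to \mathrm{Sel}_{\mathbb{A}} \to \mathrm{Sel}_{\mathcal{A}}[\gamma-1] \to \mathbb{I}^{\lor}\Bigl[ \tfrac{a(p,\mathcal{F})-1}{\mathfrak{p}_1^{n_1-j_1}\cdots\mathfrak{p}_r^{n_r-j_r}} \Bigr] \to 0.
\end{equation*}
Multiplicativity of characteristic ideals across Pontryagin duals gives
\begin{equation*}
\mathrm{char}_{\mathbb{I}}(\mathrm{Sel}_{\mathcal{A}}[\gamma-1])^{\lor} \;=\; \mathrm{char}_{\mathbb{I}}(\mathrm{Sel}_{\mathbb{A}})^{\lor}\cdot \frac{(a(p,\mathcal{F})-1)}{(\mathfrak{p}_1^{n_1-j_1}\cdots\mathfrak{p}_r^{n_r-j_r})}.
\end{equation*}

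The last ingredient is the identification $\mathrm{char}_{\mathbb{I}}(H^0(\mathbb{Q},\mathbb{A})^{\lor}) = (\mathfrak{p}_1^{n_1-j_1}\cdots\mathfrak{p}_r^{n_r-j_r})$, which I would read off from Proposition~\ref{H0}: the isomorphism $H^0(\mathbb{Q},\mathcal{A}) \simeq \mathcal{R}^{\lor}[\mathfrak{p}_1^{n_1-j_1}\cdots\mathfrak{p}_r^{n_r-j_r}\mathcal{R},\,\gamma-1]$ gives, upon restricting to $[\gamma-1]$-torsion (i.e.\ passing to $\mathbb{A}$), the description $H^0(\mathbb{Q},\mathbb{A}) \simeq \mathbb{I}^{\lor}[\mathfrak{p}_1^{n_1-j_1}\cdots\mathfrak{p}_r^{n_r-j_r}]$, whose Pontryagin dual is $\mathbb{I}/\mathfrak{p}_1^{n_1-j_1}\cdots\mathfrak{p}_r^{n_r-j_r}$. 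Substituting this back into the displayed equalities and rearranging yields precisely \eqref{20200626}. The single genuine technical point in the argument is the no-pseudo-null-submodule property of Proposition~\ref{nontrivialpseudo-null}, without which reducing the two-variable characteristic ideal modulo $(\gamma-1)$ would a priori lose an error term; once this is in hand, the whole deduction is a matter of assembling Theorem~\ref{T3} and Proposition~\ref{H0} and observing that the $\mathfrak{p}_1^{n_1-j_1}\cdots\mathfrak{p}_r^{n_r-j_r}$ appearing in the control theorem cokernel is exactly cancelled by the characteristic ideal of $H^0(\mathbb{Q},\mathbb{A})^{\lor}$.
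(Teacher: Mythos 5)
Your proposal is correct and follows essentially the same route as the paper's own derivation (the proof of Corollary \ref{IMCs}): specialize Conjecture \ref{IMCtwovar} at $(\gamma-1)$ using the no-pseudo-null-submodule property of Proposition \ref{nontrivialpseudo-null}, apply the control theorem (Theorem \ref{T3}(2), case (b)) to pass from $\mathrm{Sel}_{\mathcal{A}}[\gamma-1]$ to $\mathrm{Sel}_{\mathbb{A}}$, and cancel the resulting cokernel term against $\mathrm{char}_{\mathbb{I}}(H^0(\mathbb{Q},\mathbb{A})^{\lor})=(\mathfrak{p}_1^{n_1-j_1}\cdots\mathfrak{p}_r^{n_r-j_r})$ computed from Proposition \ref{H0}. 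The identifications and hypothesis checks (in particular the applicability of Proposition \ref{nontrivialpseudo-null} at $i=0$) are all as in the paper.
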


We will study the relation between Conjecture \ref{IMCtwovar} and Conjecture \ref{IMCHida} by specializing the characteristic ideal $\mathrm{char}_{\mathcal{R}}\left(\mathrm{Sel}_{\mathcal{A}} \right)^{\lor}$. We have the following corollary on the relation between the IMC for $\mathcal{T}$ and $\mathbb{T}$. 
\begin{cor}\label{IMCs}
Let us keep the assumptions of Conjecture \ref{IMCHida}. Let us take a stable free lattice $\mathbb{T}$. Assume further that $N$ is square free. Then Conjecture \ref{IMCtwovar} implies Conjecture \ref{IMCHida} i.e. the two-variavle IMC for $\mathcal{T}$ implies the one-variable IMC for $\mathbb{T}$. 
\end{cor}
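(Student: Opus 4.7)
The strategy is to specialize Conjecture \ref{IMCtwovar} at the height-one prime $(\gamma-1)\subset\mathcal{R}$ and match the resulting identity in $\mathbb{I}$ against Conjecture \ref{IMCHida}. Throughout, set $X:=(\mathrm{Sel}_{\mathcal{A}})^{\lor}$ and $Y:=(\mathrm{Sel}_{\mathbb{A}})^{\lor}$.

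First, I would handle the algebraic specialization. Since the hypotheses of Conjecture \ref{IMCtwovar} include $\left.\chi\right|_{(\mathbb{Z}/p\mathbb{Z})^{\times}}\neq\mathbf{1},\omega$, Proposition \ref{nontrivialpseudo-null} applied with $i=0$ shows that $X$ carries no nontrivial pseudo-null $\mathcal{R}$-submodule. Combined with the standard specialization principle (see \cite[Lemma 3.4]{Ochiai05}), this yields
\begin{equation*}
\mathrm{char}_{\mathbb{I}}\bigl(X/(\gamma-1)X\bigr)=\mathrm{char}_{\mathcal{R}}(X)\bmod (\gamma-1),
\end{equation*}
once one has ruled out $(\gamma-1)$ from the associated primes of $X$. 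Under Conjecture \ref{IMCtwovar} the right-hand side equals the image of $L_p^{\mathrm{Ki}}(\mathcal{T})$ in $\mathbb{I}\otimes_{\mathbb{Z}_p}\mathbb{Q}_p$, which is $L_p^{\mathrm{Ki}}(\mathbb{T})$ by definition.

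Next, I would plug in the control theorem. Because $\tilde{\kappa}$ reduces to the trivial character modulo $\gamma-1$, one has $\mathcal{A}[\gamma-1]\cong\mathbb{A}$ as $G_{\mathbb{Q}}$-modules, and Theorem \ref{T3}(2) in case (b) furnishes the exact sequence
\begin{equation*}
0\longrightarrow\mathrm{Sel}_{\mathbb{A}}\longrightarrow\mathrm{Sel}_{\mathcal{A}}[\gamma-1]\longrightarrow\mathbb{I}^{\lor}\!\left[\dfrac{(a(p,\mathcal{F})-1)}{\mathfrak{p}_1^{n_1-j_1}\cdots\mathfrak{p}_r^{n_r-j_r}}\right]\longrightarrow 0.
\end{equation*}
Dualizing and computing characteristic ideals over $\mathbb{I}$ gives
\begin{equation*}
\mathrm{char}_{\mathbb{I}}\bigl(X/(\gamma-1)X\bigr)=\dfrac{(a(p,\mathcal{F})-1)}{(\mathfrak{p}_1^{n_1-j_1}\cdots\mathfrak{p}_r^{n_r-j_r})}\cdot\mathrm{char}_{\mathbb{I}}(Y).
\end{equation*}
For the $H^0$ term, Proposition \ref{H0} with $\omega^i=\mathbf{1}$ shows $H^0(\mathbb{Q},\mathcal{A})$ is already killed by $\gamma-1$, so $H^0(\mathbb{Q},\mathbb{A})=H^0(\mathbb{Q},\mathcal{A}[\gamma-1])=H^0(\mathbb{Q},\mathcal{A})$, whence $H^0(\mathbb{Q},\mathbb{A})^{\lor}\cong\mathbb{I}/(\mathfrak{p}_1^{n_1-j_1}\cdots\mathfrak{p}_r^{n_r-j_r})$ is cyclic with characteristic ideal $(\mathfrak{p}_1^{n_1-j_1}\cdots\mathfrak{p}_r^{n_r-j_r})$, a principal height-one ideal in the UFD $\mathbb{I}$. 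Combining the three displayed identities then yields
\begin{equation*}
(L_p^{\mathrm{Ki}}(\mathbb{T}))=(a(p,\mathcal{F})-1)\cdot\mathrm{char}_{\mathbb{I}}(H^0(\mathbb{Q},\mathbb{A})^{\lor})^{-1}\cdot\mathrm{char}_{\mathbb{I}}(\mathrm{Sel}_{\mathbb{A}})^{\lor},
\end{equation*}
which is precisely Conjecture \ref{IMCHida}.

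The delicate point is the specialization step, namely verifying that $(\gamma-1)$ is not an associated prime of $X$. I would deduce this from two inputs: the absence of nontrivial pseudo-null submodules in $X$ (Proposition \ref{nontrivialpseudo-null}), and the $\mathbb{I}$-torsionness of $X/(\gamma-1)X$, which follows from the control exact sequence above once the cokernel is recognized as $\mathbb{I}$-torsion (it is cyclic with nonzero annihilator) and $Y$ is recognized as $\mathbb{I}$-torsion (a consequence of $L_p^{\mathrm{Ki}}(\mathbb{T})\neq 0$, a known property of the Mazur--Kitagawa $p$-adic $L$-function). With these in hand, the assembly of the three displayed equalities is formal.
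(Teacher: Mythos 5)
Your argument is correct and follows essentially the same route as the paper: specialize $\mathrm{char}_{\mathcal{R}}\left(\mathrm{Sel}_{\mathcal{A}}\right)^{\lor}$ at $\left(\gamma-1\right)$ using Proposition \ref{nontrivialpseudo-null} to exclude pseudo-null submodules, feed in the control exact sequence of Theorem \ref{T3}-(2), and identify the $H^0$ term via Proposition \ref{H0}. Your explicit verification that $\left(\gamma-1\right)$ is not an associated prime of $\left(\mathrm{Sel}_{\mathcal{A}}\right)^{\lor}$ (via the nonvanishing of $L_p^{\mathrm{Ki}}\left(\mathbb{T}\right)$) is a point the paper's proof leaves implicit, but it is not a different method.
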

\begin{proof}
We decompose $\mathbb{J}$ into product of height-one prime ideals as $\mathbb{J}=\mathfrak{p}_1^{n_1}\cdots\mathfrak{p}_r^{n_r}$, where $\mathfrak{p}_i$ and $\mathfrak{p}_j$ are distinct when $i\neq j$. Let $\left(j_1, \cdots, j_r \right)$ be the vertex of $\mathrm{Rect}_{\left(0, \cdots, 0\right)}^{\left(n_1, \cdots, n_r \right)}$ which is the image of $[\mathbb{T}]$ under $\Phi_{\mathbf{1}}$ in Theorem \ref{Gal}. We have 
\begin{equation*}
\mathrm{char}_{\mathbb{I}}\left({H^0\left(\mathbb{Q}, \mathbb{A} \right)}^{\lor}\right)=\dfrac{\mathbb{J}}{\mathfrak{p}_1^{j_1}\cdots\mathfrak{p}_r^{j_r}} 
\end{equation*}
by Proposition \ref{H0}. On the other hand, by combining Proposition \ref{nontrivialpseudo-null} and the structure theorem of finitely generated $\mathcal{R}$-modules, we have the following equality
\begin{equation*}
\Psi\left(  \mathrm{char}_{\mathcal{R}}\left(\mathrm{Sel}_{\mathcal{A}} \right)^{\lor}  \right)=\mathrm{char}_{\left.\mathcal{R}\right/{\left(\gamma-1 \right)}}\left.\left(\mathrm{Sel}_{\mathcal{A}} \right)^{\lor}\right/{\left(\gamma-1 \right)\left(\mathrm{Sel}_{\mathcal{A}} \right)^{\lor}},
\end{equation*}
where $\Psi$ denotes the surjection $\mathcal{R} \twoheadrightarrow \left.\mathcal{R}\right/{\left(\gamma-1 \right)} \stackrel{\sim}{\rightarrow} \mathbb{I}$. Then by Theorem \ref{T3}-(2), we have 
\begin{equation}\label{20201216}
\mathrm{char}_{\mathbb{I}}\left(\mathrm{Sel}_{\mathbb{A}} \right)^{\lor}=\Psi\left(  \mathrm{char}_{\mathcal{R}}\left(\mathrm{Sel}_{\mathcal{A}} \right)^{\lor}  \right)\dfrac{\mathbb{J}}{\mathfrak{p}_1^{j_1}\cdots\mathfrak{p}_r^{j_r}\left(a\left(p, \mathcal{F} \right)-1 \right)}.
\end{equation}
This completes the proof.
\end{proof}

Now we study the change of $\mathrm{char}_{\mathbb{I}}\left(\mathrm{Sel}_{\mathbb{A}\otimes\omega^i} \right)^{\lor}$ for one-variable Hida deformation when $\mathbb{T}$ varies in the set of all stable lattices for even $i$. When $\omega^i$ is non-trivial, one could apply Theorem \ref{O}. Then by Theorem \ref{Gal}, we have the same result as Corollary \ref{C2}. However the condition $(\mathbf{F}_{\mathbb{Q}})$ fails when $\omega^i$ is trivial by Proposition \ref{H0}. Thus one could not apply Ochiai's Theorem (Theorem \ref{O}) to study the change of $\mathrm{char}_{\mathbb{I}}\left(\mathrm{Sel}_{\mathbb{A}} \right)^{\lor}$. However, we could lift them to the two-variable case and specialize them by control theorem i.e by combining Corollary \ref{C2} and Theorem \ref{T3}. We obtain the following corollary.
\begin{cor}\label{alglonevar}
Let us keep the assumptions of Corollary \ref{IMCs}. Assume further that $\mathbb{I}$ is isomorphic to $\mathcal{O}[[X]]$ for the ring of integers $\mathcal{O}$ of a finite extension of $\mathbb{Q}_p$. Let $\mathbb{T}$ and $\mathbb{T}^{\prime}$ be stable free lattices. Then we have $\mathrm{char}_{\mathbb{I}}\left(\mathrm{Sel}_{\mathbb{A}^{\prime}} \right)^{\lor}=\mathrm{char}_{\mathbb{I}}\left(\mathrm{Sel}_{\mathbb{A}} \right)^{\lor}$.

\end{cor}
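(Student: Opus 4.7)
The plan is to lift the question to the two-variable situation, where the change of lattice is controlled by Corollary \ref{C2}, and then descend back to the one-variable situation via the control theorem (Theorem \ref{T3}) combined with Proposition \ref{H0}. Since there is no direct analogue of Theorem \ref{O} available in the one-variable case (the condition $(\mathbf{F}_{\mathbb{Q}})$ fails by Proposition \ref{H0}, cf.\ Remark \ref{hold not Hida}), this detour through the two-variable deformation is the essential idea.

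First I would fix a representative $\mathbb{T}^{\mathrm{min}}$ of $\Phi_{\mathbf{1}}^{-1}\left(\left(0, \cdots, 0 \right) \right)$ and, given an arbitrary stable free lattice $\mathbb{T}$ with $\Phi_{\mathbf{1}}\left([\mathbb{T}]\right)=\left(j_1, \cdots, j_r \right)$, apply Corollary \ref{C2} with $i=0$ to obtain
\begin{equation*}
\mathrm{char}_{\mathcal{R}}\left(\mathrm{Sel}_{\mathcal{A}}\right)^{\lor}=\mathrm{char}_{\mathcal{R}}\left(\mathrm{Sel}_{\mathcal{A}^{\mathrm{min}}}\right)^{\lor}\cdot\mathfrak{p}_1^{j_1}\cdots\mathfrak{p}_r^{j_r}\mathcal{R}.
\end{equation*}
Next I would specialize this equality under $\Psi:\mathcal{R}\twoheadrightarrow \left.\mathcal{R}\right/{\left(\gamma-1 \right)}\stackrel{\sim}{\rightarrow}\mathbb{I}$. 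Before doing so, one has to verify that $\left(\mathrm{Sel}_{\mathcal{A}}\right)^{\lor}$ has no nontrivial pseudo-null $\mathcal{R}$-submodule; this is precisely Proposition \ref{nontrivialpseudo-null}, whose hypotheses hold for $i=0$ since $\omega\neq\mathbf{1}$ (as $p$ is odd) and $\left.\chi^{-1}\omega\right|_{\left(\mathbb{Z}/p\mathbb{Z} \right)^{\times}}\neq\mathbf{1}$ (as $\left.\chi\right|_{\left(\left.\mathbb{Z}\right/p\mathbb{Z} \right)^{\times}}\neq\omega$ by the standing hypothesis of Corollary \ref{IMCs}). Hence characteristic ideals commute with the specialization $\Psi$, giving
\begin{equation*}
\Psi\left(\mathrm{char}_{\mathcal{R}}\left(\mathrm{Sel}_{\mathcal{A}}\right)^{\lor}\right)=\mathrm{char}_{\mathbb{I}}\left(\left.\left(\mathrm{Sel}_{\mathcal{A}}\right)^{\lor}\right/{(\gamma-1)\left(\mathrm{Sel}_{\mathcal{A}}\right)^{\lor}}\right).
\end{equation*}

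Then I would invoke exactly the calculation already carried out in the proof of Corollary \ref{IMCs}: the control theorem Theorem \ref{T3}-(2) for $P=\left(\gamma-1\right)$ together with the description of $H^0\left(\mathbb{Q}, \mathbb{A} \right)$ in Proposition \ref{H0} yield formula \eqref{20201216},
\begin{equation*}
\mathrm{char}_{\mathbb{I}}\left(\mathrm{Sel}_{\mathbb{A}}\right)^{\lor}=\Psi\left(\mathrm{char}_{\mathcal{R}}\left(\mathrm{Sel}_{\mathcal{A}}\right)^{\lor}\right)\cdot\dfrac{\mathbb{J}}{\mathfrak{p}_1^{j_1}\cdots\mathfrak{p}_r^{j_r}\left(a\left(p,\mathcal{F}\right)-1\right)}.
\end{equation*}
Substituting the two-variable comparison above, the factor $\mathfrak{p}_1^{j_1}\cdots\mathfrak{p}_r^{j_r}$ cancels and we are left with
\begin{equation*}
\mathrm{char}_{\mathbb{I}}\left(\mathrm{Sel}_{\mathbb{A}}\right)^{\lor}=\Psi\left(\mathrm{char}_{\mathcal{R}}\left(\mathrm{Sel}_{\mathcal{A}^{\mathrm{min}}}\right)^{\lor}\right)\cdot\dfrac{\mathbb{J}}{\left(a\left(p,\mathcal{F}\right)-1\right)},
\end{equation*}
whose right-hand side depends only on the fixed lattice $\mathbb{T}^{\mathrm{min}}$. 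Applying the same argument to $\mathbb{T}^{\prime}$ produces the same expression, proving the desired equality $\mathrm{char}_{\mathbb{I}}\left(\mathrm{Sel}_{\mathbb{A}^{\prime}}\right)^{\lor}=\mathrm{char}_{\mathbb{I}}\left(\mathrm{Sel}_{\mathbb{A}}\right)^{\lor}$. There is no genuine obstacle: all ingredients are in place, and the main delicate point is simply checking that the hypotheses of Proposition \ref{nontrivialpseudo-null} apply to $i=0$, which is why the specialization of the characteristic ideal is legitimate.
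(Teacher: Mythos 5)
Your proposal is correct and follows essentially the same route as the paper: the paper likewise combines the two-variable isogeny comparison of Corollary \ref{C2} with the specialization formula \eqref{20201216} (itself derived from Theorem \ref{T3}-(2), Proposition \ref{H0}, and Proposition \ref{nontrivialpseudo-null}) and cancels the factor $\mathfrak{p}_1^{j_1}\cdots\mathfrak{p}_r^{j_r}$. Your explicit check that the hypotheses of Proposition \ref{nontrivialpseudo-null} hold for $i=0$ is a detail the paper leaves implicit, but otherwise the arguments coincide.
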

\begin{proof}
Let us keep the notation in the proof of Corollary \ref{IMCs}. Let $\left(j_1, \cdots, j_r \right)$ be the vertex of $\mathrm{Rect}_{\left(0, \cdots, 0\right)}^{\left(n_1, \cdots, n_r \right)}$ which is the image of $[\mathbb{T}]$ under $\Phi_{\mathbf{1}}$ in Theorem \ref{Gal} and $\mathbb{T}^{\mathrm{min}}$ a stable free lattice which is a representative of $\Phi_{\mathbf{1}}^{-1}\left(\left(0, \cdots, 0 \right) \right)$. Let $\mathcal{A}:=\mathbb{T}\hat{\otimes}\mathbb{Z}_p[[\Gamma]]\left(\tilde{\kappa}^{-1} \right)\otimes_{\mathcal{R}}\mathcal{R}^{\lor}$ and we define $\mathcal{A}^{\mathrm{min}}$ in a similar way. Then by Corollary \ref{C2}, the equality \eqref{20201216} becomes to
\begin{align*}
\mathrm{char}_{\mathbb{I}}\left(\mathrm{Sel}_{\mathbb{A}} \right)^{\lor}&=\Psi\left(  \mathrm{char}_{\mathcal{R}}\left(\mathrm{Sel}_{\mathcal{A}^{\mathrm{min}}} \right)^{\lor}  \mathfrak{p}_1^{j_1}\cdots\mathfrak{p}_r^{j_r}\mathcal{R}\right)\dfrac{\mathbb{J}}{\mathfrak{p}_1^{j_1}\cdots\mathfrak{p}_r^{j_r}\left(a\left(p, \mathcal{F} \right)-1 \right)}.\\
&=\Psi\left(  \mathrm{char}_{\mathcal{R}}\left(\mathrm{Sel}_{\mathcal{A}^{\mathrm{min}}} \right)^{\lor}  \right)\dfrac{\mathbb{J}}{\left(a\left(p, \mathcal{F} \right)-1 \right)}
\end{align*}
which is independent on $\mathbb{T}$.
\end{proof}
\begin{rem}\label{modifychange}
Let $\mathbb{T}$ be a free lattice, since $\mathbb{J}$ is a factor of Kubota-Leopoldt $p$-adic $L$-function which is not divided by $p$, the term $\mathrm{char}_{\mathbb{I}}\left({H^0\left(\mathbb{Q}, \mathbb{A} \right)}^{\lor}\right)$ does not correspond the denominator of $L_p\left(\mathbb{T} \right)$. In particular if we assume the two-variable IMC (Conjecture \ref{IMCtwovar}), the two-variable $p$-adic $L$-function $L_p^{\mathrm{Ki}}\left(\mathcal{T} \right)$ is in $\mathcal{R}$ and hence $L_p\left(\mathbb{T} \right)$ is an element of $\mathbb{I}$. 

On the other hand, $L_p^{\mathrm{Ki}}\left(\mathcal{T} \right)$ is defined by a basis of $\mathbb{I}$-adic modular symbol associated to $\mathbb{T}$. Then by multiplying $\mathbb{T}$ by elements of $\mathbb{K}^{\times}$, we have that $\left.\mathbb{T}\right/{\mathbb{T}^{\mathrm{min}}}$ is a cyclic $\mathbb{I}$-module on which $G_{\mathbb{Q}}$ acts trivially. Thus one could have the same results for the change of $L_p^{\mathrm{Ki}}\left(\mathcal{T} \right)$ as Corollary \ref{C2} and hence $L_p^{\mathrm{Ki}}\left(\mathbb{T} \right)$ changes in the same manner. However, the Selmer group $\mathrm{char}_{\mathbb{I}}\left(\mathrm{Sel}_{\mathbb{A}} \right)^{\lor}$ does not change by Corollary \ref{alglonevar}. Thus by Proposition \ref{H0}, one may view $\mathrm{char}_{\mathbb{I}}\left(H^0\left(\mathbb{Q}, \mathbb{A} \right)^{\lor}\right)$ as making the algebraic side depending on $\mathbb{T}$ and matching every $p$-adic $L$-function $L_p^{\mathrm{Ki}}\left(\mathbb{T} \right)$ in the main conjecture.

\end{rem}

In this subsection, we study the statement of the main conjecture for one-variable ordinary deformation $\mathbb{T}$. In the following section, we study for the main conjecture of ``Eisenstein deformation" $\left.\mathcal{T}\right/\mathfrak{p}_t\mathcal{T}$ for a height-one prime ideal $\mathfrak{p}_t$ dividing $\mathbb{J}$.

\section{The phenomenon of trivial zeros}
Let us keep the notation of the previous section and let $\mathfrak{p}$ be a height-one prime ideal diving $\mathbb{J}$. In this section, we study Selmer groups $\mathrm{Sel}_{\mathcal{A}[\mathfrak{p}]}$ for $\left.\mathcal{T}\right/\mathfrak{p}\mathcal{T}$. We assume the following condition throughout this section
\begin{list}{}{}
\item[(Vand $\chi^{-1}\omega$)]The $\chi^{-1}\omega$-part of the ideal class group of $\mathbb{Q}\left(\mu_{Np} \right)$ is trivial.
\end{list}
\begin{lem}\label{0}
Suppose that $\mathbb{I}$ is isomorphic to $\mathcal{O}[[X]]$ for the ring of integers $\mathcal{O}$ of a finite extension of $\mathbb{Q}_p$. Assume $p \nmid \varphi\left(N \right)$, that $N$ is square free, (Vand $\chi^{-1}\omega$) and that the Eisenstein ideal $\mathbb{J}$ is principal. Suppose $\left.\chi\right|_{\left(\left.\mathbb{Z}\right/p\mathbb{Z} \right)^{\times}} \neq \mathbf{1}$ and $\omega$. Then $\mathrm{Sel}_{\mathbb{A}}$ is trivial for any stable free lattice. 
\end{lem}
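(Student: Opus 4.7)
The plan is a Nakayama-plus-Galois-cohomology argument, reducing the triviality of $\mathrm{Sel}_{\mathbb{A}}$ to the vanishing of a class-group piece that is controlled by the condition (Vand $\chi^{-1}\omega$).

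First, I observe that $(\mathrm{Sel}_{\mathbb{A}})^{\lor}$ is a finitely generated $\mathbb{I}$-module by standard finiteness for $p$-ordinary Selmer groups with finitely generated coefficients. Since $\mathbb{I}$ is local with maximal ideal $\mathfrak{m}_{\mathbb{I}}$, Nakayama's lemma reduces the problem to showing $\mathrm{Sel}_{\mathbb{A}}[\mathfrak{m}_{\mathbb{I}}] = 0$. By definition of the Selmer group, this subgroup embeds into $H^{1}(\mathbb{Q}_{\Sigma}/\mathbb{Q}, \mathbb{A}[\mathfrak{m}_{\mathbb{I}}])$ cut out by the restrictions of the Greenberg local conditions.

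Second, the residual module $\mathbb{A}[\mathfrak{m}_{\mathbb{I}}]$ is a two-dimensional $G_{\mathbb{Q}}$-representation over the residue field $\mathbb{F} := \mathbb{I}/\mathfrak{m}_{\mathbb{I}}$ with semi-simplification $\mathbf{1}\oplus\chi$ coming from $\overline{\rho}_{\mathcal{F}}$. By the parity argument in the proof of Corollary \ref{C2}, since $F^{-}\mathbb{T}$ is unramified at $p$ and reduces to the even character $\mathbf{1}$, the submodule $F^{+}\mathbb{A}[\mathfrak{m}_{\mathbb{I}}]$ carries the odd character $\chi$. Choosing a $G_{\mathbb{Q}}$-stable line (the subobject in the non-split case, or either summand in the split case), I obtain an exact sequence of $G_{\mathbb{Q}}$-modules
\begin{equation*}
0 \rightarrow \chi \rightarrow \mathbb{A}[\mathfrak{m}_{\mathbb{I}}] \rightarrow \mathbf{1} \rightarrow 0.
\end{equation*}
Passing to the long exact sequence in Galois cohomology and tracking the induced local conditions reduces the vanishing of $\mathrm{Sel}_{\mathbb{A}}[\mathfrak{m}_{\mathbb{I}}]$ to the vanishing of two Selmer groups: one for $\chi$ with no condition at $p$ (since $\chi$ sits in $F^{+}$), and one for $\mathbf{1}$ with the unramified condition at $p$ (since $\mathbf{1}$ is the $F^{-}$ character).

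Third, by inflation--restriction and class field theory over $\mathbb{Q}(\mu_{Np})$, the first of these Selmer groups is identified with the $\chi^{-1}\omega$-isotypic component of $\mathrm{Cl}(\mathbb{Q}(\mu_{Np}))\otimes_{\mathbb{Z}_p}\mathbb{F}$; this vanishes by (Vand $\chi^{-1}\omega$), with the assumption $\chi \neq \omega$ ensuring that the character $\chi^{-1}\omega$ is nontrivial and ruling out degenerate local behavior at the archimedean place. The second Selmer group corresponds to everywhere-unramified cyclic $\mathbb{F}$-extensions of $\mathbb{Q}$, which is trivial since $\mathbb{Q}$ has class number one and $\chi|_{(\mathbb{Z}/p\mathbb{Z})^{\times}} \neq \mathbf{1}$ prevents any spurious contribution from the $p$-part. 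Combining these vanishings via the long exact sequence yields $\mathrm{Sel}_{\mathbb{A}}[\mathfrak{m}_{\mathbb{I}}] = 0$, and Nakayama gives $\mathrm{Sel}_{\mathbb{A}} = 0$.

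The main obstacle is the second step: one must carefully identify the restrictions of the Greenberg local conditions to each character appearing in the filtration of $\mathbb{A}[\mathfrak{m}_{\mathbb{I}}]$, especially at $p$, and verify via the parity analysis that $\chi$ sits in $F^{+}$ (so that its induced Selmer condition is trivial at $p$) and $\mathbf{1}$ sits in $F^{-}$ (so that it becomes the standard unramified condition). The assumptions $\chi|_{(\mathbb{Z}/p\mathbb{Z})^{\times}} \neq \mathbf{1}, \omega$ together with primitivity of $\chi$ modulo $Np$ and squarefreeness of $N$ are essential to keep this clean parity picture and to ensure the class-group identification of the third step is exact rather than merely up to a bounded error.
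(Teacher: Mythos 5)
Your overall strategy (Nakayama plus a residual d\'evissage into characters) is not the paper's, and it contains a genuine gap at its central step. The claimed exact sequence $0 \rightarrow \chi \rightarrow \mathbb{A}[\mathfrak{m}_{\mathbb{I}}] \rightarrow \mathbf{1} \rightarrow 0$ does not exist for a general stable free lattice. Since $\mathbb{A}[\mathfrak{m}_{\mathbb{I}}] \cong \mathbb{T}/\mathfrak{m}_{\mathbb{I}}\mathbb{T}$ as a $G_{\mathbb{Q}}$-module, having $\chi$ as the subobject would force $H^{0}(\mathbb{Q}, \mathbb{A}) = 0$; but Proposition \ref{H0} shows $H^{0}(\mathbb{Q}, \mathbb{A})$ is \emph{nontrivial} for every lattice except the one corresponding to the maximal vertex $(n_1,\dots,n_r)$ of the rectangle. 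In other words, for all but one homothety class the trivial character $\mathbf{1}$ is a $G_{\mathbb{Q}}$-subrepresentation of $\mathbb{A}[\mathfrak{m}_{\mathbb{I}}]$, the extension goes the other way, and your "choose a stable line with $\chi$ as sub" step is impossible. With the correct filtration the character-by-character bound no longer closes: the sub $\mathbf{1}$ contributes a relaxed-at-$p$ $H^1(\mathbb{Q},\mathbf{1})$-type term and the quotient $\chi$ is governed by a different reflection of the class group, so the argument does not yield $\mathrm{Sel}_{\mathbb{A}}[\mathfrak{m}_{\mathbb{I}}]=0$ for those lattices. (Your third step is also only sketched: the exactness of the Greenberg local conditions in the long exact sequence and the precise identification of the $\chi$-Selmer group with the $\chi^{-1}\omega$-component are asserted rather than proved.)

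The paper's proof is structured precisely to avoid this problem. It first treats only the lattice $\mathbb{T}^{\mathrm{max}}$, for which Step \ref{s5} of Proposition \ref{H0} shows the residual representation is a \emph{non-split} extension of $\mathbf{1}$ by $\chi$; it then specializes at an arithmetic point using the control theorem (Theorem \ref{T3}-(1)) and invokes Bella\"iche--Pollack's vanishing result under (Vand $\chi^{-1}\omega$), followed by Nakayama. For every other lattice the conclusion is obtained indirectly: the characteristic ideal of $(\mathrm{Sel}_{\mathbb{A}})^{\lor}$ is independent of the lattice (Corollary \ref{alglonevar}), hence $(\mathrm{Sel}_{\mathbb{A}})^{\lor}$ is pseudo-null, and Proposition \ref{nontrivialpseudo-null} (absence of nontrivial pseudo-null submodules) forces it to vanish. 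Your proposal supplies no substitute for this second mechanism, so it does not establish the lemma for an arbitrary stable free lattice.
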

\begin{proof}
We decompose $\mathbb{J}$ into product of height-one prime ideals as $\mathbb{J}=\mathfrak{p}_1^{n_1}\cdots\mathfrak{p}_r^{n_r}$, where $\mathfrak{p}_i$ and $\mathfrak{p}_j$ are distinct when $i\neq j$.  Let $\mathbb{T}^{\mathrm{max}}$ be a stable free lattice corresponding the vertex $\left(n_1, \cdots, n_r\right)$ under $\Phi_{\mathbf{1}}$ in Theorem \ref{Gal}. We first prove the triviality of $\mathrm{Sel}_{\mathbb{A}^{\mathrm{max}}}$. Let us take a height-one prime ideal $P=\mathrm{Ker}\left(\phi \right)\mathcal{R}$ for some arithmetic specialization $\phi$. We have that $\mathrm{Sel}_{\mathbb{A}^{\mathrm{max}}[P]}$ is isomorphic to $\mathrm{Sel}_{\mathbb{A}^{\mathrm{max}}}[P]$ by Theorem \ref{T3}-(1). By Step \ref{s5} in the proof of Proposition \ref{H0}, we have that $\left.\mathbb{T}^{\mathrm{max}}\right/\mathfrak{m}_{\mathbb{I}}\mathbb{T}^{\mathrm{max}}$ is a nontrivial extension of $\mathbf{1}$ by $\chi$. Thus under the condition (Vand $\chi^{-1}\omega$), we have that $\mathrm{Sel}_{\mathbb{A}^{\mathrm{max}}[P]}$ is trivial by \cite[Proposition 5.8]{BP19}. Thus $\mathrm{Sel}_{\mathbb{A}^{\mathrm{max}}}[P]$ is trivial and the triviality of $\mathrm{Sel}_{\mathbb{A}^{\mathrm{max}}}$ follows by Nakayama's lemma. 

For other stable free lattice $\mathbb{T}$, we have that $\mathrm{char}_{\mathbb{I}}\left(\mathrm{Sel}_{\mathbb{A}} \right)^{\lor}$ is equal to $\mathrm{char}_{\mathbb{I}}\left(\mathrm{Sel}_{\mathbb{A}^{\mathrm{max}}} \right)^{\lor}$ by Corollary \ref{alglonevar}. This implies that $\left(\mathrm{Sel}_{\mathbb{A}} \right)^{\lor}$ is a pseudo-null $\mathbb{I}$-module. Thus $\mathrm{Sel}_{\mathbb{A}}$ is trivial by Proposition \ref{nontrivialpseudo-null} and its proof, the proof is complete. 
\end{proof}

We would like to study the Iwasawa theory of the cyclotomic deformation of the residual representation $\left.\mathbb{T}\right/\mathfrak{p}\mathbb{T}$ for a height-one prime ideal $\mathfrak{p}$ dividing $\mathbb{J}$. First we study that for a given $\mathfrak{p}$, which free lattice makes the Selmer group torsion. We assume the following condition from now on to the end of the paper. 
\begin{list}{}{}
\item[($p$Four)]There exists a height-one prime ideal $\mathfrak{p}$ such that $\mathfrak{p} \in V\left(\mathbb{J} \right)$ and $\mathfrak{p} \not\in V\left(\left(a\left(p, \mathcal{F} \right)-1 \right)\mathbb{J}^{-1} \right)$.
\end{list}
\begin{thm}\label{toreis}
Let us keep the assumptions of Lemma \ref{0}. We decompose $\mathbb{J}$ into product of height-one prime ideals as $\mathbb{J}=\mathfrak{p}_1^{n_1}\cdots\mathfrak{p}_r^{n_r}$, where $\mathfrak{p}_i$ and $\mathfrak{p}_j$ are distinct when $i\neq j$. Let $\mathfrak{p}_i$ be a height-one prime ideal which satisfies ($p$Four). Then we have the following statements
\begin{enumerate}
\item[(1)]Let $\mathbb{T}$ be a stable free lattice which corresponds to the vertex $\left(j_1. \cdots, j_r \right)$ with $j_i=0$, then $\left(\mathrm{Sel}_{\mathcal{A}[\mathfrak{p}_i]}\right)^{\lor}$ is a finitely generated torsion $\left.\mathcal{R}\right/{\mathfrak{p}_i}$-module. Moreover, we have
\begin{equation*}
\mathrm{char}_{\left.\mathcal{R}\right/{\mathfrak{p}_i}}\left(\mathrm{Sel}_{\mathcal{A}[\mathfrak{p}_i]}\right)^{\lor} \subset \left(\gamma-1 \right).
\end{equation*} 
\item[(2)]Let $\mathbb{T}$ be a stable free lattice which corresponds to the vertex $\left(j_1. \cdots, j_r \right)$ with $j_i>0$, then $\left(\mathrm{Sel}_{\mathcal{A}[\mathfrak{p}_i]}\right)^{\lor}$ has $\left.\mathcal{R}\right/{\mathfrak{p}_i}$-rank euqal to one. In particular, when $j_i=n_i$, $\left(\mathrm{Sel}_{\mathcal{A}[\mathfrak{p}_i]}\right)^{\lor}$ is a free $\left.\mathcal{R}\right/{\mathfrak{p}_i}$-module. 
\end{enumerate}
\end{thm}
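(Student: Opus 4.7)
The plan is to combine the control theorem (Theorem \ref{T3}(3), case (c)) with a computation of $\mathrm{ord}_{\mathfrak{p}_i\mathcal{R}}\mathrm{char}_{\mathcal{R}}(\mathrm{Sel}_{\mathcal{A}})^{\vee}$ extracted from Corollary \ref{C2} and Lemma \ref{0}. Since $\mathfrak{p}_i\mid\mathbb{J}$ and the implicit twist is $\omega^i=\mathbf{1}$, Theorem \ref{T3}(3) supplies
\begin{equation*}
0 \longrightarrow \mathrm{Ker}\,\mathrm{res}_{\mathfrak{p}_i} \longrightarrow \mathrm{Sel}_{\mathcal{A}[\mathfrak{p}_i]} \longrightarrow \mathrm{Sel}_{\mathcal{A}}[\mathfrak{p}_i] \longrightarrow 0,
\end{equation*}
with $\mathrm{Ker}\,\mathrm{res}_{\mathfrak{p}_i}\cong(\mathcal{R}/\mathfrak{p}_i)^{\vee}[\gamma-1]$ when $j_i<n_i$ and trivial when $j_i=n_i$. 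Its Pontryagin dual is $\mathbb{I}/\mathfrak{p}_i$, a $\mathcal{R}/\mathfrak{p}_i$-torsion module with $\mathcal{R}/\mathfrak{p}_i$-characteristic ideal $(\gamma-1)$. Dualising, the $\mathcal{R}/\mathfrak{p}_i$-rank of $(\mathrm{Sel}_{\mathcal{A}[\mathfrak{p}_i]})^{\vee}$ equals that of $(\mathrm{Sel}_{\mathcal{A}}[\mathfrak{p}_i])^{\vee}\cong(\mathrm{Sel}_{\mathcal{A}})^{\vee}/\mathfrak{p}_i(\mathrm{Sel}_{\mathcal{A}})^{\vee}$.

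Next I compute $\mathrm{ord}_{\mathfrak{p}_i\mathcal{R}}\mathrm{char}_{\mathcal{R}}(\mathrm{Sel}_{\mathcal{A}})^{\vee}$. Corollary \ref{C2} gives $\mathrm{char}_{\mathcal{R}}(\mathrm{Sel}_{\mathcal{A}})^{\vee}=\mathrm{char}_{\mathcal{R}}(\mathrm{Sel}_{\mathcal{A}^{\min}})^{\vee}\cdot\mathfrak{p}_1^{j_1}\cdots\mathfrak{p}_r^{j_r}\mathcal{R}$ for the minimal lattice $\mathbb{T}^{\min}=\Phi_{\mathbf{1}}^{-1}(0,\ldots,0)$. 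The proof of Corollary \ref{alglonevar} together with $\mathrm{Sel}_{\mathbb{A}^{\min}}=0$ from Lemma \ref{0} yields $\Psi(\mathrm{char}_{\mathcal{R}}(\mathrm{Sel}_{\mathcal{A}^{\min}})^{\vee})=((a(p,\mathcal{F})-1)/\mathbb{J})\mathbb{I}$, whose $\mathfrak{p}_i$-order is zero by hypothesis ($p$Four). Since $\gamma-1\notin\mathfrak{p}_i\mathcal{R}$, a generator of $\mathrm{char}_{\mathcal{R}}(\mathrm{Sel}_{\mathcal{A}^{\min}})^{\vee}$ in the UFD $\mathcal{R}$ whose image modulo $\gamma-1$ avoids $\mathfrak{p}_i$ cannot belong to $\mathfrak{p}_i\mathcal{R}$, so $\mathrm{ord}_{\mathfrak{p}_i\mathcal{R}}\mathrm{char}_{\mathcal{R}}(\mathrm{Sel}_{\mathcal{A}^{\min}})^{\vee}=0$ and the total order equals $j_i$. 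For part (1) with $j_i=0$, Proposition \ref{nontrivialpseudo-null} (no pseudo-null submodule of $(\mathrm{Sel}_{\mathcal{A}})^{\vee}$) combined with the vanishing of the order forces $(\mathrm{Sel}_{\mathcal{A}})^{\vee}_{\mathfrak{p}_i\mathcal{R}}=0$; hence $(\mathrm{Sel}_{\mathcal{A}}[\mathfrak{p}_i])^{\vee}$ is $\mathcal{R}/\mathfrak{p}_i$-torsion, and the extension by $\mathbb{I}/\mathfrak{p}_i$ yields $(\mathrm{Sel}_{\mathcal{A}[\mathfrak{p}_i]})^{\vee}$ torsion with $\mathrm{char}_{\mathcal{R}/\mathfrak{p}_i}\subset(\gamma-1)$.

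For part (2) with $j_i>0$, the aim is to establish that $(\mathrm{Sel}_{\mathcal{A}})^{\vee}_{\mathfrak{p}_i\mathcal{R}}$ is cyclic of length $j_i$—a single invariant factor, not a sum of several shorter cyclic summands. The plan is to revisit the snake-lemma diagram underlying Ochiai's proof of Theorem \ref{O}: the inclusion $\mathcal{T}^{\min}\hookrightarrow\mathcal{T}$ produces a comparison map of Selmer groups whose cokernel, after controlling pseudo-null contributions via Proposition \ref{nontrivialpseudo-null}, is pseudo-isomorphic to $(\mathcal{T}/\mathcal{T}^{\min})_{G_{\mathbb{R}}}\cong\mathcal{R}/\mathfrak{p}_1^{j_1}\cdots\mathfrak{p}_r^{j_r}\mathcal{R}$, a cyclic $\mathcal{R}$-module (since $\omega^i=\mathbf{1}$ makes the $G_{\mathbb{R}}$-coinvariants act trivially). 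Localising at $\mathfrak{p}_i\mathcal{R}$ and using $(\mathrm{Sel}_{\mathcal{A}^{\min}})^{\vee}_{\mathfrak{p}_i\mathcal{R}}=0$ collapses the sequence to $(\mathrm{Sel}_{\mathcal{A}})^{\vee}_{\mathfrak{p}_i\mathcal{R}}\cong\mathcal{R}_{\mathfrak{p}_i\mathcal{R}}/\mathfrak{p}_i^{j_i}\mathcal{R}_{\mathfrak{p}_i\mathcal{R}}$, giving rank one. When $j_i=n_i$, the control-theorem kernel is trivial, so $(\mathrm{Sel}_{\mathcal{A}[\mathfrak{p}_i]})^{\vee}=(\mathrm{Sel}_{\mathcal{A}})^{\vee}/\mathfrak{p}_i$; writing the generator of $\mathrm{char}_{\mathcal{R}}(\mathrm{Sel}_{\mathcal{A}})^{\vee}$ in the UFD $\mathcal{R}$ as $\pi_i^{n_i}g$ with $g$ coprime to a uniformiser $\pi_i$ of $\mathfrak{p}_i\mathbb{I}$, the cyclic structure identifies $(\mathrm{Sel}_{\mathcal{A}[\mathfrak{p}_i]})^{\vee}$ with $\mathcal{R}/(\pi_i^{n_i}g,\pi_i)=\mathcal{R}/\mathfrak{p}_i$, free of rank one. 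The main obstacle is the cyclicity step in part (2): Theorem \ref{O} records only an equality of characteristic ideals, so upgrading it to a module-theoretic isomorphism at the height-one localisation $\mathfrak{p}_i\mathcal{R}$ requires tracing the explicit comparison map inside Ochiai's snake-lemma argument rather than invoking his formula as a black box.
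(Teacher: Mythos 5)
Part (1) of your argument is sound, though it travels a different road from the paper's: you compute $\mathrm{ord}_{\mathfrak{p}_i\mathcal{R}}\,\mathrm{char}_{\mathcal{R}}\left(\mathrm{Sel}_{\mathcal{A}}\right)^{\lor}=j_i$ by combining Corollary \ref{C2} with $\Psi\left(\mathrm{char}_{\mathcal{R}}\left(\mathrm{Sel}_{\mathcal{A}^{\mathrm{min}}}\right)^{\lor}\right)=\left(a\left(p,\mathcal{F}\right)-1\right)\mathbb{J}^{-1}$ (extracted from Lemma \ref{0} and Corollary \ref{alglonevar}) and the hypothesis ($p$Four), whereas the paper deduces torsionness in case $j_i=0$ directly from the finiteness of $\mathcal{R}^{\lor}\left[\gamma-1,\left(a\left(p,\mathcal{F}\right)-1\right)/\mathfrak{p}_1^{n_1-j_1}\cdots\mathfrak{p}_r^{n_r-j_r},\mathfrak{p}_i\right]$, using the isomorphism $\mathrm{Sel}_{\mathcal{A}}[\gamma-1]\cong\mathcal{R}^{\lor}[\cdots]$ supplied by Theorem \ref{T3}-(2) and Lemma \ref{0} together with the structure theory of modules over $\mathcal{R}/\mathfrak{p}_i\cong\mathcal{O}^{\prime}[[\Gamma]]$. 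Both are valid, and your conclusion $\mathrm{char}_{\mathcal{R}/\mathfrak{p}_i}\left(\mathrm{Sel}_{\mathcal{A}[\mathfrak{p}_i]}\right)^{\lor}\subset\left(\gamma-1\right)$ via the control sequence at $\mathfrak{p}_i$ matches the paper's.

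Part (2), however, has a genuine gap that you yourself flag: rank $\geq 1$ follows from your order computation, but rank $\leq 1$ requires $\left(\mathrm{Sel}_{\mathcal{A}}\right)^{\lor}_{\mathfrak{p}_i\mathcal{R}}$ to be cyclic, and your route --- upgrading Theorem \ref{O} from an identity of characteristic ideals to a module-theoretic pseudo-isomorphism between the cokernel of the Selmer comparison map and $\left(\mathcal{T}/\mathcal{T}^{\mathrm{min}}\right)_{G_{\mathbb{R}}}$ --- is neither carried out nor available from the statement of Ochiai's theorem as cited; the local-condition correction terms would have to be controlled at the level of modules, not just characteristic ideals. The missing idea, which the paper uses, is much cheaper: the isomorphism $\mathrm{Sel}_{\mathcal{A}}[\gamma-1]\stackrel{\sim}{\rightarrow}\mathcal{R}^{\lor}\left[\gamma-1,\left(a\left(p,\mathcal{F}\right)-1\right)/\mathfrak{p}_1^{n_1-j_1}\cdots\mathfrak{p}_r^{n_r-j_r}\right]$ (Theorem \ref{T3}-(2) plus the vanishing of $\mathrm{Sel}_{\mathbb{A}}$ from Lemma \ref{0}) shows that $\left(\mathrm{Sel}_{\mathcal{A}}\right)^{\lor}/\left(\gamma-1\right)\left(\mathrm{Sel}_{\mathcal{A}}\right)^{\lor}$ is a cyclic $\mathcal{R}$-module, whence $\left(\mathrm{Sel}_{\mathcal{A}}\right)^{\lor}$ itself is cyclic by Nakayama's lemma, since $\gamma-1$ lies in the maximal ideal of the local ring $\mathcal{R}$. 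Cyclicity together with $\mathrm{ord}_{\mathfrak{p}_i}=j_i>0$ gives $\left(\mathrm{Sel}_{\mathcal{A}}[\mathfrak{p}_i]\right)^{\lor}\cong\left(\mathrm{Sel}_{\mathcal{A}}\right)^{\lor}/\mathfrak{p}_i\left(\mathrm{Sel}_{\mathcal{A}}\right)^{\lor}$ free of rank one over $\mathcal{R}/\mathfrak{p}_i$, and the control sequence from Theorem \ref{T3}-(3) then yields both the rank statement and the freeness when $j_i=n_i$. You already have every input for this shortcut in hand; substituting it for the ``trace Ochiai's snake lemma'' step closes the gap.
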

\begin{proof}
Let $\mathbb{T}$ be a stable free lattice and $\left(j_1, \cdots, j_r \right)$ the corresponding vertex. Let $\mathfrak{p}_i$ be a prime factor of $\mathbb{J}$ which satisfies ($p$Four). We have the following exact sequence by Theorem \ref{T3}-(2):
\begin{equation}\label{conex}
0 \rightarrow \mathrm{Sel}_{\mathcal{A}[\gamma-1]} \rightarrow \mathrm{Sel}_{\mathcal{A}}[\gamma-1] \rightarrow \mathcal{R}^{\lor}\left[\gamma-1, \dfrac{\left(a\left(p, \mathcal{F}\right)-1\right)}{\mathfrak{p}_1^{n_1-j_1}\cdots\mathfrak{p_r}^{n_r-j_r}} \right] \rightarrow  0.
 \end{equation}
The module $\mathrm{Sel}_{\mathcal{A}[\gamma-1]}$ is trivial by Lemma \ref{0}. Then \eqref{conex} becomes an isomorphism
\begin{equation}\label{27}
\mathrm{Sel}_{\mathcal{A}}[\gamma-1]\stackrel{\sim}{\rightarrow}\mathcal{R}^{\lor}\left[\gamma-1, \dfrac{\left(a\left(p, \mathcal{F}\right)-1\right)}{\mathfrak{p}_1^{n_1-j_1}\cdots\mathfrak{p_r}^{n_r-j_r}} \right],
\end{equation}
whence
\begin{equation}\label{isom}
\mathrm{Sel}_{\mathcal{A}}[\gamma-1, \mathfrak{p}_i] \stackrel{\sim}{\rightarrow} \mathcal{R}^{\lor}\left[\gamma-1, \dfrac{\left(a\left(p, \mathcal{F}\right)-1\right)}{\mathfrak{p}_1^{n_1-j_1}\cdots\mathfrak{p_r}^{n_r-j_r}}, \mathfrak{p}_i \right].
\end{equation}

We recall that $\mathfrak{p}_i$ is not lying above $\left(p \right) \in \Lambda$ by Ferrero-Washington's theorem. This implies that $\left.\mathcal{R}\right/\mathfrak{p}_i$ is isomorphic to $\mathcal{O}^{\prime}[[\Gamma]]$ for a finite extension $\mathcal{O}^{\prime}$ of $\mathbb{Z}_p$. Suppose that $\left(\mathrm{Sel}_{\mathcal{A}}[\mathfrak{p}_i] \right)^{\lor}$ is not $\left.\mathcal{R}\right/\mathfrak{p}_i$-torsion, then $\left.\left(\mathrm{Sel}_{\mathcal{A}}[\mathfrak{p}_i] \right)^{\lor}\right/{\left(\gamma-1 \right)\left(\mathrm{Sel}_{\mathcal{A}}[\mathfrak{p}_i] \right)^{\lor}}$ must be infinite by the structure theorem of Iwasawa modules. Let $j_i=0$. Since $\mathfrak{p}_i$ does not divide $\left(a\left(p, \mathcal{F}\right)-1\right)\cdot\mathbb{J}^{-1}$, the module in the right hand side of \eqref{isom} is finite. Thus by the above arguments on $\left(\mathrm{Sel}_{\mathcal{A}}[\mathfrak{p}_i] \right)^{\lor}$ we have that $\left(\mathrm{Sel}_{\mathcal{A}}[\mathfrak{p}_i] \right)^{\lor}$ must be $\left.\mathcal{R}\right/\mathfrak{p}_i$-torsion when $j_i=0$. We also have the following exact sequence by Theorem \ref{T3}-(3):
\begin{equation}\label{0112}
0 \rightarrow \left(\left.\mathcal{R}\right/\mathfrak{p}_i\right)^{\lor}[\gamma-1] \rightarrow \mathrm{Sel}_{\mathcal{A}[\mathfrak{p}_i]} \rightarrow \mathrm{Sel}_{\mathcal{A}}[\mathfrak{p}_i] \rightarrow 0. 
\end{equation}
Thus $\left(\mathrm{Sel}_{\mathcal{A}[\mathfrak{p}_i]} \right)^{\lor}$ is $\left.\mathcal{R}\right/\mathfrak{p}_i$-torsion and
\begin{equation*}
\mathrm{char}_{\left.\mathcal{R}\right/\mathfrak{p}_i}\left(\mathrm{Sel}_{\mathcal{A}[\mathfrak{p}_i]} \right)^{\lor}=\left(\gamma-1 \right)\mathrm{char}_{\left.\mathcal{R}\right/\mathfrak{p}_i}\left(\mathrm{Sel}_{\mathcal{A}}[\mathfrak{p}_i] \right)^{\lor}.
\end{equation*}

Now let us assume $j_i>0$ and let $\Xi: \mathcal{R} \rightarrow \overline{\mathbb{Q}}_p[[\Gamma]]$ be the composition of the projection $\mathcal{R} \twoheadrightarrow \left.\mathcal{R}\right/{\mathfrak{p}_i}$ with a fixed embedding $\left.\mathcal{R}\right/{\mathfrak{p}_i} \hookrightarrow \overline{\mathbb{Q}}_p[[\Gamma]]$. We have the following equality by Corollary \ref{C2}
\begin{equation*}
\mathrm{char}_{\left.\mathcal{R}\right/\mathfrak{p}_i}\left(\mathrm{Sel}_{\mathcal{A}}[\mathfrak{p}_i] \right)^{\lor}=\Xi\left(\mathfrak{p}_1^{j_1}\cdots\mathfrak{p}_n^{j_n} \right)\cdot\Xi\left(\mathrm{char}_{\mathcal{R}}\left(\mathrm{Sel}_{\mathcal{A}^{\mathrm{min}}} \right)^{\lor} \right).
\end{equation*}
The right hand side is $0$ since $j_i>0$. Hence $\left(\mathrm{Sel}_{\mathcal{A}}[\mathfrak{p}_i] \right)^{\lor}$ is not $\left.\mathcal{R}\right/\mathfrak{p}_i$-torsion. We have that $\left(\mathrm{Sel}_{\mathcal{A}}[\gamma-1] \right)^{\lor}$ is a cyclic $\mathcal{R}$-module by \eqref{27} and hence $\left(\mathrm{Sel}_{\mathcal{A}} \right)^{\lor}$
is cyclic follows by Nakayama's lemma. Thus $\left(\mathrm{Sel}_{\mathcal{A}}[\mathfrak{p}_i] \right)^{\lor}$ is a free $\left.\mathcal{R}\right/\mathfrak{p}_i$-module of rank one. Note that the exact sequence \eqref{0112} holds unless $j_i \neq n_i$ by Theorem \ref{T3}-(3). Thus $\left(\mathrm{Sel}_{\mathcal{A}[\mathfrak{p}_i]} \right)^{\lor}$ has $\left.\mathcal{R}\right/{\mathfrak{p}_i}$-rank equal to one. When $j_i=n_i$, $\mathrm{Sel}_{\mathcal{A}[\mathfrak{p}_i]}$ is isomorphic to $\mathrm{Sel}_{\mathcal{A}}[\mathfrak{p}_i]$ and hence $\left(\mathrm{Sel}_{\mathcal{A}[\mathfrak{p}_i]} \right)^{\lor}$ is a free $\left.\mathcal{R}\right/{\mathfrak{p}_i}$-module of rank one. This completes the proof of Theorem \ref{toreis}.

\end{proof}

Now we study the statement of the main conjecture for $\left.\mathcal{T}\right/\mathfrak{p}_i\mathcal{T}$. We denote by $L_p\left(\left.\mathcal{T}\right/\mathfrak{p}_i\mathcal{T} \right)$ the image of $L_p^{\mathrm{Ki}}\left(\mathcal{T} \right)$ in $\left.\mathcal{R}\right/{\mathfrak{p}_i}\otimes_{\mathbb{Z}_p}\mathbb{Q}_p$. By Conjecture \ref{IMCGalois} and Proposition \ref{H0}, the main conjecture for $\left.\mathcal{T}\right/\mathfrak{p}_i\mathcal{T}$ is the following statement
\begin{conj}\label{IMCEiss}
Suppose that $\mathbb{I}$ is a UFD and Gorenstein. Suppose $\left.\chi\right|_{\left(\left.\mathbb{Z}\right/p\mathbb{Z} \right)^{\times}} \neq \mathbf{1}$ and $\omega$. Assume $p \nmid \varphi\left(N \right)$ and that $\mathbb{J}$ is principal. Let $\mathbb{T}$ be a stable free lattice such that $\left(\mathrm{Sel}_{\mathcal{A}[\mathfrak{p}_i]}\right)^{\lor}$ is a torsion $\left.\mathcal{R}\right/{\mathfrak{p}_i}$-module. Then we have the following equality
\begin{equation*}
\mathrm{char}_{\left.\mathcal{R}\right/\mathfrak{p}_i}\left(H^0\left(\mathbb{Q}, \mathcal{A} [\mathfrak{p}_i]\right)^{\lor} \right)^{-1}\mathrm{char}_{\left.\mathcal{R}\right/\mathfrak{p}_i}\left(\mathrm{Sel}_{\mathcal{A} [\mathfrak{p}_i]} \right)^{\lor}=\left(L_p\left(\left.\mathcal{T}\right/\mathfrak{p}_i\mathcal{T}\right) \right).
\end{equation*}

\end{conj}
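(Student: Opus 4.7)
The plan is to deduce Conjecture \ref{IMCEiss} from the two-variable main conjecture (Conjecture \ref{IMCtwovar}) by specializing at the height-one prime $\mathfrak{p}_i$ and correcting for the discrepancies measured by the control theorem (Theorem \ref{T3}(3)) and the computation of $H^0(\mathbb{Q}, \mathcal{A}[\mathfrak{p}_i])$ supplied by Proposition \ref{H0}. By Theorem \ref{toreis}(1), the torsionness hypothesis on $(\mathrm{Sel}_{\mathcal{A}[\mathfrak{p}_i]})^{\lor}$ over $\mathcal{R}/\mathfrak{p}_i$ forces the vertex of $[\mathbb{T}]$ under $\Phi_{\mathbf{1}}$ to have $i$-th coordinate $j_i = 0$, so in particular $n_i - j_i = n_i \geq 1$ and all three ingredients are immediately applicable.

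The first step is specialization. Proposition \ref{nontrivialpseudo-null} applies since $\omega^i = \mathbf{1}$ is neither $\omega$ nor $\chi^{-1}\omega|_{(\mathbb{Z}/p\mathbb{Z})^\times}$ (using the standing assumption $\chi|_{(\mathbb{Z}/p\mathbb{Z})^\times} \neq \omega$), so $(\mathrm{Sel}_{\mathcal{A}})^{\lor}$ has no nontrivial pseudo-null $\mathcal{R}$-submodule. Combined with the Pontryagin-dual identification $(\mathrm{Sel}_{\mathcal{A}}[\mathfrak{p}_i])^{\lor} \cong (\mathrm{Sel}_{\mathcal{A}})^{\lor}/\mathfrak{p}_i(\mathrm{Sel}_{\mathcal{A}})^{\lor}$ and the hypothesis that $(\mathrm{Sel}_{\mathcal{A}[\mathfrak{p}_i]})^{\lor}$ is $\mathcal{R}/\mathfrak{p}_i$-torsion, which in turn forces $\mathfrak{p}_i \nmid L_p^{\mathrm{Ki}}(\mathcal{T})$, this allows one to specialize the characteristic ideal cleanly in the spirit of \cite[Lemma 3.4]{Ochiai05}, yielding
\[
\mathrm{char}_{\mathcal{R}/\mathfrak{p}_i}(\mathrm{Sel}_{\mathcal{A}}[\mathfrak{p}_i])^{\lor} \;=\; \bigl(L_p(\mathcal{T}/\mathfrak{p}_i\mathcal{T})\bigr).
\]

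The second step is bookkeeping. Theorem \ref{T3}(3), applied with $\omega^i = \mathbf{1}$ and $j_i < n_i$, produces the exact sequence
\[
0 \to (\mathcal{R}/\mathfrak{p}_i)^{\lor}[\gamma-1] \to \mathrm{Sel}_{\mathcal{A}[\mathfrak{p}_i]} \to \mathrm{Sel}_{\mathcal{A}}[\mathfrak{p}_i] \to 0,
\]
whose Pontryagin dual, together with the identification $((\mathcal{R}/\mathfrak{p}_i)^{\lor}[\gamma-1])^{\lor} \cong \mathcal{R}/(\mathfrak{p}_i, \gamma-1)$, yields the relation $\mathrm{char}_{\mathcal{R}/\mathfrak{p}_i}(\mathrm{Sel}_{\mathcal{A}[\mathfrak{p}_i]})^{\lor} = (\gamma-1)\cdot \mathrm{char}_{\mathcal{R}/\mathfrak{p}_i}(\mathrm{Sel}_{\mathcal{A}}[\mathfrak{p}_i])^{\lor}$. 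In parallel, since $\mathcal{R}$ is a UFD, $\mathfrak{p}_i = (\pi_i)$ is principal, and the sequence $0 \to \mathcal{A}[\mathfrak{p}_i] \to \mathcal{A} \xrightarrow{\pi_i} \mathcal{A} \to 0$ gives $H^0(\mathbb{Q}, \mathcal{A}[\mathfrak{p}_i]) = H^0(\mathbb{Q}, \mathcal{A})[\mathfrak{p}_i]$. Proposition \ref{H0} identifies $H^0(\mathbb{Q}, \mathcal{A}) \cong (\mathcal{R}/(\gamma-1, \mathfrak{p}_1^{n_1-j_1}\cdots\mathfrak{p}_r^{n_r-j_r}))^{\lor}$; since $\mathfrak{p}_i \mid \mathfrak{p}_1^{n_1-j_1}\cdots\mathfrak{p}_r^{n_r-j_r}$ (as $n_i - j_i \geq 1$), taking $[\mathfrak{p}_i]$-torsion collapses the bracketed ideal to $(\gamma-1, \mathfrak{p}_i)$, hence $H^0(\mathbb{Q}, \mathcal{A}[\mathfrak{p}_i]) \cong (\mathcal{R}/(\gamma-1, \mathfrak{p}_i))^{\lor}$ and $\mathrm{char}_{\mathcal{R}/\mathfrak{p}_i}(H^0(\mathbb{Q}, \mathcal{A}[\mathfrak{p}_i])^{\lor}) = (\gamma-1)$.

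Assembling the three identities, the $(\gamma-1)$-factor picked up by the control theorem is cancelled exactly by the $H^0$-term, leaving
\[
\mathrm{char}_{\mathcal{R}/\mathfrak{p}_i}(H^0(\mathbb{Q}, \mathcal{A}[\mathfrak{p}_i])^{\lor})^{-1}\cdot \mathrm{char}_{\mathcal{R}/\mathfrak{p}_i}(\mathrm{Sel}_{\mathcal{A}[\mathfrak{p}_i]})^{\lor} \;=\; \bigl(L_p(\mathcal{T}/\mathfrak{p}_i\mathcal{T})\bigr),
\]
which is the assertion of Conjecture \ref{IMCEiss}. The hardest part will be making the specialization step fully rigorous: although Proposition \ref{nontrivialpseudo-null} supplies the crucial pseudo-null vanishing, one must check that its hypotheses genuinely cover the case $\omega^i = \mathbf{1}$ at hand and make precise the sense in which $\mathrm{char}_{\mathcal{R}}(\mathrm{Sel}_{\mathcal{A}})^{\lor}$ descends along the principal quotient $\mathcal{R} \twoheadrightarrow \mathcal{R}/\mathfrak{p}_i$ once $\mathfrak{p}_i \nmid L_p^{\mathrm{Ki}}(\mathcal{T})$ has been verified. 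With those technicalities settled, the remaining Iwasawa-theoretic bookkeeping is essentially formal.
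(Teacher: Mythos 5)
Since the statement is a conjecture, the only available ``proof'' is the conditional deduction from the two-variable main conjecture, and your proposal is exactly the paper's own argument for this implication (Corollary \ref{sp}, carried out by the method of Corollary \ref{IMCs}): compute $\mathrm{char}_{\left.\mathcal{R}\right/\mathfrak{p}_i}\left(H^0\left(\mathbb{Q}, \mathcal{A}[\mathfrak{p}_i]\right)^{\lor}\right)=\left(\gamma-1\right)$ via Proposition \ref{H0}, specialize $\mathrm{char}_{\mathcal{R}}\left(\mathrm{Sel}_{\mathcal{A}}\right)^{\lor}$ along $\mathfrak{p}_i$ using the absence of pseudo-null submodules (Proposition \ref{nontrivialpseudo-null}), and cancel the resulting $\left(\gamma-1\right)$ against the kernel term from Theorem \ref{T3}(3). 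Your bookkeeping, including the reduction to $j_i=0$ via Theorem \ref{toreis} (which does require the auxiliary hypotheses ($p$Four) and (Vand $\chi^{-1}\omega$) that appear in Corollary \ref{sp} but not in the bare statement of the conjecture), matches the paper's.
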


We obtain the following corollary

\begin{cor}\label{sp}
Let us keep the assumptions and the notation of Lemma \ref{0} and Theorem \ref{toreis}. Let $\mathfrak{p}_i$ be a height-one prime ideal which satisfies  the condition ($p$Four). Let $\mathbb{T}$ be a stable free lattice which corresponds to the vertex $\left(j_1. \cdots, j_r \right)$ with $j_i=0$. Then $\left(\mathrm{Sel}_{\mathcal{A}[\mathfrak{p}_i]}\right)^{\lor}$ is a finitely generated torsion $\left.\mathcal{R}\right/{\mathfrak{p}_i}$-module and Conjecture \ref{IMCtwovar} implies Conjecture \ref{IMCEiss}. 
\end{cor}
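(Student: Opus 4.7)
The torsionness of $\left(\mathrm{Sel}_{\mathcal{A}[\mathfrak{p}_i]}\right)^{\lor}$ as a $\mathcal{R}/\mathfrak{p}_i$-module is exactly the content of Theorem \ref{toreis}-(1), so the substantive task is to derive Conjecture \ref{IMCEiss} from Conjecture \ref{IMCtwovar}. The plan is to specialize the equality $\mathrm{char}_{\mathcal{R}}\left(\mathrm{Sel}_{\mathcal{A}}\right)^{\lor}=\left(L_p^{\mathrm{Ki}}\left(\mathcal{T}\right)\right)$ modulo $\mathfrak{p}_i$ and then rewrite both sides using the control theorem (Theorem \ref{T3}) together with the explicit shape of $H^0\left(\mathbb{Q}, \mathcal{A}[\mathfrak{p}_i]\right)$ provided by Proposition \ref{H0}.

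First I would set up the algebraic side. Since $j_i<n_i$ (indeed $j_i=0$ and $n_i\geq 1$ because $\mathfrak{p}_i\mid\mathbb{J}$), Theorem \ref{T3}-(3) supplies the short exact sequence
$$0 \longrightarrow \left(\mathcal{R}/\mathfrak{p}_i\right)^{\lor}[\gamma-1] \longrightarrow \mathrm{Sel}_{\mathcal{A}[\mathfrak{p}_i]} \longrightarrow \mathrm{Sel}_{\mathcal{A}}[\mathfrak{p}_i] \longrightarrow 0.$$
Taking Pontryagin duals and using multiplicativity of characteristic ideals over $\mathcal{R}/\mathfrak{p}_i$ yields
$$\mathrm{char}_{\mathcal{R}/\mathfrak{p}_i}\left(\mathrm{Sel}_{\mathcal{A}[\mathfrak{p}_i]}\right)^{\lor} = (\gamma-1)\cdot \mathrm{char}_{\mathcal{R}/\mathfrak{p}_i}\left(\mathrm{Sel}_{\mathcal{A}}[\mathfrak{p}_i]\right)^{\lor},$$
because the Pontryagin dual of $\left(\mathcal{R}/\mathfrak{p}_i\right)^{\lor}[\gamma-1]$ is the cyclic $\mathcal{R}/\mathfrak{p}_i$-module $\left(\mathcal{R}/\mathfrak{p}_i\right)/(\gamma-1)$, whose characteristic ideal equals $(\gamma-1)$.

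Next I would analyze $H^0\left(\mathbb{Q}, \mathcal{A}[\mathfrak{p}_i]\right)=H^0\left(\mathbb{Q}, \mathcal{A}\right)[\mathfrak{p}_i]$ using Proposition \ref{H0}. Since $j_i=0$ and $n_i\geq 1$, the ideal $\mathfrak{p}_1^{n_1-j_1}\cdots\mathfrak{p}_r^{n_r-j_r}$ is divisible by $\mathfrak{p}_i$, so taking the further $\mathfrak{p}_i$-torsion collapses the annihilator to $\mathfrak{p}_i+(\gamma-1)$ and produces the identification $H^0\left(\mathbb{Q}, \mathcal{A}[\mathfrak{p}_i]\right)\cong \left(\mathcal{R}/\mathfrak{p}_i\right)^{\lor}[\gamma-1]$. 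Consequently
$$\mathrm{char}_{\mathcal{R}/\mathfrak{p}_i}\left(H^0\left(\mathbb{Q}, \mathcal{A}[\mathfrak{p}_i]\right)^{\lor}\right) = (\gamma-1).$$
This is the decisive matching: the $(\gamma-1)$ appearing as the kernel of the control map is exactly the denominator predicted on the right-hand side of Conjecture \ref{IMCEiss}.

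For the specialization step I would invoke Proposition \ref{nontrivialpseudo-null}, which ensures $\left(\mathrm{Sel}_{\mathcal{A}}\right)^{\lor}$ has no non-trivial pseudo-null $\mathcal{R}$-submodule. Combined with the fact that $\left(\mathrm{Sel}_{\mathcal{A}}[\mathfrak{p}_i]\right)^{\lor}\cong \left(\mathrm{Sel}_{\mathcal{A}}\right)^{\lor}/\mathfrak{p}_i\left(\mathrm{Sel}_{\mathcal{A}}\right)^{\lor}$ is $\mathcal{R}/\mathfrak{p}_i$-torsion (by Theorem \ref{toreis}-(1)), the standard specialization lemma—already used in the proof of Corollary \ref{IMCs}—gives
$$\mathrm{char}_{\mathcal{R}/\mathfrak{p}_i}\left(\mathrm{Sel}_{\mathcal{A}}[\mathfrak{p}_i]\right)^{\lor}=\Psi_{\mathfrak{p}_i}\left(\mathrm{char}_{\mathcal{R}}\left(\mathrm{Sel}_{\mathcal{A}}\right)^{\lor}\right),$$
where $\Psi_{\mathfrak{p}_i}\colon\mathcal{R}\twoheadrightarrow \mathcal{R}/\mathfrak{p}_i$ is the projection. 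Combining the three displayed equalities with the definition $L_p\left(\mathcal{T}/\mathfrak{p}_i\mathcal{T}\right)=\Psi_{\mathfrak{p}_i}\left(L_p^{\mathrm{Ki}}(\mathcal{T})\right)$ and Conjecture \ref{IMCtwovar} produces Conjecture \ref{IMCEiss}. The most delicate point is the legitimacy of the specialization lemma: one needs $\mathfrak{p}_i$ not to divide any elementary divisor of $\left(\mathrm{Sel}_{\mathcal{A}}\right)^{\lor}$, but the torsionness of $\left(\mathrm{Sel}_{\mathcal{A}}\right)^{\lor}/\mathfrak{p}_i\left(\mathrm{Sel}_{\mathcal{A}}\right)^{\lor}$ provided by Theorem \ref{toreis}-(1), together with the absence of pseudo-null submodules, makes this automatic.
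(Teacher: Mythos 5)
Your proposal is correct and follows essentially the same route as the paper: the paper's proof simply notes that $\mathrm{char}_{\left.\mathcal{R}\right/\mathfrak{p}_i}\left(H^0\left(\mathbb{Q}, \mathcal{A}[\mathfrak{p}_i]\right)^{\lor}\right)=\left(\gamma-1\right)$ by Proposition \ref{H0} and then combines Theorem \ref{T3}-(3) with the specialization argument of Corollary \ref{IMCs} (no pseudo-null submodules via Proposition \ref{nontrivialpseudo-null} plus the structure theorem), which is exactly the chain of steps you spell out. Your version just makes explicit the details the paper leaves to the reader.
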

\begin{proof}
We have
\begin{equation*}
\mathrm{char}_{\left.\mathcal{R}\right/\mathfrak{p}_i}\left(H^0\left(\mathbb{Q}, \mathcal{A} [\mathfrak{p}_i]\right)^{\lor} \right)=\left(\gamma-1 \right)
\end{equation*}
by Proposition \ref{H0}. Thus by combining Theorem \ref{H0}-(3), the proof of Corollary \ref{sp} is done in the same way as Corollary \ref{IMCs}.
\end{proof}
\begin{rem}\label{remeis}
For the cyclotomic deformation of ordinary Galois representations attached to a cusp form $f$, the phenomenon of trivial zeros would occur only when the weight is equal to two and $a\left(p, f \right)=1$. In this section, we know that the phenomenon of trivial zeros may also occur for a certain ordinary (but not modular) representation $\left.\mathbb{T}\right/\mathfrak{p}_i\mathbb{T}$ and the term $\mathrm{char}_{\left.\mathcal{R}\right/\mathfrak{p}_i}\left(H^0\left(\mathbb{Q}, \mathcal{A} [\mathfrak{p}_i]\right)^{\lor} \right)$ is a modification related to the trivial zero. 

\end{rem}

Yau Mathematical Science Center, Tsinghua University, Beijing, China

E-mail address: yandong890911@mail.tsinghua.edu.cn

\end{document}